\newcommand{\cC}{{\mathcal C}}
\newcommand{\calC}{{\mathcal C}}
\newcommand{\cD}{{\mathcal D}}
\newtheorem{definition}{Definition}
\newtheorem{assumption}{Assumption}
\newtheorem{theorem}{Theorem}
\newtheorem{lemma}{Lemma}
\newtheorem{proposition}{Proposition}
\newtheorem{corollary}{Corollary}
\newenvironment{proof}{\noindent {\bf \em Proof: }\ignorespaces}%
{\hspace*{\fill}$\Box$\par}
{\hspace*{\fill}$\Box$\par\vspace{4mm}}
\newenvironment{proofof}[1]{\smallskip\noindent{\bf \em Proof of #1.}}%
{\hspace*{\fill}$\Box$\par}
\numberwithin{equation}{section}
\newcommand{\eat}[1]{}
\newcommand{\topic}[1]{\vspace{2mm}\noindent{{\bf #1:}}}
\newcommand{\blue}[1]{\textcolor{blue}{#1}}
\definecolor{bgcolor}{rgb}{0.66,0.88,1.00}
\newcommand{\E}{{\mathbb{E}}}
\newcommand{\R}{{\mathbb R}}
\newcommand{\inner}[2]{\langle #1,#2 \rangle}
\newcommand{\ns}[1]{\| #1 \|^2}
\newcommand{\nsB}[1]{\left\| #1 \right\|^2}
\newcommand{\n}[1]{\| #1 \|}
\newcommand{\hx}{\widehat{x}}
\newcommand{\bL}{\bar{L}}
\newcommand{\fgap}{{\Delta_{0}}}
\newcommand{\fgapp}{{\Delta'_{0}}}
\newcommand{\xk}{x^k}
\newcommand{\xkn}{x^{k+1}}
\newcommand{\gk}{g^k}
\newcommand{\Ek}{\E_k}
\newcommand{\fs}{f^*}
\newcommand{\sk}{\sigma_k^2}
\newcommand{\skn}{\sigma_{k+1}^2}
\newcommand{\Dk}{{\Delta^{k}}}
\newcommand{\Dkn}{{\Delta^{k+1}}}
\newcommand{\wk}{w^k}
\newcommand{\wkn}{w^{k+1}}
\newcommand{\wik}{w_i^k}
\newcommand{\wikn}{w_i^{k+1}}
\newcommand{\wjk}{w_j^k}
\newcommand{\wijk}{w_{i,j}^{k}}
\newcommand{\wijkn}{w_{i,j}^{k+1}}
\newcommand{\gi}{\widetilde{g}_i^k}
\newcommand{\Di}{\widehat{\Delta}_i^k}
\newcommand{\tsk}{\widetilde{\sigma}_{k}^2}
\newcommand{\tskn}{\widetilde{\sigma}_{k+1}^2}
\newcommand{\Ci}{\cC_i^k}
\newcommand{\hi}{h_i^k}
\newcommand{\hin}{h_i^{k+1}}
\newcommand{\summ}{\sum_{i=1}^m}
\newcommand{\EkB}[1]{\Ek\left[ #1 \right]}
\newcommand{\ski}{\sigma_{k,i}^2}
\newcommand{\skin}{\sigma_{k+1,i}^2}
\begin{document}
	
\title{\bf A Unified Analysis of Stochastic Gradient Methods \\ for Nonconvex Federated Optimization}
\author{\qquad\quad Zhize Li   \qquad\quad  Peter Richt{\'a}rik \\
	 	King Abdullah University of Science and Technology (KAUST)
}

\date{June 12, 2020}
\maketitle

\begin{abstract}
In this paper, we study the performance of a large family of SGD variants in the smooth nonconvex regime. 
To this end, we propose a generic and flexible assumption capable of accurate modeling of the second moment of the stochastic gradient. 
Our assumption is satisfied by a large number of specific variants of SGD in the literature, including SGD with arbitrary sampling, SGD with compressed gradients, and a wide variety of variance-reduced SGD methods such as SVRG and SAGA. 
We provide a single convergence analysis for all methods that satisfy the proposed unified assumption, thereby offering a unified understanding of SGD variants in the nonconvex regime instead of relying on dedicated analyses of each variant. 
Moreover, our unified analysis is accurate enough to recover or improve upon the best-known convergence results of several classical methods, and also gives new convergence results for many new methods which arise as special cases. 
In the more general distributed/federated nonconvex optimization setup, we propose two new general algorithmic frameworks differing in whether direct gradient compression (DC) or compression of gradient differences (DIANA) is used. 
We show that all methods captured by these two frameworks also satisfy our unified assumption. 
Thus, our unified convergence analysis also captures a large variety of distributed methods utilizing compressed communication. 
Finally, we also provide a unified analysis for obtaining faster linear convergence rates in this nonconvex regime under the PL condition.
\end{abstract}

\vspace{5mm}
\tableofcontents

\newpage
\section{Introduction}
\label{sec:intro}

In this paper, we develop a general framework for studying and designing  SGD-type  methods for solving {\em nonconvex distributed/federated optimization problems} \cite{khirirat2018distributed, FEDLEARN, kairouz2019advances}. Given $m$ machines/workers/devices, each having access to their own  data samples, we consider the problem
\begin{equation}\label{eq:prob-fed}
  \min_{x\in \R^d} \left\{  f(x) := \frac{1}{m}\sum \limits_{i=1}^m{f_i(x)}   \right\}
\end{equation}
in the heterogeneous (non-IID) data setting, i.e., we allow different workers to have access to different data distributions. We consider the case when the loss $f_i$ in worker $i$ is of an online/expectation form, 
\begin{align}
f_i(x) := \E_{\zeta \sim \cD_i}[f_i(x,\zeta)] , \label{prob-fed:exp}
\end{align}
and also the case when $f_i$ is of a finite-sum form,
\begin{align}
  f_i(x) := \frac{1}{n}\sum \limits_{j=1}^n{f_{i,j}(x)}, \label{prob-fed:finite}
\end{align}
where  $f(x), f_i(x), f_i(x,\zeta)$ and  $f_{i,j}(x)$ are possibly nonconvex functions. Forms \eqref{prob-fed:exp} and \eqref{prob-fed:finite} capture the population (resp.\ empirical) risk minimization problems in distributed/federated learning. 

\subsection{Single machine setting}
In particular, the single machine/node case (i.e., $m=1$) of problem \eqref{eq:prob-fed} reduces to the standard problem
\begin{equation}\label{eq:prob}
\min_{x\in \R^d}   f(x),
\end{equation}
where $f(x)$ can be the online/expectation form 
\begin{align}
f(x) := \E_{\zeta\sim \cD}[f(x,\zeta)]  \label{prob:exp}
\end{align}
or the finite-sum form
\begin{align}
 f(x) := \frac{1}{n}\sum \limits_{j=1}^n{f_j(x)}, \label{prob:finite}
\end{align}
where $f(x), f(x,\zeta)$ and  $f_j(x)$  are possibly nonconvex functions.
These forms capture the standard population/empirical risk minimization problems in machine learning.

There has been extensive research into solving the standard problem \eqref{eq:prob}--\eqref{prob:finite} and an enormous number of methods were proposed, e.g., \citep{nesterov2014introductory, nemirovski2009robust, ghadimi2013stochastic, johnson2013accelerating, defazio2014saga, nguyen2017sarah, ge2015escaping,  lin2015universal, lan2015optimal, lan2018random, zhize2019unified, allen2017katyusha, zhize2020anderson, ghadimi2016mini,zhou2018stochastic, fang2018spider,zhize2019ssrgd, pham2019proxsarah}.
Due to the increasing popularity of distributed/federated learning, the more general distributed/federated optimization problem \eqref{eq:prob-fed}--\eqref{prob-fed:finite} has attracted significant attention as well~  \citep{FEDLEARN, FL2017-AISTATS, lian2017can, lan2018random, li2018federated, localSGD-Stich, mishchenko2019distributed, DIANA2, karimireddy2019scaffold, khaled2019first, yang2019federated, li2019communication, kairouz2019advances, li2020acceleration, localSGD-AISTATS2020}. 
However, all these methods are analyzed separately, often using different approaches, intuitions, and assumptions, and separately in the $m=1$ (single node) and $m\geq 1$ case.

\subsection{Our contributions}
\label{sec:contribution}

We provide a {\em single and sharp analysis for a large family of SGD methods (Algorithm~\ref{alg:1}) for solving the nonconvex problem \eqref{eq:prob-fed}.}  Our approach offers a {\em unified understanding} of many previously proposed SGD variants, which we believe helps the community making better sense of existing methods and results. More importantly, {\em our unified approach also motivates and facilitates the design of, and offers plug-in convergence guarantees for,  many new and practically relevant SGD variants}.

\begin{algorithm}[t]
	\caption{Framework of stochastic gradient methods}
	\label{alg:1}
	\begin{algorithmic}[1]
		\REQUIRE ~
		initial point $x^0$, stepsize $\eta_k$
		\FOR {$k=0,1,2,\ldots$}
		\STATE Compute stochastic gradient $g^k$  \label{line:grad}
		\STATE $x^{k+1} = x^k - \eta_k g^k$  \label{line:update}
		\ENDFOR
	\end{algorithmic}
\end{algorithm}

While Algorithm~\ref{alg:1} has a seemingly tame structure, the complication arises due to the fact that there is a potentially infinite number of meaningful and yet sharply distinct ways in which the gradient estimator $g^k$ can be defined. The selection of an appropriate estimator  is a very active and important area of research, as it directly impacts  many aspects of the  algorithm it gives rise to, including tractability, memory footprint, per iteration cost, parallelizability, iteration complexity, communication complexity, sample complexity and generalization.

The key technical idea of our approach is the design of a {\em flexible, tractable and accurate  parametric model} capturing the behavior of the stochastic gradient. We want the model to be {\em flexible} in order to be able to  describe many existing and have the potential to describe many  variants of SGD. As we shall see, flexibility is achieved by the inclusion of a number of parameters. We want the model to be {\em tractable}, meaning that it needs to act as an assumption which can be used to perform a theoretical complexity analysis. Finally, we want the complexity results to be {\em accurate}, i.e., we want to recover best known rates for existing methods, and obtain sharp and useful rates with predictive power for new methods. Our parametric model is described in Assumption~\ref{asp:boge}, and as we argue throughout the paper and appendices, it is indeed flexible, tractable and accurate.

\begin{assumption}[Gradient estimator]\label{asp:boge}
	The gradient estimator $g^k$ in  Algorithm~\ref{alg:1} is unbiased, i.e., 
	$\E_k[g^k] = \nabla f(x^k)$,   
	and there exist non-negative constants $A_1, A_2, B_1, B_2, C_1,C_2,D_1,\rho$ and a random sequence $\{\sigma_k^2\}$ such that the following two inequalities hold
	\begin{align}
	\E_k[\ns{g^k}] & \leq 2A_1(f(x^k)-f^*)+B_1\ns{\nabla f(x^k)} + D_1 \sigma_k^2 +C_1,  \label{eq:boge1} \\
	\E_k[\sigma_{k+1}^2] & \leq (1-\rho)\sigma_k^2 + 2A_2(f(x^k)-f^*)+B_2\ns{\nabla f(x^k)} +C_2. \label{eq:boge2}
	\end{align}
\end{assumption}

{\vspace{1mm}\noindent\bf Flexibility:} 
Our model for the behavior of the stochastic gradient for nonconvex optimization, as captured by Assumption~\ref{asp:boge}, is satisfied by a large number of specific variants of SGD proposed in the literature, including SGD with arbitrary sampling \citep{qian2019svrg, gower2019sgd, gorbunov2019unified, khaled2020better}, SGD with compressed gradients \citep{alistarh2017qsgd, wen2017terngrad, bernstein2018signsgd, khirirat2018distributed, SEGA, Cnat}, and a wide variety of variance-reduced SGD methods such as SVRG \citep{johnson2013accelerating}, SAGA \citep{defazio2014saga} and their variants (e.g., \citep{kovalev2019don, reddi2016stochastic, reddi2016proximal, allen2016variance, lei2017non, li2018simple, ge2019stable, mishchenko2019distributed, DIANA2}). Specific methods vary in the parameters for which  recurrences \eqref{eq:boge1} and \eqref{eq:boge2} are satisfied. For example, SGD variants not employing  variance reduction will generally have $D_1=0$, and  recurrence \eqref{eq:boge2} will not be used (i.e., we can ignore it and set $\rho =1$, $A_2=0$, $B_2=0$ and $C_2=0$). This setting was considered in \cite{khaled2020better}, and was an inspiration for our work. If variance reduction is applied, then $D_1>0$ and typically $C_1=0$, and  recurrence \eqref{eq:boge2} describes the variance reduction process, with parameter $\rho$ describing the speed of variance reduction. If $C_2>0$, variance reduction is not perfect. If $C_2=0$ as well, then the methods will be fully variance reduced, which typically means faster convergence rate. The specific values of all the parameters depend on how the stochastic gradient $g^k$ is constructed (e.g., via minibatching, importance sampling, variance reduction, perturbation, compression).

We design several new methods, with gradient estimators that fit Assumption~\ref{asp:boge}, for solving the general nonconvex distributed/federated problem \eqref{eq:prob-fed}--\eqref{prob-fed:finite} using compressed (e.g., quantized or sparsified) gradient communication, which is of import when training deep learning models. We adopt a direct compression (DC) framework \cite{alistarh2017qsgd, khirirat2018distributed}, and a compression of gradient differences framework (DIANA)  \cite{mishchenko2019distributed, DIANA2}.  We develop several new specific methods belonging to the  DC framework (Algorithm~\ref{alg:dc}) and DIANA framework (Algorithm \ref{alg:diana}), show that they all satisfy Assumption \ref{asp:boge}, and thus are also captured by our unified analysis.

{\vspace{1mm}\noindent\bf Tractability:} 
We use our unified assumption  to prove four complexity theorems: Theorems~\ref{thm:main}, \ref{thm:main-pl-dec}, \ref{thm:dc-diff},  and \ref{thm:diana-type-diff}. Theorem~\ref{thm:main} is the main theorem, and Theorem~\ref{thm:main-pl-dec} is used to obtain sharper results under the PL condition. Theorems~\ref{thm:dc-diff}  and \ref{thm:diana-type-diff} are used in combination with the previous generic Theorems \ref{thm:main} and \ref{thm:main-pl-dec} to obtain specialized results for distributed/federated optimization utilizing either direct gradient compression (DC framework (Algorithm~\ref{alg:dc})), or compression of gradient  differences (DIANA framework (Algorithm \ref{alg:diana})), respectively. In  Tables \ref{table:1}--\ref{table:fed-pl} we visualize  how these theorems lead to corollaries which describe the detailed complexity results of various existing and new methods.

{\vspace{1mm}\noindent\bf Accuracy:} 
For all existing methods, the rates we obtain using our general analysis match the best known rates.

\vspace{-2mm}
\begin{table}[!h]
	\centering
	\caption{Selected  methods that fit our unified analysis framework for \emph{nonconvex optimization} ($m=1$, i.e., single node).} 
	\label{table:1}
	\vspace{1mm}
	\begin{tabular}{|c|c|c|c|c|c|c|}
		\hline
		Problem & Assumption & Method & Algorithm & \multicolumn{2}{c|}{Convergence result} & Recover \\
		\hline
		\eqref{eq:prob} 
		& Asp \ref{asp:lsmooth}
		&  GD 
		&  Alg  \ref{alg:gd}
		& \multirow{4}{*}{Thm \ref{thm:main}}
		& Cor \ref{cor:gd}
		& \citep{nesterov2014introductory}  \\ \cline{1-4}\cline{6-7}
		
		\eqref{eq:prob} with \eqref{prob:exp} or \eqref{prob:finite} 
		& Asp \ref{asp:lsmooth}
		&  SGD
		&  Alg  \ref{alg:sgd}
		& 
		& Cor \ref{cor:sgd}
		& \citep{ghadimi2016mini, khaled2020better} \\ \cline{1-4}\cline{6-7}
		
		\eqref{eq:prob} with \eqref{prob:finite} 
		& Asp \ref{asp:avgsmooth}
		&  L-SVRG
		&  Alg  \ref{alg:lsvrg}
		& 
		&Cor \ref{cor:lsvrg}
		& \citep{reddi2016stochastic, allen2016variance, li2018simple, qian2019svrg} \\ \cline{1-4}\cline{6-7}
		
		\eqref{eq:prob} with \eqref{prob:finite} 
		& Asp \ref{asp:avgsmooth-saga}
		&  SAGA
		&  Alg  \ref{alg:saga}
		& 
		&Cor \ref{cor:saga}
		& \citep{reddi2016proximal} \\
		\hline
	\end{tabular}
	
	\vspace{1mm}
	\centering
	\caption{Selected methods that fit our unified analysis framework for \emph{nonconvex distributed/federated optimization} ($m\geq 1$, i.e., any number of nodes).} 
	\label{table:fed}
	\vspace{1mm}
	\begin{tabular}{|c|c|c|c|c|c|c|}
		\hline
		Problem & Assumption & Method & Algorithm & \multicolumn{2}{c|}{Convergence result} & Recover \\
		\hline
		\eqref{eq:prob-fed} 
		& Asp \ref{asp:lsmooth-diana}
		&  DC-GD
		&  Alg \ref{alg:dc-gd} 
		& \multirow{4}{*}{Thm \ref{thm:main}, \ref{thm:dc-diff}}
		& Cor \ref{cor:dc-gd} 
		&  \citep{khaled2020better} \\ \cline{1-4}\cline{6-7}
		
		\eqref{eq:prob-fed} with \eqref{prob-fed:exp} or \eqref{prob-fed:finite} 
		& Asp \ref{asp:lsmooth-diana}
		&  DC-SGD
		&  Alg \ref{alg:dc-sgd} 
		& 
		& Cor \ref{cor:dc-sgd} 
		& \citep{khaled2020better, Cnat} \\ \cline{1-4}\cline{6-7}
		
		\eqref{eq:prob-fed} with \eqref{prob-fed:finite} 
		& Asp \ref{asp:lsmooth-diana}, \ref{asp:avgsmooth-fed}
		&  DC-LSVRG
		&  Alg \ref{alg:dc-lsvrg} 
		& 
		& Cor \ref{cor:dc-lsvrg} 
		& \bf{New} \\ \cline{1-4}\cline{6-7}
		
		\eqref{eq:prob-fed} with \eqref{prob-fed:finite}
		& Asp \ref{asp:lsmooth-diana}, \ref{asp:avgsmooth-saga-fed}
		&  DC-SAGA
		&  Alg \ref{alg:dc-saga} 
		& 
		& Cor \ref{cor:dc-saga} 
		& \bf{New} \\ 
		\hline
		\hline
		\eqref{eq:prob-fed} 
		& Asp \ref{asp:lsmooth-diana}
		&  DIANA-GD
		&  Alg \ref{alg:diana-gd} 
		& \multirow{4}{*}{Thm \ref{thm:main}, \ref{thm:diana-type-diff}}
		& Cor \ref{cor:diana-gd} 
		& \bf{New} \\ \cline{1-4}\cline{6-7}
		
		\eqref{eq:prob-fed} with \eqref{prob-fed:exp} or \eqref{prob-fed:finite} 
		& Asp \ref{asp:lsmooth-diana}
		&  DIANA-SGD
		&  Alg \ref{alg:diana-sgd} 
		& 
		& Cor \ref{cor:diana-sgd} 
		& \bf{New} \\ \cline{1-4}\cline{6-7}
		
		\eqref{eq:prob-fed} with \eqref{prob-fed:finite} 
		& Asp \ref{asp:lsmooth-diana}, \ref{asp:avgsmooth-fed}
		&  DIANA-LSVRG
		&  Alg \ref{alg:diana-lsvrg} 
		& 
		& Cor \ref{cor:diana-lsvrg} 
		& \bf{New}$^\dagger$ \\ \cline{1-4}\cline{6-7}
		
		\eqref{eq:prob-fed} with \eqref{prob-fed:finite}
		& Asp \ref{asp:lsmooth-diana}, \ref{asp:avgsmooth-saga-fed}
		&  DIANA-SAGA
		&  Alg \ref{alg:diana-saga} 
		& 
		& Cor \ref{cor:diana-saga} 
		& \bf{New}$^\dagger$ \\
		\hline
	\end{tabular}
	\vspace{0.5mm}
	{\begin{spacing}{0.5}\footnotesize$^\dagger$We want to mention that \citet{DIANA2} studied a weak version of DIANA-LSVRG and DIANA-SAGA with minibatch size $b=1$ (non-minibatch version).
	See Section \ref{sec:diana} for more details.
	\end{spacing}}
	
	\vspace{5mm}
	\centering
	\caption{Selected methods that fit our unified analysis framework for {\em nonconvex optimization under the PL condition} ($m=1$).} 
	\label{table:pl}
	\vspace{1mm}
	\begin{tabular}{|c|c|c|c|c|c|c|}
		\hline
		Problem & Assumption & Method & Algorithm &  \multicolumn{2}{c|}{Convergence result} & Recover \\
		\hline
		\eqref{eq:prob} 
		& Asp \ref{asp:lsmooth}, \ref{asp:pl}
		&  GD
		&  Alg  \ref{alg:gd}
		& \multirow{4}{*}{Thm \ref{thm:main-pl-dec}}
		& Cor \ref{cor:gd-pl}
		& \citep{polyak1963gradient,karimi2016linear} \\\cline{1-4}\cline{6-7}
		
		\eqref{eq:prob} with \eqref{prob:exp} or \eqref{prob:finite} 
		& Asp \ref{asp:lsmooth}, \ref{asp:pl}
		&  SGD
		&  Alg  \ref{alg:sgd}
		& 
		& Cor \ref{cor:sgd-pl}
		& \citep{khaled2020better} \\ \cline{1-4}\cline{6-7}
		
		\eqref{eq:prob} with \eqref{prob:finite} 
		& Asp \ref{asp:avgsmooth}, \ref{asp:pl}
		&  L-SVRG
		&  Alg  \ref{alg:lsvrg}
		& 
		& Cor \ref{cor:lsvrg-pl}
		& \citep{reddi2016proximal,li2018simple} \\ \cline{1-4}\cline{6-7}
		
		\eqref{eq:prob} with \eqref{prob:finite} 
		& Asp \ref{asp:avgsmooth-saga}, \ref{asp:pl}
		&  SAGA
		&  Alg  \ref{alg:saga}
		& 
		& Cor \ref{cor:saga-pl}
		& \citep{reddi2016proximal} \\
		\hline
	\end{tabular}
	
	\vspace{1mm}
	\centering
	\caption{Selected methods that fit our unified analysis framework for \emph{nonconvex distributed/federated optimization under PL condition} ($m\geq 1$).} 
	\label{table:fed-pl}
	\vspace{1mm}
	\begin{tabular}{|c|c|c|c|c|c|c|}
		\hline
		Problem & Assumption & Method & Algorithm & \multicolumn{2}{c|}{Convergence result} & Recover \\
		\hline
		\eqref{eq:prob-fed} 
		& Asp \ref{asp:lsmooth-diana}, \ref{asp:pl}
		&  DC-GD
		&  Alg \ref{alg:dc-gd}
		& \multirow{4}{*}{Thm \ref{thm:main-pl-dec}, \ref{thm:dc-diff}}
		& Cor \ref{cor:dc-gd-pl} 
		& \bf{New} \\ \cline{1-4}\cline{6-7}
		
		\eqref{eq:prob-fed} with \eqref{prob-fed:exp} or \eqref{prob-fed:finite} 
		& Asp \ref{asp:lsmooth-diana}, \ref{asp:pl}
		&  DC-SGD
		&  Alg \ref{alg:dc-sgd}
		& 
		& Cor \ref{cor:dc-sgd-pl} 
		& \bf{New}\\ \cline{1-4}\cline{6-7}
		
		\eqref{eq:prob-fed} with \eqref{prob-fed:finite} 
		& Asp \ref{asp:lsmooth-diana}, \ref{asp:avgsmooth-fed}, \ref{asp:pl}
		&  DC-LSVRG
		&  Alg \ref{alg:dc-lsvrg}
		& 
		& Cor \ref{cor:dc-lsvrg-pl} 
		& \bf{New} \\ \cline{1-4}\cline{6-7}
		
		\eqref{eq:prob-fed} with \eqref{prob-fed:finite}
		& Asp \ref{asp:lsmooth-diana}, \ref{asp:avgsmooth-saga-fed}, \ref{asp:pl}
		&  DC-SAGA
		&  Alg \ref{alg:dc-saga}
		& 
		& Cor \ref{cor:dc-saga-pl} 
		& \bf{New} \\
		\hline
		\hline
		\eqref{eq:prob-fed} 
		& Asp \ref{asp:lsmooth-diana}, \ref{asp:pl}
		&  DIANA-GD
		&  Alg \ref{alg:diana-gd}
		& \multirow{4}{*}{Thm \ref{thm:main-pl-dec}, \ref{thm:diana-type-diff}}
		& Cor \ref{cor:diana-gd-pl} 
		& \bf{New} \\ \cline{1-4}\cline{6-7}
		
		\eqref{eq:prob-fed} with \eqref{prob-fed:exp} or \eqref{prob-fed:finite} 
		& Asp \ref{asp:lsmooth-diana}, \ref{asp:pl}
		&  DIANA-SGD
		&  Alg \ref{alg:diana-sgd}
		& 
		& Cor \ref{cor:diana-sgd-pl} 
		& \bf{New} \\ \cline{1-4}\cline{6-7}
		
		\eqref{eq:prob-fed} with \eqref{prob-fed:finite} 
		& Asp \ref{asp:lsmooth-diana}, \ref{asp:avgsmooth-fed}, \ref{asp:pl}
		&  DIANA-LSVRG
		&  Alg \ref{alg:diana-lsvrg}
		& 
		& Cor \ref{cor:diana-lsvrg-pl} 
		& \bf{New} \\ \cline{1-4}\cline{6-7}
		
		\eqref{eq:prob-fed} with \eqref{prob-fed:finite}
		& Asp \ref{asp:lsmooth-diana}, \ref{asp:avgsmooth-saga-fed}, \ref{asp:pl}
		&  DIANA-SAGA
		&  Alg \ref{alg:diana-saga}
		&  
		& Cor \ref{cor:diana-saga-pl} 
		& \bf{New} \\
		\hline
	\end{tabular}
\end{table}

\section{Notation and Assumptions}
\label{sec:pre}

We now introduce the notation and assumptions that we will use throughout the rest of the paper.

\subsection{Notation}
Let $\fgap := f(x^0) - f^*$, where $f^* := \min_{x\in \R^d} f(x)$. 
Let $[n]$ denote the set $\{1,2,\cdots,n\}$ and $\n{\cdot}$ denote the Euclidean norm of a vector.
Let $\inner{u}{v}$ denote the standard Euclidean inner product of two vectors $u$ and $v$.
We use $O(\cdot)$ notation to hide  absolute constants.
For notational convenience, we consider the online form \eqref{prob-fed:exp} or \eqref{prob:exp} as the finite-sum form \eqref{prob-fed:finite} or \eqref{prob:finite} by letting $f_{i,j}(x) := f_i(x, \zeta_j)$ or $f_i(x) := f(x, \zeta_i)$ and thinking of $n$ as infinity (infinite data samples). By $\E[\cdot]$ we denote mathematical expectation.

\subsection{Assumptions}
In order to prove convergence results, one usually needs one or more of the following  standard smoothness assumptions for function $f$, depending on the setting (see e.g., \citep{nesterov2014introductory, ghadimi2016mini, lei2017non, reddi2016stochastic, allen2016variance, li2018simple, fang2018spider, pham2019proxsarah, khaled2020better}).

\begin{assumption}[$L$-smoothness]\label{asp:lsmooth}
	A function $f:\R^d\to \R$ is $L$-smooth if
	\begin{align}\label{eq:lsmooth}
	\n{\nabla f(x) - \nabla f(y)}  \leq L \n{x-y}, \quad \forall x,y \in \R^d.
	\end{align}
\end{assumption}

If one desires to obtain a refined analysis of SGD-type methods applied to  finite-sum problems \eqref{prob:finite}, the $L$-smoothness assumption can be replaced by  average $L$-smoothness, defined next.
\begin{assumption}[Average $L$-smoothness]\label{asp:avgsmooth}
	A function $f(x) :=\frac{1}{n}\sum_{i=1}^{n}f_i(x)$ is average $L$-smooth if 
	\begin{align}\label{eq:avgsmooth}
		\E[\ns{\nabla f_i(x) - \nabla f_i(y)}]\leq \frac{1}{n}\sum \limits_{i=1}^{n} L_i^2\ns{x-y}\leq L^2 \ns{x-y}, \quad \forall x,y \in \R^d.
	\end{align}
\end{assumption}

Note that we slightly change the form of Assumption \ref{asp:avgsmooth} for SAGA-type methods as follows:
\begin{assumption}[Average $L$-smoothness]\label{asp:avgsmooth-saga}
	A function $f(x) :=\frac{1}{n}\sum_{i=1}^{n}f_i(x)$ is average $L$-smooth if 
	\begin{align}\label{eq:avgsmooth-saga}
		\E[\ns{\nabla f_i(x) - \nabla f_i(y_i)}] \leq L^2\frac{1}{n}\sum \limits_{i=1}^{n} \ns{x-y_i}, \quad \forall x, \{y_i\}_{i\in [n]} \in \R^d.
	\end{align}
\end{assumption}

We now present smoothness assumptions suitable for the more general nonconvex federated problems, i.e., \eqref{eq:prob-fed}--\eqref{prob-fed:finite}.

\begin{assumption}[$L$-smoothness]\label{asp:lsmooth-diana}
	For each work $i\in [m]$, the function $f_i(x)$  is $L_i$-smooth if 
	\begin{align}\label{eq:lsmooth-diana}
	\n{\nabla f_i(x) - \nabla f_i(y)}  \leq L_i \n{x-y}, \quad \forall x,y \in \R^d.
	\end{align}
\end{assumption}
Moreover, we define $L^2:=\frac{1}{m}\sum_{i=1}^{m}L_i^2$. Note that Assumption \ref{asp:lsmooth-diana} reduces to Assumption \ref{asp:lsmooth} in single node case (i.e., $m=1$). Similarly, for nonconvex federated finite-sum problems, i.e., \eqref{prob-fed:finite}, we also need the average $L$-smoothness assumption (Assumption \ref{asp:avgsmooth}) for each worker $i$.
\begin{assumption}[Average $\bar{L}$-smoothness]\label{asp:avgsmooth-fed}
	A function $f_i(x) :=\frac{1}{n}\sum_{j=1}^{n}f_{i,j}(x)$  is average $\bar{L}$-smooth if 
	\begin{align}\label{eq:avgsmooth-fed}
		\E[\ns{\nabla f_{i,j}(x) - \nabla f_{i,j}(y)}] \leq \frac{1}{n}\sum \limits_{j=1}^{n} L_{i,j}^2\ns{x-y}\leq \bar{L}^2 \ns{x-y}, \quad \forall x,y \in \R^d.
	\end{align}
\end{assumption}

Similarly, we slightly change the form of Assumption \ref{asp:avgsmooth-fed} for SAGA-type methods as follows:
\begin{assumption}[Average $\bar{L}$-smoothness]\label{asp:avgsmooth-saga-fed}
	A function $f_i(x) :=\frac{1}{n}\sum_{j=1}^{n}f_{i,j}(x)$  is average $\bar{L}$-smooth if 
	\begin{align}\label{eq:avgsmooth-saga-fed}
		\E[\ns{\nabla f_{i,j}(x) - \nabla f_{i,j}(y_{i,j})}] \leq \bar{L}^2\frac{1}{n}\sum \limits_{j=1}^{n} \ns{x-y_{i,j}}, \quad \forall x, \{y_{i,j}\}_{j\in [n]} \in \R^d.
	\end{align}
\end{assumption}

Moreover, we also provide a unified analysis for nonconvex (federated) optimization problems \eqref{eq:prob-fed}--\eqref{prob:finite} under the Polyak-\L{}ojasiewicz (PL) condition \citep{polyak1963gradient}.
\begin{assumption}[PL condition] \label{asp:pl}
	A function $f$ satisfies the PL condition if 
	\begin{equation}\label{eq:pl}
	\exists \mu>0, ~\mathrm{such~that} ~\ns{\nabla f(x)} \geq 2\mu (f(x)-f^*),~ \forall x\in \R^d,
	\end{equation}
	where $f^*:=\min_{x\in \R^d} f(x)$ denotes the optimal function value.
\end{assumption}
It is worth noting that the PL condition does not imply convexity of $f$. For example, $f(x) = x^2 + 3\sin^2 x$ is a nonconvex function but it satisfies PL condition with $\mu=1/32$.
\citet{karimi2016linear} showed that PL condition is weaker than many conditions, e.g., strong convexity (SC), weak strong convexity (WSC), and error bound (EB). 
Moreover, if $f$ is convex, PL condition is equivalent to the error bound (EB) and quadratic growth (QG) condition \citep{luo1993error,anitescu2000degenerate}.

\section{Main Unified Theorems and Simple Special Cases}

In this section we first provide our main unified complexity results (Section~\ref{sec:bui98fg8s_09uf}), and subsequently enumerate a few special cases to showcase the flexibility of our unified approach in accurately describing specific SGD methods (Section~\ref{sec:nonconvex}).

\subsection{Main unified theorems}\label{sec:bui98fg8s_09uf}

We first state Theorem~\ref{thm:main}, which covers a large family of SGD methods (Algorithm~\ref{alg:1}) under the general parametric assumption (Assumption~\ref{asp:boge}). The theorem says that SGD converges at the rate $O(\cdot\frac{1}{\epsilon^2})$ or $O(\cdot\frac{1}{\epsilon^4})$, depending in the value of the parameters.

\begin{theorem}[Main theorem]\label{thm:main}
	Suppose that Assumptions \ref{asp:boge} and \ref{asp:lsmooth} hold. Use the fixed stepsize
	$$  \eta_k \equiv \eta = \min\left\{ \frac{1}{LB_1+LD_1B_2\rho^{-1}},~ \sqrt{\frac{\ln 2}{(LA_1 + LD_1A_2\rho^{-1})K}},~ \frac{\epsilon^2}{2L(C_1+D_1C_2\rho^{-1})} \right\}$$ and let
	$\fgapp:= f(x^0) - f^* + 2^{-1}L\eta^2D_1\rho^{-1} \sigma_0^2$. Then
	the number of iterations performed by Algorithm~\ref{alg:1} to find an $\epsilon$-solution, i.e., a point $\hx$ such that $$\E[\n{\nabla f(\hx)}] \leq \epsilon,$$ can be bounded by
	\begin{align}
	 K = \frac{8\fgapp L}{\epsilon^2} \max \left\{ B_1+D_1B_2\rho^{-1},~ \frac{12\fgapp (A_1+D_1 A_2\rho^{-1})}{\epsilon^2},~ \frac{2(C_1+D_1C_2\rho^{-1})}{\epsilon^2}\right\}. \label{eq:main-ssuhud}
	\end{align}
\end{theorem}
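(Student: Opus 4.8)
The plan is to run a descent-lemma argument governed by a Lyapunov function that folds in the auxiliary sequence $\sk$, so that recurrence \eqref{eq:boge2} can be used to neutralize the $\sk$ term produced by recurrence \eqref{eq:boge1}. First I would apply the $L$-smoothness inequality to the update $\xkn = \xk - \eta\gk$, take the conditional expectation $\Ek[\cdot]$, and use unbiasedness $\Ek[\gk]=\nabla f(\xk)$ together with $\n{\xkn-\xk}^2=\eta^2\ns{\gk}$ to obtain
$$\EkB{f(\xkn)} \le f(\xk) - \eta\ns{\nabla f(\xk)} + \tfrac{L\eta^2}{2}\EkB{\ns{\gk}}.$$
Substituting the bound \eqref{eq:boge1} for $\EkB{\ns{\gk}}$ then introduces the four terms $A_1(f(\xk)-\fs)$, $B_1\ns{\nabla f(\xk)}$, $D_1\sk$, and $C_1$.

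The key step is to define the Lyapunov function $\Phi^k := f(\xk) - \fs + \tfrac{L\eta^2 D_1}{2\rho}\sk$, so that $\Phi^0 = \fgapp$. Adding $\tfrac{L\eta^2 D_1}{2\rho}\EkB{\skn}$ to the descent inequality and invoking \eqref{eq:boge2}, the coefficient of $\sk$ becomes $\tfrac{L\eta^2 D_1}{2} + \tfrac{L\eta^2 D_1}{2\rho}(1-\rho) = \tfrac{L\eta^2 D_1}{2\rho}$, which is exactly the coefficient of $\sk$ inside $\Phi^k$; the $\sigma$-dependence therefore closes up. Writing $\widetilde A := A_1 + D_1 A_2\rho^{-1}$, $\widetilde B := B_1 + D_1 B_2\rho^{-1}$, $\widetilde C := C_1 + D_1 C_2\rho^{-1}$, collecting terms yields
$$\EkB{\Phi^{k+1}} \le \Phi^k - \eta\ns{\nabla f(\xk)} + \tfrac{L\eta^2\widetilde B}{2}\ns{\nabla f(\xk)} + L\eta^2\widetilde A(f(\xk)-\fs) + \tfrac{L\eta^2\widetilde C}{2}.$$
Here the three pieces of the stepsize each do distinct work: the bound $\eta \le 1/(L\widetilde B)$ gives $\tfrac{L\eta^2\widetilde B}{2}\le\tfrac{\eta}{2}$, so the two gradient-norm terms merge into $-\tfrac{\eta}{2}\ns{\nabla f(\xk)}$; and bounding $f(\xk)-\fs \le \Phi^k$ converts the $\widetilde A$-term into a multiplicative factor, so after taking full expectation
$$\E[\Phi^{k+1}] \le (1+L\eta^2\widetilde A)\,\E[\Phi^k] - \tfrac{\eta}{2}\E[\ns{\nabla f(\xk)}] + \tfrac{L\eta^2\widetilde C}{2}.$$

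The main obstacle will be controlling the multiplicative growth factor $W := 1+L\eta^2\widetilde A$, and this is precisely what the second stepsize component is designed for: the bound $\eta \le \sqrt{\ln 2/(L\widetilde A K)}$ forces $L\eta^2\widetilde A \le (\ln 2)/K$, whence $W^K \le (1+\tfrac{\ln 2}{K})^K \le e^{\ln 2} = 2$. I would then multiply the $k$-th inequality by $W^{K-1-k}$ and sum over $k=0,\dots,K-1$; the $\Phi$-terms telescope to $W^K\Phi^0 - \E[\Phi^K]$, and using $\E[\Phi^K]\ge 0$, $W^K \le 2$, and $1\le W^{K-1-k}\le 2$ on the remaining sums gives
$$\tfrac{\eta}{2}\sum_{k=0}^{K-1}\E[\ns{\nabla f(\xk)}] \le 2\fgapp + L\eta^2\widetilde C K.$$

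Finally, dividing by $\eta K/2$ and choosing $\hx$ uniformly at random from $\{\xk\}_{k=0}^{K-1}$ yields $\E[\ns{\nabla f(\hx)}] \le \tfrac{4\fgapp}{\eta K} + 2L\eta\widetilde C$. I would use the third stepsize component $\eta\le \epsilon^2/(2L\widetilde C)$ to push the noise term to order $\epsilon^2$, then solve $\tfrac{4\fgapp}{\eta K}\le O(\epsilon^2)$ for $K$ using $1/\eta = \max\{L\widetilde B,\ \sqrt{L\widetilde A K/\ln 2},\ 2L\widetilde C/\epsilon^2\}$ (the middle case being quadratic in $K$, which is why its contribution scales like $(\fgapp)^2\widetilde A/\epsilon^4$), and pass from $\E[\ns{\nabla f(\hx)}]\le\epsilon^2$ to $\E[\n{\nabla f(\hx)}]\le\epsilon$ by Jensen's inequality. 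Tracking the absolute constants through these three cases produces the stated maximum in \eqref{eq:main-ssuhud}.
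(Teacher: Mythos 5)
Your proposal is essentially the paper's own proof: the same Lyapunov function $\Phi^k=f(x^k)-f^*+\alpha\sigma_k^2$ with the same choice $\alpha=\tfrac{L\eta^2D_1}{2\rho}$ that closes the $\sigma$-recursion, the same growth factor $\beta=1+L\eta^2(A_1+D_1A_2\rho^{-1})$ controlled via $\beta^K\le 2$, and the same three roles for the three branches of the stepsize. The one place you deviate is the output rule: the paper draws $\hx$ with probabilities proportional to $\beta^{K-1-k}$, so the additive noise term survives as $L\eta\widetilde C$ exactly, whereas your uniform choice (after bounding $1\le\beta^{K-1-k}\le2$) yields $2L\eta\widetilde C$; with the stated stepsize $\eta\le\epsilon^2/(2L\widetilde C)$ this gives $\E[\ns{\nabla f(\hx)}]\le\tfrac32\epsilon^2$ rather than $\epsilon^2$, so to recover the theorem's exact constants you would need to tighten that branch to $\epsilon^2/(4L\widetilde C)$ and replace $2\widetilde C/\epsilon^2$ by $4\widetilde C/\epsilon^2$ in the max --- a constant-factor adjustment only, not a gap in the argument.
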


We now state Theorem~\ref{thm:main-pl-dec}, which covers  a large family of SGD methods (Algorithm~\ref{alg:1}) if in addition  the  PL condition (Assumption~\ref{asp:pl}) is satisfied. Note that under the PL condition, one can obtain a faster linear convergence $O(\cdot\log \frac{1}{\epsilon})$ (Theorem~\ref{thm:main-pl-dec}) rather than the sublinear convergence of Theorem~\ref{thm:main}.

\begin{theorem}[Main theorem under PL condition]\label{thm:main-pl-dec}
	Suppose that Assumptions \ref{asp:boge}, \ref{asp:lsmooth} and \ref{asp:pl} hold. Set the stepsize as
	$$ \eta_{k} = \begin{cases}
	\eta   & \text {if~~}  k\leq \frac{K}{2}\\
	\frac{2\eta}{2+(k-\frac{K}{2})\mu\eta} &\text{if~~}  k>\frac{K}{2}
	\end{cases},
	\text{~where~~}
	\eta \leq \frac{1}{LB_1+2LD_1B_2\rho^{-1} + (L A_1 + 2LD_1A_2 \rho^{-1})\mu^{-1}}$$ 
	and let $\fgapp:= f(x^0) - f^* + L\eta^2D_1\rho^{-1} \sigma_0^2$ and $\kappa := \frac{L}{\mu}$. Then the number of iterations performed by Algorithm~\ref{alg:1} to find an $\epsilon$-solution, i.e., a point $x^K$ such that $$\E[f(x^K)-f^*] \leq \epsilon,$$ can be bounded by
	\begin{align} 
	K = \max \left\{2\left(B_1+2D_1B_2\rho^{-1} +(L A_1 + 2LD_1A_2 \rho^{-1})\mu^{-1}\right)\kappa\log \frac{2\fgapp}{\epsilon},~ \frac{10(C_1+2D_1C_2\rho^{-1})\kappa}{\mu \epsilon}\right\}. \label{eq:main-pl-k}
	\end{align}
\end{theorem}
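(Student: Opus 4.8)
The plan is to run a standard descent-lemma argument, but carefully track the extra variance term $\sigma_k^2$ via a Lyapunov function, and then split the iteration budget into two phases to exploit the PL condition for a fast rate.

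First I would start from $L$-smoothness (Assumption~\ref{asp:lsmooth}), which gives the descent inequality $f(\xkn) \le f(\xk) + \inner{\nabla f(\xk)}{\xkn-\xk} + \tfrac{L}{2}\ns{\xkn - \xk}$. Substituting the update $\xkn = \xk - \eta_k \gk$ and taking conditional expectation $\Ek[\cdot]$, I would use unbiasedness $\Ek[\gk]=\nabla f(\xk)$ to get $\Ek[f(\xkn)] \le f(\xk) - \eta_k \ns{\nabla f(\xk)} + \tfrac{L\eta_k^2}{2}\Ek[\ns{\gk}]$. Now I plug in the first parametric bound \eqref{eq:boge1} to control $\Ek[\ns{\gk}]$, which introduces the troublesome $D_1\sigma_k^2$ term. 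To absorb it, I would form the Lyapunov function $\Phi^k := f(\xk) - \fs + c\,\eta_k^2\sigma_k^2$ for a suitable constant $c$ (essentially $c = \tfrac{LD_1}{2\rho}$, matching the $L\eta^2 D_1\rho^{-1}\sigma_0^2$ term appearing in $\fgapp$), and use the second recurrence \eqref{eq:boge2} to show the $\sigma_k^2$ contributions contract at rate $(1-\rho)$ while feeding back controllable multiples of $(f(\xk)-\fs)$ and $\ns{\nabla f(\xk)}$. The stepsize constraint $\eta \le (LB_1 + 2LD_1B_2\rho^{-1} + (LA_1 + 2LD_1A_2\rho^{-1})\mu^{-1})^{-1}$ is exactly what is needed to guarantee that, after combining \eqref{eq:boge1} and \eqref{eq:boge2}, the net coefficient on $\ns{\nabla f(\xk)}$ stays at most $-\tfrac{\eta}{2}$ (say), leaving a clean one-step descent of the form $\Ek[\Phi^{k+1}] \le \Phi^k - \tfrac{\eta}{2}\ns{\nabla f(\xk)} + (\text{const terms in }C_1,C_2)$.

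Next I would invoke the PL condition (Assumption~\ref{asp:pl}), $\ns{\nabla f(\xk)} \ge 2\mu(f(\xk)-\fs)$, to convert the gradient term into a contraction on the function gap, yielding a recursion like $\Ek[\Phi^{k+1}] \le (1-\mu\eta)\Phi^k + (\text{const})$. Here is where the two-phase stepsize matters: in the first phase ($k\le K/2$) the stepsize is the constant $\eta$, giving geometric decay $(1-\mu\eta)^{K/2}$ that drives $\Phi^k$ from $\fgapp$ down to roughly the noise floor; this produces the $\log\tfrac{2\fgapp}{\epsilon}$ factor in \eqref{eq:main-pl-k}. In the second phase ($k>K/2$) the decreasing stepsize $\eta_k = \tfrac{2\eta}{2+(k-K/2)\mu\eta}$ is chosen so that the $O(1/k)$ schedule kills the residual additive $C_1,C_2$ noise at the optimal $O(1/(\mu\epsilon))$ rate, giving the second term $\tfrac{10(C_1+2D_1C_2\rho^{-1})\kappa}{\mu\epsilon}$. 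I would unroll each phase's recursion separately (a geometric sum in phase one, a telescoping/weighted-average argument with the harmonic-type stepsizes in phase two) and then take the max of the two iteration counts.

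The main obstacle I anticipate is the bookkeeping in the Lyapunov coupling: one must choose the constant $c$ in $\Phi^k$ and verify that the combined coefficients from \eqref{eq:boge1} and \eqref{eq:boge2}, after applying PL, simultaneously (i) keep the $\sigma_k^2$ recursion contracting, (ii) leave a strictly negative coefficient on the gradient/function-gap term, and (iii) reproduce exactly the stepsize ceiling stated in the theorem — the factors of $2$ on $B_2,A_2$ and the $\mu^{-1}$ weighting are delicate and arise precisely from how \eqref{eq:boge2}'s $A_2,B_2$ terms get re-expressed through PL. The second subtlety is the phase-two analysis with the time-varying stepsize: showing that the particular form $\eta_k=\tfrac{2\eta}{2+(k-K/2)\mu\eta}$ yields the claimed $O(1/(\mu\epsilon))$ noise decay requires a careful induction (or a standard lemma on recursions of the type $r_{k+1}\le(1-\mu\eta_k)r_k + c\,\eta_k^2$), and matching the constant $10$ is where most of the routine-but-error-prone computation lives.
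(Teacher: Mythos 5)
Your proposal follows essentially the same route as the paper's proof: a Lyapunov function $\Delta^k = f(x^k)-f^*+\alpha\sigma_k^2$ with $\alpha = L\eta^2 D_1/\rho$, combined with \eqref{eq:boge1}, \eqref{eq:boge2} and the PL condition to obtain $\E[\Delta^{k+1}]\le(1-\mu\eta_k)\E[\Delta^k]+C\eta_k^2$, followed by exactly the two-phase recursion lemma you anticipate (the paper's Proposition~\ref{lem:keyrel}), whose constant $10/(a^2K)$ yields the factor $10$ in \eqref{eq:main-pl-k}. The only minor discrepancy is your tentative $c=\tfrac{LD_1}{2\rho}$, which is the constant for the non-PL Theorem~\ref{thm:main}; the PL analysis drops the factor $\tfrac{1}{2}$, consistent with $\fgapp = f(x^0)-f^*+L\eta^2D_1\rho^{-1}\sigma_0^2$.
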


In the following sections and appendices, we show that many specific methods, existing and new, satisfy our unified Assumption \ref{asp:boge} and can thus  be captured by our unified analysis (i.e., Theorems~\ref{thm:main} and \ref{thm:main-pl-dec}). We can thus plug their corresponding parameters (i.e., specific values for $A_1, A_2, B_1, B_2, C_1,C_2,D_1,\rho$) into our unified Theorems~\ref{thm:main}  and \ref{thm:main-pl-dec} to obtain  detailed convergence rates for these methods. See Tables~\ref{table:1} and \ref{table:pl} for an overview.

In particular, we give two general frameworks, i.e., DC framework (Algorithm \ref{alg:dc}) and DIANA framework (Algorithm \ref{alg:diana}) for the general nonconvex federated problems \eqref{eq:prob-fed}--\eqref{prob-fed:finite}.
We provide Theorems \ref{thm:dc-diff} and \ref{thm:diana-type-diff}  showing that optimization methods belonging to the DC and DIANA frameworks also satisfy our unified Assumption~\ref{asp:boge}, and can thus also be captured by our unified analysis to obtain detailed convergence rates in the nonconvex distributed regime. See Tables~\ref{table:fed} and \ref{table:fed-pl} for an overview.

\subsection{Simple special cases ($m=1$)}
\label{sec:nonconvex}

As a case study, we first focus on the single node case (i.e., $m=1$), i.e., on the standard problem \eqref{eq:prob} with online form \eqref{prob:exp} or finite-sum form \eqref{prob:finite}, i.e., 
\[ \min_{x\in \R^d}   f(x),\]
where
\[ f(x) := \E_{\zeta\sim \cD}[f(x,\zeta)],  
\text{~~~~~or~~~~~}  f(x) := \frac{1}{n}\sum \limits_{i=1}^n{f_i(x)}.\]

In the following, we prove that some classical methods such as GD (Section~\ref{sec:bi8f9g8f}), SGD (Section~\ref{sec:98g9s8gf89gs}), L-SVRG (Section~\ref{sec:bo89gfs09f}) and SAGA (Section~\ref{sec:bi98gjhf_9u9f}) satisfy our unified Assumption \ref{asp:boge}, and thus can be captured by our unified convergence analysis, i.e., Theorems \ref{thm:main} and \ref{thm:main-pl-dec}. 

\subsubsection{GD method}\label{sec:bi8f9g8f}

The vanilla gradient descent (GD) method is formalized as Algorithm~\ref{alg:gd}.

\begin{algorithm}[h]
	\caption{GD}
	\label{alg:gd}
	\begin{algorithmic}[1]
		\STATE  In Line \ref{line:grad} of Algorithm \ref{alg:1}: $g^k=\nabla f(x^k)$ \label{line:grad-gd}
	\end{algorithmic}
\end{algorithm}

We now show that the gradient estimator used in  GD, i.e., the true/full gradient, satisfies  Assumption~\ref{asp:boge}.
\begin{lemma}[GD estimator satisfies Assumption \ref{asp:boge}]\label{lem:gd}
	The gradient estimator $g^k=\nabla f(x^k)$  satisfies the unified  Assumption \ref{asp:boge} with parameters
	$$A_1=C_1=D_1=0,\quad B_1=1,\quad \sigma_k^2 \equiv 0,\quad \rho=1,\quad A_2=B_2=C_2=0.$$
\end{lemma}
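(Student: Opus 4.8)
The plan is to verify the three requirements of Assumption~\ref{asp:boge} directly, exploiting the fact that the GD estimator $g^k = \nabla f(x^k)$ carries no randomness, so that the conditional expectation $\E_k[\cdot]$ acts as the identity on any deterministic function of $x^k$. First I would check unbiasedness: since $g^k$ is a deterministic function of $x^k$, we immediately have $\E_k[g^k] = g^k = \nabla f(x^k)$, which is exactly the required identity.

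Next I would establish the second-moment bound \eqref{eq:boge1}. Substituting the claimed parameters $A_1 = C_1 = D_1 = 0$ and $B_1 = 1$ collapses its right-hand side to $\ns{\nabla f(x^k)}$. On the left-hand side, determinism gives $\E_k[\ns{g^k}] = \ns{\nabla f(x^k)}$, so \eqref{eq:boge1} in fact holds with equality, and no smoothness or curvature input is needed to certify it.

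Finally I would dispose of the recurrence \eqref{eq:boge2}. The choice $\sigma_k^2 \equiv 0$ together with $\rho = 1$ and $A_2 = B_2 = C_2 = 0$ makes both sides identically zero: the left-hand side is $\E_k[\sigma_{k+1}^2] = 0$, while the right-hand side evaluates to $(1-1)\cdot 0 + 0 + 0 + 0 = 0$, so the inequality degenerates to $0 \le 0$. This is precisely the regime, flagged in the discussion following Assumption~\ref{asp:boge}, in which variance reduction is switched off and the second recurrence is vacuous.

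There is no genuine analytic obstacle here; the content lies entirely in the modeling observation rather than in any estimate. The role of the parameter choice is to reduce the generic template to the exact behaviour of GD: $B_1 = 1$ records that the squared estimator norm equals $\ns{\nabla f(x^k)}$, with no excess second-moment noise ($C_1 = 0$), no dependence on the function gap ($A_1 = 0$), and no auxiliary variance sequence ($D_1 = 0$), while $\rho = 1$ and $A_2 = B_2 = C_2 = 0$ trivially satisfy the unused recurrence \eqref{eq:boge2}. I would therefore present the argument as a one-line substitution for each of the two inequalities, noting that both are met with equality.
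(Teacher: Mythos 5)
Your proposal is correct and follows essentially the same route as the paper's proof: both observe that the absence of randomness makes $\E_k$ act as the identity, so unbiasedness and the bound $\E_k[\ns{g^k}] = \ns{\nabla f(x^k)}$ hold trivially with the stated parameters, and the second recurrence reduces to $0 \le 0$. Your write-up is merely more explicit about why \eqref{eq:boge2} is vacuous, which the paper leaves implicit.
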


\subsubsection{SGD method}
\label{sec:98g9s8gf89gs}

Many (but not all) variants of stochastic gradient descent (SGD) can be written in the form of Algorithm~\ref{alg:sgd}.

\begin{algorithm}[h]
	\caption{SGD \citep{khaled2020better}}
	\label{alg:sgd}
	\begin{algorithmic}[1]
		\STATE  In Line \ref{line:grad} of Algorithm \ref{alg:1}: 
		\STATE $g^k=\begin{cases}
		\nabla f_i(x^k), &\text{Standard SGD} \\
		\cC(\nabla f(x^k)), & \text{Compressed gradient, e.g., quantized gradient, sparse gradient, etc} \\
		\nabla f(x^k) +\xi,  &  \text{Noisy gradient} \\
		\sum_{i\in I} v_i \nabla f_i(x^k), &\text{minibatch SGD, SGD with importance or arbitrary sampling}\\
		\ldots,  &\text{Combinations (e.g., minibatch compressed SGD, noisy SGD) and beyond}
		\end{cases}
		$\label{line:grad-sgd}
	\end{algorithmic}
\end{algorithm}

\citet{khaled2020better}  showed that the stochastic gradient estimators used in many variants of SGD for nonconvex smooth problems, including variants performing minibatching, importance sampling, gradient compression and their combinations, satisfy the following generalized expected smoothness (ES) assumption.

\begin{assumption}[ES: Expected Smoothness  \citep{khaled2020better}]\label{asp:es}
	The gradient estimator $g(x)$ is unbiased
	$\E[g(x)] = \nabla f(x)$
	and there exist non-negative constants $A, B, C$ such that 
	\begin{align}
	\E[\ns{g(x)}] \leq 2A(f(x)-f^*)+B\ns{\nabla f(x)} + C,  \quad   \forall x\in \R^d.
	\end{align}        
\end{assumption}	

Our work can be seen as a further and substantial generalization of their approach, one that allows for many more methods  to be captured by a single assumption and analysis.  In particular, our unified Assumption \ref{asp:boge}  can capture the behavior of the gradient estimator constructed by variance reduced methods while ES assumption cannot. The next lemma says that, indeed, our unified Assumption \ref{asp:boge} recovers the ES assumption in a special case.

\begin{lemma}[SGD estimator satisfies Assumption \ref{asp:boge}]\label{lem:sgd}
	Any gradient estimator $g^k$  satisfying the Expected Smoothness Assumption~\ref{asp:es}, i.e.,
	$$ \E_k[\ns{g^k}] \leq 2A(f(x^k)-f^*)+B\ns{\nabla f(x^k)} + C,$$ 
	(used in Line \ref{line:grad-sgd} of Algorithm \ref{alg:sgd})
	 satisfies the unified Assumption \ref{asp:boge} with  parameters 
	$$A_1=A,\quad B_1=B,\quad C_1= C,\quad D_1=0,\quad \sigma_k^2 \equiv 0,\quad \rho=1,\quad A_2=B_2=C_2=0.$$
\end{lemma}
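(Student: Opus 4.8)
The plan is to verify that the ES assumption directly implies Assumption~\ref{asp:boge} by a term-by-term comparison of the two defining inequalities, treating \eqref{eq:boge2} as vacuous. The key observation is that Assumption~\ref{asp:es} is essentially a special case of inequality \eqref{eq:boge1}: both bound $\E_k[\ns{g^k}]$ by an affine combination of the suboptimality gap $(f(x^k)-f^*)$, the squared gradient norm $\ns{\nabla f(x^k)}$, and (in \eqref{eq:boge1}) the variance-reduction term $D_1\sigma_k^2$ plus a constant. First I would confirm the unbiasedness condition: since the ES assumption assumes $\E_k[g^k] = \nabla f(x^k)$, the first requirement of Assumption~\ref{asp:boge} is inherited immediately.

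Next I would match coefficients in \eqref{eq:boge1}. Comparing
\[
\E_k[\ns{g^k}] \leq 2A(f(x^k)-f^*)+B\ns{\nabla f(x^k)} + C
\]
with
\[
\E_k[\ns{g^k}] \leq 2A_1(f(x^k)-f^*)+B_1\ns{\nabla f(x^k)} + D_1 \sigma_k^2 +C_1,
\]
I read off the assignments $A_1=A$, $B_1=B$, $C_1=C$, and $D_1=0$. With $D_1=0$, the $\sigma_k^2$ term drops out entirely, so the specific choice of the sequence $\{\sigma_k^2\}$ is irrelevant to \eqref{eq:boge1}; I would set $\sigma_k^2 \equiv 0$ for concreteness, as stated in the lemma.

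The remaining task is to ensure the second recurrence \eqref{eq:boge2} holds with the claimed parameters. Setting $\rho=1$, $A_2=B_2=C_2=0$, and $\sigma_k^2 \equiv 0$, inequality \eqref{eq:boge2} reduces to $\E_k[\sigma_{k+1}^2] \leq (1-1)\cdot 0 = 0$, which is satisfied trivially (indeed with equality) since $\sigma_{k+1}^2 \equiv 0$. Thus \eqref{eq:boge2} is vacuously true and imposes no constraint, consistent with the discussion in the paper that SGD variants without variance reduction simply ignore the second recurrence.

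There is essentially no obstacle here; this is a bookkeeping verification that the ES model embeds into the strictly more general parametric model of Assumption~\ref{asp:boge}. The only point requiring a moment of care is confirming that \eqref{eq:boge2} is genuinely nonbinding under the zero-valued parameter choices rather than merely assumed away, which the direct substitution above makes explicit. The conceptual content is precisely the claim in the surrounding text: Assumption~\ref{asp:boge} generalizes ES, recovering it as the special case $D_1=0$ with the variance-reduction recurrence disabled.
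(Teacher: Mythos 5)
Your proposal is correct and follows essentially the same route as the paper's own proof, which simply reads off the parameter identification $A_1=A$, $B_1=B$, $C_1=C$, $D_1=0$ from the ES inequality and notes that the second recurrence is rendered inactive; your explicit check that \eqref{eq:boge2} reduces to $0\leq 0$ under $\sigma_k^2\equiv 0$, $\rho=1$, $A_2=B_2=C_2=0$ is a slightly more careful version of the paper's ``it is easy to see'' step. No gaps.
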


\subsubsection{L-SVRG method}\label{sec:bo89gfs09f}

The  loopless (minibatch) SVRG method (L-SVRG) developed by  \citet{Hofmann2015}  and rediscovered by \citet{kovalev2019don}  is formalized as Algorithm~\ref{alg:lsvrg}. 
\citet{kovalev2019don} only studied L-SVRG  in the strongly convex setting with $b=1$ (no minibatch).  Recently, \citet{qian2019svrg} studied L-SVRG in the nonconvex case.
\vspace{-2mm}
\begin{algorithm}[!h]
	\caption{Loopless SVRG (L-SVRG) \citep{Hofmann2015, kovalev2019don} }
	\label{alg:lsvrg}
	\begin{algorithmic}[1]
		\REQUIRE ~
		initial point $x^0=w^0$, stepsize $\eta_k$, minibatch size $b$, probability $p\in (0,1]$
		\FOR {$k=0,1,2,\ldots$}
		\STATE $g^k = \frac{1}{b} \sum \limits_{i\in I_b} (\nabla f_i(x^k)- \nabla f_i(w^k)) +\nabla f(w^k)$ \qquad ($I_b$ denotes random minibatch with $|I_b|=b$)
		\label{line:lsvrg}
		\STATE $x^{k+1} = x^k - \eta_k g^k$  \label{line:update-lsvrg}
		\STATE $w^{k+1} = \begin{cases}
		x^k, &\text{with probability } p\\
		w^k, &\text{with probability } 1-p
		\end{cases}$ \label{line:w_prob}
		\ENDFOR
	\end{algorithmic}
\end{algorithm}

We now show that the gradient estimator used in  L-SVRG satisfies  Assumption~\ref{asp:boge}.
\begin{lemma}[L-SVRG estimator satisfies Assumption \ref{asp:boge}]\label{lem:lsvrg}
	Suppose that Assumption \ref{asp:avgsmooth} holds. The L-SVRG gradient estimator $$g^k=\frac{1}{b} \sum_{i\in I_b} (\nabla f_i(x^k)- \nabla f_i(w^k)) +\nabla f(w^k)$$ (see Line \ref{line:lsvrg} in Algorithm \ref{alg:lsvrg})
	satisfies the unified Assumption~\ref{asp:boge} with parameters
	$$A_1=A_2=C_1=C_2=0,$$
	$$B_1=1, \quad D_1=\frac{L^2}{b}, \quad \sigma_k^2= \ns{x^k-w^k}, \quad  \rho=\frac{p}{2}+\frac{p^2}{2}-\frac{\eta^2L^2}{b},\quad  B_2=\frac{2\eta^2}{p}-\eta^2.$$
\end{lemma}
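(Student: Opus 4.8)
The plan is to verify the three requirements of Assumption~\ref{asp:boge} in turn: unbiasedness, the second-moment bound \eqref{eq:boge1}, and the recursion \eqref{eq:boge2} for the choice $\sigma_k^2 = \ns{x^k - w^k}$. First, for unbiasedness: since $I_b$ is a uniform random minibatch, $\Ek[\frac{1}{b}\sum_{i\in I_b}\nabla f_i(\cdot)] = \nabla f(\cdot)$, so the two minibatch terms reduce in expectation to $\nabla f(x^k) - \nabla f(w^k)$, and adding back $\nabla f(w^k)$ gives $\Ek[g^k] = \nabla f(x^k)$. Next, for \eqref{eq:boge1} I would use the bias--variance decomposition $\Ek[\ns{g^k}] = \ns{\nabla f(x^k)} + \Ek[\ns{g^k - \nabla f(x^k)}]$. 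The residual $g^k - \nabla f(x^k)$ is the centered minibatch average of the vectors $\nabla f_i(x^k) - \nabla f_i(w^k)$, so by the standard variance bound for a size-$b$ uniform minibatch its second moment is at most $\frac{1}{b}$ times the population average $\frac{1}{n}\sum_i \ns{\nabla f_i(x^k) - \nabla f_i(w^k)}$. Average $L$-smoothness (Assumption~\ref{asp:avgsmooth}) bounds this average by $L^2\ns{x^k-w^k}$, yielding $\Ek[\ns{g^k}] \le \ns{\nabla f(x^k)} + \frac{L^2}{b}\ns{x^k-w^k}$, which is exactly \eqref{eq:boge1} with $B_1 = 1$, $D_1 = L^2/b$, and $A_1 = C_1 = 0$.

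The real work lies in deriving the recursion for $\skn = \ns{x^{k+1} - w^{k+1}}$. I would condition on the independent Bernoulli coin in Line~\ref{line:w_prob}: with probability $p$ we have $w^{k+1} = x^k$, so $\ns{x^{k+1}-w^{k+1}} = \eta^2\ns{g^k}$; with probability $1-p$ we have $w^{k+1}=w^k$, so $\ns{x^{k+1}-w^{k+1}} = \ns{(x^k - w^k) - \eta g^k}$. Taking expectation over $I_b$ and using unbiasedness to evaluate the cross term via $\Ek\inner{x^k-w^k}{g^k} = \inner{x^k-w^k}{\nabla f(x^k)}$, the two $\ns{g^k}$ contributions combine into $\eta^2\Ek[\ns{g^k}]$, giving
$$\Ek[\skn] = \eta^2\Ek[\ns{g^k}] + (1-p)\ns{x^k-w^k} - 2(1-p)\eta\inner{x^k-w^k}{\nabla f(x^k)}.$$
I would then substitute the bound on $\Ek[\ns{g^k}]$ from the previous step and control the inner-product term with Young's inequality $-2\eta\inner{a}{c} \le \gamma\ns{a} + \eta^2\gamma^{-1}\ns{c}$, applied with $a = x^k-w^k$, $c = \nabla f(x^k)$.

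The crux, and the step I expect to be the main obstacle, is choosing the Young parameter $\gamma$ so that the resulting coefficients match the claimed $\rho$ and $B_2$ simultaneously. Collecting the $\ns{x^k-w^k}$ terms produces coefficient $\frac{\eta^2 L^2}{b} + (1-p)(1+\gamma)$; imposing $(1-p)(1+\gamma) = 1 - \frac{p}{2} - \frac{p^2}{2}$ and exploiting the factorization $1 - \frac{p}{2} - \frac{p^2}{2} = \frac{(2+p)(1-p)}{2}$ forces $\gamma = \frac{p}{2}$. With this single choice the $\ns{\nabla f(x^k)}$ coefficient becomes $\eta^2 + (1-p)\eta^2\gamma^{-1} = \eta^2\frac{2-p}{p} = \frac{2\eta^2}{p} - \eta^2$, matching $B_2$, while the $\ns{x^k-w^k}$ coefficient equals $1-\rho$ with $\rho = \frac{p}{2}+\frac{p^2}{2} - \frac{\eta^2 L^2}{b}$, and $A_2 = C_2 = 0$. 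This verifies \eqref{eq:boge2} and completes the identification of all parameters. The only delicate bookkeeping beyond this is justifying the independence of the coin and the minibatch so that the conditioning is clean, and confirming that the minibatch variance bound holds with the factor $\frac{1}{b}$ for the sampling scheme in use.
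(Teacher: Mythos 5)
Your proposal is correct and follows essentially the same route as the paper's proof: unbiasedness, the bias--variance decomposition with the $\tfrac{1}{b}$ minibatch variance bound and average $L$-smoothness for \eqref{eq:boge1}, and the case split on the Bernoulli update of $w^{k+1}$ followed by Young's inequality with parameter $p/2$ for \eqref{eq:boge2}. The paper makes the identical choice of Young parameter ($\beta=p/2$ in its notation), arriving at the same coefficients $1-\rho$ and $B_2=\tfrac{2\eta^2}{p}-\eta^2$.
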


As we discussed in Section \ref{sec:contribution}, for variance-reduced methods, the gradient estimator usually satisfies the unified Assumption \ref{asp:boge} with $D_1>0$ and $C_1=0$.
The recurrence \eqref{eq:boge2} describes the variance reduction process, with parameter $\rho$ describing the speed of variance reduction. 
Here $C_2$ is also $0$,  thus L-SVRG is fully variance reduced, which typically means faster convergence rate. This can be see from our main Theorems \ref{thm:main} and \ref{thm:main-pl-dec}. Indeed, i) for Theorem \ref{thm:main}, note that because $A_1=A_2=C_1=C_2=0$, the two $O(\frac{1}{\epsilon^2})$ terms appearing in the max in \eqref{eq:main-ssuhud} disappear, and hence the  rate of L-SVRG is $O(\frac{1}{\epsilon^2})$ as opposed to the generic rate $O(\frac{1}{\epsilon^4})$ of less refined (i.e., not variance reduced) SGD variants.
ii) for Theorem \ref{thm:main-pl-dec}, note that because $C_1=C_2=0$, the last term $O(\frac{\kappa}{\mu \epsilon})$ in \eqref{eq:main-pl-k} disappears, and hence the rate of L-SVRG is linear $O(\kappa\log \frac{1}{\epsilon})$ as opposed to the worse sublinear rate $O(\frac{\kappa}{\mu \epsilon})$ of less refined (i.e., not variance reduced) SGD variants.

\subsubsection{SAGA method}\label{sec:bi98gjhf_9u9f}

The  (minibatch) SAGA method developed by  \citet{defazio2014saga}  (in convex setting) is formalized as Algorithm~\ref{alg:saga}.
Here we analyze it and obtain convergence rates in nonconvex setting.

\begin{algorithm}[!h]
	\caption{SAGA \cite{defazio2014saga}}
	\label{alg:saga}
	\begin{algorithmic}[1]
		\REQUIRE ~
		initial point $x^0, \{w_i^0\}_{i=1}^n$, stepsize $\eta_k$, minibatch size $b$
		\FOR {$k=0,1,2,\ldots$}
		\STATE $g^k = \frac{1}{b} \sum \limits_{i\in I_b} (\nabla f_i(x^k)- \nabla f_i(w_i^k)) +\frac{1}{n}\sum \limits_{j=1}^{n}\nabla f_j(w_j^k)$ ~~~($I_b$ denotes random minibatch with $|I_b|=b$) \label{line:saga}
		\STATE $x^{k+1} = x^k - \eta_k g^k$  \label{line:update-saga}
		\STATE $w_i^{k+1} = \begin{cases}
		x^k, & \text{for~~}  i\in I_b\\
		w_i^k, &\text{for~~} i\notin I_b
		\end{cases}$ \label{line:wi}
		\ENDFOR
	\end{algorithmic}
\end{algorithm}

We now show that the gradient estimator used in SAGA satisfies  Assumption~\ref{asp:boge}.

\begin{lemma}[SAGA estimator satisfies Assumption \ref{asp:boge}]\label{lem:saga}
	Suppose that Assumption \ref{asp:avgsmooth-saga} holds. The SAGA gradient estimator $$g^k= \frac{1}{b} \sum_{i\in I_b} (\nabla f_i(x^k)- \nabla f_i(w_i^k)) +\frac{1}{n}\sum_{j=1}^{n}\nabla f_j(w_j^k)$$ (see Line \ref{line:saga} in Algorithm \ref{alg:saga})
	satisfies the unified Assumption~\ref{asp:boge} with parameters $$A_1=A_2=C_1=C_2=0,$$
	$$B_1=1,\quad D_1=\frac{L^2}{b},\quad \sigma_k^2=\frac{1}{n}\sum_{i=1}^{n} \ns{x^k-w_i^k},\quad  \rho=\frac{b}{2n}+\frac{b^2}{2n^2}-\frac{\eta^2L^2}{b},\quad B_2=\frac{2\eta^2n}{b}-\eta^2.$$
\end{lemma}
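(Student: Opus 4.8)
The plan is to verify the two recurrences \eqref{eq:boge1} and \eqref{eq:boge2} of Assumption~\ref{asp:boge} in turn, exploiting the fact that SAGA mirrors L-SVRG (Lemma~\ref{lem:lsvrg}): the per-index refresh probability $\Prob[i\in I_b]=b/n$ plays exactly the role of the L-SVRG probability $p$, and the claimed parameters are precisely those of Lemma~\ref{lem:lsvrg} under the substitution $p\mapsto b/n$. I would first record unbiasedness, $\Ek[\gk]=\nabla f(\xk)$, which holds because each index enters $I_b$ with probability $b/n$, so the expectation of the minibatch correction $\frac1b\sum_{i\in I_b}\nabla f_i(\wik)$ equals the stored full term $\frac1n\sum_i\nabla f_i(\wik)$ and the two cancel.

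For \eqref{eq:boge1} I would use the bias--variance split $\Ek[\ns{\gk}]=\ns{\nabla f(\xk)}+\Ek[\ns{\gk-\nabla f(\xk)}]$. Writing $\delta_i:=\nabla f_i(\xk)-\nabla f_i(\wik)$, the centered part $\gk-\nabla f(\xk)$ is the deviation of the minibatch average $\frac1b\sum_{i\in I_b}\delta_i$ from its mean, so the standard sampling-without-replacement variance bound gives $\Ek[\ns{\gk-\nabla f(\xk)}]\le\frac1b\cdot\frac1n\sum_{i=1}^n\ns{\delta_i}$. Assumption~\ref{asp:avgsmooth-saga} then bounds $\frac1n\sum_i\ns{\delta_i}\le L^2\,\frac1n\sum_i\ns{\xk-\wik}=L^2\sk$, giving $\Ek[\ns{\gk}]\le\ns{\nabla f(\xk)}+\frac{L^2}{b}\sk$; this is \eqref{eq:boge1} with $A_1=C_1=0$, $B_1=1$, $D_1=L^2/b$.

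For \eqref{eq:boge2} the key device is the single per-index identity $\xkn-\wikn=\mathbf{1}_{[i\notin I_b]}(\xk-\wik)-\eta \gk$, valid for every $i$ (it reduces to $-\eta\gk$ when $i\in I_b$, since then $\wikn=\xk$, and to $(\xk-\wik)-\eta\gk$ otherwise). Squaring, averaging over $i$, and taking $\Ek$, the square terms separate cleanly: the first contributes $\Prob[i\notin I_b]\,\sk=(1-\tfrac bn)\sk$ and the last contributes $\eta^2\Ek[\ns{\gk}]$, to which I would apply the bound just derived in \eqref{eq:boge1}. What remains is the cross term $-\tfrac{2\eta}{n}\sum_i\Ek[\mathbf{1}_{[i\notin I_b]}\inner{\xk-\wik}{\gk}]$.

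The main obstacle is precisely this cross term: unlike in L-SVRG, where the memory is refreshed by an independent coin and the inner product may be evaluated against $\Ek[\gk]=\nabla f(\xk)$, here the indicator $\mathbf{1}_{[i\notin I_b]}$ and the estimator $\gk$ are driven by the \emph{same} minibatch $I_b$ and are therefore correlated. I would resolve this by evaluating the coupled expectation $\Ek[\mathbf{1}_{[i\notin I_b]}\gk]$ through the pairwise inclusion probabilities of sampling without replacement, extracting its dominant clean piece $(1-\tfrac bn)\nabla f(\xk)$ and controlling the lower-order (order $1/n$) correction, so that the cross term collapses to the L-SVRG-type expression $-2\eta(1-\tfrac bn)\inner{\xk-\bar w^k}{\nabla f(\xk)}$, with $\bar w^k:=\frac1n\sum_i \wik$. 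Applying Young's inequality with parameter $\alpha=\tfrac{b}{2n}$ (the analogue of $\alpha=p/2$ used for L-SVRG) together with $\ns{\xk-\bar w^k}\le\sk$ then loads the $\ns{\nabla f(\xk)}$-coefficient into $B_2=\frac{2\eta^2n}{b}-\eta^2$ and, after substituting $\Ek[\ns{\gk}]\le\ns{\nabla f(\xk)}+\frac{L^2}{b}\sk$ into the clean square term, the $\sk$-coefficient into $(1-\tfrac bn)(1+\tfrac{b}{2n})+\frac{\eta^2L^2}{b}=1-\tfrac{b}{2n}-\tfrac{b^2}{2n^2}+\frac{\eta^2L^2}{b}$, i.e.\ exactly $1-\rho$ with $\rho=\frac{b}{2n}+\frac{b^2}{2n^2}-\frac{\eta^2L^2}{b}$ and $A_2=C_2=0$. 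Tracking the combinatorial coefficients so that the correlation between the refreshed indices and $\gk$ yields the sharp $\frac{b^2}{2n^2}$ term together with the correct unit coefficient on $\frac{\eta^2L^2}{b}$—rather than the looser factors produced by bounding $\mathbf{1}_{[i\notin I_b]}\le 1$ against $\ns{\gk}$—is the delicate bookkeeping I expect to dominate the effort.
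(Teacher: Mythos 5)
Your unbiasedness argument and your derivation of \eqref{eq:boge1} (bias--variance split, the $1/b$ minibatch variance factor, then Assumption~\ref{asp:avgsmooth-saga}) coincide with the paper's proof, and your per-index identity $\xkn-\wikn=\mathbf{1}_{[i\notin I_b]}(\xk-\wik)-\eta\gk$ is an exact and clean way to set up \eqref{eq:boge2}. The divergence is in how the cross term is handled, and this is where your proposal has a genuine gap. The paper does \emph{not} compute any pairwise inclusion probabilities: it simply replaces the refresh indicators by their probabilities inside the expectation, writing $\Ek[\ns{\xkn-\wikn}]=\frac{b}{n}\Ek[\ns{\xkn-\xk}]+(1-\frac{b}{n})\Ek[\ns{\xkn-\wik}]$, and then evaluates the cross term against $\Ek[\gk]=\nabla f(\xk)$, exactly as in the L-SVRG case, before applying Young's inequality with parameter $b/2n$. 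In other words, the paper treats the events $\{i\in I_b\}$ as if they were independent of $\gk$ --- precisely the idealization you flag as inadmissible.

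The problem is that your proposed repair cannot land on the stated constants. Carrying out the exact computation you outline (sampling without replacement, $\Prob[j\in I_b,\,i\notin I_b]=\frac{b(n-b)}{n(n-1)}$ for $j\neq i$) gives
\begin{align*}
\Ek\bigl[\mathbf{1}_{[i\notin I_b]}\gk\bigr]
=\Bigl(1-\tfrac{b}{n}\Bigr)\Bigl(\nabla f(\xk)+\tfrac{1}{n-1}\bigl(\bar{\delta}-\delta_i\bigr)\Bigr),
\qquad \delta_i:=\nabla f_i(\xk)-\nabla f_i(\wik),\quad \bar{\delta}:=\tfrac1n\textstyle\sum_i\delta_i,
\end{align*}
so the cross term equals your clean L-SVRG-type expression \emph{plus} a residual
$-\frac{2\eta(n-b)}{n^2(n-1)}\sum_i\inner{\xk-\wik}{\bar{\delta}-\delta_i}$. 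This residual is a covariance between the table gaps $\xk-\wik$ and the gradient differences $\delta_i$; in the nonconvex setting it has no sign, so it can only be bounded (Cauchy--Schwarz plus Assumption~\ref{asp:avgsmooth-saga}), which costs an additional term of order $\frac{\eta L}{n}\sk$. That perturbation strictly degrades the coefficient of $\sk$, so your route establishes \eqref{eq:boge2} only with some $\rho$ smaller than the stated $\frac{b}{2n}+\frac{b^2}{2n^2}-\frac{\eta^2L^2}{b}$ (or with extra terms elsewhere), not with the lemma's exact parameters. The ``delicate bookkeeping'' you defer to is exactly the step that fails: no bookkeeping makes the residual vanish. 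To obtain the stated constants one must either evaluate the cross term against $\Ek[\gk]$ as the paper does (accepting its implicit independence assumption), or modify the algorithm so that the table refresh uses a sample drawn independently of the one defining $\gk$. Your observation about the correlation is a fair criticism of the paper's own argument, but as a proof of the lemma as stated, your proposal does not close.
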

{\bf Remark:}  We obtain the detailed convergence rates for these methods by plugging their corresponding specific values of $A_1, A_2, B_1, B_2, C_1,C_2,D_1,\rho$ (see Lemmas \ref{lem:gd}--\ref{lem:saga}) into our unified Theorems~\ref{thm:main}  and \ref{thm:main-pl-dec}.  See Tables~\ref{table:1} and \ref{table:pl} for an overview.

\section{General Nonconvex Federated Optimization Problems}
\label{sec:fedrated}

In this section, we consider the more general nonconvex distributed/federated problem \eqref{eq:prob-fed} with online form \eqref{prob-fed:exp} or finite-sum form \eqref{prob-fed:finite}, i.e.,
\[
  \min_{x\in \R^d} \bigg\{  f(x) := \frac{1}{m}\sum \limits_{i=1}^m{f_i(x)}  \bigg\}, 
  \]
  where
\[ f_i(x) := \E_{\zeta \sim \cD_i}[f_i(x,\zeta)],  
\text{~~~~or~~~~}  f_i(x) := \frac{1}{n}\sum \limits_{j=1}^n{f_{i,j}(x)}.
\]
Here we allow different machines/workers to have different data distributions, i.e., we consider the non-IID (heterogeneous) data setting. Note that in distributed/federated problems, the bottleneck usually is the communication cost among workers, which motivates the study of methods which employ {\em compressed} communication. 

In the following, we provide two general algorithmic frameworks differing in whether direct gradient compression (DC) or compression of gradient differences (DIANA) is used. Previous approaches mostly focus on strongly convex or convex problems for specific instances of SGD. Ours is the first unified analysis covering many variants of SGD in a single theorem, covering the nonconvex regime. In fact, many specific  SGD methods arising as special cases of our general approach have not been analyzed before.

\subsection{Compression operators}

Compressed communication is modeled by the application of a (randomized) compression operator to the communicated messages, as described next.   

\begin{definition}[Compression operator] \label{def_compression} 
	A randomized map $\calC: \R^d\mapsto \R^d$ is an $\omega$-compression operator  if  
	\begin{equation}\label{eq:compress-main}
	\E[\calC(x)]=x,  \qquad \E[\ns{\calC(x)-x}] \leq \omega\ns{x}, \qquad \forall x\in \R^d.
	\end{equation}
	In particular, no compression ($\calC(x)\equiv x$) implies $\omega=0$.
\end{definition}
Note that \eqref{eq:compress-main} holds for many practical compression methods, e.g., random sparsification \citep{stich2018sparsified}, quantization \citep{alistarh2017qsgd}, natural compression \citep{Cnat}. We are not going to focus on any specific compression operator; instead, we will analyze our methods for any compression operator captured by the above definition.

\subsection{DC framework for nonconvex federated optimization}

In the direct compression (DC) framework studied in this section, each machine $i\in [m]$ computes its local stochastic gradient $\widetilde{g}_i^k$,  subsequently applies to it a compression operator $\cC_i^k$, and communicates the compressed vector to a server, or to all other nodes (see Algorithm \ref{alg:dc}).

\vspace{-1mm}
\begin{algorithm}[!h]
	\caption{DC framework of stochastic gradient methods for nonconvex federated optimization}
	\label{alg:dc}
	\begin{algorithmic}[1]
		\REQUIRE ~
		initial point $x^0\in \R^d$,  stepsizes $\eta_k$
		\FOR {$k=0,1,2,\ldots$}
		\STATE {\bf{for all machines $i= 1,2,\ldots,m$ do in parallel}}
		\STATE \quad Compute local stochastic gradient $\widetilde{g}_i^k$ \label{line:localgrad-dc}
		\STATE \quad \blue{Compress local gradient $\cC_i^k(\widetilde{g}_i^k)$} and send it to the server \label{line:comgrad-dc}
		\STATE {\bf{end for}}
		\STATE Aggregate received compressed gradient information:
		$g^k = \frac{1}{m}\sum \limits_{i=1}^m \cC_i^k(\widetilde{g}_i^k)$ \label{line:gk-dc}
		\STATE $x^{k+1} = x^k - \eta_k g^k$  
		\ENDFOR
	\end{algorithmic}
\end{algorithm}

Our main theoretical result describing the convergence properties of Algorithm~\ref{alg:dc} is stated next.

\begin{theorem}[DC framework]\label{thm:dc-diff}
	If the local stochastic gradient $\widetilde{g}_i^k$ (see Line \ref{line:localgrad-dc} of Algorithm~\ref{alg:dc}) satisfies  the recursions
	\begin{align}
	\E_k[\ns{\widetilde{g}_i^k}] & \leq 2A_{1,i}(f_i(x^k)-f_i^*)+B_{1,i}\ns{\nabla f_i(x^k)} + {D_{1,i} \sigma_{k,i}^2} +C_{1,i}, \label{eq:gi1-dc-diff-8}\\
	\E_k[\sigma_{k+1,i}^2] & \leq {(1-\rho_i)\sigma_{k,i}^2 + 2A_{2,i}(f(x^k)-f^*)+B_{2,i}\ns{\nabla f(x^k)} + D_{2,i}\E_k[\ns{g^k}] +C_{2,i}}, \label{eq:gi2-dc-diff-8}
	\end{align}
	then $g^k$ (see Line \ref{line:gk-dc} of Algorithm \ref{alg:dc}) satisfies the unified Assumption \ref{asp:boge}
	with
	\begin{align*}
	&  A_1 =\frac{(1+\omega)A}{m}, \quad
	B_1 =1, \quad 
	D_1 =\frac{1+\omega}{m}, \quad
	\sk =  \frac{1}{m}\summ  D_{1,i} \ski , \quad
	C_1  =  \frac{(1+\omega)C}{m}, \\
	&  \rho =\min_i\rho_i-\tau, \quad
	A_2 =D_A+ \tau A, \quad
	B_2 =D_B+D_D, \quad 
	C_2  = D_C+  \tau C, 
	\end{align*}
	where 
	$A:=\max_i  (A_{1,i}+B_{1,i}L_i-L_i/(1+\omega))$,
	$C := \frac{1}{m} \summ C_{1,i} + 2A\Delta_f^*$,
	$\Delta_f^*:=f^*-\frac{1}{m}\summ f_i^*$,
	$\tau:=\frac{(1+\omega)D_D}{m}$,
	$D_A:=\frac{1}{m} \summ  D_{1,i}A_{2,i}$,
	$D_B:=\frac{1}{m} \summ  D_{1,i} B_{2,i}$,
	$D_D:=\frac{1}{m} \summ  D_{1,i} D_{2,i}$,
	and 
	$D_C:=\frac{1}{m} \summ  D_{1,i}C_{2,i}$.
\end{theorem}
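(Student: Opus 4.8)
The plan is to verify the two recursions of Assumption~\ref{asp:boge} for the aggregated estimator $g^k = \frac{1}{m}\summ \cC_i^k(\gi)$ one at a time, handling \eqref{eq:boge1} first because the bound on $\E_k[\ns{g^k}]$ it produces must be fed back into the derivation of \eqref{eq:boge2}. Unbiasedness is immediate: conditioning on the iterate, each $\cC_i^k$ is an unbiased $\omega$-compression operator (Definition~\ref{def_compression}) independent across the $m$ machines, and each local estimator $\gi$ is unbiased for $\nabla f_i(x^k)$, so by the tower rule $\E_k[g^k] = \frac{1}{m}\summ \nabla f_i(x^k) = \nabla f(x^k)$.

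For the first recursion I would begin from the bias--variance split $\E_k[\ns{g^k}] = \ns{\nabla f(x^k)} + \E_k[\ns{g^k-\nabla f(x^k)}]$, which already explains $B_1=1$. Since the compression noise and local sampling are independent across machines and each summand $\cC_i^k(\gi)-\nabla f_i(x^k)$ is conditionally mean-zero, all cross terms vanish and the variance decouples as $\frac{1}{m^2}\summ \E_k[\ns{\cC_i^k(\gi)-\nabla f_i(x^k)}]$. A nested decomposition of each summand into the compression error $\cC_i^k(\gi)-\gi$ and the sampling error $\gi-\nabla f_i(x^k)$, combined with the $\omega$-bound \eqref{eq:compress-main}, yields the per-machine estimate $(1+\omega)\E_k[\ns{\gi}] - \ns{\nabla f_i(x^k)}$. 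Substituting hypothesis \eqref{eq:gi1-dc-diff-8} and then converting the residual gradient-norm terms into function-value gaps via $L_i$-smoothness in the form $\ns{\nabla f_i(x)}\leq 2L_i(f_i(x)-f_i^*)$ (Assumption~\ref{asp:lsmooth-diana}) is precisely where the quantity $A=\max_i(A_{1,i}+B_{1,i}L_i-L_i/(1+\omega))$ emerges. Finally, rewriting $\summ (f_i(x^k)-f_i^*) = m(f(x^k)-f^*)+m\Delta_f^*$ absorbs the heterogeneity mismatch and produces exactly $A_1=\frac{(1+\omega)A}{m}$, $D_1=\frac{1+\omega}{m}$ with $\sk=\frac{1}{m}\summ D_{1,i}\ski$, and $C_1=\frac{(1+\omega)C}{m}$.

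For the second recursion I would use the linearity $\skn = \frac{1}{m}\summ D_{1,i}\skin$ and plug in hypothesis \eqref{eq:gi2-dc-diff-8}. Bounding $1-\rho_i \leq 1-\min_i\rho_i$ on the decay term and reading off the weighted averages $D_A$, $D_B$, $D_C$ directly handles the $(f(x^k)-f^*)$, $\ns{\nabla f(x^k)}$ and constant contributions. The only coupled term is $D_D\,\E_k[\ns{g^k}]$, into which I would substitute the bound already established for \eqref{eq:boge1}. Because $D_D D_1 = \tau$, $D_D A_1 = \tau A$ and $D_D C_1 = \tau C$, this substitution adds a $\tau\sk$ to the decay---pushing the effective contraction down to $\rho = \min_i\rho_i - \tau$---and augments the gap and constant coefficients by $\tau A$ and $\tau C$, giving $A_2 = D_A+\tau A$, $B_2=D_B+D_D$, and $C_2=D_C+\tau C$. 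Collecting terms reproduces \eqref{eq:boge2} with the stated constants.

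The main obstacle I anticipate is the bookkeeping in the first recursion: justifying the vanishing of cross terms through cross-machine independence and conditional unbiasedness, carrying out the nested compression/sampling variance decomposition cleanly, and---most delicately---tracking how $L_i$-smoothness and the heterogeneity correction $\Delta_f^*$ combine to yield exactly $A$ and $C$. This step also quietly requires $B_{1,i}\geq \frac{1}{1+\omega}$ so that the smoothness substitution is applied with a non-negative coefficient, which holds whenever $B_{1,i}\geq 1$ as in all instances of interest. The feedback of the $\E_k[\ns{g^k}]$ bound into the $\sigma$-recursion is conceptually the reason the contraction factor degrades by $\tau$, and getting that coupling right is the crux of the second part.
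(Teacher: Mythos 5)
Your proposal is correct and follows essentially the same route as the paper's proof: the same per-machine estimate $(1+\omega)\Ek[\ns{\gi}]-\ns{\nabla f_i(x^k)}$ obtained from unbiased compression plus cross-machine independence, the same use of $L_i$-smoothness and the $\Delta_f^*$ heterogeneity correction to produce $A$ and $C$, and the same feedback of the first-moment bound into the $\sigma$-recursion, which is exactly what degrades the contraction to $\min_i\rho_i-\tau$ and yields $A_2=D_A+\tau A$, $B_2=D_B+D_D$, $C_2=D_C+\tau C$. Your explicit remark that the smoothness substitution requires $(1+\omega)B_{1,i}\geq 1$ is a condition the paper uses silently (it holds in all its instances, where $B_{1,i}=1$), so that observation is a welcome refinement rather than a deviation.
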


The above result means that, provided that  the local gradient estimators $\widetilde{g}_i^k$ used in the DC framework (Algorithm~\ref{alg:dc}) satisfy recursions \eqref{eq:gi1-dc-diff-8}--\eqref{eq:gi2-dc-diff-8}, the global gradient estimator $g^k$ satisfies our unified Assumption~\ref{asp:boge}, and hence our main convergence results, Theorem~\ref{thm:main} and Theorem~\ref{thm:main-pl-dec} can be applied. 

To showcase the generality and expressive power of our DC framework, we describe several particular ways in which such local estimators can be generated, each leading to a particular instance DC-type method. 
In particular, we  describe methods DC-GD (Algorithm~\ref{alg:dc-gd}), DC-SGD (Algorithm \ref{alg:dc-sgd}), DC-LSVRG (Algorithm \ref{alg:dc-lsvrg}) and DC-SAGA (Algorithm \ref{alg:dc-saga}). 
In each case we show that recursions \eqref{eq:gi1-dc-diff-8} and  \eqref{eq:gi2-dc-diff-8} are satisfied, and in doing so, we obtain complexity results in the nonconvex case with or without the PL condition. 
Details can be found in the appendix.
See the first part of Table~\ref{table:fed} and the first part of Table~\ref{table:fed-pl} for a summary of these particular methods. 
Note that all of these results are new, with the exception of DC-GD and DC-SGD in the nonconvex non-PL case.

\subsection{DIANA framework for nonconvex federated optimization}
\label{sec:diana}

We now highlight an inherent issue of the DC framework (Algorithm \ref{alg:dc}), which will serve as a motivation for the proposed DIANA framework described here. Considering any stationary point $\hx$ such that $\nabla f(\hx) =\sum_{i=1}^{m} \nabla f_i(\hx) =0$,  the aggregated compressed gradient (even if the full gradient is used locally, i.e., $\widetilde{g}_i^k=\nabla f_i(x^k)$), is {\em not} equal to zero $0$, i.e., $g(\hx) = \frac{1}{m}\sum_{i=1}^m \cC_i(\nabla f_i(\hx)) \neq 0$. This effect slows down convergence of the methods in DC framework.
To address this issue, we use the DIANA framework 
to compress the {\em gradient differences} instead (see Line \ref{line:comgrad-diana} of Algorithm \ref{alg:diana}). 

\begin{algorithm}[h]
	\caption{DIANA framework of stochastic gradient methods for nonconvex federated optimization}
	\label{alg:diana}
	\begin{algorithmic}[1]
		\REQUIRE ~
		initial point $x^0$, $\{h_i^0\}_{i=1}^m$, $h^0=\frac{1}{m}\sum_{i=1}^{m}h_i^0$, stepsize parameters $\eta_k, \alpha_k$
		\FOR {$k=0,1,2,\ldots$}
		\STATE {\bf{for all machines $i= 1,2,\ldots,m$ do in parallel}}
		\STATE \quad Compute local stochastic gradient $\widetilde{g}_i^k$ \label{line:localgrad}
		\STATE \quad \blue{Compress shifted local gradient $\widehat{\Delta}_i^k= \cC_i^k(\widetilde{g}_i^k- h_i^k)$} and send $\widehat{\Delta}_i^k$ to the server \label{line:comgrad-diana}
		\STATE \quad Update local shift $h_i^{k+1}=h_i^k+\alpha_k \cC_i^k(\widetilde{g}_i^k - h_i^k)$
		\STATE {\bf{end for}}
		\STATE Aggregate received compressed gradient information:
		$g^k = h^k + \frac{1}{m}\sum \limits_{i=1}^m \widehat{\Delta}_i^k$ \label{line:gk}
		\STATE $x^{k+1} = x^k - \eta_k g^k$ 
		\STATE $h^{k+1} = h^k + \alpha_k  \frac{1}{m}\sum \limits_{i=1}^m  \widehat{\Delta}_i^k$
		\ENDFOR
	\end{algorithmic}
\end{algorithm}

The DIANA framework was previously studied by \citet{mishchenko2019distributed, DIANA2,DFPMCI2019} for a few specific instances of SGD and mainly in strongly convex or convex problems. Ours is the first unified analysis covering many variants of SGD in a single theorem, covering the nonconvex regime.

Concretely, \citet{DFPMCI2019} analyze the strongly convex or convex problem of finding a fixed point with strong assumptions. Here we analyze the more general nonconvex setting with a more general unified assumption.
\citet{mishchenko2019distributed} analyze their methods for the ternary compression operator only, and their result in the nonconvex setting makes very strong assumptions on the local gradient estimators $\widetilde{g}_i^k$. In particular, they assume that $\widetilde{g}_i^k$ is unbiased, and that there exists $\sigma_i^2\geq 0$ such that $\E_k[\ns{\widetilde{g}_i^k}] \leq \ns{\nabla f_i(x^k)} +\sigma_i^2$. 
Note that  this  implies that $B_{1,i} = 1$ and $C_{1,i} = \sigma_i^2$ in recursion \eqref{eq:gi1-dc-diff-8}.
This corresponds to a (rather restrictive) special case of recursions \eqref{eq:gi1-dc-diff-8}--\eqref{eq:gi2-dc-diff-8} with the rest of the parameters rendered ``inactive'':
$A_{1,i} = D_{1,i}=A_{2,i} = B_{2,i} = C_{2,i} = D_{2,i} =0$, and $\rho_i=1$. 
Hence, when compared to \citep{mishchenko2019distributed}, our results for the DIANA framework can be seen as a generalization to arbitrary compression operators described by Definition~\ref{def_compression}, to a much more general class of local gradient estimators (including more methods), and to the PL setting. 

Further, \citet{DIANA2} lift some of the deficiencies of \citep{mishchenko2019distributed}. In particular, they consider general compression operators, and consider the finite-sum setting 
in  which they use two particular variance reduced estimators $\widetilde{g}_i^k$, i.e., L-SVRG and SAGA with minibatch size $b=1$ (non-minibatch version).
Our work can be seen as a generalization of their work to the potentially infinite family of local gradient estimators described by recursions \eqref{eq:gi1-dc-diff-8}--\eqref{eq:gi2-dc-diff-8}, and further to the PL setting. Note that our framework allows for the analysis of more general variants of DIANA, such as DIANA-LSVRG or DIANA-SAGA with additional additive noise and with minibatch $b\geq 1$ (note that one can enjoy a linear speedup by adopting minibatch in parallel), which can be helpful in some situations.

Our main result for DIANA framework is described in the following Theorem \ref{thm:diana-type-diff}.
\begin{theorem}[DIANA framework]\label{thm:diana-type-diff}
	Suppose that the local stochastic gradient $\widetilde{g}_i^k$ (see Line~\ref{line:localgrad} of Algorithm~\ref{alg:diana}) satisfies \eqref{eq:gi1-dc-diff-8}--\eqref{eq:gi2-dc-diff-8}, same as in the DC framework.
	Then $g^k$ (see Line \ref{line:gk} of Algorithm \ref{alg:diana}) satisfies the unified Assumption \ref{asp:boge}
	with 
	\begin{align*}
	&  A_1 =\frac{(1+\omega)A}{m}, ~
	B_1 =1, ~
	D_1 =\frac{1+\omega}{m}, ~
	\sk =  \frac{1}{m} \summ  D_{1,i} \ski  + \frac{\omega}{(1+\omega)m}
	\summ \ns{\nabla f_i(\xk) -\hi}, \\
	&  C_1  =  \frac{(1+\omega)C}{m}, \quad
	\rho =\min \left\{\min_i\rho_i-\tau,~ 2\alpha -(1-\alpha)\beta^{-1} -\alpha^2-\tau \right\}, \\
	&  A_2 =D_A+ \tau A, \quad
	B_2 =D_B+B, \quad 
	C_2  = D_C +  \tau C,
	\end{align*}
	where 
	$A:=\max_i  (A_{1,i}+(B_{1,i}-1)L_i)$, 
	$B:=\frac{\omega(1+\beta)L^2\eta^2}{1+\omega} + D_D$, 
	$C := \frac{1}{m} \summ C_{1,i} + 2A\Delta_f^*$,
	$\Delta_f^*:=f^*-\frac{1}{m}\summ f_i^*$,
	$\tau:=\alpha^2 \omega + \frac{(1+\omega)B}{m}$,
	$D_A:=\frac{1}{m} \summ  D_{1,i}A_{2,i}$,
	$D_B:=\frac{1}{m} \summ  D_{1,i} B_{2,i}$,
	$D_D:=\frac{1}{m} \summ  D_{1,i} D_{2,i}$,
	$D_C:=\frac{1}{m} \summ  D_{1,i}C_{2,i}$,
	and $\forall \beta>0$.
\end{theorem}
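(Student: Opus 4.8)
The plan is to verify, in order, the unbiasedness of $g^k$ and then the two inequalities \eqref{eq:boge1}--\eqref{eq:boge2} of Assumption~\ref{asp:boge}. Throughout I abbreviate the local shift error by $H_i^k := \nabla f_i(x^k) - h_i^k$ and the local gradient variance by $V_i^k := \E_k[\ns{\widetilde{g}_i^k - \nabla f_i(x^k)}] = \E_k[\ns{\widetilde{g}_i^k}] - \ns{\nabla f_i(x^k)}$; by \eqref{eq:gi1-dc-diff-8} the latter is at most $2A_{1,i}(f_i(x^k)-f_i^*) + (B_{1,i}-1)\ns{\nabla f_i(x^k)} + D_{1,i}\sigma_{k,i}^2 + C_{1,i}$. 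Unbiasedness is immediate once one notes that the update lines of Algorithm~\ref{alg:diana} preserve the invariant $h^k = \frac{1}{m}\summ h_i^k$: since $\E_k[\mathcal{C}_i^k(\widetilde{g}_i^k - h_i^k)] = \nabla f_i(x^k) - h_i^k$, we get $\E_k[g^k] = h^k + \frac{1}{m}\summ (\nabla f_i(x^k) - h_i^k) = \frac{1}{m}\summ \nabla f_i(x^k) = \nabla f(x^k)$.

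For \eqref{eq:boge1} I would write $g^k = \frac{1}{m}\summ g_i^k$ with $g_i^k := h_i^k + \mathcal{C}_i^k(\widetilde{g}_i^k - h_i^k)$, each conditionally unbiased for $\nabla f_i(x^k)$. Because the machines' sampling and compression are independent, the cross terms cancel and $\E_k[\ns{g^k}] = \ns{\nabla f(x^k)} + \frac{1}{m^2}\summ \E_k[\ns{g_i^k - \nabla f_i(x^k)}]$. Splitting off the compression noise and using $\E[\ns{\mathcal{C}(x)-x}]\le\omega\ns{x}$ together with $\E_k[\ns{\widetilde{g}_i^k - h_i^k}] = V_i^k + \ns{H_i^k}$ gives $\E_k[\ns{g_i^k - \nabla f_i(x^k)}] \le (1+\omega)V_i^k + \omega\ns{H_i^k}$. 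The resulting $\frac{\omega}{m^2}\summ\ns{H_i^k}$ and $\frac{1+\omega}{m^2}\summ D_{1,i}\sigma_{k,i}^2$ are exactly $D_1$ times the two pieces of the stated $\sigma_k^2$ (with $D_1 = \frac{1+\omega}{m}$), so they become $D_1\sigma_k^2$. The remaining $\frac{1+\omega}{m^2}\summ [2A_{1,i}(f_i-f_i^*)+(B_{1,i}-1)\ns{\nabla f_i}]$ I collapse with the smoothness consequence $\ns{\nabla f_i(x^k)}\le 2L_i(f_i(x^k)-f_i^*)$ into $\frac{2(1+\omega)}{m^2}\summ A(f_i-f_i^*)$ with $A=\max_i(A_{1,i}+(B_{1,i}-1)L_i)$; converting $\frac{1}{m}\summ(f_i-f_i^*) = (f-f^*)+\Delta_f^*$ then produces precisely $2A_1(f-f^*)+C_1$, establishing \eqref{eq:boge1}.

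The crux is \eqref{eq:boge2}. I decompose $\sigma_{k+1}^2 = S_{k+1}+T_{k+1}$ with $S_{k+1} := \frac{1}{m}\summ D_{1,i}\sigma_{k+1,i}^2$ and $T_{k+1} := \frac{\omega}{(1+\omega)m}\summ\ns{\nabla f_i(x^{k+1})-h_i^{k+1}}$. Averaging the local recursion \eqref{eq:gi2-dc-diff-8} controls the first piece: $\E_k[S_{k+1}] \le (1-\min_i\rho_i)S_k + 2D_A(f-f^*)+D_B\ns{\nabla f}+D_D\E_k[\ns{g^k}]+D_C$. The genuinely DIANA-specific step is a shift-descent lemma for $T_{k+1}$. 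Expanding $h_i^{k+1}=h_i^k+\alpha_k\mathcal{C}_i^k(\widetilde{g}_i^k-h_i^k)$ and using $\E_k[\mathcal{C}_i^k(\widetilde{g}_i^k-h_i^k)] = H_i^k$ together with the compression variance yields $\E_k[\ns{\nabla f_i(x^k)-h_i^{k+1}}] \le [(1-\alpha)^2+\alpha^2\omega]\ns{H_i^k}+\alpha^2(1+\omega)V_i^k$; a Young split with free parameter $\beta>0$ then attaches the drift $\ns{\nabla f_i(x^{k+1})-\nabla f_i(x^k)}\le L_i^2\eta^2\ns{g^k}$ with weight $(1+\beta)$, while the standard condition $\alpha_k\le\tfrac{1}{1+\omega}$ lets me absorb the $\beta^{-1}$-terms and bound the $\ns{H_i^k}$-coefficient by $(1-\alpha)^2+(1-\alpha)\beta^{-1}+\alpha^2\omega$. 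This is the main obstacle: getting the three contributions (mean contraction, compression variance, and drift) to combine into a clean geometric factor without the $V_i^k$-coefficient picking up spurious $\beta$-dependence.

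It remains to resolve the circular appearance of $\E_k[\ns{g^k}]$, which enters both $\E_k[S_{k+1}]$ (coefficient $D_D$) and $T_{k+1}$ (coefficient $\frac{\omega(1+\beta)L^2\eta^2}{1+\omega}$ after using $L^2=\frac{1}{m}\summ L_i^2$), with total coefficient $B=\frac{\omega(1+\beta)L^2\eta^2}{1+\omega}+D_D$. I substitute the already-proven bound \eqref{eq:boge1}, which redistributes $B\cdot(2A_1(f-f^*),\ B_1\ns{\nabla f},\ D_1\sigma_k^2,\ C_1)$ across the four target terms; here the $BD_1=\frac{(1+\omega)B}{m}$ piece lands on $\sigma_k^2$. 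Combining this with the $\alpha^2\omega$ that appears on $S_k$ (from the $\sigma_{k,i}^2$-part of the $\alpha^2(1+\omega)V_i^k$ term) and on $T_k$ (from the shift contraction) yields $\tau=\alpha^2\omega+\frac{(1+\omega)B}{m}$ subtracted from both branches, so that $\E_k[S_{k+1}+T_{k+1}]\le(1-\rho)(S_k+T_k)+\cdots$ holds with $\rho=\min\{\min_i\rho_i-\tau,\ 2\alpha-(1-\alpha)\beta^{-1}-\alpha^2-\tau\}$. Finally, expanding the function-value and constant parts of the $\alpha^2(1+\omega)V_i^k$ and $BD_1\sigma_k^2$ contributions through the same smoothness/$\Delta_f^*$ manipulation as in \eqref{eq:boge1} reads off $A_2=D_A+\tau A$, $B_2=D_B+B$ and $C_2=D_C+\tau C$, completing the verification.
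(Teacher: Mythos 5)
Your proposal follows the paper's proof almost step for step: the same unbiasedness argument, the same per-machine second-moment decomposition with the compression inequality $\E_k[\ns{g_i^k-\nabla f_i(x^k)}]\le(1+\omega)V_i^k+\omega\ns{H_i^k}$, the same smoothness/$\Delta_f^*$ bookkeeping giving \eqref{eq:boge1}, the same splitting $\sigma_{k+1}^2=S_{k+1}+T_{k+1}$, the same substitution of the already-proven bound \eqref{eq:boge1} to resolve the circular $\E_k[\ns{g^k}]$, and the same final identification of $\tau$, $A_2$, $B_2$, $C_2$, $\rho$. All of that is consistent with the paper.

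The genuine gap sits exactly at the step you yourself call ``the main obstacle,'' and the mechanism you invoke does not overcome it. In your order of operations you first establish $\E_k[\ns{\nabla f_i(x^k)-h_i^{k+1}}]\le[(1-\alpha)^2+\alpha^2\omega]\ns{H_i^k}+\alpha^2(1+\omega)V_i^k$ and only then apply Young to $\ns{[\nabla f_i(x^{k+1})-\nabla f_i(x^k)]+[\nabla f_i(x^k)-h_i^{k+1}]}$. Young then necessarily multiplies the \emph{entire} shift-error bound by $(1+\beta^{-1})$. Your condition $\alpha\le\frac{1}{1+\omega}$ does repair the $\ns{H_i^k}$-coefficient, since it is equivalent to $(1-\alpha)^2+\alpha^2\omega\le 1-\alpha$ and hence gives $(1+\beta^{-1})[(1-\alpha)^2+\alpha^2\omega]\le(1-\alpha)^2+\alpha^2\omega+(1-\alpha)\beta^{-1}$, but it does nothing for the $V_i^k$-coefficient, which becomes $(1+\beta^{-1})\alpha^2(1+\omega)$ rather than $\alpha^2(1+\omega)$. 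Your subsequent bookkeeping silently assumes the $\beta$-free value: with your derivation, $\tau$ would have to be $(1+\beta^{-1})\alpha^2\omega+\frac{(1+\omega)B}{m}$, and correspondingly $A_2$, $C_2$ and $\rho$ would all be worse than stated — in addition to carrying the hypothesis $\alpha\le\frac{1}{1+\omega}$, which is not part of the theorem. The paper gets the $\beta$-free coefficient by a different order of operations: it expands the square of the sum \emph{first}, replaces the second factor of the cross term by its conditional mean $(1-\alpha)H_i^k$, applies Young only to $2\inner{\nabla f_i(x^{k+1})-\nabla f_i(x^k)}{(1-\alpha)H_i^k}$, and leaves the term $\alpha^2\E_k[\ns{\cC_i^k(\gi-\hi)}]\le\alpha^2(1+\omega)(V_i^k+\ns{H_i^k})$ untouched by $\beta$. (That conditional-mean swap itself deserves care, since $x^{k+1}$ is correlated with machine $i$'s compression noise through $g^k$, but it is the device that produces the stated constants.) Without this cross-term treatment, or an equivalent one, your sketch cannot yield the theorem's parameters.
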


Similarly, the above result means that, provided that  the local gradient estimators $\widetilde{g}_i^k$ used in the DIANA framework (Algorithm~\ref{alg:diana}) satisfy recursions \eqref{eq:gi1-dc-diff-8}--\eqref{eq:gi2-dc-diff-8}, the global gradient estimator $g^k$ satisfies our unified Assumption~\ref{asp:boge}, and hence our main convergence results, Theorem~\ref{thm:main} and Theorem~\ref{thm:main-pl-dec} can be applied. 

To showcase the generality and expressive power of our DIANA framework, we describe several particular ways in which such local estimators can be generated, each leading to a particular instance of a DIANA-type method. 
In particular, we  describe methods DIANA-GD (Algorithm~\ref{alg:diana-gd}), DIANA-SGD (Algorithm~\ref{alg:diana-sgd}), DIANA-LSVRG (Algorithm~\ref{alg:diana-lsvrg}) and DIANA-SAGA (Algorithm~\ref{alg:diana-saga}). 
In each case we show that recursions \eqref{eq:gi1-dc-diff-8} and  \eqref{eq:gi2-dc-diff-8} are satisfied, and in doing so, we obtain complexity results in the nonconvex case with or without the PL condition. 
Details can be found in the appendix.
See the second part of Table~\ref{table:fed} and the second part of Table~\ref{table:fed-pl} for a summary of these particular methods.
Note that all of these results are new, with the exception of a weak version of DIANA-LSVRG and DIANA-SAGA studied by \citet{DIANA2} (as discussed above) in the nonconvex non-PL case.

\newpage

\bibliographystyle{plainnat}
\bibliography{bibunified}

\newpage
\appendix
	

\section{Missing Proofs for Unified Main Theorems}
\label{sec:proofofmain}

In this section, we provide the detailed proofs for our unified main theorems with/without PL condition.

\subsection{Proof of unified Theorem \ref{thm:main}}
We first restate our unified Theorem \ref{thm:main} here and then provide the detailed proof.

\begingroup
\def\thetheorem{\ref{thm:main}}
\begin{theorem}[Main theorem]
	Suppose that Assumptions \ref{asp:boge} and \ref{asp:lsmooth} hold. Let stepsize 
	$$\eta_k \equiv \eta \leq \min\left\{ \frac{1}{LB_1+LD_1B_2\rho^{-1}},~ \sqrt{\frac{\ln 2}{(LA_1 + LD_1A_2\rho^{-1})K}},~ \frac{\epsilon^2}{2L(C_1+D_1C_2\rho^{-1})} \right\},$$ then the number of iterations performed by Algorithm \ref{alg:1} to find an $\epsilon$-solution, i.e. a point $\hx$ such that $\E[\n{\nabla f(\hx)}] \leq \epsilon$, can be bounded by
	\begin{align} 
	K = \frac{8\fgapp L}{\epsilon^2} \max \left\{ B_1+D_1B_2\rho^{-1},~ \frac{12\fgapp (A_1+D_1 A_2\rho^{-1})}{\epsilon^2},~ \frac{2(C_1+D_1C_2\rho^{-1})}{\epsilon^2}\right\}, \label{eq:k-main}
	\end{align}
	where $\fgapp:= f(x^0) - f^* + 2^{-1}L\eta^2D_1\rho^{-1} \sigma_0^2$.
\end{theorem}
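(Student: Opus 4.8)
The plan is to combine the standard descent lemma for $L$-smooth functions with a Lyapunov/potential function that absorbs the variance-reduction term $\sigma_k^2$ tracked by recurrence \eqref{eq:boge2}. First I would start from $L$-smoothness (Assumption~\ref{asp:lsmooth}), which gives the descent inequality
\begin{align*}
f(\xkn) \leq f(\xk) + \inner{\nabla f(\xk)}{\xkn-\xk} + \frac{L}{2}\ns{\xkn-\xk}.
\end{align*}
Substituting the update $\xkn = \xk - \eta \gk$ and taking conditional expectation $\Ek[\cdot]$, I would use unbiasedness $\Ek[\gk]=\nabla f(\xk)$ to handle the cross term and Assumption~\ref{asp:boge}, inequality \eqref{eq:boge1}, to bound $\Ek[\ns{\gk}]$. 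This yields a one-step bound of the form
\begin{align*}
\Ek[f(\xkn)] \leq f(\xk) - \left(\frac{\eta}{2}-\frac{L\eta^2 B_1}{2}\right)\ns{\nabla f(\xk)} + L\eta^2 A_1(f(\xk)-\fs) + \frac{L\eta^2 D_1}{2}\sk + \frac{L\eta^2 C_1}{2}.
\end{align*}

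\textbf{Handling the $\sigma_k^2$ term.} The main obstacle is the $\frac{L\eta^2 D_1}{2}\sk$ term, which does not telescope on its own. Here I would define the Lyapunov function $\Phi^k := f(\xk)-\fs + \frac{L\eta^2 D_1}{2\rho}\sk$ and use recurrence \eqref{eq:boge2} to control $\Ek[\skn]$. Adding $\frac{L\eta^2 D_1}{2\rho}\Ek[\skn]$ to both sides and applying \eqref{eq:boge2}, the coefficient of $\sk$ becomes $\frac{L\eta^2 D_1}{2}(1-\rho)\rho^{-1} + \frac{L\eta^2 D_1}{2} = \frac{L\eta^2 D_1}{2\rho}$, so the $\sk$ contribution reproduces itself with the correct coefficient and the stray $\frac{L\eta^2 D_1}{2}\sk$ from the descent step is exactly cancelled. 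What remains are extra terms $\frac{L\eta^2 D_1}{\rho}(A_2(f(\xk)-\fs) + \frac{B_2}{2}\ns{\nabla f(\xk)} + \frac{C_2}{2})$, which I would fold into the existing $A_1$, $B_1$, $C_1$ contributions, giving effective constants $A_1 + D_1 A_2\rho^{-1}$, $B_1 + D_1 B_2\rho^{-1}$, and $C_1 + D_1 C_2\rho^{-1}$.

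\textbf{Closing the argument.} The first stepsize bound $\eta \leq (LB_1 + LD_1 B_2\rho^{-1})^{-1}$ ensures the gradient-norm coefficient $\frac{\eta}{2}(1 - L\eta(B_1+D_1B_2\rho^{-1}))$ stays at least $\frac{\eta}{4}$, so after rearranging and telescoping $\Phi^0,\dots,\Phi^{K-1}$ I would obtain
\begin{align*}
\frac{1}{K}\sum_{k=0}^{K-1}\E[\ns{\nabla f(\xk)}] \leq \frac{4\fgapp}{\eta K} + 4L\eta(A_1+D_1A_2\rho^{-1})\cdot\frac{1}{K}\sum_{k=0}^{K-1}\E[f(\xk)-\fs] + 2L\eta(C_1+D_1C_2\rho^{-1}),
\end{align*}
where $\fgapp = \Phi^0$ as defined. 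The term involving $A_1+D_1A_2\rho^{-1}$ is the delicate one: I would bound $\E[f(\xk)-\fs]$ by $\fgapp$ along the trajectory (monotonicity or a separate telescoping argument under the stepsize choice) and then use the second stepsize bound $\eta \leq \sqrt{\ln 2/((LA_1+LD_1A_2\rho^{-1})K)}$ to keep this contribution $O(\epsilon^2)$; the factor $\ln 2$ is precisely what controls the geometric accumulation of the $f(\xk)-\fs$ terms. Finally, defining $\hx$ as a uniformly random iterate so that $\E[\ns{\nabla f(\hx)}]$ equals the averaged left-hand side, applying Jensen's inequality $\E[\n{\nabla f(\hx)}] \leq \sqrt{\E[\ns{\nabla f(\hx)}]}$, and requiring each of the three terms to be at most $\epsilon^2/3$ (or similar) via the three stepsize constraints, I would solve for $K$ and recover the iteration complexity \eqref{eq:k-main} as the maximum of the three resulting expressions.
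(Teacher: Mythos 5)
Your proposal reproduces the paper's argument almost step for step: the same descent inequality from $L$-smoothness, the same Lyapunov function $\Dk = f(\xk)-\fs+\alpha\sk$ with the same coefficient $\alpha = L\eta^2 D_1/(2\rho)$ (and the same cancellation check $\tfrac{L\eta^2D_1}{2}(1-\rho)+\rho\alpha \le \alpha\beta$), the same effective constants $A_1+D_1A_2\rho^{-1}$, $B_1+D_1B_2\rho^{-1}$, $C_1+D_1C_2\rho^{-1}$, the same three stepsize constraints with $\ln 2$ playing the same role, and the same final balancing to extract $K$. The one place where you genuinely diverge is also the one place where your sketch has a gap. After the one-step bound $\E[\Dkn]\le \beta\,\E[\Dk]-\eta'\,\E[\ns{\nabla f(\xk)}]+C$ with $\beta=1+L\eta^2(A_1+D_1A_2\rho^{-1})>1$, the paper multiplies by $\beta^{K-1-k}$ before summing, so the $\beta$-factor is absorbed into the telescoping weights and $\hx$ is drawn with probabilities $p_k\propto\beta^{K-1-k}$ (not uniformly, as you assume); the quantity $\E[f(\xk)-\fs]$ is never bounded along the trajectory. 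You instead telescope with uniform weights and propose to bound $\E[f(\xk)-\fs]\le\fgapp$ ``by monotonicity.'' That step fails as stated: neither $\E[f(\xk)]$ nor $\E[\Dk]$ is monotone here, precisely because $\beta>1$ and there is the additive constant $C$. The correct unrolled bound is $\E[\Dk]\le\beta^k\fgapp+C\sum_{j<k}\beta^j\le 2\fgapp+2KC$ (using $\beta^K\le 2$ from the $\ln 2$ constraint), and the extra $KC$ contribution must then separately be shown to be $O(\epsilon^2)$ via the third stepsize constraint. This can be done, so your route is repairable, but it costs extra bookkeeping and worse constants; the paper's geometrically weighted telescoping sidesteps the issue entirely and is what actually yields the stated $K$.

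Two smaller points. First, your gradient-norm coefficient is written as $\tfrac{\eta}{2}\bigl(1-L\eta(B_1+D_1B_2\rho^{-1})\bigr)$, which under the first stepsize constraint is only guaranteed nonnegative; the correct coefficient is $\eta'=\eta\bigl(1-\tfrac{1}{2}L\eta(B_1+D_1B_2\rho^{-1})\bigr)\ge\tfrac{\eta}{2}$, which is what your later factor $4\fgapp/(\eta K)$ implicitly uses. Second, the budget split at the end must be $\epsilon^2/2+\epsilon^2/2$ across the two surviving terms (the $A$-term is eliminated by the weighting, not budgeted separately), otherwise the constants in \eqref{eq:k-main} do not come out as stated.
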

\addtocounter{theorem}{-1}
\endgroup

\begin{proof}
	First, we obtain the relation between $f(\xkn)$ and $f(\xk)$:
	\begin{align}
	f(\xkn) 
	&\leq f(\xk) + \inner{\nabla f(\xk)}{\xkn-\xk} + \frac{L}{2}\ns{\xkn-\xk} \label{eq:use_smooth}\\
	&=f(\xk) -\eta  \inner{\nabla f(\xk)}{\gk} + \frac{L\eta^2}{2}\ns{\gk}, \label{eq:plug_update}
	\end{align}
	where \eqref{eq:use_smooth} uses $L$-smoothness of $f$ (see \eqref{eq:lsmooth}), and \eqref{eq:plug_update} follows from the update step $\xkn = \xk - \eta \gk$ (Line \ref{line:update} of Algorithm \ref{alg:1}). Now, we take expectation for \eqref{eq:plug_update} conditional on the past, i.e., $x^{0:k}$, denoted as $\Ek$:
	\begin{align}
	\Ek[f(\xkn)] 
	& \leq f(\xk) -\eta  \ns{\nabla f(\xk)} + \frac{L\eta^2}{2}\Ek[\ns{\gk}] \notag \\
	\Ek[f(\xkn) - \fs] 
	& \leq f(\xk) -\fs -\eta  \ns{\nabla f(\xk)} + \frac{L\eta^2}{2}\Ek[\ns{\gk}] \notag \\
	& \leq (1+L\eta^2 A_1) (f(\xk) -\fs) - \left(\eta-\frac{L\eta^2 B_1}{2}\right)  \ns{\nabla f(\xk)} + \frac{L\eta^2 D_1}{2} \sk + \frac{L\eta^2}{2}C_1 \notag
	\end{align}
	where these inequalities hold by using our unified Assumption \ref{asp:boge}. Then according to \eqref{eq:boge2} in Assumption \ref{asp:boge}, we have, for $\forall \alpha>0$
	\begin{align}
	\Ek[f(\xkn) - \fs + \alpha\skn] 
	& \leq (1+L\eta^2 A_1 +2\alpha A_2) (f(\xk) -\fs) 
	+ \left(\frac{L\eta^2 D_1}{2} + \alpha (1-\rho)\right) \sk
	\notag \\
	& \qquad \qquad  - \left(\eta-\frac{L\eta^2 B_1}{2}-\alpha B_2\right)  \ns{\nabla f(\xk)} + \frac{L\eta^2}{2}C_1 + \alpha C_2. \notag
	\end{align}
	Now, we take expectation again and define 
	\begin{align}
	\Dkn &:=f(\xkn) - \fs + \alpha\skn  \label{eq:d_def}\\
	\beta &:= 1+L\eta^2 A_1 +2\alpha A_2   \label{eq:beta_def}\\
	\eta' &: = \eta-\frac{L\eta^2 B_1}{2}-\alpha B_2  \label{eq:eta_def}\\
	C &: = \frac{L\eta^2}{2}C_1 + \alpha C_2   \label{eq:c_def}
	\end{align}
	to obtain
	\begin{align}
	\E[\Dkn] 
	& \leq \beta \E[f(\xk) -\fs] 
	+ \left(\frac{L\eta^2 D_1}{2} + \alpha (1-\rho)\right) \E[\sk]
	- \eta' \E[\ns{\nabla f(\xk)}] + C \notag \\
	& = \beta \E\left[f(\xk) -\fs +
	\left(\frac{\frac{L\eta^2 D_1}{2} + \alpha (1-\rho)}{\beta}\right) \sk\right]
	- \eta' \E[\ns{\nabla f(\xk)}] + C \notag \\
	& \leq  \beta\E[\Dk]  -  \eta' \E[\ns{\nabla f(\xk)}]  + C, \label{eq:alphabeta} 
	\end{align}
	where the last inequality holds by setting 
	\begin{equation}\label{eq:alpha_def}
	\alpha =\frac{L\eta^2D_1}{2\rho},
	\end{equation}
	the reason is as follows: 
	\begin{align}
	\frac{L\eta^2 D_1}{2} + \alpha (1-\rho) 
	&\leq \alpha \beta \overset{\eqref{eq:beta_def}}{=} \alpha(1+L\eta^2 A_1 +2\alpha A_2),
	\notag \\
	\frac{L\eta^2 D_1}{2}  &\leq \alpha(\rho+L\eta^2 A_1 +2\alpha A_2). \notag
	\end{align}
	Now we rewrite \eqref{eq:alphabeta} as 
	\begin{align}
	\beta^{K-1-k} \eta' \E[\ns{\nabla f(\xk)}]
	&\leq  \beta^{K-1-k}  (\beta \E[\Dk] - \E[\Dkn]) + \beta^{K-1-k}  C \notag\\
	&=\beta^{K-k}  \E[\Dk] - \beta^{K-(k+1)} \E[\Dkn] + \beta^{K-1-k}  C, 
	~~ \forall 0\leq k \leq K-1.  \label{eq:key}
	\end{align}
	Summing up \eqref{eq:key} for $0\leq k \leq K-1$ to get
	\begin{align}
	\sum_{k=0}^{K-1}\beta^{K-1-k} \eta' \E[\ns{\nabla f(\xk)}]
	& \leq \beta^{K}  \E[\Delta^0] 
	+\sum_{k=0}^{K-1} \beta^{K-1-k}  C   \notag\\
	\eta' \E[\ns{\nabla f(\hx)}] 
	&\leq	\frac{\beta^K}{\sum_{k=0}^{K-1}\beta^{K-1-k}}	\Delta^0 + C, \label{eq:result}
	\end{align}
	where $\hx$ randomly chosen from $\{\xk\}_{k=0}^{K-1}$ with probability $p_k=\frac{\beta^{K-1-k}}{\sum_{k=0}^{K-1}\beta^{K-1-k}}$ for $x^k$.
	Now, we compute each parameter in \eqref{eq:result}:
	\begin{align}
	\eta' &\overset{\eqref{eq:eta_def}}{=}  \eta-\frac{L\eta^2 B_1}{2}-\alpha B_2 
	\notag\\
	&\overset{\eqref{eq:alpha_def}}{=} \eta-\frac{L\eta^2 B_1}{2}-\frac{L\eta^2D_1B_2}{2\rho}  \notag\\
	&~~ = \eta- \frac{\eta}{2}(L\eta B_1+ L\eta D_1B_2\rho^{-1}) \notag\\
	&~~ \geq \frac{\eta}{2}, \label{eq:para1}
	\end{align}
	where the last inequality holds by setting 
	\begin{equation}\label{eq:eta1}
	\eta \leq \frac{1}{LB_1+LD_1B_2\rho^{-1}}.
	\end{equation}
	For the right-hand side, we have 
	\begin{align}
	\frac{\beta^K}{\sum_{k=0}^{K-1}\beta^{K-1-k}} &\overset{\eqref{eq:beta_def}}{=} 
	\frac{(1+L\eta^2 A_1 +2\alpha A_2)^K}{\sum_{k=0}^{K-1}(1+L\eta^2 A_1 +2\alpha A_2)^{K-1-k}}
	\notag\\
	&~~ \leq  \frac{(1+L\eta^2 A_1 +2\alpha A_2)^K}{K}  \notag\\
	&~~ \leq \frac{\exp\left((L\eta^2 A_1 +2\alpha A_2)K\right)}{K}  \notag\\
	&\overset{\eqref{eq:alpha_def}}{=} 
	\frac{\exp\left((L\eta^2 A_1 +L\eta^2D_1 A_2 \rho^{-1})K\right)}{K} \notag\\
	&~~\leq \frac{2}{K},  \label{eq:para2}
	\end{align} 
	where the last inequality holds by setting 
	\begin{equation}\label{eq:eta2}
	\eta \leq \sqrt{\frac{\ln 2}{(LA_1 + LD_1A_2\rho^{-1})K}}.
	\end{equation}
	For the last one $C$, we have 
	\begin{align}
	C &\overset{\eqref{eq:c_def}}{=} 
	\frac{L\eta^2}{2}C_1 + \alpha C_2  \notag\\
	&\overset{\eqref{eq:alpha_def}}{=} 
	\frac{L\eta^2}{2}C_1 + \frac{L \eta^2D_1}{2\rho}C_2  \notag\\
	&~~ = \frac{\eta^2}{2} (LC_1+LD_1C_2\rho^{-1}).  \label{eq:para3}
	\end{align} 
	Now, we plug \eqref{eq:para1}, \eqref{eq:para2} and \eqref{eq:para3} into \eqref{eq:result}: 
	\begin{align}
	\frac{\eta}{2}\E[\ns{\nabla f(\hx)}] 
	&\leq \frac{2}{K} \Delta^0 + \frac{\eta^2}{2} (LC_1+LD_1C_2\rho^{-1})
	\notag \\
	\E[\ns{\nabla f(\hx)}]  
	&\leq \frac{4}{K\eta} \Delta^0 + \eta(LC_1+LD_1C_2\rho^{-1})  \notag\\   
	&\leq \frac{\epsilon^2}{2} + \frac{\epsilon^2}{2} = \epsilon^2.    \label{eq:finalresult}
	\end{align}
	Thus, we find an $\epsilon$-solution, i.e. a point $\hx$ such that 
	$$\E[\n{\nabla f(\hx)}] \leq \sqrt{\E[\ns{\nabla f(\hx)}]} \leq \epsilon.$$
	Note that \eqref{eq:finalresult} holds by setting \begin{equation}\label{eq:eta3}
	\eta \leq \frac{\epsilon^2}{2L(C_1+D_1C_2\rho^{-1})}
	\end{equation}
	and 
	\begin{equation}
	K = \frac{8\Delta^0}{\epsilon^2}\frac{1}{\eta}=\frac{8\fgapp L}{\epsilon^2} \max \left\{ B_1+D_1B_2\rho^{-1}, \frac{12\fgapp (A_1+D_1 A_2\rho^{-1})}{\epsilon^2}, \frac{2(C_1+D_1C_2\rho^{-1})}{\epsilon^2}\right\},
	\end{equation}
	Note that according to \eqref{eq:eta1}, \eqref{eq:eta2}, \eqref{eq:eta3}, we know that $\eta$ needs to satisfy
	$$\eta \leq \min\left\{ \frac{1}{LB_1+LD_1B_2\rho^{-1}}, \sqrt{\frac{\ln 2}{(LA_1 + LD_1A_2\rho^{-1})K}}, \frac{\epsilon^2}{2L(C_1+D_1C_2\rho^{-1})} \right\}.$$
	Also, $\Delta^0 \overset{\eqref{eq:d_def}}{=} f(x^0) - \fs + \alpha \sigma_0^2 \overset{\eqref{eq:alpha_def}}{=}  f(x^0) - f^* + 2^{-1}L\eta^2D_1\rho^{-1} \sigma_0^2 \overset{\text{def}}{=} \fgapp$.
\end{proof}

\subsection{Proofs for unified theorems under PL condition}
\label{sec:proofofpl}
In this section, we provide the detailed proofs for our unified Theorem \ref{thm:main-pl} (constant stepsize) and Theorem \ref{thm:main-pl-dec} (descreasing stepsize) under PL condition.
Note that under the PL condition, one can obtain a faster linear convergence $O(\cdot\log \frac{1}{\epsilon})$ (see \eqref{eq:k-main-pl} in Theorem \ref{thm:main-pl}) rather than the sublinear convergence $O(\cdot\frac{1}{\epsilon^2})$ (see \eqref{eq:k-main} in Theorem \ref{thm:main}).
Besides, one can get a tighter convergence rate if one uses decreasing stepsize $\eta_{k}$ in $x^{k+1} = x^k - \eta_k g^k$  (Line \ref{line:update} of Algorithm \ref{alg:1}) under PL condition. See Theorem \ref{thm:main-pl} (constant stepsize) and Theorem \ref{thm:main-pl-dec} (descreasing stepsize) for a comparison.

\subsubsection{Proof of unified Theorem \ref{thm:main-pl} under PL condition (constant stepsize)}
In this section, we first provide the unified Theorem \ref{thm:main-pl} under PL condition with constant stepsize and then provide the detailed proof.

\begin{theorem}[Main theorem under PL condition with constant stepsize]\label{thm:main-pl}
	Suppose that Assumptions \ref{asp:boge}, \ref{asp:lsmooth} and \ref{asp:pl} hold. Let stepsize 
	$$\eta_k \equiv \eta \leq \min\left\{ \frac{1}{LB_1+2LD_1B_2\rho^{-1} + (L A_1 + 2LD_1A_2 \rho^{-1})\mu^{-1}},~~ \frac{\mu\epsilon}{LC_1+2LD_1C_2\rho^{-1}}  \right\},$$ 
	then the number of iterations performed by Algorithm \ref{alg:1} to find an $\epsilon$-solution, i.e. a point $x^K$ such that $\E[f(x^K)-f^*] \leq \epsilon$, can be bounded by
	\begin{align}
	K = \max \left\{ B_1+2D_1B_2\rho^{-1} + (L A_1 + 2LD_1A_2 \rho^{-1})\mu^{-1},~~ \frac{C_1+2D_1C_2\rho^{-1}}{\mu \epsilon}\right\} \kappa \log \frac{2\fgapp}{\epsilon},
	\label{eq:k-main-pl}
	\end{align}
	where $\fgapp:= f(x^0) - f^* + L\eta^2D_1\rho^{-1} \sigma_0^2$ and $\kappa := L/\mu$.
\end{theorem}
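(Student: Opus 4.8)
The plan is to recycle the one-step analysis from the proof of Theorem~\ref{thm:main} and inject the PL inequality \eqref{eq:pl} at the single point where a negative multiple of $\ns{\nabla f(\xk)}$ appears, thereby converting that term into a genuine contraction of the objective gap. The target is a clean geometric recursion $\E[\Dkn]\le(1-\theta)\E[\Dk]+C$ for the Lyapunov function $\Dk=(f(\xk)-\fs)+\alpha\sk$, after which linear convergence follows by unrolling.

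Concretely, I would first reproduce the descent step: combining $L$-smoothness \eqref{eq:lsmooth}, the update $\xkn=\xk-\eta\gk$, and the moment bound \eqref{eq:boge1} gives
$$\Ek[f(\xkn)-\fs]\le(1+L\eta^2A_1)(f(\xk)-\fs)-\Big(\eta-\tfrac{L\eta^2B_1}{2}\Big)\ns{\nabla f(\xk)}+\tfrac{L\eta^2D_1}{2}\sk+\tfrac{L\eta^2}{2}C_1.$$
Adding $\alpha$ times \eqref{eq:boge2} and writing $\eta':=\eta-\tfrac{L\eta^2B_1}{2}-\alpha B_2$ and $C:=\tfrac{L\eta^2}{2}C_1+\alpha C_2$ produces
$$\Ek[\Dkn]\le(1+L\eta^2A_1+2\alpha A_2)(f(\xk)-\fs)-\eta'\ns{\nabla f(\xk)}+\Big(\tfrac{L\eta^2D_1}{2}+\alpha(1-\rho)\Big)\sk+C.$$
The new ingredient is to invoke \eqref{eq:pl}, which (once I verify $\eta'\ge0$ from the stepsize bound) lets me replace $-\eta'\ns{\nabla f(\xk)}$ by $-2\mu\eta'(f(\xk)-\fs)$.

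It then remains to choose $\alpha$ and a rate $\theta$ so that both the $(f(\xk)-\fs)$-coefficient and the $\sk$-coefficient are dominated by $(1-\theta)$ times their weights in $\Dk$. For the $\sk$ term this asks $\tfrac{L\eta^2D_1}{2}\le\alpha(\rho-\theta)$, which forces the choice $\alpha=L\eta^2D_1\rho^{-1}$ (twice the value used in Theorem~\ref{thm:main}, and the source of the factor-$2$ terms $2D_1B_2\rho^{-1}$, $2D_1A_2\rho^{-1}$, $2D_1C_2\rho^{-1}$) together with $\theta\le\rho/2$. For the $(f(\xk)-\fs)$ term it asks $\theta\le 2\mu\eta'-L\eta^2A_1-2\alpha A_2$; substituting $\alpha$ and the first stepsize bound $\eta\le\big(LB_1+2LD_1B_2\rho^{-1}+(LA_1+2LD_1A_2\rho^{-1})\mu^{-1}\big)^{-1}$ collapses this to $\theta\le\mu\eta$, so I take $\theta=\mu\eta$. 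Unrolling $\E[\Dkn]\le(1-\theta)\E[\Dk]+C$ and using $(1-\theta)^K\le e^{-\theta K}$ gives $\E[\Dk]\le e^{-\theta K}\Delta^0+C/\theta$. Splitting the tolerance, the homogeneous part is below $\epsilon/2$ once $K\ge\theta^{-1}\log\frac{2\Delta^0}{\epsilon}$, which yields a $\kappa\log\frac{2\fgapp}{\epsilon}$-type factor (with $\fgapp=f(x^0)-\fs+\alpha\sigma_0^2$) matching the first entry of the maximum; forcing $C/\theta\le\epsilon/2$ gives the second stepsize constraint $\eta\le\frac{\mu\epsilon}{LC_1+2LD_1C_2\rho^{-1}}$ and the second entry $\frac{C_1+2D_1C_2\rho^{-1}}{\mu\epsilon}$ of the maximum. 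Since $f(x^K)-\fs\le\Delta^K$, this proves the claim.

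The main obstacle is the simultaneous calibration of $\alpha$ and $\theta$: the Lyapunov function must contract at a single rate even though it is driven by two different mechanisms — the PL contraction acting on $f(\xk)-\fs$ at rate $\sim\mu\eta$, and the variance-reduction contraction acting on $\sk$ at rate $\sim\rho$. Synchronizing them is what both dictates the doubled weight $\alpha=L\eta^2D_1\rho^{-1}$ and requires the admissibility checks $\mu\eta\le\rho/2$ (so $\theta=\mu\eta$ is feasible in the $\sk$-inequality) and $\eta'\ge0$ under the stated stepsize; the remaining algebra is the routine bookkeeping of collecting coefficients.
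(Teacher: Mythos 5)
Your proposal is correct and follows essentially the same route as the paper's own proof: the identical Lyapunov function $\Delta^k = f(x^k)-f^* + \alpha\sigma_k^2$, the identical doubled weight $\alpha = L\eta^2 D_1\rho^{-1}$ (versus $\tfrac{L\eta^2D_1}{2\rho}$ in Theorem~\ref{thm:main}), the identical contraction rate $\theta=\mu\eta$ extracted from the first stepsize bound exactly as in \eqref{eq:beta-pl}, and the identical unrolling and $\epsilon/2$--$\epsilon/2$ split producing the two entries of the maximum and the second stepsize constraint. If anything you are slightly more explicit than the paper, which verifies only the $(f(x^k)-f^*)$-coefficient condition and leaves the admissibility checks $\eta'\ge 0$ and $\mu\eta\le\rho/2$ (your $\sigma_k^2$-coefficient condition) implicit.
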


\begin{proof}
	Similar to the proof of Theorem \ref{thm:main}, we obtain the relation between $f(\xkn)$ and $f(\xk)$:
	\begin{align}
	f(\xkn) 
	&\leq f(\xk) + \inner{\nabla f(\xk)}{\xkn-\xk} + \frac{L}{2}\ns{\xkn-\xk} \label{eq:use_smooth-pl}\\
	&=f(\xk) -\eta  \inner{\nabla f(\xk)}{\gk} + \frac{L\eta^2}{2}\ns{\gk}, \label{eq:plug_update-pl}
	\end{align}
	where \eqref{eq:use_smooth-pl} uses $L$-smoothness of $f$ (see \eqref{eq:lsmooth}), and \eqref{eq:plug_update-pl} follows from the update step $\xkn = \xk - \eta \gk$ (Line \ref{line:update} of Algorithm \ref{alg:1}). Now, we take expectation for \eqref{eq:plug_update-pl} conditional on the past, i.e., $x^{0:k}$, denoted as $\Ek$:
	\begin{align}
	\Ek[f(\xkn)] 
	& \leq f(\xk) -\eta  \ns{\nabla f(\xk)} + \frac{L\eta^2}{2}\Ek[\ns{\gk}] \notag \\
	\Ek[f(\xkn) - \fs] 
	& \leq f(\xk) -\fs -\eta  \ns{\nabla f(\xk)} + \frac{L\eta^2}{2}\Ek[\ns{\gk}] \notag \\
	& \leq (1+L\eta^2 A_1) (f(\xk) -\fs) - \left(\eta-\frac{L\eta^2 B_1}{2}\right)  \ns{\nabla f(\xk)} \notag \\
	& \qquad \qquad + \frac{L\eta^2 D_1}{2} \sk + \frac{L\eta^2}{2}C_1 \notag
	\end{align}
	where these inequalities hold by using our unified Assumption \ref{asp:boge}. Then according to \eqref{eq:boge2} in Assumption \ref{asp:boge}, we have, for $\forall \alpha>0$
	\begin{align}
	\Ek[f(\xkn) - \fs + \alpha\skn] 
	& \leq (1+L\eta^2 A_1 +2\alpha A_2) (f(\xk) -\fs) 
	+ \left(\frac{L\eta^2 D_1}{2} + \alpha (1-\rho)\right) \sk
	\notag \\
	& \qquad \qquad  - \left(\eta-\frac{L\eta^2 B_1}{2}-\alpha B_2\right)  \ns{\nabla f(\xk)} + \frac{L\eta^2}{2}C_1 + \alpha C_2. \label{eq:last}
	\end{align}
	Then, we apply PL condition \eqref{eq:pl} to \eqref{eq:last}:
	\begin{align}
	\Ek[f(\xkn) - \fs + \alpha\skn] 
	& \leq (1+L\eta^2 A_1 +2\alpha A_2) (f(\xk) -\fs) 
	+ \left(\frac{L\eta^2 D_1}{2} + \alpha (1-\rho)\right) \sk
	\notag \\
	& \qquad - 2\mu\left(\eta-\frac{L\eta^2 B_1}{2}-\alpha B_2\right) (f(\xk) -\fs)  + \frac{L\eta^2}{2}C_1 + \alpha C_2.
	\end{align}
	Now, we take expectation again and define 
	\begin{align}
	\Dkn &:=f(\xkn) - \fs + \alpha\skn  \label{eq:d_def-pl}\\
	\beta &:= L\eta^2 A_1 +2\alpha A_2   \label{eq:beta_def-pl}\\
	\eta' &: = \eta-\frac{L\eta^2 B_1}{2}-\alpha B_2  \label{eq:eta_def-pl}\\
	C &: = \frac{L}{2}C_1 + \frac{\alpha}{\eta^2} C_2   \label{eq:c_def-pl}
	\end{align}
	to obtain
	\begin{align}
	\E[\Dkn] 
	& \leq (1-2\mu \eta' + \beta) \E[f(\xk) -\fs] 
	+ \left(\frac{L\eta^2 D_1}{2} + \alpha (1-\rho)\right) \E[\sk]
	+ C\eta^2 \notag \\
	& =(1-2\mu\eta' +\beta) \E\left[f(\xk) -\fs +
	\left(\frac{\frac{L\eta^2 D_1}{2} + \alpha (1-\rho)}{1-2\mu\eta' +\beta}\right) \sk\right]
	+ C\eta^2 \notag \\
	&\leq (1-\mu\eta) \E[\Dk] + C\eta^2, \label{eq:etap-pl} 
	\end{align}
	where the last inequality \eqref{eq:etap-pl} holds by setting
	\begin{equation}\label{eq:alpha_def-pl}
	\alpha =\frac{L\eta^2D_1}{\rho}, ~~~ 
	\eta \leq  \frac{1}{LB_1+2LD_1B_2\rho^{-1} + (L A_1 + 2LD_1A_2 \rho^{-1})\mu^{-1}},
	\end{equation}
	since
	\begin{align}
	1-2\mu \eta' + \beta
	&\overset{\eqref{eq:eta_def-pl}}{=} 1-2\mu(\eta-\frac{L\eta^2 B_1}{2}-\alpha B_2) + \beta \notag\\
	&\overset{\eqref{eq:beta_def-pl}}{=} 1-2\mu (\eta-\frac{L\eta^2 B_1}{2}-\alpha B_2) + L\eta^2 A_1 +2\alpha A_2 \notag\\
	&\overset{\eqref{eq:alpha_def-pl}}{=}   1-2\mu ( \eta-\frac{L\eta^2 B_1}{2}-\frac{L\eta^2D_1B_2}{\rho}) + L\eta^2 A_1 + \frac{2L\eta^2D_1}{\rho}A_2  \notag\\
	&~~= 1- 2\mu\eta \left(1-\frac{1}{2}\left(L\eta B_1+ 2L\eta D_1B_2\rho^{-1} +(L \eta A_1 + 2L\eta D_1A_2 \rho^{-1})\mu^{-1}\right) \right)  
	\notag \\
	&\overset{\eqref{eq:alpha_def-pl}}{\leq}  1-\mu\eta. \label{eq:beta-pl}
	\end{align}
	
	Telescoping \eqref{eq:etap-pl} for $0\leq k \leq K-1$, we have
	\begin{align}
	\E[\Delta^K]
	& \leq (1-\mu\eta)^K \E[\Delta^0] + C\eta^2 \sum_{k=0}^{K-1} (1-\mu\eta)^k 
	\notag\\
	&\leq  (1-\mu\eta)^K \Delta^0 +  \frac{C\eta}{\mu}   \notag\\
	&\leq \frac{\epsilon}{2} + \frac{\epsilon}{2} = \epsilon.    \label{eq:finalresult-pl}
	\end{align}
	Thus, we find an $\epsilon$-solution, i.e. a point $x^K$ such that 
	$$\E[f(x^K) - \fs] \leq E[\Delta^K] \leq \epsilon.$$
	Note that \eqref{eq:finalresult-pl} holds by setting 
	\begin{equation}\label{eq:eta3-pl}
	\eta \leq \frac{\mu\epsilon}{2C} 
	\overset{\eqref{eq:c_def-pl}}{=} \frac{\mu\epsilon}{LC_1 + \frac{2\alpha}{\eta^2}C_2}
	\overset{\eqref{eq:alpha_def-pl}}{=} \frac{\mu\epsilon}{LC_1+2LD_1C_2\rho^{-1}}
	\end{equation}
	and 
	\begin{align*}
	K &= \frac{1}{\mu\eta}\log\frac{2\Delta^0}{\epsilon} \notag\\
	&= \max \left\{ B_1+2D_1B_2\rho^{-1} + (L A_1 + 2LD_1A_2 \rho^{-1})\mu^{-1},~ \frac{C_1+2D_1C_2\rho^{-1}}{\mu \epsilon}\right\} \frac{L}{\mu}\log \frac{2\Delta^0}{\epsilon}.
	\end{align*}
	Note that according to \eqref{eq:alpha_def-pl} and \eqref{eq:eta3-pl}, we know that the step size $\eta$ needs to satisfy
	$$\eta \leq \min\left\{  \frac{1}{LB_1+2LD_1B_2\rho^{-1} + (L A_1 + 2LD_1A_2 \rho^{-1})\mu^{-1}}, \frac{\mu\epsilon}{LC_1+2LD_1C_2\rho^{-1}} \right\}.$$
	Also, $\Delta^0 \overset{\eqref{eq:d_def-pl}}{=} f(x^0) - \fs + \alpha \sigma_0^2 \overset{\eqref{eq:alpha_def-pl}}{=}  f(x^0) - f^* + L\eta^2D_1\rho^{-1} \sigma_0^2 \overset{\text{def}}{=} \fgapp$.
\end{proof}

\subsubsection{Proof of unified Theorem \ref{thm:main-pl-dec} under PL condition (decreasing stepsize)}
We first restate our unified Theorem \ref{thm:main-pl-dec} under PL condition with decreasing stepsize and then provide the detailed proof.
The decreasing stepsize $\eta_{k}$ in $x^{k+1} = x^k - \eta_k g^k$  (Line \ref{line:update} of Algorithm \ref{alg:1}) leads to a tighter convergence result (see \eqref{eq:k-main-pl} in Theorem \ref{thm:main-pl} and  \eqref{eq:k-main-pl-dec} in Theorem \ref{thm:main-pl-dec} for a comparison).

\begingroup
\def\thetheorem{\ref{thm:main-pl-dec}}
\begin{theorem}[Main theorem under PL condition with decreasing stepsize]
	Suppose that Assumptions \ref{asp:boge} , \ref{asp:lsmooth} and \ref{asp:pl} hold. Let stepsize 
	$$\eta_{k} = \begin{cases}
	\eta   & \text {if~~}  k\leq \frac{K}{2}\\
	\frac{2\eta}{2+(k-\frac{K}{2})\mu\eta} &\text{if~~}  k>\frac{K}{2}
	\end{cases},
	\text{~where~~}
	\eta \leq \frac{1}{LB_1+2LD_1B_2\rho^{-1} + (L A_1 + 2LD_1A_2 \rho^{-1})\mu^{-1}},$$ 
	then the number of iterations performed by Algorithm \ref{alg:1} to find an $\epsilon$-solution, i.e. a point $x^K$ such that $\E[f(x^K)-f^*] \leq \epsilon$, can be bounded by
	\begin{align}
	K = \max \left\{2\left(B_1+2D_1B_2\rho^{-1} +(L A_1 + 2LD_1A_2 \rho^{-1})\mu^{-1}\right)\kappa\log \frac{2\fgapp}{\epsilon}, \frac{10(C_1+2D_1C_2\rho^{-1})\kappa}{\mu \epsilon}\right\}, \label{eq:k-main-pl-dec}
	\end{align}
	where $\fgapp:= f(x^0) - f^* + L\eta^2D_1\rho^{-1} \sigma_0^2$ and $\kappa := L/\mu$.
\end{theorem}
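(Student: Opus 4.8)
The plan is to reuse the one-step estimate already established in the proof of Theorem~\ref{thm:main-pl} and then feed it into a two-phase telescoping argument matched to the stepsize schedule. First I would observe that the derivation culminating in \eqref{eq:etap-pl} is genuinely a per-iteration argument: for any step $k$ whose stepsize $\eta_k$ satisfies the stated upper bound, setting $\alpha_k := \tfrac{L\eta_k^2 D_1}{\rho}$ as in \eqref{eq:alpha_def-pl} gives
\[ \Ek[f(\xkn) - \fs + \alpha_k\skn] \leq (1-\mu\eta_k)\bigl(f(\xk)-\fs+\alpha_k\sk\bigr) + C\eta_k^2, \qquad C := \tfrac{L}{2}C_1 + \tfrac{LD_1}{\rho}C_2. \]
The key point is that $C$ is \emph{independent} of $\eta_k$, since the $\eta_k^2$ in $\alpha_k$ cancels the $\tfrac{\alpha_k}{\eta_k^2}C_2$ term of \eqref{eq:c_def-pl}. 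Because the schedule is non-increasing we have $\alpha_{k+1}\le\alpha_k$, so with $\Dk:=f(\xk)-\fs+\alpha_k\sk$ one gets $\Dkn \le f(\xkn)-\fs+\alpha_k\skn$ and hence the clean, uniformly valid recurrence $\E[\Dkn]\le(1-\mu\eta_k)\E[\Dk]+C\eta_k^2$ with $\Delta^0=\fgapp$.

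Next I would treat the two phases. In Phase~1 ($k\le K/2$) the stepsize is the constant $\eta$, so telescoping exactly as in \eqref{eq:finalresult-pl} yields
\[ \E[\Delta^{K/2}] \leq (1-\mu\eta)^{K/2}\fgapp + \tfrac{C\eta}{\mu}. \]
Imposing $\tfrac{K}{2}\ge \tfrac{1}{\mu\eta}\log\tfrac{2\fgapp}{\epsilon}$ forces the geometric remainder $(1-\mu\eta)^{K/2}\fgapp\le\tfrac{\epsilon}{2}$; substituting the value of $\eta$ turns $\tfrac{2}{\mu\eta}\log\tfrac{2\fgapp}{\epsilon}$ into the first term of \eqref{eq:k-main-pl-dec}. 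For Phase~2 ($k>K/2$) I would substitute $t:=k-\tfrac{K}{2}$ and $\gamma:=\tfrac{2}{\mu\eta}$ and verify the algebraic identity $\eta_k=\tfrac{2}{\mu(\gamma+t)}$, so that $\mu\eta_k=\tfrac{2}{\gamma+t}$. Writing $a_t:=\E[\Delta^{K/2+t}]$ the recurrence becomes $a_{t+1}\le(1-\tfrac{2}{\gamma+t})a_t+\tfrac{4C}{\mu^2(\gamma+t)^2}$.

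The technical heart is then the induction claim $a_t\le\tfrac{\nu}{\gamma+t}$ with $\nu:=\max\{\gamma a_0,\,\tfrac{4C}{\mu^2}\}$: the base case $a_0\le\nu/\gamma$ holds by definition, and after clearing denominators the inductive step reduces to the elementary inequality $\nu\ge\tfrac{4C}{\mu^2}\cdot\tfrac{s+1}{s+2}$ with $s=\gamma+t$, which is true since $\tfrac{s+1}{s+2}<1$. Evaluating at $t=K/2$ gives $\E[f(x^K)-\fs]\le\E[\Delta^K]=a_{K/2}\le\tfrac{\nu}{\gamma+K/2}$. The $\tfrac{4C}{\mu^2}$ branch contributes $O\!\bigl(\tfrac{\kappa(C_1+2D_1C_2\rho^{-1})}{\mu K}\bigr)$, which is $\le\epsilon$ once $K$ exceeds a constant multiple of $\tfrac{(C_1+2D_1C_2\rho^{-1})\kappa}{\mu\epsilon}$ — the second term of \eqref{eq:k-main-pl-dec}; the $\gamma a_0$ branch is tamed by Phase~1, since $a_0\le\tfrac{\epsilon}{2}+\tfrac{C\eta}{\mu}$ gives $\gamma a_0\le\tfrac{\epsilon}{\mu\eta}+\tfrac{2C}{\mu^2}$, whose pieces are absorbed by the first and second terms respectively.

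I expect the main obstacle to be the final combination: verifying that a single $K$, namely the maximum of the two displayed quantities, simultaneously drives the Phase-1 geometric remainder, the $\gamma a_0$ contribution, and the noise floor $\tfrac{4C}{\mu^2}$ each below $O(\epsilon)$, and pinning the precise constants (the factor $2$ and the factor $10$) requires retaining the $+\gamma$ in the denominator $\gamma+K/2$ rather than crudely lower-bounding it by $K/2$. By contrast, the one-step recurrence and the $\tfrac{\nu}{\gamma+t}$ induction are routine once the schedule is rewritten in the form $\eta_k=\tfrac{2}{\mu(\gamma+t)}$; the real care lies in threading the value of $\eta$ and the quantity $\gamma=2/(\mu\eta)$ consistently through both terms of the complexity bound.
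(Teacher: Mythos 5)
Your proposal is correct, and it shares the paper's overall skeleton: the same Lyapunov function $\Dk = f(\xk)-\fs+\alpha_k\sk$, the same one-step recurrence $\E[\Dkn] \le (1-\mu\eta_k)\E[\Dk] + C\eta_k^2$ with $C = \tfrac{L}{2}C_1 + \tfrac{LD_1}{\rho}C_2$ (the paper's \eqref{eq:key-pl2}), and the same two-phase split with identical Phase-1 telescoping. Where you genuinely diverge is Phase 2. The paper proves Proposition~\ref{lem:keyrel} by multiplying the recursion by the weight $\bigl(2+(k-\tfrac{K}{2})\mu\eta\bigr)^2$ and telescoping the weighted sequence, which yields the \emph{additive} bound $M_K \le (1-\mu\eta)^{K/2}M_0 + \tfrac{10C}{\mu^2K}$, from which the two terms of $K$ can be read off directly. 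You instead run an induction on $a_t \le \nu/(\gamma+t)$ with $\nu=\max\{\gamma a_0,\,4C/\mu^2\}$; your reduction of the inductive step to $\nu \ge \tfrac{4C}{\mu^2}\cdot\tfrac{s+1}{s+2}$ is correct (note it requires $1-\tfrac{2}{\gamma+t}\ge 0$, i.e.\ $\mu\eta\le 1$, the same condition the paper needs implicitly for $1-\mu\eta_k\ge 0$), and your max-type bound then forces the closing case analysis you describe. The constants work out with room to spare: under the stated $K$, whose second term equals $\tfrac{20C}{\mu^2\epsilon}$, one gets $\tfrac{4C/\mu^2}{\gamma+K/2}\le\tfrac{8C}{\mu^2K}\le\epsilon$ in one branch and $\tfrac{\gamma a_0}{\gamma+K/2}\le\tfrac{\epsilon}{2}+\tfrac{4C}{\mu^2K}\le\epsilon$ in the other, using exactly the observation you flag, namely keeping $\gamma$ in the denominator so that $\tfrac{\gamma\,\epsilon/2}{\gamma+K/2}\le\tfrac{\epsilon}{2}$. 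One point where you are actually more careful than the paper: the weight $\alpha_k = L\eta_k^2D_1/\rho$ is iteration-dependent, so chaining \eqref{eq:etap-pl2} across iterations needs $\alpha_{k+1}\le\alpha_k$ (non-increasing stepsizes); you state this explicitly, whereas the paper treats $\alpha$ as if it were a single constant.
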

\addtocounter{theorem}{-1}
\endgroup

\begin{proof}
	Similar to the proof of Theorem \ref{thm:main-pl}, we obtain the relation between $f(\xkn)$ and $f(\xk)$:
	\begin{align}
	f(\xkn) 
	&\leq f(\xk) + \inner{\nabla f(\xk)}{\xkn-\xk} + \frac{L}{2}\ns{\xkn-\xk} \label{eq:use_smooth-pl2}\\
	&=f(\xk) -\eta_k  \inner{\nabla f(\xk)}{\gk} + \frac{L\eta_k^2}{2}\ns{\gk}, \label{eq:plug_update-pl2}
	\end{align}
	where \eqref{eq:use_smooth-pl2} uses $L$-smoothness of $f$ (see \eqref{eq:lsmooth}), and \eqref{eq:plug_update-pl2} follows from the update step $\xkn = \xk - \eta_k \gk$ (Line \ref{line:update} of Algorithm \ref{alg:1}). Now, we take expectation for \eqref{eq:plug_update-pl2} conditional on the past, i.e., $x^{0:k}$, denoted as $\Ek$:
	\begin{align}
	\Ek[f(\xkn)] 
	& \leq f(\xk) -\eta_k  \ns{\nabla f(\xk)} + \frac{L\eta_k^2}{2}\Ek[\ns{\gk}] \notag \\
	\Ek[f(\xkn) - \fs] 
	& \leq f(\xk) -\fs -\eta_k  \ns{\nabla f(\xk)} + \frac{L\eta_k^2}{2}\Ek[\ns{\gk}] \notag \\
	& \leq (1+L\eta_k^2 A_1) (f(\xk) -\fs) - \left(\eta_k-\frac{L\eta_k^2 B_1}{2}\right)  \ns{\nabla f(\xk)} \notag \\
	& \qquad \qquad + \frac{L\eta_k^2 D_1}{2} \sk + \frac{L\eta_k^2}{2}C_1 \notag
	\end{align}
	where these inequalities hold by using our unified Assumption \ref{asp:boge}. Then according to \eqref{eq:boge2} in Assumption \ref{asp:boge}, we have, for $\forall \alpha>0$
	\begin{align}
	\Ek[f(\xkn) - \fs + \alpha\skn] 
	& \leq (1+L\eta_k^2 A_1 +2\alpha A_2) (f(\xk) -\fs) 
	+ \left(\frac{L\eta_k^2 D_1}{2} + \alpha (1-\rho)\right) \sk
	\notag \\
	& \qquad \qquad  - \left(\eta-\frac{L\eta_k^2 B_1}{2}-\alpha B_2\right)  \ns{\nabla f(\xk)} + \frac{L\eta_k^2}{2}C_1 + \alpha C_2. \label{eq:last2}
	\end{align}
	Then, we apply PL condition \eqref{eq:pl} to \eqref{eq:last2}:
	\begin{align}
	\Ek[f(\xkn) - \fs + \alpha\skn] 
	& \leq (1+L\eta_k^2 A_1 +2\alpha A_2) (f(\xk) -\fs) 
	+ \left(\frac{L\eta_k^2 D_1}{2} + \alpha (1-\rho)\right) \sk
	\notag \\
	& \qquad - 2\mu\left(\eta_k-\frac{L\eta_k^2 B_1}{2}-\alpha B_2\right) (f(\xk) -\fs)  + \frac{L\eta_k^2}{2}C_1 + \alpha C_2.
	\end{align}
	Now, we take expectation again and define 
	\begin{align}
	\Dkn &:=f(\xkn) - \fs + \alpha\skn  \label{eq:d_def-pl2}\\
	\beta_k &:= L\eta_k^2 A_1 +2\alpha A_2   \label{eq:beta_def-pl2}\\
	\eta'_k &: = \eta_k-\frac{L\eta_k^2 B_1}{2}-\alpha B_2  \label{eq:eta_def-pl2}\\
	C_k &: = \frac{L}{2}C_1 + \frac{\alpha}{\eta_k^2} C_2   \label{eq:c_def-pl2}
	\end{align}
	to obtain
	\begin{align}
	\E[\Dkn] 
	& \leq (1-2\mu \eta'_k + \beta_k) \E[f(\xk) -\fs] 
	+ \left(\frac{L\eta_k^2 D_1}{2} + \alpha (1-\rho)\right) \E[\sk]
	+ C_k\eta_k^2 \notag \\
	& =(1-2\mu\eta'_k +\beta_k) \E\left[f(\xk) -\fs +
	\left(\frac{\frac{L\eta_k^2 D_1}{2} + \alpha (1-\rho)}{1-2\mu\eta'_k +\beta_k}\right) \sk\right]
	+ C_k\eta_k^2 \notag \\
	&\leq (1-\mu\eta_k) \E[\Dk] + C_k\eta_k^2, \label{eq:etap-pl2} 
	\end{align}
	where the last inequality \eqref{eq:etap-pl2} holds by setting
	\begin{equation}\label{eq:alpha_def-pl2}
	\alpha =\frac{L\eta_k^2D_1}{\rho}, ~~  
	\eta_k \leq  \frac{1}{LB_1+2LD_1B_2\rho^{-1} + (L A_1 + 2LD_1A_2 \rho^{-1})\mu^{-1}}
	\end{equation}
	since
	\begin{align}
	1-2\mu \eta'_k + \beta_k
	&\overset{\eqref{eq:eta_def-pl2}}{=} 1-2\mu(\eta_k-\frac{L\eta_k^2 B_1}{2}-\alpha B_2) + \beta \notag\\
	&\overset{\eqref{eq:beta_def-pl2}}{=} 1-2\mu (\eta_k-\frac{L\eta_k^2 B_1}{2}-\alpha B_2) + L\eta_k^2 A_1 +2\alpha A_2 \notag\\
	&\overset{\eqref{eq:alpha_def-pl2}}{=}   1-2\mu ( \eta_k-\frac{L\eta_k^2 B_1}{2}-\frac{L\eta_k^2D_1B_2}{\rho}) + L\eta_k^2 A_1 + \frac{2L\eta_k^2D_1}{\rho}A_2  \notag\\
	&~~= 1- 2\mu\eta_k \left(1-\frac{1}{2}\left(L\eta_k B_1+ 2L\eta_k D_1B_2\rho^{-1} +(L \eta_k A_1 + 2L\eta_k D_1A_2 \rho^{-1})\mu^{-1}\right) \right)  \notag \\
	&\overset{\eqref{eq:alpha_def-pl2}}{\leq}  1-\mu\eta_k. \label{eq:beta-pl2}
	\end{align}
	Note that $C_k \overset{\eqref{eq:c_def-pl2}}{=}  \frac{L}{2}C_1 + \frac{\alpha}{\eta_k^2} C_2
	= \frac{L}{2}C_1+\frac{LD_1}{\rho}C_2$, thus we use $C:=\frac{L}{2}C_1+\frac{LD_1}{\rho}C_2$ to denote $C_k$ in \eqref{eq:etap-pl2}
	\begin{align}\label{eq:key-pl2}
	\E[\Dkn] 
	&\leq (1-\mu\eta_k) \E[\Dk] + C\eta_k^2.
	\end{align}
	To prove the convergence result with \eqref{eq:key-pl2}, we provide a key proposition as follows:
	\begin{proposition}\label{lem:keyrel}
		For any sequence $\{M_k\}_{k=0}^{K}$ satisfying 
		\begin{align}\label{eq:relation}
		M_{k}  &\leq (1-a b_k) M_{k-1} + cb_k^2, ~~\forall k \in [K]
		\end{align}
		where $a,c\geq 0$. 
		If we set $\{b_k\}$ as 
		\begin{align}\label{eq:setb}
		b_{k} = \begin{cases}
		b   & \text {if~~}  k\leq \frac{K}{2}\\
		\frac{2b}{2+(k-\frac{K}{2})ab} &\text {if~~}  k>\frac{K}{2}
		\end{cases},
		\end{align}
		then we have
		\begin{align}\label{eq:relationK}
		M_{K}  &\leq (1-a b)^{\frac{K}{2}} M_0 + \frac{10c}{a^2K}.
		\end{align}
	\end{proposition}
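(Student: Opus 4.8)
The plan is to analyze the two phases of the stepsize schedule separately and then stitch the resulting bounds together. For the first phase $k\le K/2$ the stepsize is constant, $b_k=b$, so \eqref{eq:relation} reads $M_k\le(1-ab)M_{k-1}+cb^2$. Unrolling this geometric recursion and using $\sum_{j\ge 0}(1-ab)^j=1/(ab)$ (valid because the stepsize restriction guarantees $0<ab\le 1$, hence $0\le 1-ab<1$) gives
\begin{align*}
M_{K/2}\le(1-ab)^{K/2}M_0+cb^2\sum_{j=0}^{K/2-1}(1-ab)^j\le(1-ab)^{K/2}M_0+\frac{cb}{a}.
\end{align*}
This is the quantity that is fed into the second phase, and note that the first summand already has the exact form required in \eqref{eq:relationK}, so the remaining work is to show that the residual $cb/a$ shrinks to order $c/(a^2K)$ during the decreasing-stepsize phase.

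For $k>K/2$ I would introduce $j:=k-\tfrac{K}{2}\in\{1,\dots,K/2\}$ and the shift $\kappa_0:=2/(ab)\ge 2$, so that \eqref{eq:setb} becomes $b_k=\frac{2}{a(j+\kappa_0)}$, whence $ab_k=\frac{2}{j+\kappa_0}$ and $cb_k^2=\frac{4c}{a^2(j+\kappa_0)^2}$. The central device is to cancel the decaying factor by multiplying \eqref{eq:relation} by the weight $(j+\kappa_0-1)(j+\kappa_0)$. Setting $V_j:=(j+\kappa_0-1)(j+\kappa_0)M_{K/2+j}$ and using $1-ab_k=\frac{j+\kappa_0-2}{j+\kappa_0}$, the first term collapses exactly into $V_{j-1}$, leaving
\begin{align*}
V_j\le V_{j-1}+\frac{4c(j+\kappa_0-1)}{a^2(j+\kappa_0)}\le V_{j-1}+\frac{4c}{a^2}.
\end{align*}
Telescoping over $j=1,\dots,K/2$ then yields $V_{K/2}\le V_0+\frac{2cK}{a^2}$, where $V_0=(\kappa_0-1)\kappa_0\,M_{K/2}$.

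Finally I would divide by $(\tfrac{K}{2}+\kappa_0-1)(\tfrac{K}{2}+\kappa_0)$ to recover $M_K$, substitute the phase-one bound into $V_0$, and check the constants. The coefficient in front of $(1-ab)^{K/2}M_0$ is $\frac{(\kappa_0-1)\kappa_0}{(K/2+\kappa_0-1)(K/2+\kappa_0)}\le 1$, so that term is absorbed cleanly; combining the two noise contributions and using $\frac{cb}{a}=\frac{2c}{a^2\kappa_0}$ gives a total noise of $\frac{2c}{a^2}\cdot\frac{K+\kappa_0-1}{(K/2+\kappa_0-1)(K/2+\kappa_0)}$. It then remains to verify $K(K+\kappa_0-1)\le 5(\tfrac{K}{2}+\kappa_0-1)(\tfrac{K}{2}+\kappa_0)$, which after expansion reduces to the manifestly nonnegative expression $\tfrac14K^2+4K\kappa_0-\tfrac32K+5\kappa_0^2-5\kappa_0\ge 0$ (each problematic term is dominated using $\kappa_0\ge 2$), delivering the claimed bound $\frac{10c}{a^2K}$. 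I expect the main obstacle to be precisely this constant chase: one must choose the telescoping weight so that the decaying factor cancels \emph{exactly}, and then combine the phase-one residual $cb/a$ with the phase-two accumulation $2cK/a^2$ jointly rather than bounding them separately, since bounding them apart only yields the looser constant $12$ instead of the stated $10$.
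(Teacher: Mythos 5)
Your proof is correct, and it follows essentially the same route as the paper's: geometric unrolling of the constant-stepsize phase, a quadratically growing weight that makes the decreasing-stepsize phase telescope, and a final division by the terminal weight. The two deviations are executional rather than conceptual. First, the paper multiplies the recursion by the squared weight $\left(2+(k-\tfrac{K}{2})ab\right)^2$, so its collapse is inexact and needs the extra step $(x+ab)(x-ab)\le x^2$, whereas your falling-factorial weight $(j+\kappa_0-1)(j+\kappa_0)$ telescopes exactly; this is marginally cleaner but changes nothing in the final bound. Second, you chase the constant through one joint polynomial inequality, while the paper simply bounds the two noise contributions separately via $\frac{4cb/a}{(2+\frac{K}{2}ab)^2}\le\frac{2c}{a^2K}$ and $\frac{2cb^2K}{(2+\frac{K}{2}ab)^2}\le\frac{8c}{a^2K}$, which already sums to $\frac{10c}{a^2K}$ — so your concern that separate bounding forces the looser constant $12$ does not apply to the paper's decomposition (it is an artifact of splitting after your change of variables). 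Both arguments also share the same implicit assumptions ($K/2$ an integer, $0<ab\le 1$ so that $\kappa_0\ge 2$ and the geometric sum bound holds, and $M_0\ge 0$ when absorbing the decaying term), so there is no gap on that front either.
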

	\begin{proofof}{Proposition \ref{lem:keyrel}}
		Telescoping \eqref{eq:relation} for the first part $0\leq k \leq K/2$, we have 
		\begin{align}
		M_{\frac{K}{2}}  &\leq (1-a b)^\frac{K}{2} M_{0} + cb^2\sum_{k=0}^{K/2-1}(1-ab)^k \notag\\
		&\leq   (1-a b)^\frac{K}{2} M_{0} + \frac{cb}{a}. \label{eq:relation2}
		\end{align}
		Now we consider the second part $\frac{K}{2}<k<K$, we have
		\begin{align}
		M_{k}  
		&~~\leq (1-a b_k) M_{k-1} + cb_k^2 \notag\\
		&\overset{\eqref{eq:setb}}{=} (1-\frac{2ab}{2+(k-\frac{K}{2})ab}) M_{k-1} + \frac{4cb^2}{(2+(k-\frac{K}{2})ab)^2} \notag\\
		\left(2+(k-\frac{K}{2})ab\right)^2 M_k 
		&~\leq~ \left(2+(k-\frac{K}{2})ab\right)\left(2+(k-\frac{K}{2})ab-2ab\right)M_{k-1} +4cb^2 \notag\\
		&~\leq~ \left(2+(k-1-\frac{K}{2})ab\right)^2M_{k-1} +4cb^2. \label{eq:relation3}
		\end{align}
		Telescoping \eqref{eq:relation3} for the second part $\frac{K}{2}<k<K$, we have 
		\begin{align}
		\left(2+(K-\frac{K}{2})ab\right)^2 M_K 
		&\leq 4M_{\frac{K}{2}} +2cb^2K  \notag\\
		&\leq 4 (1-a b)^\frac{K}{2} M_{0} + \frac{4cb}{a} +  2cb^2K \\
		M_K  &\leq (1-a b)^\frac{K}{2} M_{0} + \frac{2c}{a^2 K} +  \frac{8c}{a^2K} \notag\\
		&=(1-a b)^\frac{K}{2} M_{0} + \frac{10c}{a^2 K}. \notag
		\end{align}
	\end{proofof}
	
	Now, we apply Lemma \ref{lem:keyrel} to equation \eqref{eq:key-pl2} and set the stepsize $\eta_k$ as 
	\begin{align}\label{eq:seteta}
	\eta_{k} = \begin{cases}
	\eta  & \text {if~~}  k\leq \frac{K}{2}\\
	\frac{2\eta}{2+(k-\frac{K}{2})\mu \eta} &\text {if~~}  k>\frac{K}{2}
	\end{cases},
	\end{align}
	then we obtain
	\begin{align}
	\E[\Delta^K]
	& \leq (1-\mu\eta)^\frac{K}{2} \E[\Delta^0] + \frac{10C}{\mu^2 K}
	\notag\\
	&\leq \frac{\epsilon}{2} + \frac{\epsilon}{2} = \epsilon.    \label{eq:finalresult-pl2}
	\end{align}
	Thus, we find an $\epsilon$-solution, i.e. a point $x^K$ such that 
	$$\E[f(x^K) - \fs] \leq E[\Delta^K] \leq \epsilon.$$
	Note that \eqref{eq:finalresult-pl2} holds by choosing 
	\begin{align}
	K &~= \max \left\{ \frac{2}{\mu\eta}\log\frac{2\Delta^0}{\epsilon},~  \frac{20C}{\mu^2\epsilon}\right\} \notag\\
	&\overset{\eqref{eq:alpha_def-pl2} }{=} \max \left\{\frac{2L( B_1+2D_1B_2\rho^{-1} +(L A_1 + 2LD_1A_2 \rho^{-1})\mu^{-1})}{\mu}\log \frac{2\fgapp}{\epsilon}, \frac{10L(C_1+2D_1C_2\rho^{-1})}{\mu^2 \epsilon}\right\}.
	\end{align}
	Note that according to \eqref{eq:alpha_def-pl2} and \eqref{eq:seteta}, we know that the step size $\eta$ needs to satisfy
	$$\eta_{k} = \begin{cases}
	\eta   & \text {if~~}  k\leq \frac{K}{2}\\
	\frac{2\eta}{2+(k-\frac{K}{2})\mu\eta} &\text {if~~}  k>\frac{K}{2}
	\end{cases},$$
	where
	$$\eta \leq \frac{1}{LB_1+2LD_1B_2\rho^{-1} + (L A_1 + 2LD_1A_2 \rho^{-1})\mu^{-1}}.$$ 
	Also, $\Delta^0 \overset{\eqref{eq:d_def-pl}}{=} f(x^0) - \fs + \alpha \sigma_0^2 \overset{\eqref{eq:alpha_def-pl}}{=}  f(x^0) - f^* + L\eta^2D_1\rho^{-1} \sigma_0^2 \overset{\text{def}}{=} \fgapp$.
\end{proof}

\newpage
\section{Convergence Results and Proofs for Nonconvex Optimization ($m=1$)}
\label{sec:specialcases}

In this section, we provide the detailed convergence rates and proofs for some specific methods (see Section \ref{sec:nonconvex}) in the single machine case (i.e., $m=1$) of nonconvex federated problem \eqref{eq:prob-fed} which reduces to the standard nonconvex problem \eqref{eq:prob} with online form \ref{prob:exp} or finite-sum form \eqref{prob:finite}, i.e., 
\begin{equation*}
\min_{x\in \R^d}   f(x), \text{~~where~}  f(x) := \E_{\zeta\sim \cD}[f(x,\zeta)],  
\text{~~~or~~~}  f(x) := \frac{1}{n}\sum_{i=1}^n{f_i(x)}.
\end{equation*}

In the following, we prove that some specific methods, i.e., GD, SGD, L-SVRG and SAGA satisfy our unified Assumption \ref{asp:boge} and thus can be captured by our unified analysis.
Then, we plug their corresponding parameters (i.e., specific values for $A_1, A_2, B_1, B_2, C_1,C_2,D_1,\rho$) into our unified Theorem \ref{thm:main} to obtain the detailed convergence rates for these methods.

\subsection{GD method}
\label{sec:gd}

We first restate our Lemma \ref{lem:gd} for GD method (Algorithm \ref{alg:gd}) and provide its proof.
Then we plug its corresponding parameters (i.e., specific values for $A_1, A_2, B_1, B_2, C_1,C_2,D_1,\rho$) into our unified Theorem \ref{thm:main} to obtain the detailed convergence rate.

\begingroup
\def\thelemma{\ref{lem:gd}}
\begin{lemma}[GD]
	Let the gradient estimator $g^k=\nabla f(x^k)$ (see Line \ref{line:grad-gd} of Algorithm \ref{alg:gd}), then 
	$g^k$ satisfies the unified Assumption \ref{asp:boge} with 
	$$A_1=C_1=D_1=0,~ B_1=1,~ \sigma_k^2 \equiv 0,~ \rho=1,~ A_2=B_2=C_2=0.$$
\end{lemma}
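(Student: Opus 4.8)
The plan is to verify the two inequalities of Assumption~\ref{asp:boge} by direct substitution, exploiting the fact that the GD estimator $\gk = \nabla f(\xk)$ is deterministic conditional on $\xk$ and that the claimed parameters switch off all but one term. First I would establish unbiasedness: since $\gk$ carries no randomness given $\xk$, the conditional expectation satisfies $\Ek[\gk] = \gk = \nabla f(\xk)$, matching the unbiasedness requirement with equality.

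Next I would check the second-moment bound \eqref{eq:boge1}. Determinism again gives $\Ek[\ns{\gk}] = \ns{\nabla f(\xk)}$, and substituting $A_1 = C_1 = D_1 = 0$ and $B_1 = 1$ collapses the right-hand side of \eqref{eq:boge1} to exactly $\ns{\nabla f(\xk)}$. Hence \eqref{eq:boge1} holds as an identity for every $\xk$; in particular no smoothness or optimality-gap information is required.

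Finally I would dispose of the auxiliary recurrence \eqref{eq:boge2}, which is the only place that calls for a small conceptual remark rather than pure substitution. Because GD performs no variance reduction, I would simply declare the auxiliary sequence to be identically zero, $\sk \equiv 0$. With $\rho = 1$ and $A_2 = B_2 = C_2 = 0$, both sides of \eqref{eq:boge2} then equal zero, so the recurrence holds trivially and the chosen sequence is self-consistent. The closest this lemma comes to an obstacle is precisely this observation: recurrence \eqref{eq:boge2} is vacuous for any method lacking a variance-reduction state, so one is free to set the inactive parameters $\rho, A_2, B_2, C_2$ together with the trivial sequence $\sk \equiv 0$ to satisfy it automatically. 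This completes the verification.
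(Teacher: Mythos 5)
Your proposal is correct and matches the paper's own proof: both arguments rest on the observation that $g^k=\nabla f(x^k)$ is deterministic given $x^k$, so unbiasedness and the bound $\E_k[\ns{g^k}]\leq\ns{\nabla f(x^k)}$ hold immediately, and the parameters $A_1=C_1=D_1=0$, $B_1=1$, $\sigma_k^2\equiv 0$, $\rho=1$, $A_2=B_2=C_2=0$ satisfy Assumption~\ref{asp:boge}. Your explicit remark that recurrence \eqref{eq:boge2} becomes $0\leq 0$ and is therefore vacuous is a slightly more careful spelling-out of what the paper leaves implicit, but it is the same argument.
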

\addtocounter{lemma}{-1}
\endgroup

\begin{proofof}{Lemma \ref{lem:gd}}
	If gradient estimator $\gk = \nabla f(\xk)$, it is easy to see that 
	$$\Ek[\gk] = \nabla f(\xk)$$ and 
	$$\Ek[\ns{\gk}] \leq \ns{ \nabla f(\xk)}$$
	since there is no randomness in the algorithm.
	Thus, $g^k$ satisfies the unified Assumption \ref{asp:boge} with 
	$$A_1=C_1=D_1=0,~ B_1=1,~ \sigma_k^2 \equiv 0,~ \rho=1,~ A_2=B_2=C_2=0.$$
\end{proofof}

\begin{corollary}[GD]\label{cor:gd}
	Suppose that Assumption \ref{asp:lsmooth} holds. Let stepsize $\eta\leq \frac{1}{L}$, then
	the number of iterations performed by GD (Algorithm \ref{alg:gd}) to find an $\epsilon$-solution of nonconvex problem \eqref{eq:prob}, i.e. a point $\hx$ such that $\E[\n{\nabla f(\hx)}] \leq \epsilon$, can be bounded by
	$$
	K = \frac{8\fgap L}{\epsilon^2}.
	$$
\end{corollary}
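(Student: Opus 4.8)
The plan is to treat Corollary \ref{cor:gd} as a direct specialization of the main unified Theorem \ref{thm:main}, using the parameter values supplied by Lemma \ref{lem:gd}. By Lemma \ref{lem:gd}, the full-gradient estimator $g^k = \nabla f(x^k)$ satisfies Assumption \ref{asp:boge} with $A_1 = C_1 = D_1 = 0$, $B_1 = 1$, $\sigma_k^2 \equiv 0$, $\rho = 1$, and $A_2 = B_2 = C_2 = 0$. Since both Assumptions \ref{asp:boge} and \ref{asp:lsmooth} now hold, Theorem \ref{thm:main} applies, and it remains only to evaluate its stepsize and iteration bounds at these parameters.

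First I would substitute into the stepsize. The three terms in the min defining $\eta$ become $\frac{1}{LB_1 + LD_1 B_2 \rho^{-1}} = \frac{1}{L}$, while the second and third terms, having $LA_1 + LD_1 A_2 \rho^{-1} = 0$ and $C_1 + D_1 C_2 \rho^{-1} = 0$ in their denominators, impose no constraint (they are effectively $+\infty$). Hence the admissible stepsize is exactly $\eta \leq \frac{1}{L}$, matching the corollary's hypothesis. Next I would evaluate the iteration count \eqref{eq:main-ssuhud}. The three quantities inside the max reduce to $B_1 + D_1 B_2 \rho^{-1} = 1$, $\frac{12\fgapp (A_1 + D_1 A_2 \rho^{-1})}{\epsilon^2} = 0$, and $\frac{2(C_1 + D_1 C_2 \rho^{-1})}{\epsilon^2} = 0$, so the max equals $1$.

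Moreover, because $D_1 = 0$ and $\sigma_0^2 = 0$, the modified initial gap $\fgapp = f(x^0) - f^* + 2^{-1}L\eta^2 D_1 \rho^{-1} \sigma_0^2$ collapses to $\fgapp = f(x^0) - f^* = \fgap$. Plugging these into \eqref{eq:main-ssuhud} yields $K = \frac{8\fgap L}{\epsilon^2}$, which is precisely the claimed bound.

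There is essentially no technical obstacle here; the only point requiring care is the vanishing-parameter bookkeeping — verifying that setting $A_1 = C_1 = D_1 = 0$ correctly removes the $O(1/\epsilon^4)$-type terms, both from the stepsize min and from the iteration max, rather than producing ill-defined $0/0$ expressions. Since these parameters appear in numerators of the max and (multiplied by $D_1 = 0$) in denominators of the stepsize min, they simply drop out, leaving the clean deterministic rate. This confirms that the unified analysis recovers the classical $O(L\fgap/\epsilon^2)$ complexity of gradient descent for smooth nonconvex functions.
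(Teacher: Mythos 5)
Your proposal is correct and follows essentially the same route as the paper's own proof: invoke Lemma \ref{lem:gd} to obtain the parameter values, then specialize the stepsize condition and iteration bound of Theorem \ref{thm:main}, noting that $\fgapp = \fgap$ since $D_1 = 0$ and $\sigma_0^2 = 0$. Your explicit remark that the vanishing parameters make the second and third terms of the stepsize min unconstraining (rather than $0/0$) is a slightly more careful treatment of the degenerate cases than the paper gives, but the argument is the same.
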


\begin{proofof}{Corollary \ref{cor:gd}}
	According to our unified Theorem \ref{thm:main}, if the stepsize is chosen as 
	$$\eta \leq \min\left\{ \frac{1}{LB_1+LD_1B_2\rho^{-1}}, \sqrt{\frac{\ln 2}{(LA_1 + LD_1A_2\rho^{-1})K}}, \frac{\epsilon^2}{2L(C_1+D_1C_2\rho^{-1})} \right\} 
	= \frac{1}{L}$$
	since $B_1=1, A_1=C_1=D_1=0$ according to Lemma \ref{lem:gd},
	then the number of iterations performed by GD (Algorithm \ref{alg:gd}) to find an $\epsilon$-solution of problem \eqref{eq:prob} can be bounded by
	$$
	K = \frac{8\fgapp L}{\epsilon^2} \max \left\{ B_1+D_1B_2\rho^{-1}, \frac{12\fgapp (A_1+D_1 A_2\rho^{-1})}{\epsilon^2}, \frac{2(C_1+D_1C_2\rho^{-1})}{\epsilon^2}\right\} 
	= \frac{8\fgap L}{\epsilon^2}
	$$
	since $B_1=1, A_1=C_1=D_1=0, \sigma_0^2 = 0$, and 
	$$\fgapp:= f(x^0) - f^* + 2^{-1}L\eta^2D_1\rho^{-1} \sigma_0^2 = f(x^0) - f^* =\fgap.$$
\end{proofof}

\subsection{SGD method}
\label{sec:sgd}

We first restate our Lemma \ref{lem:sgd} for SGD method (Algorithm \ref{alg:sgd}) and provide its proof.
Then we plug its corresponding parameters (i.e., specific values for $A_1, A_2, B_1, B_2, C_1,C_2,D_1,\rho$) into our unified Theorem \ref{thm:main} to obtain the detailed convergence rate.

\begingroup
\def\thelemma{\ref{lem:sgd}}
\begin{lemma}[SGD]
	Let the gradient estimator $g^k$ (see Line \ref{line:grad-sgd} of Algorithm \ref{alg:sgd}) satisfy Assumption \ref{asp:es}, i.e.,
	$ \E_k[\ns{g^k}] \leq 2A(f(x^k)-f^*)+B\ns{\nabla f(x^k)} + C,$
	then $g^k$ satisfies the unified Assumption \ref{asp:boge} with  
	$$A_1=A,~ B_1=B,~ C_1= C,~ D_1=0,~ \sigma_k^2 \equiv 0,~ \rho=1,~ A_2=B_2=C_2=0.$$
\end{lemma}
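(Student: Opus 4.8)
The plan is to verify the three requirements of Assumption~\ref{asp:boge} one at a time under the proposed parameter values, observing that each reduces to something either already assumed or trivially true. Since the Expected Smoothness Assumption~\ref{asp:es} bundles unbiasedness $\E_k[g^k]=\nabla f(x^k)$ into its statement, the unbiasedness clause of Assumption~\ref{asp:boge} is inherited directly, with nothing to prove.

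Next I would handle inequality~\eqref{eq:boge1}. Plugging in $A_1=A$, $B_1=B$, $C_1=C$, $D_1=0$ and $\sigma_k^2\equiv 0$, the term $D_1\sigma_k^2$ vanishes and the right-hand side becomes precisely $2A(f(x^k)-f^*)+B\ns{\nabla f(x^k)}+C$. This is exactly the bound furnished by Assumption~\ref{asp:es}, so \eqref{eq:boge1} holds without further work. Finally, for inequality~\eqref{eq:boge2} I would set $\rho=1$, $A_2=B_2=C_2=0$ and use $\sigma_k^2\equiv 0$; the right-hand side then equals $(1-1)\cdot 0 + 0 + 0 + 0 = 0$, while the left-hand side is $\E_k[\sigma_{k+1}^2]=0$ because the sequence is identically zero, so the recurrence holds as $0\le 0$.

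There is no real obstacle in this argument: the content of the lemma is purely conceptual, namely that plain SGD carries no variance-reduction memory, so deactivating the auxiliary sequence $\{\sigma_k^2\}$ and its recurrence~\eqref{eq:boge2} recovers Assumption~\ref{asp:es} as the special case of Assumption~\ref{asp:boge} with $D_1=0$ and $\rho=1$. The lemma thus serves as a consistency check confirming that the unified assumption strictly generalizes the ES assumption of \citet{khaled2020better}.
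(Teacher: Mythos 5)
Your proof is correct and follows the same route as the paper's own argument: the paper simply notes that unbiasedness and inequality \eqref{eq:boge1} are inherited verbatim from Assumption~\ref{asp:es} with $D_1=0$, while \eqref{eq:boge2} is vacuous under $\sigma_k^2\equiv 0$, $\rho=1$, $A_2=B_2=C_2=0$. You spell out the trivial verification of \eqref{eq:boge2} as $0\le 0$ a bit more explicitly than the paper does, but the content is identical.
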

\addtocounter{lemma}{-1}
\endgroup

\begin{proofof}{Lemma \ref{lem:sgd}}
	Suppose that the gradient estimator $g^k$ satisfies Assumption \ref{asp:es}, i.e., 
	$$\Ek[\gk] = \nabla f(\xk)$$
	and 
	$$ \Ek[\ns{\gk}] \leq 2A(f(\xk)-f^*)+B\ns{\nabla f(\xk)} + C,$$
	Then it is easy to see that $g^k$ satisfies Assumption \ref{asp:boge} with 
	$$A_1=A,~ B_1=B,~ C_1= C,~ D_1=0,~ \sigma_k^2 \equiv 0,~ \rho=1,~ A_2=B_2=C_2=0.$$
\end{proofof}

\begin{corollary}[SGD]\label{cor:sgd}
	Suppose that Assumption \ref{asp:lsmooth} holds and the gradient estimator $g^k$ in Algorithm \ref{alg:sgd} satisfies Assumption \ref{asp:es}.
	Let stepsize $\eta \leq \min\{ \frac{1}{LB}, \sqrt{\frac{\ln 2}{LAK}}, \frac{\epsilon^2}{2LC} \}$, then
	the number of iterations performed by SGD (Algorithm \ref{alg:sgd}) for finding an $\epsilon$-solution of nonconvex problem \eqref{eq:prob} with \eqref{prob:exp} or \eqref{prob:finite}, i.e. a point $\hx$ such that $\E[\n{\nabla f(\hx)}] \leq \epsilon$, can be bounded by 
	$$
	K = \frac{8\fgap L}{\epsilon^2} \max \left\{B, \frac{12\fgap A}{\epsilon^2}, \frac{2C}{\epsilon^2}\right\}.
	$$
	Note that it recovers the recent result for SGD given by \citep{khaled2020better}.
\end{corollary}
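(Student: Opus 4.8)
The plan is to treat this corollary as a direct specialization of the unified main theorem, Theorem~\ref{thm:main}, via the parameter identification supplied by Lemma~\ref{lem:sgd}. First I would invoke Lemma~\ref{lem:sgd}: since the gradient estimator $g^k$ is assumed to satisfy the Expected Smoothness Assumption~\ref{asp:es}, it also satisfies the unified Assumption~\ref{asp:boge} with the particular parameter choice $A_1=A$, $B_1=B$, $C_1=C$, $D_1=0$, $\sigma_k^2\equiv 0$, $\rho=1$, and $A_2=B_2=C_2=0$. The key structural feature here is that $D_1=0$, which means the second-moment recursion \eqref{eq:boge2} is inactive and every term it contributes to Theorem~\ref{thm:main} vanishes.

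Next I would substitute these values into the stepsize prescription of Theorem~\ref{thm:main}. Because $D_1=0$, the products $LD_1B_2\rho^{-1}$, $LD_1A_2\rho^{-1}$, and $D_1C_2\rho^{-1}$ all collapse to zero, so the three-way minimum reduces to exactly $\eta\leq\min\{1/(LB),\ \sqrt{\ln 2/(LAK)},\ \epsilon^2/(2LC)\}$, matching the corollary's hypothesis. The same substitution into the iteration bound \eqref{eq:k-main} turns the max-expression into $\max\{B,\ 12\fgapp A/\epsilon^2,\ 2C/\epsilon^2\}$.

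Finally I would simplify the quantity $\fgapp$. By its definition $\fgapp := f(x^0)-\fs+2^{-1}L\eta^2 D_1\rho^{-1}\sigma_0^2$, and since $D_1=0$ (equivalently $\sigma_0^2=0$) the variance-reduction correction term drops out, giving $\fgapp = f(x^0)-\fs = \fgap$. Substituting $\fgap$ for $\fgapp$ yields precisely the claimed complexity $K=\tfrac{8\fgap L}{\epsilon^2}\max\{B,\ 12\fgap A/\epsilon^2,\ 2C/\epsilon^2\}$, and the $\epsilon$-solution guarantee $\E[\n{\nabla f(\hx)}]\leq\epsilon$ is inherited verbatim from Theorem~\ref{thm:main}. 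There is no genuine analytic obstacle in this proof: the descent lemma, the weighted telescoping with the potential $\Delta^k$, and the stepsize balancing have all already been carried out once and for all inside Theorem~\ref{thm:main}. The only thing to verify is that the plug-in is done correctly and that the degenerate $D_1=0$ case hides no division-by-$\rho$ issue; since $\rho=1$ here, $\rho^{-1}$ is well defined and the multiplications by $D_1=0$ are unambiguous. A closing remark would note that this recovers the known SGD rate of \citet{khaled2020better}.
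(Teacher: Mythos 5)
Your proposal is correct and follows exactly the paper's own route: plug the parameters from Lemma~\ref{lem:sgd} ($A_1=A$, $B_1=B$, $C_1=C$, $D_1=0$, $\rho=1$, $A_2=B_2=C_2=0$) into Theorem~\ref{thm:main}, observe that all $D_1$-terms vanish so the stepsize condition and iteration bound collapse to the claimed forms, and note $\fgapp=\fgap$ since $\sigma_0^2=0$. Your added remark that $\rho=1$ avoids any $\rho^{-1}$ ambiguity is a sound (if implicit in the paper) sanity check.
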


\begin{proofof}{Corollary \ref{cor:sgd}}
	According to our unified Theorem \ref{thm:main}, if the stepsize is chosen as 
	\begin{align}
	\eta &\leq \min\left\{ \frac{1}{LB_1+LD_1B_2\rho^{-1}}, \sqrt{\frac{\ln 2}{(LA_1 + LD_1A_2\rho^{-1})K}}, \frac{\epsilon^2}{2L(C_1+D_1C_2\rho^{-1})} \right\}  \notag\\
	&= \min\left\{ \frac{1}{LB}, \sqrt{\frac{\ln 2}{LAK}}, \frac{\epsilon^2}{2LC} \right\}
	\end{align}
	since $A_1=A, B_1=B, C_1= C, D_1=0 $ according to Lemma \ref{lem:sgd},
	then the number of iterations performed by SGD (Algorithm \ref{alg:sgd}) to find an $\epsilon$-solution of problem \eqref{eq:prob} with \eqref{prob:exp} or \eqref{prob:finite} can be bounded by
	\begin{align}
	K &= \frac{8\fgapp L}{\epsilon^2} \max \left\{ B_1+D_1B_2\rho^{-1}, \frac{12\fgapp (A_1+D_1 A_2\rho^{-1})}{\epsilon^2}, \frac{2(C_1+D_1C_2\rho^{-1})}{\epsilon^2}\right\}  \notag\\
	&= \frac{8\fgap L}{\epsilon^2} \max \left\{B, \frac{12\fgap A}{\epsilon^2}, \frac{2C}{\epsilon^2}\right\}
	\end{align}
	since $A_1=A, B_1=B, C_1= C, D_1=0, \sigma_0^2 = 0$, and 
	$$\fgapp:= f(x^0) - f^* + 2^{-1}L\eta^2D_1\rho^{-1} \sigma_0^2 = f(x^0) - f^* =\fgap.$$
\end{proofof}

\subsection{L-SVRG method}
\label{sec:lsvrg}

We first restate our Lemma \ref{lem:lsvrg} for L-SVRG method (Algorithm \ref{alg:lsvrg}) and provide its proof.
Then we plug its corresponding parameters (i.e., specific values for $A_1, A_2, B_1, B_2, C_1,C_2,D_1,\rho$) into our unified Theorem \ref{thm:main} to obtain the detailed convergence rate.

\begingroup
\def\thelemma{\ref{lem:lsvrg}}
\begin{lemma}[L-SVRG]
	Suppose that Assumption \ref{asp:avgsmooth} holds. The gradient estimator $g^k=\frac{1}{b} \sum_{i\in I_b} (\nabla f_i(x^k)- \nabla f_i(w^k)) +\nabla f(w^k)$ (see Line \ref{line:lsvrg} in Algorithm \ref{alg:lsvrg})
	satisfies the unified Assumption~\ref{asp:boge} with 
	$$A_1=A_2=C_1=C_2=0,$$
	$$B_1=1,~ D_1=\frac{L^2}{b},~ \sigma_k^2= \ns{x^k-w^k},~  \rho=\frac{p}{2}+\frac{p^2}{2}-\frac{\eta^2L^2}{b},~  B_2=\frac{2\eta^2}{p}-\eta^2.$$
\end{lemma}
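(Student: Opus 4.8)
The plan is to verify the two ingredients of Assumption~\ref{asp:boge} separately: the second-moment bound \eqref{eq:boge1} and the recursion \eqref{eq:boge2} for the potential $\sk = \ns{\xk - \wk}$. First I would establish unbiasedness: since $I_b$ is a uniform minibatch, $\Ek[\frac{1}{b}\sum_{i\in I_b}\nabla f_i(\xk)] = \nabla f(\xk)$ and likewise for the $\wk$ terms, so the control-variate structure gives $\Ek[\gk] = \nabla f(\xk)$. For \eqref{eq:boge1} I would use the bias-variance split $\Ek[\ns{\gk}] = \ns{\nabla f(\xk)} + \Ek[\ns{\gk - \nabla f(\xk)}]$, valid because $\gk$ is unbiased. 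The fluctuation $\gk - \nabla f(\xk)$ is the centered minibatch average of $\xi_i := \nabla f_i(\xk) - \nabla f_i(\wk)$, so the standard minibatch variance bound yields $\Ek[\ns{\gk - \nabla f(\xk)}] \leq \frac{1}{b}\cdot\frac{1}{n}\sum_{i=1}^n\ns{\nabla f_i(\xk) - \nabla f_i(\wk)}$, and Assumption~\ref{asp:avgsmooth} bounds the average by $L^2\ns{\xk - \wk}$. This gives exactly $\Ek[\ns{\gk}] \leq \ns{\nabla f(\xk)} + \frac{L^2}{b}\sk$, matching $B_1 = 1$, $D_1 = L^2/b$, and $A_1 = C_1 = 0$.

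For the recursion \eqref{eq:boge2} I would exploit the independence of the minibatch draw and the Bernoulli coin flip defining $\wkn$. Conditioning first on $I_b$ and then averaging over the coin gives the convex combination $\Ek[\skn] = p\,\Ek[\ns{\xkn - \xk}] + (1-p)\,\Ek[\ns{\xkn - \wk}]$. Using $\xkn - \xk = -\eta\gk$ and $\xkn - \wk = (\xk - \wk) - \eta\gk$, I would expand both squared norms, replace the cross term $-2\eta\inner{\xk - \wk}{\Ek[\gk]}$ by $-2\eta\inner{\xk - \wk}{\nabla f(\xk)}$ via unbiasedness, and substitute the second-moment bound from the first part for the $\Ek[\ns{\gk}]$ terms. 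Because $p\eta^2 + (1-p)\eta^2 = \eta^2$, this produces a bound of the form $\Ek[\skn] \leq \bigl((1-p)+\frac{\eta^2 L^2}{b}\bigr)\sk + \eta^2\ns{\nabla f(\xk)} - 2(1-p)\eta\inner{\xk - \wk}{\nabla f(\xk)}$, with all remaining contributions hidden in the indefinite cross term.

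The crux is handling that cross term $-2(1-p)\eta\inner{\xk - \wk}{\nabla f(\xk)}$, and this is where the exact values of $\rho$ and $B_2$ are forced. I would apply Young's inequality $2uv \leq c^{-1}u^2 + c v^2$ with the carefully tuned parameter $c = \frac{2}{p(1-p)}$ (for $p<1$; the cross term vanishes identically when $p=1$), so that the $\ns{\xk - \wk}$ contribution becomes $\frac{p(1-p)}{2}\sk$ and the gradient contribution becomes $\frac{2(1-p)\eta^2}{p}\ns{\nabla f(\xk)}$. Collecting the $\sk$ coefficients then gives $\frac{\eta^2 L^2}{b} + (1-p)(1+\tfrac{p}{2}) = 1 - \tfrac{p}{2} - \tfrac{p^2}{2} + \tfrac{\eta^2 L^2}{b} = 1 - \rho$, while collecting the $\ns{\nabla f(\xk)}$ coefficients gives $\eta^2 + \frac{2(1-p)\eta^2}{p} = \frac{2\eta^2}{p} - \eta^2 = B_2$, with $A_2 = C_2 = 0$. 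The main difficulty is recognizing that this particular $c$ is the unique choice landing simultaneously on the stated $\rho$ and $B_2$; a cruder split (e.g. absorbing $\frac{p}{2}\sk$) would waste the $\frac{p^2}{2}$ improvement in the contraction factor and yield a weaker $\rho$.
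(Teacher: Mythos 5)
Your proof is correct and follows essentially the same route as the paper's: the same bias--variance split with the minibatch variance bound and average smoothness for \eqref{eq:boge1}, the same coin-flip decomposition $\Ek[\skn]=p\,\Ek[\ns{\xkn-\xk}]+(1-p)\,\Ek[\ns{\xkn-\wk}]$, and the same Young's inequality treatment of the cross term---your parameter $c=\tfrac{2}{p(1-p)}$ (with the factor $1-p$ absorbed into one of the vectors) yields exactly the paper's split with $\beta=p/2$, namely $\tfrac{p(1-p)}{2}\sk+\tfrac{2(1-p)}{p}\eta^2\ns{\nabla f(\xk)}$. The only immaterial difference is ordering: you substitute the second-moment bound before applying Young's inequality, whereas the paper applies Young's first; the two operations commute.
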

\addtocounter{lemma}{-1}
\endgroup

\begin{proofof}{Lemma \ref{lem:lsvrg}}
	If gradient estimator $g^k=\frac{1}{b} \sum_{i\in I_b} (\nabla f_i(x^k)- \nabla f_i(w^k)) +\nabla f(w^k)$ (see Line \ref{line:lsvrg} in Algorithm \ref{alg:lsvrg}), we show the following equations:
	\begin{align}
	\Ek[\gk] &= \Ek\left[\frac{1}{b} \sum_{i\in I_b} (\nabla f_i(\xk)- \nabla f_i(\wk)) +\nabla f(\wk)\right] \notag\\
	&= \nabla f(\xk) - \nabla f(\wk) + \nabla f(\wk) = \nabla f(\xk) \label{eq:svrg0}
	\end{align}
	and 
	\begin{align}
	\Ek[\ns{\gk}] & = \Ek\left[\ns{\gk-\nabla f(\xk)}\right] + \ns{\nabla f(\xk) } \notag\\
	&= \Ek\left[\nsB{\frac{1}{b} \sum_{i\in I_b} (\nabla f_i(\xk)- \nabla f_i(\wk)) +\nabla f(\wk) - \nabla f(\xk)} \right]  + \ns{\nabla f(\xk) }  \notag\\
	&= \frac{1}{b^2} \Ek\left[\nsB{\sum_{i\in I_b} \left((\nabla f_i(\xk)- \nabla f_i(\wk)) -(\nabla f(\xk) - \nabla f(\wk)) \right)} \right]  + \ns{\nabla f(\xk) }  \notag\\
	&= \frac{1}{b} \Ek\left[\nsB{(\nabla f_i(\xk)- \nabla f_i(\wk)) -(\nabla f(\xk) - \nabla f(\wk))} \right]  + \ns{\nabla f(\xk) }  \notag\\
	&\leq \frac{1}{b} \Ek\left[\nsB{\nabla f_i(\xk)- \nabla f_i(\wk)} \right]  + \ns{\nabla f(\xk) }  \label{eq:var}\\
	& \leq  \frac{L^2}{b} \ns{\xk-\wk} + \ns{\nabla f(\xk)}, \label{eq:svrg1}
	\end{align}
	where \eqref{eq:var} uses the fact $\E[\ns{x-\E[x]}]\leq \E[\ns{x}]$, and the last inequality uses Assumption \ref{asp:avgsmooth} (i.e., \eqref{eq:avgsmooth}).
	Now, we define $\sk:=\ns{\xk-\wk}$ and obtain
	\begin{align}
	&\Ek[\skn] \notag\\
	& := \Ek[\ns{\xkn-\wkn}]  \notag\\
	&= p\Ek[\ns{\xkn-\xk}] + (1-p)\Ek[\ns{\xkn-\wk}]  \label{eq:useprob}\\
	&=p\eta^2\Ek[\ns{\gk}] + (1-p)\Ek[\ns{\xk-\eta \gk-\wk}]   \label{eq:useupdate} \\
	&=p\eta^2\Ek[\ns{\gk}] + (1-p)\Ek[\ns{\xk-\wk} + \ns{\eta \gk} -2\inner{\xk-\wk}{\eta\gk}]  \notag\\
	&= p\eta^2\Ek[\ns{\gk}] + (1-p)\ns{\xk-\wk} + (1-p)\eta^2\Ek[\ns{\gk}] -2(1-p)\inner{\xk-\wk}{\eta \nabla f(\xk)} \notag \\
	&\leq \eta^2\Ek[\ns{\gk}] + (1-p)\ns{\xk-\wk} +(1-p)\beta\ns{\xk-\wk} + \frac{1-p}{\beta}\ns{\eta \nabla f(\xk)} \label{eq:useyoung} \\
	&= \eta^2\Ek[\ns{\gk}] + (1-p)(1+\beta)\ns{\xk-\wk}  
	+ \frac{(1-p)\eta^2}{\beta}\ns{\nabla f(\xk)} \notag
	\end{align}
	\begin{align}
	&\leq \eta^2 \left(\frac{L^2}{b} \ns{\xk-\wk} + \ns{\nabla f(\xk)} \right)
	+(1-p)(1+\beta)\ns{\xk-\wk}  
	+ \frac{(1-p)\eta^2}{\beta}\ns{\nabla f(\xk)}  \label{eq:usesvrg1}\\
	&=  \left((1-p)(1+\beta) +\frac{\eta^2L^2}{b}\right)\ns{x^k-w^k} + \left( \frac{(1-p)\eta^2}{\beta} + \eta^2\right)\ns{\nabla f(x^k)}  \notag\\
	&=  \left(1-\frac{p}{2}-\frac{p^2}{2}+\frac{\eta^2L^2}{b}\right)\ns{x^k-w^k} + \left(\frac{2\eta^2}{p}-\eta^2\right)\ns{\nabla f(x^k)}, \label{eq:svrg2}
	\end{align}
	where \eqref{eq:useprob} uses Line \ref{line:w_prob} of Algorithm \ref{alg:lsvrg}, \eqref{eq:useupdate} uses Line \ref{line:update-lsvrg} of Algorithm \ref{alg:lsvrg}, \eqref{eq:useyoung} uses Young's inequality for $\forall \beta>0$, \eqref{eq:usesvrg1} uses \eqref{eq:svrg1}, and \eqref{eq:svrg2} holds by setting $\beta=p/2$.
	
	Now, according to \eqref{eq:svrg0}, \eqref{eq:svrg1} and \eqref{eq:svrg2}, we know $g^k$ satisfies the unified Assumption \ref{asp:boge} with $$A_1=A_2=C_1=C_2=0,$$
	$$B_1=1,~ D_1=\frac{L^2}{b},~ \sigma_k^2= \ns{x^k-w^k},~  \rho=\frac{p}{2}+\frac{p^2}{2}-\frac{\eta^2L^2}{b},~  B_2=\frac{2\eta^2}{p}-\eta^2.$$
\end{proofof}

\begin{corollary}[L-SVRG]\label{cor:lsvrg}
	Suppose that Assumption \ref{asp:avgsmooth} holds.
	Let stepsize $\eta \leq \frac{1}{L(1+2b^{-1/3}p^{-2/3})}$, 
	then the number of iterations performed by L-SVRG (Algorithm \ref{alg:lsvrg}) for finding an $\epsilon$-solution of nonconvex problem \eqref{eq:prob} with \eqref{prob:finite}, i.e. a point $\hx$ such that $\E[\n{\nabla f(\hx)}] \leq \epsilon$, can be bounded by 
	$$
	K = \frac{8\fgap L}{\epsilon^2} \left(1+ \frac{2}{b^{1/3}p^{2/3}}\right).
	$$
	In particular, we have
	\begin{enumerate}
		\item let minibatch size $b=1$ and probability $p=\frac{1}{n}$, then the number of iterations $K = \frac{24\fgap  Ln^{2/3}}{\epsilon^2}$.
		\item let minibatch size $b=n^{2/3}$ and probability $p=\frac{1}{n^{1/3}}$, then the number of iterations $K =\frac{24\fgap  L}{\epsilon^2}$, but each iteration costs $n^{2/3}$ due to minibatch size $b=n^{2/3}$.
	\end{enumerate}
\end{corollary}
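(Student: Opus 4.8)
The plan is to invoke Lemma \ref{lem:lsvrg} to read off the concrete parameter values $A_1,A_2,B_1,B_2,C_1,C_2,D_1,\rho$ for the L-SVRG estimator and feed them into the generic bound of Theorem \ref{thm:main}. The crucial simplification is that Lemma \ref{lem:lsvrg} gives $A_1=A_2=C_1=C_2=0$, so the second and third entries of the maximum in \eqref{eq:k-main} vanish, and the corresponding square-root and $\epsilon^2/\cdots$ constraints on the stepsize become vacuous. Only the first stepsize constraint $\eta\le\frac{1}{L(B_1+D_1B_2\rho^{-1})}$ survives, and the iteration count collapses to $K=\frac{8\fgapp L}{\epsilon^2}(B_1+D_1B_2\rho^{-1})$. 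Moreover, since L-SVRG is initialized with $x^0=w^0$, we have $\sigma_0^2=\ns{x^0-w^0}=0$, hence $\fgapp=f(x^0)-f^*=\fgap$.

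The core of the argument is therefore to show that the prescribed stepsize $\eta\le\frac{1}{L(1+2b^{-1/3}p^{-2/3})}$ yields the clean bound $B_1+D_1B_2\rho^{-1}=1+D_1B_2\rho^{-1}\le 1+\frac{2}{b^{1/3}p^{2/3}}$, which simultaneously certifies the stepsize constraint and produces the claimed $K$. First I would control $\rho$ from below. Using $L^2\eta^2\le(2b^{-1/3}p^{-2/3})^{-2}=\frac{1}{4}b^{2/3}p^{4/3}$, one gets $\frac{\eta^2L^2}{b}\le\frac{p^{4/3}}{4b^{1/3}}=\frac{p}{4}(p/b)^{1/3}\le\frac{p}{4}$ because $p\le 1\le b$; plugging this into $\rho=\frac{p}{2}+\frac{p^2}{2}-\frac{\eta^2L^2}{b}$ gives $\rho\ge\frac{p}{4}$, so $\rho^{-1}\le\frac{4}{p}$. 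Next I would bound the numerator: $D_1B_2=\frac{L^2}{b}\left(\frac{2\eta^2}{p}-\eta^2\right)\le\frac{2L^2\eta^2}{bp}$. Combining, $D_1B_2\rho^{-1}\le\frac{8L^2\eta^2}{bp^2}\le\frac{8}{bp^2}\cdot\frac{b^{2/3}p^{4/3}}{4}=\frac{2}{b^{1/3}p^{2/3}}$, as required.

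Having established $B_1+D_1B_2\rho^{-1}\le 1+\frac{2}{b^{1/3}p^{2/3}}$, the generic bound gives $K=\frac{8\fgap L}{\epsilon^2}\left(1+\frac{2}{b^{1/3}p^{2/3}}\right)$. The two special cases then follow by direct substitution: for $b=1,\ p=\frac1n$ we have $b^{1/3}p^{2/3}=n^{-2/3}$, so $K=\frac{8\fgap L}{\epsilon^2}(1+2n^{2/3})\le\frac{24\fgap Ln^{2/3}}{\epsilon^2}$ (using $1\le n^{2/3}$); for $b=n^{2/3},\ p=n^{-1/3}$ we have $b^{1/3}p^{2/3}=n^{2/9}n^{-2/9}=1$, giving $K=\frac{24\fgap L}{\epsilon^2}$, at a per-iteration cost of $b=n^{2/3}$ gradients.

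I expect the main obstacle to be the bookkeeping in the middle paragraph — in particular, verifying the monotonicity fact $(p/b)^{1/3}\le 1$ needed so that $\rho\ge p/4$, i.e.\ so that the variance-reduction rate $\rho$ stays safely positive under the chosen stepsize, and then checking that the final product telescopes exactly to $\frac{2}{b^{1/3}p^{2/3}}$ with no slack lost. Once $\rho$ is bounded away from zero, the rest is a routine chain of elementary inequalities, so the only genuine subtlety is ensuring that the stepsize is small enough that $\frac{\eta^2L^2}{b}$ does not overwhelm the $\frac{p}{2}$ term in $\rho$.
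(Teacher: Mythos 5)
Your proposal is correct and follows essentially the same route as the paper's proof: plug the Lemma \ref{lem:lsvrg} parameters into Theorem \ref{thm:main}, observe that $A_1=A_2=C_1=C_2=0$ and $\sigma_0^2=\ns{x^0-w^0}=0$ collapse the bound to $K=\frac{8\fgap L}{\epsilon^2}(B_1+D_1B_2\rho^{-1})$, and then bound $B_1+D_1B_2\rho^{-1}\le 1+\frac{2}{b^{1/3}p^{2/3}}$ via $\frac{\eta^2L^2}{b}\le\frac{p}{4}$ and hence $\rho\ge\frac{p}{4}$. In fact your derivation of $\frac{\eta^2L^2}{b}\le\frac{p}{4}$ from the stepsize (using $p\le 1\le b$) is spelled out more explicitly than in the paper, which merely asserts this fact.
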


\topic{Remark} i) The analysis and results for L-SVRG in this nonconvex case are new. Previous work studied the standard SVRG form \citep{johnson2013accelerating} not this simpler loopless version.
ii) L-SVRG enjoys a linear speedup for parallel computation, i.e., if one can parallel compute $b$ minibatch stochastic gradients, then one can finding an $\epsilon$-solution within $O(\frac{n^{2/3}}{\epsilon^2 b})$ steps by letting $p=\frac{b}{n}$ while a single node (minibatch $b=1$) needs $O(\frac{n^{2/3}}{\epsilon^2})$ steps. In the best case, one can achieve $b=n^{2/3}$ times acceleration via parallel computation.

\begin{proofof}{Corollary \ref{cor:lsvrg}}
	According to our unified Theorem \ref{thm:main}, the stepsize should be chosen as 
	\begin{align}
	\eta &\leq \min\left\{ \frac{1}{LB_1+LD_1B_2\rho^{-1}}, \sqrt{\frac{\ln 2}{(LA_1 + LD_1A_2\rho^{-1})K}}, \frac{\epsilon^2}{2L(C_1+D_1C_2\rho^{-1})} \right\}  \notag\\
	&= \frac{1}{LB_1+LD_1B_2\rho^{-1}}
	\end{align}
	since $A_1=A_2=C_1=C_2=0$ according to Lemma \ref{lem:lsvrg}.
	Then the number of iterations performed by L-SVRG (Algorithm \ref{alg:lsvrg}) to find an $\epsilon$-solution of problem \eqref{eq:prob} with \eqref{prob:finite} can be bounded by
	\begin{align}
	K &= \frac{8\fgapp L}{\epsilon^2} \max \left\{ B_1+D_1B_2\rho^{-1}, \frac{12\fgapp (A_1+D_1 A_2\rho^{-1})}{\epsilon^2}, \frac{2(C_1+D_1C_2\rho^{-1})}{\epsilon^2}\right\}  \notag\\
	&= \frac{8\fgap L}{\epsilon^2} (B_1+D_1B_2\rho^{-1})
	\end{align}
	since $A_1=A_2=C_1=C_2=0, \sigma_0^2 = \ns{x^0-w^0}=0$, and $\fgapp:= f(x^0) - f^* + 2^{-1}L\eta^2D_1\rho^{-1} \sigma_0^2 = f(x^0) - f^* =\fgap$.
	
	Now, the remaining thing is to upper bound the term $B_1+D_1B_2\rho^{-1}$,
	\begin{align}
	B_1+D_1B_2\rho^{-1} 
	&= 1+ \frac{L^2}{b}\left(\frac{2\eta^2}{p}-\eta^2\right) \left(\frac{p}{2}+\frac{p^2}{2}-\frac{\eta^2L^2}{b}\right)^{-1} \label{eq:svrgpara}\\
	& \leq 1+ \frac{L^2}{b}\left(\frac{2\eta^2}{p}\right) \left(\frac{p}{4}\right)^{-1}      \label{eq:pb} \\
	& = 1+ \frac{8L^2\eta^2}{bp^2}  \notag\\
	&\leq 1+ \frac{2}{b^{1/3}p^{2/3}}, \label{eq:plugeta}  
	\end{align}
	where \eqref{eq:svrgpara} follows from $B_1=1,  D_1=\frac{L^2}{b}, B_2=\frac{2\eta^2}{p}-\eta^2, \rho=\frac{p}{2}+\frac{p^2}{2}-\frac{\eta^2L^2}{b}$ in Lemma \ref{lem:lsvrg}, 
	the last inequality \eqref{eq:plugeta} holds by setting 
	$\eta \leq \frac{1}{L(1+\frac{2}{b^{1/3}p^{2/3}})} 
	\leq \frac{1}{LB_1+LD_1B_2\rho^{-1}}$, 
	and \eqref{eq:pb} is due to the fact $\frac{\eta^2L^2}{b}\leq \frac{p}{4}$.
	
	In sum, let stepsize 
	$$	\eta \leq \frac{1}{L(1+\frac{2}{b^{1/3}p^{2/3}})},$$ 
	then the number of iterations performed by L-SVRG (Algorithm \ref{alg:lsvrg}) to find an $\epsilon$-solution of problem \eqref{eq:prob} with \eqref{prob:finite} can be bounded by
	\begin{align}
	K &= \frac{8\fgap L}{\epsilon^2}\left(1+ \frac{2}{b^{1/3}p^{2/3}}\right).
	\end{align}
	In particular, we have
	\begin{enumerate}
		\item let minibatch size $b=1$ and probability $p=\frac{1}{n}$, then the number of iterations $K = \frac{24\fgap  Ln^{2/3}}{\epsilon^2}$.
		\item let minibatch size $b=n^{2/3}$and probability $p=\frac{1}{n^{1/3}}$, then the number of iterations $K =\frac{24\fgap  L}{\epsilon^2}$, but each iteration costs $n^{2/3}$ due to minibatch size $b=n^{2/3}$.
	\end{enumerate}
\end{proofof}

\subsection{SAGA method}
\label{sec:saga}

We first restate our Lemma \ref{lem:saga} for SAGA method (Algorithm \ref{alg:saga}) and provide its proof.
Then we plug its corresponding parameters (i.e., specific values for $A_1, A_2, B_1, B_2, C_1,C_2,D_1,\rho$) into our unified Theorem \ref{thm:main} to obtain the detailed convergence rate.

\begingroup
\def\thelemma{\ref{lem:saga}}
\begin{lemma}[SAGA]
	Suppose that Assumption \ref{asp:avgsmooth-saga} holds. The gradient estimator $g^k= \frac{1}{b} \sum_{i\in I_b} (\nabla f_i(x^k)- \nabla f_i(w_i^k)) +\frac{1}{n}\sum_{j=1}^{n}\nabla f_j(w_j^k)$ (see Line \ref{line:saga} in Algorithm \ref{alg:saga})
	satisfies the unified Assumption~\ref{asp:boge} with $$A_1=A_2=C_1=C_2=0,$$
	$$B_1=1,~ D_1=\frac{L^2}{b},~ \sigma_k^2=\frac{1}{n}\sum_{i=1}^{n} \ns{x^k-w_i^k},~  \rho=\frac{b}{2n}+\frac{b^2}{2n^2}-\frac{\eta^2L^2}{b},~ B_2=\frac{2\eta^2n}{b}-\eta^2.$$
\end{lemma}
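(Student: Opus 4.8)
The plan is to follow the same three-part template used in the proof of Lemma~\ref{lem:lsvrg} for L-SVRG, exploiting the tight structural analogy between the two methods: SAGA is essentially L-SVRG in which the single reference point $w^k$ is replaced by a table $\{w_i^k\}$ and the global refresh probability $p$ is replaced by the per-index selection probability $b/n$. Concretely, I expect all the SAGA parameters to arise from the L-SVRG parameters via the substitution $p \mapsto b/n$, which already matches the claimed values $\rho = \frac{b}{2n}+\frac{b^2}{2n^2}-\frac{\eta^2L^2}{b}$ and $B_2 = \frac{2\eta^2 n}{b}-\eta^2$.

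First I would establish unbiasedness. Since $I_b$ is a uniform minibatch of size $b$, we have $\Ek[\frac{1}{b}\sum_{i\in I_b}\nabla f_i(x^k)] = \frac{1}{n}\sum_{i=1}^n \nabla f_i(x^k) = \nabla f(x^k)$ and likewise $\Ek[\frac{1}{b}\sum_{i\in I_b}\nabla f_i(w_i^k)] = \frac{1}{n}\sum_{i=1}^n \nabla f_i(w_i^k)$, while the control term $\frac{1}{n}\sum_j \nabla f_j(w_j^k)$ is measurable with respect to the past. Subtracting cancels the stored-gradient average and yields $\Ek[g^k] = \nabla f(x^k)$. Next I would bound the second moment via the bias-variance split $\Ek[\ns{g^k}] = \Ek[\ns{g^k - \nabla f(x^k)}] + \ns{\nabla f(x^k)}$, writing the deviation as $\frac{1}{b}\sum_{i\in I_b}(\zeta_i - \bar\zeta)$ with $\zeta_i := \nabla f_i(x^k) - \nabla f_i(w_i^k)$ and $\bar\zeta := \frac{1}{n}\sum_i \zeta_i$. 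The sampling identity used in \eqref{eq:var} reduces $\frac{1}{b^2}\Ek[\ns{\sum_{i\in I_b}(\cdot)}]$ to $\frac{1}{b}\Ek[\ns{\zeta_i - \bar\zeta}]$; dropping $-\ns{\bar\zeta}$ and invoking Assumption~\ref{asp:avgsmooth-saga} bounds this by $\frac{L^2}{b}\cdot\frac{1}{n}\sum_i\ns{x^k - w_i^k}$. Identifying $\sigma_k^2 := \frac{1}{n}\sum_i \ns{x^k - w_i^k}$ then gives $\Ek[\ns{g^k}] \leq \frac{L^2}{b}\sigma_k^2 + \ns{\nabla f(x^k)}$, i.e.\ $B_1 = 1$, $D_1 = L^2/b$, and $A_1 = C_1 = 0$.

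Finally, for the recurrence on $\sigma_{k+1}^2$ I would expand $\ns{x^{k+1} - w_i^{k+1}}$ per index according to the SAGA refresh rule (Line~\ref{line:wi}): with probability $b/n$ the index is selected and $w_i^{k+1} = x^k$, contributing $\eta^2\ns{g^k}$, and otherwise $w_i^{k+1} = w_i^k$, contributing $\ns{x^k - w_i^k - \eta g^k}$. Expanding the square, applying Young's inequality to the cross term with parameter $\beta = b/(2n)$, substituting the second-moment bound from the previous step, and averaging over $i$ should reproduce the L-SVRG computation \eqref{eq:useprob}--\eqref{eq:svrg2} with $p$ replaced by $b/n$, delivering the stated $\rho$, $B_2$, and $A_2 = C_2 = 0$.

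The main obstacle is precisely this last step. Unlike L-SVRG, where the refresh coin in Line~\ref{line:w_prob} is independent of $g^k$, here the selection event $\{i\in I_b\}$ and the estimator $g^k$ both depend on the \emph{same} minibatch $I_b$, so the cross term $\Ek[\mathbb{1}_{i\notin I_b}\inner{x^k - w_i^k}{g^k}]$ cannot be decoupled by a naive conditioning on the selection. Getting this step clean---so that the identity $\Ek[g^k] = \nabla f(x^k)$ can be used inside the cross term exactly as in \eqref{eq:useyoung}---is the crux that makes the $p \mapsto b/n$ correspondence rigorous, and it is the place where the argument genuinely diverges from the L-SVRG proof rather than merely relabeling constants.
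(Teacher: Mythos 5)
Your outline is the same three-part argument as the paper's proof of Lemma~\ref{lem:saga}: the identical unbiasedness computation, the identical bias--variance split plus sampling identity plus Assumption~\ref{asp:avgsmooth-saga} for the second moment, and the identical expansion of the refresh rule (Line~\ref{line:wi}) followed by Young's inequality with parameter $b/(2n)$. The one place you depart from the paper is your closing paragraph, and there the situation is the reverse of what you suggest: the coupling between the selection event $\{i\in I_b\}$ and $\gk$ is not resolved by some idea in the paper that your plan is missing --- the paper simply performs the naive decoupling. In \eqref{eq:usewi}--\eqref{eq:useupdate1} the indicators $\mathbf{1}_{i\in I_b}$, $\mathbf{1}_{i\notin I_b}$ are replaced by the probabilities $b/n$, $1-b/n$ inside $\Ek$, and in the following line the cross term is evaluated as if $\Ek[\mathbf{1}_{i\notin I_b}\,\gk] = (1-\tfrac{b}{n})\nabla f(\xk)$. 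So you have correctly identified a gap, but it is a gap in the paper's own proof, not an extra obstacle that the paper clears by other means.

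For completeness, here is how serious the gap is and how to close it. After averaging over $i$, the quadratic terms decouple \emph{exactly}: $\sum_{i=1}^n \mathbf{1}_{i\in I_b} = b$ and $\sum_{i=1}^n \mathbf{1}_{i\notin I_b} = n-b$ hold deterministically, so $\Ek\big[\tfrac{1}{n}\sum_i \mathbf{1}_{i\in I_b}\,\eta^2\ns{\gk}\big] = \tfrac{b\eta^2}{n}\Ek[\ns{\gk}]$ and likewise for the $\mathbf{1}_{i\notin I_b}\,\eta^2\ns{\gk}$ contribution, while $\mathbf{1}_{i\notin I_b}\ns{\xk-\wik}$ decouples because $\ns{\xk-\wik}$ is fixed under $\Ek$. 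The only genuinely correlated piece is the cross term, exactly as you say. It can be computed from the joint inclusion probabilities: for $j\neq i$, $\Prob(j\in I_b,\ i\notin I_b)=\tfrac{b}{n}\cdot\tfrac{n-b}{n-1}$, which gives, with $\zeta_j := \nabla f_j(\xk)-\nabla f_j(\wjk)$ and $\bar\zeta := \tfrac{1}{n}\sum_{j=1}^n \zeta_j$,
\begin{align*}
\Ek[\mathbf{1}_{i\notin I_b}\,\gk] = \Big(1-\tfrac{b}{n}\Big)\nabla f(\xk) + \tfrac{n-b}{n(n-1)}\big(\bar\zeta - \zeta_i\big).
\end{align*}
The first term is what the paper uses; the second is the neglected correction. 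Bounding its contribution by Young's inequality and Assumption~\ref{asp:avgsmooth-saga} (note that $\tfrac{1}{n}\sum_i\ns{\zeta_i-\bar\zeta} \le L^2\sk$) adds a term of order $\tfrac{\eta L}{n-1}\sk$ to the recurrence, which is dominated by $\rho \approx \tfrac{b}{2n}$ under the stepsize of Corollary~\ref{cor:saga} and can therefore be absorbed at the cost of slightly worse constants in $\rho$ and $B_2$. In short: completed the paper's way, your plan reproduces the lemma verbatim but inherits the unjustified decoupling; completed with the exact inclusion probabilities, it is rigorous and proves the lemma with marginally adjusted constants.
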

\addtocounter{lemma}{-1}
\endgroup

\begin{proofof}{Lemma \ref{lem:saga}}
	If gradient estimator $g^k=\frac{1}{b} \sum_{i\in I_b} (\nabla f_i(\xk)- \nabla f_i(\wik)) +\frac{1}{n}\sum_{j=1}^{n}\nabla f_j(\wjk)$ (see Line \ref{line:saga} in Algorithm \ref{alg:saga}), we show the following equations:
	\begin{align}
	\Ek[\gk] &= \Ek\left[\frac{1}{b} \sum_{i\in I_b} (\nabla f_i(\xk)- \nabla f_i(\wik)) +\frac{1}{n}\sum_{j=1}^{n}\nabla f_j(\wjk)\right] \notag\\
	&= \nabla f(\xk) - \frac{1}{n}\sum_{i=1}^{n}\nabla f_i(\wik)+ \frac{1}{n}\sum_{j=1}^{n}\nabla f_j(\wjk) \notag\\
	&= \nabla f(\xk) \label{eq:saga0}
	\end{align}
	and 
	\begin{align}
	&\Ek[\ns{\gk}] \notag\\
	& = \Ek\left[\ns{\gk-\nabla f(\xk)}\right] + \ns{\nabla f(\xk) } \notag\\
	&= \Ek\left[\nsB{\frac{1}{b} \sum_{i\in I_b} (\nabla f_i(\xk)- \nabla f_i(\wik)) +\frac{1}{n}\sum_{j=1}^{n}\nabla f_j(\wjk) - \nabla f(\xk)} \right]  + \ns{\nabla f(\xk) }  \notag \\
	&= \frac{1}{b^2} \Ek\left[\nsB{\sum_{i\in I_b} \left(\left(\nabla f_i(\xk)- \nabla f_i(\wik)\right) -\left(\frac{1}{n}\sum_{j=1}^{n}\nabla f_j(\xk) - \frac{1}{n}\sum_{j=1}^{n}\nabla f_j(\wjk)\right) \right)} \right]  \notag\\
	&\qquad  + \ns{\nabla f(\xk) }  \notag\\
	&= \frac{1}{b^2} \Ek\left[\sum_{i\in I_b}\nsB{\left(\nabla f_i(\xk)- \nabla f_i(\wik)\right) -\left(\frac{1}{n}\sum_{j=1}^{n}\nabla f_j(\xk) - \frac{1}{n}\sum_{j=1}^{n}\nabla f_j(\wjk)\right)} \right]  \notag\\
	&\qquad  + \ns{\nabla f(\xk) }  \notag\\
	&\leq \frac{1}{b} \Ek\left[\nsB{\nabla f_i(\xk)- \nabla f_i(\wik)} \right]  + \ns{\nabla f(\xk) }  \label{eq:var1}\\
	& \leq  \frac{L^2}{b} \frac{1}{n}\sum_{i=1}^{n} \ns{\xk-\wik} + \ns{\nabla f(\xk)}, \label{eq:saga1}
	\end{align}
	where \eqref{eq:var1} uses the fact $\E[\ns{x-\E[x]}]\leq \E[\ns{x}]$, and the last inequality uses Assumption \ref{asp:avgsmooth-saga} (i.e., \eqref{eq:avgsmooth-saga}).
	Now, we denote $\sk:=\frac{1}{n}\sum_{i=1}^{n} \ns{\xk-\wik}$ and obtain
	\begin{align}
	\Ek[\skn] & := \EkB{\frac{1}{n}\sum_{i=1}^{n} \ns{\xkn- \wikn}}  \notag\\
	&= \EkB{\frac{1}{n}\sum_{i=1}^{n} \frac{b}{n}\ns{\xkn-\xk} 
		+ \frac{1}{n}\sum_{i=1}^{n} \left(1-\frac{b}{n}\right)\ns{\xkn-\wik} }  \label{eq:usewi}\\
	&= \frac{b}{n}\Ek\eta^2\ns{\gk} 
	+ \left(1-\frac{b}{n}\right)\EkB{\frac{1}{n}\sum_{i=1}^{n} \ns{\xk-\eta\gk-\wik} }  \label{eq:useupdate1} \\
	&= \frac{b\eta^2}{n}\Ek\ns{\gk} 
	+ \left(1-\frac{b}{n}\right)\EkB{\frac{1}{n}\sum_{i=1}^{n} \left(\ns{\xk-\wik} + \ns{\eta \gk} -2\inner{\xk-\wik}{\eta\gk}\right)}  \notag
	\end{align}
	\begin{align}
	&= \eta^2\Ek\ns{\gk} 
	+ \left(1-\frac{b}{n}\right)\frac{1}{n}\sum_{i=1}^{n} \ns{\xk-\wik} 
	+ 2\left(1-\frac{b}{n}\right)\frac{1}{n}\sum_{i=1}^{n}\inner{\xk-\wik}{\eta \nabla f(\xk)}  \notag\\
	&\leq \eta^2\Ek\ns{\gk} 
	+ \left(1-\frac{b}{n}\right)\frac{1}{n}\sum_{i=1}^{n} \ns{\xk-\wik} \notag\\
	&\qquad \qquad 
	+ \left(1-\frac{b}{n}\right)\frac{1}{n}\sum_{i=1}^{n}\left(\beta\ns{\xk-\wik} +\frac{\eta^2}{\beta}\ns{\nabla f(\xk)}\right) \label{eq:useyoung1} \\
	&= \eta^2\Ek\ns{\gk} 
	+ \left(1-\frac{b}{n}\right)\left(1+\beta\right)\frac{1}{n}\sum_{i=1}^{n} \ns{\xk-\wik} 
	+ \left(1-\frac{b}{n}\right)\frac{\eta^2}{\beta}\ns{\nabla f(\xk)} \notag\\
	&\leq \eta^2 \left(\frac{L^2}{b} \frac{1}{n}\sum_{i=1}^{n} \ns{\xk-\wik} + \ns{\nabla f(\xk)}\right)  \notag\\
	&\qquad \qquad 
	+ \left(1-\frac{b}{n}\right)\left(1+\beta\right)\frac{1}{n}\sum_{i=1}^{n} \ns{\xk-\wik} 
	+ \left(1-\frac{b}{n}\right)\frac{\eta^2}{\beta}\ns{\nabla f(\xk)} \label{eq:usesaga1}\\
	&=  \left(\left(1-\frac{b}{n}\right)\left(1+\beta\right) +\frac{\eta^2L^2}{b}\right)\frac{1}{n}\sum_{i=1}^{n} \ns{\xk-\wik} + \left(\left(1-\frac{b}{n}\right)\frac{\eta^2}{\beta}+ \eta^2\right)\ns{\nabla f(x^k)}  \notag\\
	&=  \left(1-\frac{b}{2n}-\frac{b^2}{2n^2}+\frac{\eta^2L^2}{b}\right)\frac{1}{n}\sum_{i=1}^{n} \ns{\xk-\wik}  + \left(\frac{2\eta^2n}{b}-\eta^2\right)\ns{\nabla f(x^k)}, \label{eq:saga2}
	\end{align}
	where \eqref{eq:usewi} uses Line \ref{line:wi} of Algorithm \ref{alg:saga}, \eqref{eq:useupdate1} uses Line \ref{line:update-saga} of Algorithm \ref{alg:saga}, \eqref{eq:useyoung1} uses Young's inequality for $\forall \beta>0$, \eqref{eq:usesaga1} uses \eqref{eq:saga1}, and \eqref{eq:saga2} holds by setting $\beta=b/2n$.
	
	Now, according to \eqref{eq:saga0}, \eqref{eq:saga1} and \eqref{eq:saga2}, we know $g^k$ satisfies the unified Assumption \ref{asp:boge} with $$A_1=A_2=C_1=C_2=0,$$
	$$B_1=1,~ D_1=\frac{L^2}{b},~ \sigma_k^2=\frac{1}{n}\sum_{i=1}^{n} \ns{x^k-w_i^k},~  \rho=\frac{b}{2n}+\frac{b^2}{2n^2}-\frac{\eta^2L^2}{b},~ B_2=\frac{2\eta^2n}{b}-\eta^2.$$
\end{proofof}

\begin{corollary}[SAGA]\label{cor:saga}
	Suppose that Assumption \ref{asp:avgsmooth-saga} holds.
	Let stepsize $\eta \leq \frac{1}{L(1+2n^{2/3}b^{-1})}$, 
	then the number of iterations performed by SAGA (Algorithm \ref{alg:saga}) for finding an $\epsilon$-solution of nonconvex problem \eqref{eq:prob} with \eqref{prob:finite}, i.e. a point $\hx$ such that $\E[\n{\nabla f(\hx)}] \leq \epsilon$, can be bounded by 
	$$
	K = \frac{8\fgap L}{\epsilon^2} \left(1+ \frac{2n^{2/3}}{b}\right).
	$$
	In particular, we have
	\begin{enumerate}
		\item let minibatch size $b=1$, then the number of iterations $K = \frac{24\fgap  Ln^{2/3}}{\epsilon^2}$.
		\item let minibatch size $b=n^{2/3}$, then the number of iterations $K =\frac{24\fgap  L}{\epsilon^2}$, but each iteration costs $n^{2/3}$ due to minibatch size $b=n^{2/3}$.
	\end{enumerate}
\end{corollary}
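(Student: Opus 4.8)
The plan is to follow the same template as the proof of Corollary~\ref{cor:lsvrg}: take the SAGA parameters established in Lemma~\ref{lem:saga} and feed them into the unified Theorem~\ref{thm:main}. Because Lemma~\ref{lem:saga} gives $A_1=A_2=C_1=C_2=0$, the second and third arguments of both the stepsize minimum and the iteration maximum in Theorem~\ref{thm:main} become vacuous, so the admissible stepsize collapses to $\eta\leq\frac{1}{LB_1+LD_1B_2\rho^{-1}}$ and the iteration bound collapses to $K=\frac{8\fgapp L}{\epsilon^2}(B_1+D_1B_2\rho^{-1})$. Moreover, since SAGA is initialized with $w_i^0=x^0$ for all $i$, we have $\sigma_0^2=\frac{1}{n}\summ\ns{x^0-w_i^0}=0$, hence $\fgapp=f(x^0)-f^*=\fgap$. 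Thus the corollary reduces to bounding the single quantity $B_1+D_1B_2\rho^{-1}$ and checking consistency of the stepsize.

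The core of the argument is to show $B_1+D_1B_2\rho^{-1}\leq 1+\frac{2n^{2/3}}{b}$ under the stated stepsize. First I would lower-bound $\rho$: discarding the nonnegative term $\frac{b^2}{2n^2}$ and using the stepsize to guarantee $\frac{\eta^2L^2}{b}\leq\frac{b}{4n}$, one obtains $\rho=\frac{b}{2n}+\frac{b^2}{2n^2}-\frac{\eta^2L^2}{b}\geq\frac{b}{4n}$, i.e.\ $\rho^{-1}\leq\frac{4n}{b}$. Plugging $B_1=1$, $D_1=\frac{L^2}{b}$, and $B_2=\frac{2\eta^2n}{b}-\eta^2\leq\frac{2\eta^2n}{b}$ then yields $B_1+D_1B_2\rho^{-1}\leq 1+\frac{L^2}{b}\cdot\frac{2\eta^2n}{b}\cdot\frac{4n}{b}=1+\frac{8L^2\eta^2n^2}{b^3}$. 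Finally, the stepsize choice $\eta\leq\frac{1}{L(1+2n^{2/3}/b)}\leq\frac{b}{2Ln^{2/3}}$ gives $\eta^2\leq\frac{b^2}{4L^2n^{4/3}}$, so $\frac{8L^2\eta^2n^2}{b^3}\leq\frac{2n^{2/3}}{b}$, completing the bound.

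The two bookkeeping checks are that the prescribed stepsize satisfies both requirements simultaneously. The bound just derived shows $1+2n^{2/3}/b\geq B_1+D_1B_2\rho^{-1}$, so $\eta\leq\frac{1}{L(1+2n^{2/3}/b)}\leq\frac{1}{L(B_1+D_1B_2\rho^{-1})}$, which is exactly the stepsize condition demanded by Theorem~\ref{thm:main}; and since $n^{4/3}\geq n$ for $n\geq 1$, the same stepsize yields $\frac{\eta^2L^2}{b}\leq\frac{b}{4n^{4/3}}\leq\frac{b}{4n}$, which is the inequality used above to lower-bound $\rho$. Substituting the bound on $B_1+D_1B_2\rho^{-1}$ into $K=\frac{8\fgap L}{\epsilon^2}(B_1+D_1B_2\rho^{-1})$ gives $K=\frac{8\fgap L}{\epsilon^2}(1+\frac{2n^{2/3}}{b})$, and the two special cases follow by setting $b=1$ and $b=n^{2/3}$ and using $1+2n^{2/3}\leq 3n^{2/3}$.

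The main obstacle is the self-referential nature of the stepsize constraint: the admissible $\eta$ from Theorem~\ref{thm:main} depends on $\rho$, which itself depends on $\eta$ through the variance-reduction term $-\frac{\eta^2L^2}{b}$. The trick (mirroring the L-SVRG analysis, with the substitution $p\to b/n$) is to choose $\eta$ small enough that this term is dominated by $\frac{b}{4n}$, thereby decoupling the dependence and allowing one to replace $\rho^{-1}$ by the clean constant $\frac{4n}{b}$ before verifying the final stepsize inequality.
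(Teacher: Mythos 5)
Your proposal is correct and follows essentially the same route as the paper's own proof: plug the SAGA parameters of Lemma~\ref{lem:saga} into Theorem~\ref{thm:main}, use $A_1=A_2=C_1=C_2=0$ and $\sigma_0^2=0$ to collapse the stepsize condition and the bound to $K=\frac{8\fgap L}{\epsilon^2}\left(B_1+D_1B_2\rho^{-1}\right)$, and then show $B_1+D_1B_2\rho^{-1}\leq 1+\frac{2n^{2/3}}{b}$ via $B_2\leq\frac{2\eta^2 n}{b}$, $\rho\geq\frac{b}{4n}$ (guaranteed by $\frac{\eta^2L^2}{b}\leq\frac{b}{4n}$), and the stated stepsize. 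Your explicit handling of the self-referential stepsize constraint is the same decoupling the paper uses, and is in fact stated more cleanly, since the paper's SAGA proof carries over a typo from the L-SVRG case (writing $\eta \leq \frac{1}{L(1+2b^{-1/3}p^{-2/3})}$ where $\eta \leq \frac{1}{L(1+2n^{2/3}b^{-1})}$ is meant).
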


\topic{Remark} Similar to L-SVRG, SAGA also enjoys a linear speedup for parallel computation, i.e., if one can parallel compute $b$ minibatch stochastic gradients, then one can finding an $\epsilon$-solution within $O(\frac{n^{2/3}}{\epsilon^2 b})$ steps while a single node (minibatch $b=1$) needs $O(\frac{n^{2/3}}{\epsilon^2})$ steps. In the best case, one can achieve $b=n^{2/3}$ times acceleration via parallel computation.

\begin{proofof}{Corollary \ref{cor:saga}}
	According to our unified Theorem \ref{thm:main}, the stepsize should be chosen as 
	\begin{align}
	\eta &\leq \min\left\{ \frac{1}{LB_1+LD_1B_2\rho^{-1}}, \sqrt{\frac{\ln 2}{(LA_1 + LD_1A_2\rho^{-1})K}}, \frac{\epsilon^2}{2L(C_1+D_1C_2\rho^{-1})} \right\}  \notag\\
	&= \frac{1}{LB_1+LD_1B_2\rho^{-1}}
	\end{align}
	since $A_1=A_2=C_1=C_2=0$ according to Lemma \ref{lem:saga}.
	Then the number of iterations performed by SAGA (Algorithm \ref{alg:saga}) to find an $\epsilon$-solution of problem \eqref{eq:prob} with \eqref{prob:finite} can be bounded by
	\begin{align}
	K &= \frac{8\fgapp L}{\epsilon^2} \max \left\{ B_1+D_1B_2\rho^{-1}, \frac{12\fgapp (A_1+D_1 A_2\rho^{-1})}{\epsilon^2}, \frac{2(C_1+D_1C_2\rho^{-1})}{\epsilon^2}\right\}  \notag\\
	&= \frac{8\fgap L}{\epsilon^2} (B_1+D_1B_2\rho^{-1})
	\end{align}
	since $A_1=A_2=C_1=C_2=0, \sigma_0^2 = \ns{x^0-w^0}=0$, and $\fgapp:= f(x^0) - f^* + 2^{-1}L\eta^2D_1\rho^{-1} \sigma_0^2 = f(x^0) - f^* =\fgap$.
	
	Now, the remaining thing is to upper bound the term $B_1+D_1B_2\rho^{-1}$,
	\begin{align}
	B_1+D_1B_2\rho^{-1} 
	&= 1+ \frac{L^2}{b}\left(\frac{2\eta^2n}{b}-\eta^2\right) \left(\frac{b}{2n}+\frac{b^2}{2n^2}-\frac{\eta^2L^2}{b}\right)^{-1} \label{eq:sagapara}\\
	& \leq 1+ \frac{L^2}{b}\left(\frac{2\eta^2n}{b}\right) \left(\frac{b}{4n}\right)^{-1}      \label{eq:pb1} \\
	& = 1+ \frac{8n^2L^2\eta^2}{b^3}  \notag\\
	&\leq 1+ \frac{2n^{2/3}}{b}, \label{eq:plugeta1}  
	\end{align}
	where \eqref{eq:sagapara} follows from $B_1=1,  D_1=\frac{L^2}{b},
	B_2=\frac{2\eta^2n}{b}-\eta^2,
	\rho=\frac{b}{2n}+\frac{b^2}{2n^2}-\frac{\eta^2L^2}{b}$ in Lemma \ref{lem:saga}, 
	the last inequality \eqref{eq:plugeta1} holds by setting 
	$\eta \leq \frac{1}{L(1+\frac{2}{b^{1/3}p^{2/3}})} 
	\leq \frac{1}{LB_1+LD_1B_2\rho^{-1}}$, 
	and \eqref{eq:pb1} is due to the fact $\frac{\eta^2L^2}{b}\leq \frac{b}{4n}$.
	
	In sum, let stepsize 
	$$	\eta \leq \frac{1}{L(1+\frac{2n^{2/3}}{b})},$$ 
	then the number of iterations performed by SAGA (Algorithm \ref{alg:saga}) to find an $\epsilon$-solution of problem \eqref{eq:prob} with \eqref{prob:finite} can be bounded by
	\begin{align}
	K &= \frac{8\fgap L}{\epsilon^2}\left(1+ \frac{2n^{2/3}}{b}\right).
	\end{align}
	In particular, we have
	\begin{enumerate}
		\item let minibatch size $b=1$, then the number of iterations $K = \frac{24\fgap  Ln^{2/3}}{\epsilon^2}$.
		\item let minibatch size $b=n^{2/3}$, then the number of iterations $K =\frac{24\fgap  L}{\epsilon^2}$, but each iteration costs $n^{2/3}$ due to minibatch size $b=n^{2/3}$.
	\end{enumerate}
\end{proofof}

\newpage
\section{Convergence Results and Proofs for Nonconvex Federated Optimization}
\label{sec:federated-app}

In this section, we provide the detailed convergence rates and proofs for the more general nonconvex distributed/federated problem \eqref{eq:prob-fed} with online form \eqref{prob-fed:exp} or finite-sum form \eqref{prob-fed:finite}, i.e.,
\begin{equation*}
\min_{x\in \R^d} \bigg\{  f(x) := \frac{1}{m}\sum_{i=1}^m{f_i(x)}  \bigg\}, 
\text{~~where~}  f_i(x) := \E_{\zeta \sim \cD_i}[f_i(x,\zeta)],  
\text{~~or~~}  f_i(x) := \frac{1}{n}\sum_{j=1}^n{f_{i,j}(x)}.
\end{equation*}
Here we allow that different machine/worker $i\in [m]$ can have different data distribution $\cD_i$, i.e., non-IID data (heterogeneous data) setting.

Note that in this general distributed/federated problem, the bottleneck usually is the communication cost among all workers. Thus we focus on the compressed gradient methods. 
Here we recall the definition of  compression operator.
\begingroup
\def\thedefinition{\ref{def_compression}}
\begin{definition}[Compression operator]
	A randomized map $\calC: \R^d\mapsto \R^d$ is an $\omega$-compression operator  if  
	\begin{equation}\label{eq:compress}
	\E[\calC(x)]=x,  \qquad \E[\ns{\calC(x)-x}] \leq \omega\ns{x}, \qquad \forall x\in \R^d.
	\end{equation}
	In particular, no compression ($\calC(x)\equiv x$) implies $\omega=0$.
\end{definition}
\addtocounter{definition}{-1}
\endgroup

In the following, we prove that several (new) methods belonging to the proposed general DC framework (Algorithm \ref{alg:dc}) and DIANA framework (Algorithm \ref{alg:diana}) for solving this general distributed/federated problem also satisfy the unified Assumption \ref{asp:boge}
and thus can also be captured by our unified analysis.
Then, we plug their corresponding parameters into our unified Theorem \ref{thm:main} to obtain the detailed convergence rates for these methods. 

\subsection{DC framework for nonconvex federated optimization}
\label{sec:dc-app}

We first prove a general Theorem for DC framework (Algorithm \ref{alg:dc})  which shows that several (new) methods belonging to the general DC framework satisfy Assumption \ref{asp:boge} and thus can be captured by our unified analysis.
Then, we plug their corresponding parameters into our unified Theorem \ref{thm:main} to obtain the detailed convergence rates for these methods. 

Before proving the Theorem \ref{thm:dc-diff}, we first provide a simple version as in Theorem \ref{thm:dc} where all workers share the same variance term $\tsk$ (see \eqref{eq:gi1-dc}).
If the parallel workers use GD, SGD or L-SVRG for computing their local stochastic gradient $\widetilde{g}_i^k$ (see Line \ref{line:localgrad-dc} of Algorithm \ref{alg:dc}), then they indeed share the same variance term $\tsk$, i.e., Theorem \ref{thm:dc} includes these settings.
However, if the parallel workers use SAGA-type methods for computing their local stochastic gradient $\widetilde{g}_i^k$, then the variance term $\ski$ (see \eqref{eq:gi1-dc-diff}) is different for different worker $i$, i.e., the more general Theorem \ref{thm:dc-diff} includes this SAGA setting while Theorem \ref{thm:dc} does not.

\begin{theorem}[DC framework with same variance for all workers]\label{thm:dc} 
	Suppose that the local stochastic gradient $\widetilde{g}_i^k$ (see Line \ref{line:localgrad-dc} of Algorithm \ref{alg:dc}) satisfies 
	\begin{align}
	\E_k[\ns{\widetilde{g}_i^k}] & \leq 2A_{1,i}(f_i(x^k)-f_i^*)+B_{1,i}\ns{\nabla f_i(x^k)} + \blue{D_{1}' \widetilde{\sigma}_{k}^2} +C_{1,i},  \label{eq:gi1-dc}\\
	\E_k[\widetilde{\sigma}_{k+1}^2] & \leq \blue{(1-\rho')\widetilde{\sigma}_{k}^2 + 2A_{2}'(f(x^k)-f^*)+B_{2}'\ns{\nabla f(x^k)} + D_2'\E_k[\ns{g^k}] +C_2'}, \label{eq:gi2-dc}
	\end{align}
	then $g^k$ (see Line \ref{line:gk-dc} of Algorithm \ref{alg:dc}) satisfies the unified Assumption \ref{asp:boge}, i.e., 
	\begin{align}
	\E_k[\ns{g^k}] & \leq 2A_1(f(x^k)-f^*)+B_1\ns{\nabla f(x^k)} + D_1 \sigma_k^2 +C_1, \label{eq:gk1-dc}\\
	\E_k[\sigma_{k+1}^2] & \leq (1-\rho)\sigma_k^2 + 2A_2(f(x^k)-f^*)+B_2\ns{\nabla f(x^k)} +C_2, \label{eq:gk2-dc}
	\end{align}
	with parameters
	\begin{align*}
	&A_1 =\frac{(1+\omega)A}{m}, \qquad
	B_1 =1, \qquad 
	C_1  =  \frac{(1+\omega)C}{m}, \\ 
	&D_1 =\frac{1+\omega}{m}, \qquad
	\sk =  D_{1}'\tsk, \qquad
	\rho =\rho'-\tau, \\
	&A_2 =D_{1}'A_{2}'+ \tau A, \qquad
	B_2 =D_{1}'B_{2}'+D_{1}'D_2', \qquad 
	C_2  = D_{1}'C_2' +  \tau C,
	\end{align*}
	where 
	$A:=\max_i  (A_{1,i}+B_{1,i}L_i-L_i/(1+\omega))$,
	$C := \frac{1}{m} \summ C_{1,i} + 2A\Delta_f^*$,
	$\Delta_f^*:=f^*-\frac{1}{m}\summ f_i^*$,
	and $\tau:= \frac{(1+\omega)D_{1}'D_2'}{m}$.
\end{theorem}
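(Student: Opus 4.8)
The plan is to verify the two inequalities of Assumption~\ref{asp:boge} directly from the aggregation rule $\gk = \frac{1}{m}\summ \Ci(\gi)$, using the compression property (Definition~\ref{def_compression}), unbiasedness of the local estimators $\Ek[\gi]=\nabla f_i(\xk)$, and the local recursions \eqref{eq:gi1-dc}--\eqref{eq:gi2-dc}. First I would confirm unbiasedness: conditioning on the past and on the $\gi$, the tower property together with $\E[\Ci(x)]=x$ gives $\Ek[\Ci(\gi)]=\Ek[\gi]=\nabla f_i(\xk)$, hence $\Ek[\gk]=\frac{1}{m}\summ\nabla f_i(\xk)=\nabla f(\xk)$.

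For the second-moment bound \eqref{eq:gk1-dc}, I would split via unbiasedness as $\Ek[\ns{\gk}]=\ns{\nabla f(\xk)}+\Ek[\ns{\gk-\nabla f(\xk)}]$, which immediately forces $B_1=1$. Because different workers compress with independent randomness, the centered summands $\Ci(\gi)-\nabla f_i(\xk)$ are conditionally mean-zero and independent, so cross terms vanish and $\Ek[\ns{\gk-\nabla f(\xk)}]=\frac{1}{m^2}\summ\Ek[\ns{\Ci(\gi)-\nabla f_i(\xk)}]$. For each worker I would peel off the compression error by conditioning on $\gi$: $\E[\ns{\Ci(x)-x}]\le\omega\ns{x}$ plus unbiasedness yields $\Ek[\ns{\Ci(\gi)-\nabla f_i(\xk)}]\le(1+\omega)\Ek[\ns{\gi}]-\ns{\nabla f_i(\xk)}$.

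Next I would insert \eqref{eq:gi1-dc}. The crucial manoeuvre is absorbing the residual per-worker gradient terms: the coefficient of $\ns{\nabla f_i(\xk)}$ is $(1+\omega)B_{1,i}-1$, and applying the smoothness inequality $\ns{\nabla f_i(\xk)}\le 2L_i(f_i(\xk)-f_i^*)$ (a consequence of Assumption~\ref{asp:lsmooth-diana}) bounds worker $i$'s contribution by $2(1+\omega)(A_{1,i}+B_{1,i}L_i-L_i/(1+\omega))(f_i(\xk)-f_i^*)\le 2(1+\omega)A(f_i(\xk)-f_i^*)$ by the definition of $A$. Summing, dividing by $m^2$, and rewriting $\frac1m\summ(f_i(\xk)-f_i^*)=f(\xk)-f^*+\Delta_f^*$ produces exactly $A_1=\tfrac{(1+\omega)A}{m}$ and $C_1=\tfrac{(1+\omega)C}{m}$ (the offset $2A\Delta_f^*$ and the average of $C_{1,i}$ combining into $C$), while the compression-inflated $\tsk$ term gives $D_1=\tfrac{1+\omega}{m}$ acting on $\sk=D_1'\tsk$.

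Finally, for the variance recursion \eqref{eq:gk2-dc} I would multiply \eqref{eq:gi2-dc} by $D_1'$ (so $\skn=D_1'\tskn$) and substitute the just-proved bound on $\Ek[\ns{\gk}]$ into the $D_2'\Ek[\ns{\gk}]$ term. Collecting coefficients, the $\tsk$ terms give $D_1'((1-\rho')+(1+\omega)D_1'D_2'/m)=(1-\rho)D_1'$ with $\rho=\rho'-\tau$ and $\tau=\tfrac{(1+\omega)D_1'D_2'}{m}$, and the gap, gradient, and constant terms recombine into $A_2=D_1'A_2'+\tau A$, $B_2=D_1'(B_2'+D_2')$, and $C_2=D_1'C_2'+\tau C$. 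I expect this bookkeeping to be the main obstacle: the recursion \eqref{eq:gi2-dc} is self-referential through $\Ek[\ns{\gk}]$, so feeding the first bound back into the second is what shrinks the effective contraction $\rho$ by $\tau$; making the smoothness-based conversion of the per-worker $\ns{\nabla f_i(\xk)}$ into the global gap line up with the stated $A$, $C$, and $\Delta_f^*$ is where all the care is required.
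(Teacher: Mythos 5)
Your proposal is correct and follows essentially the same route as the paper's proof: the same unbiasedness-plus-independence variance decomposition arriving at $\frac{1}{m^2}\sum_{i=1}^m\left((1+\omega)\E_k[\ns{\gi}]-\ns{\nabla f_i(\xk)}\right)+\ns{\nabla f(\xk)}$, the same smoothness absorption $\ns{\nabla f_i(\xk)}\leq 2L_i(f_i(\xk)-f_i^*)$ producing $A$, $C$ and $\Delta_f^*$, and the same substitution of the first bound into the $D_2'\E_k[\ns{g^k}]$ term of \eqref{eq:gi2-dc} to obtain $\rho=\rho'-\tau$, $A_2=D_1'A_2'+\tau A$, $B_2=D_1'(B_2'+D_2')$, $C_2=D_1'C_2'+\tau C$. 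The only deviation (centering each worker's term around $\nabla f_i(\xk)$ and peeling off the compression error per worker, rather than splitting off the aggregate compression error first as the paper does) is a cosmetic reordering of the same computation.
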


Before providing the proof for Theorem \ref{thm:dc}, we recall the more general Theorem \ref{thm:dc-diff} here for better comparison. Then we provide the detailed proofs for Theorems \ref{thm:dc} and \ref{thm:dc-diff}.

\begingroup
\def\thetheorem{\ref{thm:dc-diff}}
\begin{theorem}[DC framework with different variance for different worker]
	Suppose that the local stochastic gradient $\widetilde{g}_i^k$ (see Line \ref{line:localgrad-dc} of Algorithm \ref{alg:dc}) satisfies 
	\begin{align}
	\E_k[\ns{\widetilde{g}_i^k}] & \leq 2A_{1,i}(f_i(x^k)-f_i^*)+B_{1,i}\ns{\nabla f_i(x^k)} + \blue{D_{1,i} \sigma_{k,i}^2} +C_{1,i}, \label{eq:gi1-dc-diff}\\
	\E_k[\sigma_{k+1,i}^2] & \leq \blue{(1-\rho_i)\sigma_{k,i}^2 + 2A_{2,i}(f(x^k)-f^*)+B_{2,i}\ns{\nabla f(x^k)} + D_{2,i}\E_k[\ns{g^k}] +C_{2,i}}, \label{eq:gi2-dc-diff}
	\end{align}
	then $g^k$ (see Line \ref{line:gk-dc} of Algorithm \ref{alg:dc}) satisfies the unified Assumption \ref{asp:boge}
	i.e., 
	\begin{align}
	\E_k[\ns{g^k}] & \leq 2A_1(f(x^k)-f^*)+B_1\ns{\nabla f(x^k)} + D_1 \sigma_k^2 +C_1, \label{eq:gk1-dc-diff}\\
	\E_k[\sigma_{k+1}^2] & \leq (1-\rho)\sigma_k^2 + 2A_2(f(x^k)-f^*)+B_2\ns{\nabla f(x^k)} +C_2, \label{eq:gk2-dc-diff}
	\end{align}
	with parameters
	\begin{align*}
	&A_1 =\frac{(1+\omega)A}{m}, \qquad
	B_1 =1, \qquad 
	C_1  =  \frac{(1+\omega)C}{m}, \\ 
	&D_1 =\frac{1+\omega}{m}, \qquad
	\sk =  \frac{1}{m}\summ  D_{1,i} \ski , \qquad
	\rho =\min_i\rho_i-\tau, \\
	&A_2 =D_A+ \tau A, \qquad
	B_2 =D_B+D_D, \qquad 
	C_2  = D_C+  \tau C, 
	\end{align*}
	where 
	$A:=\max_i  (A_{1,i}+B_{1,i}L_i-L_i/(1+\omega))$,
	$C := \frac{1}{m} \summ C_{1,i} + 2A\Delta_f^*$,
	$\Delta_f^*:=f^*-\frac{1}{m}\summ f_i^*$,
	$\tau:=\frac{(1+\omega)D_D}{m}$,
	$D_A:=\frac{1}{m} \summ  D_{1,i}A_{2,i}$,
	$D_B:=\frac{1}{m} \summ  D_{1,i} B_{2,i}$,
	$D_D:=\frac{1}{m} \summ  D_{1,i} D_{2,i}$,
	and 
	$D_C:=\frac{1}{m} \summ  D_{1,i}C_{2,i}$.
\end{theorem}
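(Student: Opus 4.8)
The plan is to verify the two inequalities of Assumption~\ref{asp:boge} for the aggregated estimator $g^k = \frac{1}{m}\sum_{i=1}^m \cC_i^k(\widetilde g_i^k)$, relying on two standing structural facts of the DC framework: the local estimators are conditionally unbiased, $\E_k[\widetilde g_i^k] = \nabla f_i(x^k)$, and the pairs $(\widetilde g_i^k, \cC_i^k)$ use independent randomness across workers $i$. Unbiasedness of $g^k$ is then immediate: applying the tower property to the compressors (Definition~\ref{def_compression}) gives $\E_k[g^k] = \frac{1}{m}\sum_i \E_k[\widetilde g_i^k] = \frac{1}{m}\sum_i \nabla f_i(x^k) = \nabla f(x^k)$.

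First I would establish the second-moment bound \eqref{eq:gk1-dc-diff}. Writing $Y_i := \cC_i^k(\widetilde g_i^k)$, the workers' independence makes the $Y_i$ conditionally independent with $\E_k[Y_i] = \nabla f_i(x^k)$, so a variance decomposition yields
$$\E_k[\ns{g^k}] = \ns{\nabla f(x^k)} + \frac{1}{m^2}\sum_i \big(\E_k[\ns{Y_i}] - \ns{\nabla f_i(x^k)}\big).$$
Bounding $\E_k[\ns{Y_i}] \leq (1+\omega)\E_k[\ns{\widetilde g_i^k}]$ via the compression inequality and substituting the local bound \eqref{eq:gi1-dc-diff}, the $\ns{\nabla f(x^k)}$ term survives with coefficient exactly $B_1 = 1$, while the residual $\ns{\nabla f_i(x^k)}$ terms carry coefficient $(1+\omega)B_{1,i} - 1$. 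The key conversion step is to absorb these into function-value gaps via the smoothness inequality $\ns{\nabla f_i(x^k)} \leq 2L_i(f_i(x^k) - f_i^*)$; combining with the $2A_{1,i}$ term produces the coefficient $2(1+\omega)\big(A_{1,i} + B_{1,i}L_i - L_i/(1+\omega)\big)$, which is where the definition of $A = \max_i(\cdots)$ and the $-L_i/(1+\omega)$ shift originate. Finally I would aggregate using $\frac{1}{m}\sum_i f_i(x^k) = f(x^k)$ together with $\sum_i(f^* - f_i^*) = m\Delta_f^*$ to split the gaps into $(f(x^k)-f^*)$ and a constant; the latter folds into $C$, delivering $A_1, C_1$, and $D_1$ with $\sigma_k^2 = \frac{1}{m}\sum_i D_{1,i}\sigma_{k,i}^2$.

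Next I would derive \eqref{eq:gk2-dc-diff} by taking the $D_{1,i}$-weighted average of \eqref{eq:gi2-dc-diff}, since $\E_k[\sigma_{k+1}^2] = \frac{1}{m}\sum_i D_{1,i}\E_k[\sigma_{k+1,i}^2]$. The contraction term is handled by $1 - \rho_i \leq 1 - \min_i \rho_i$, giving the $(1-\min_i\rho_i)\sigma_k^2$ factor, and the averaged $A_{2,i}, B_{2,i}, C_{2,i}$ terms assemble into $D_A, D_B, D_C$. The main obstacle is the term $\frac{1}{m}\sum_i D_{1,i}D_{2,i}\,\E_k[\ns{g^k}] = D_D\,\E_k[\ns{g^k}]$: it reintroduces $\E_k[\ns{g^k}]$, which itself depends on $\sigma_k^2$, so the recursion is not yet in the required form. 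I would break this circularity by substituting the just-proved bound \eqref{eq:gk1-dc-diff} for $\E_k[\ns{g^k}]$; using $D_1 = (1+\omega)/m$ and $\tau = (1+\omega)D_D/m$, the substituted $\sigma_k^2$ contribution equals exactly $\tau\sigma_k^2$, which subtracts from the contraction rate to give $\rho = \min_i\rho_i - \tau$, while the remaining pieces shift $A_2 \to D_A + \tau A$, $B_2 \to D_B + D_D$, and $C_2 \to D_C + \tau C$. I expect this substitution, and the careful tracking of how it perturbs $\rho$ and the constants, to be the most delicate bookkeeping; everything else is routine aggregation. Theorem~\ref{thm:dc} is the special case in which all $D_{1,i}, A_{2,i}, \ldots$ coincide and $\widetilde\sigma_k^2$ is shared across workers, so its proof follows the same steps with the averages collapsing to single values.
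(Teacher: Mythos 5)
Your proposal is correct and follows essentially the same route as the paper's proof: conditional independence and unbiasedness to decompose $\E_k[\ns{g^k}]$ into $\ns{\nabla f(x^k)}$ plus per-worker variances (you do this in one shot via the law of total variance, the paper in two orthogonal splits — compression error then estimator noise — but these are the same argument), the $(1+\omega)$ compression bound, the smoothness conversion $\ns{\nabla f_i(x^k)}\leq 2L_i(f_i(x^k)-f_i^*)$ yielding the $A=\max_i(A_{1,i}+B_{1,i}L_i-L_i/(1+\omega))$ definition and the $\Delta_f^*$ shift into $C$, the $D_{1,i}$-weighted average for the $\sigma_{k}^2$ recursion, and finally substituting the just-proved bound \eqref{eq:gk1-dc-diff} to eliminate $\E_k[\ns{g^k}]$, with $\tau=(1+\omega)D_D/m$ accounting for exactly the degradation $\rho=\min_i\rho_i-\tau$ and the shifts in $A_2,B_2,C_2$. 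No gaps; the bookkeeping you flag as delicate matches the paper's computation exactly.
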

\addtocounter{theorem}{-1}
\endgroup

\begin{proofof}{Theorem \ref{thm:dc}}
	First, we show the that gradient estimator $\gk$ (see Line \ref{line:gk-dc} of Algorithm \ref{alg:dc}) is unbiased:
	\begin{align}
	\Ek[\gk] 
	&=\Ek\left[\frac{1}{m}\summ \Ci(\gi) \right]  
	=\frac{1}{m}\summ \nabla f_i(\xk)=\nabla f(\xk). 
	\end{align}
	Then, we prove the upper bound for the second moment of gradient estimator $\gk$:
	\begin{align}
	&\Ek[\ns{\gk}] \notag\\
	&=\Ek\left[\nsB{\frac{1}{m}\summ\Ci(\gi) -\frac{1}{m}\summ \gi + \frac{1}{m}\summ \gi}\right] \notag\\
	&\overset{\eqref{eq:compress}}{=} \Ek\left[\nsB{\frac{1}{m}\summ \left(\Ci(\gi) - \gi\right)}\right]  
	+\Ek\left[\nsB{\frac{1}{m}\summ \gi} \right] \notag
	\end{align}
	\begin{align}
	&\overset{\eqref{eq:compress}}{\leq} \frac{\omega}{m^2}  \EkB{\summ \ns{\gi}}
	+\Ek\left[\nsB{\frac{1}{m}\summ (\gi - \nabla f_i(\xk)) + \frac{1}{m}\summ \nabla f_i(\xk)} \right] \notag\\
	&=\frac{\omega}{m^2} \EkB{\summ \ns{\gi}}
	+\frac{1}{m^2} \EkB{\summ\ns{\gi - \nabla f_i(\xk)}}
	+ \ns{\nabla f(\xk)} \notag\\
	&=\frac{\omega}{m^2} \EkB{\summ \ns{\gi}}
	+\frac{1}{m^2} \summ \left(\Ek[\ns{\gi}] - \ns{\nabla f_i(\xk)}\right)
	+ \ns{\nabla f(\xk)} \notag\\
	&=
	\frac{1}{m^2} \summ \left((1+\omega)\Ek[\ns{\gi}] - \ns{\nabla f_i(\xk)}\right)
	+ \ns{\nabla f(\xk)} \notag\\
	&\overset{\eqref{eq:gi1-dc}}{\leq} \frac{1}{m^2} \summ \left((1+\omega)(2A_{1,i}(f_i(\xk)-f_i^*)+B_{1,i}\ns{\nabla f_i(\xk)} + D_{1}' \tsk +C_{1,i}) - \ns{\nabla f_i(\xk)}\right) \notag\\
	&\qquad \qquad
	+ \ns{\nabla f(\xk)} \notag\\
	&\leq \frac{1+\omega}{m^2} \summ \left(2\left((A_{1,i}+B_{1,i}L_i)-L_i/(1+\omega)\right)(f_i(\xk)-f_i^*) + D_{1}' \tsk +C_{1,i} \right) \notag\\
	&\qquad \qquad
	+ \ns{\nabla f(\xk)} \notag\\
	&\leq \frac{2(1+\omega)A}{m^2}\summ (f_i(\xk)-f_i^*)  + \frac{1+\omega}{m} D_{1}' \tsk + \frac{1+\omega}{m^2}\summ C_{1,i}
	+ \ns{\nabla f(\xk)} \label{eq:define-a-dc}\\
	&= \frac{2(1+\omega)A}{m}(f(\xk)-f^*)  
	+ \frac{1+\omega}{m}D_{1}'\tsk 
	+ \frac{(1+\omega)C}{m}
	+ \ns{\nabla f(\xk)}, \label{eq:define-fstar-dc}
	\end{align}
	where \eqref{eq:define-a-dc} holds by defining $A:=\max_i  (A_{1,i}+B_{1,i}L_i-L_i/(1+\omega))$, and \eqref{eq:define-fstar-dc} holds by defining 	$C := \frac{1}{m} \summ C_{1,i} + 2A\Delta_f^*$ and $\Delta_f^*:=f^*-\frac{1}{m}\summ f_i^*$.
	
	Thus, we have proved the first part, i.e., \eqref{eq:gk1-dc} holds with
	\begin{align}
	&A_1 =\frac{(1+\omega)A}{m}, \qquad
	B_1 =1, \qquad 
	C_1  =  \frac{(1+\omega)C}{m}, \\ 
	&D_1 =\frac{1+\omega}{m}, \qquad
	\sk =  D_{1}'\tsk. \label{eq:sk-dc}
	\end{align}
	Now we prove the second part $\Ek[\skn]$ (i.e., \eqref{eq:gk2-dc}). According to \eqref{eq:gi2-dc}, we have 
	\begin{align}
	&\Ek[\skn] \notag\\
	&\overset{\eqref{eq:sk-dc}}{:=} \Ek[D_{1}'\tskn] \notag\\
	&\overset{\eqref{eq:gi2-dc}}{\leq} (1-\rho')D_{1}'\tsk + 2D_{1}'A_{2}'(f(\xk)-f^*)+D_{1}'B_{2}'\ns{\nabla f(\xk)} + D_{1}'D_2'\Ek[\ns{\gk}] +D_{1}'C_2'\notag\\
	&\overset{\eqref{eq:define-fstar-dc}}{\leq} (1-\rho')\sk + 2D_{1}'A_{2}'(f(\xk)-f^*)+D_{1}'B_{2}'\ns{\nabla f(\xk)} +D_{1}'C_2' \notag\\
	&\qquad 
	+D_{1}'D_2'\left(\frac{2(1+\omega)A}{m}(f(\xk)-f^*)  
	+ \frac{1+\omega}{m}D_{1}'\tsk 
	+ \frac{(1+\omega)C}{m}
	+ \ns{\nabla f(\xk)}\right) \notag\\
	&= (1-\rho'+\tau) \sk + 2(D_{1}'A_{2}'+\tau A) (f(\xk)-f^*) \notag\\
	&\qquad  + (D_{1}'B_{2}'+D_{1}'D_2')\ns{\nabla f(\xk)} 
	+ D_{1}'C_2' + \tau C,
	\label{eq:term2-dc}
	\end{align}	
	where \eqref{eq:term2-dc} holds by defining $\tau:= \frac{(1+\omega)D_{1}'D_2'}{m}$.
	
	Now, we have proved the second part, i.e., \eqref{eq:gk2-dc} holds with
	\begin{align*}
	&\rho =\rho'-\tau, \qquad
	A_2 =D_{1}'A_{2}'+ \tau A, \qquad
	B_2 =D_{1}'B_{2}'+D_{1}'D_2', \qquad 
	C_2  = D_{1}'C_2' +  \tau C.
	\end{align*}
\end{proofof}

\begin{proofof}{Theorem \ref{thm:dc-diff} }
	Similar to the proof of Theorem \ref{thm:dc}, 
	we know that gradient estimator $\gk$ (see Line \ref{line:gk-dc} of Algorithm \ref{alg:dc}) is unbiased, i.e., 
	\begin{align}
	\Ek[\gk] 
	&=\Ek\left[\frac{1}{m}\summ \Ci(\gi) \right]  
	=\frac{1}{m}\summ \nabla f_i(\xk)=\nabla f(\xk). 
	\end{align}
	Then, we prove the upper bound for the second moment of gradient estimator $\gk$:
	\begin{align}
	&\Ek[\ns{\gk}] \notag\\
	&=\Ek\left[\nsB{\frac{1}{m}\summ\Ci(\gi) -\frac{1}{m}\summ \gi + \frac{1}{m}\summ \gi}\right] \notag\\
	&\overset{\eqref{eq:compress}}{=} \Ek\left[\nsB{\frac{1}{m}\summ \left(\Ci(\gi) - \gi\right)}\right]  
	+\Ek\left[\nsB{\frac{1}{m}\summ \gi} \right] \notag\\
	&\overset{\eqref{eq:compress}}{\leq} \frac{\omega}{m^2}  \EkB{\summ \ns{\gi}}
	+\Ek\left[\nsB{\frac{1}{m}\summ (\gi - \nabla f_i(\xk)) + \frac{1}{m}\summ \nabla f_i(\xk)} \right] \notag\\
	&=\frac{\omega}{m^2} \EkB{\summ \ns{\gi}}
	+\frac{1}{m^2} \EkB{\summ\ns{\gi - \nabla f_i(\xk)}}
	+ \ns{\nabla f(\xk)} \notag\\
	&=\frac{\omega}{m^2} \EkB{\summ \ns{\gi}}
	+\frac{1}{m^2} \summ \left(\Ek[\ns{\gi}] - \ns{\nabla f_i(\xk)}\right)
	+ \ns{\nabla f(\xk)} \notag\\
	&=
	\frac{1}{m^2} \summ \left((1+\omega)\Ek[\ns{\gi}] - \ns{\nabla f_i(\xk)}\right)
	+ \ns{\nabla f(\xk)} \notag\\
	&\overset{\eqref{eq:gi1-dc-diff}}{\leq} \frac{1}{m^2} \summ \left((1+\omega)(2A_{1,i}(f_i(\xk)-f_i^*)+B_{1,i}\ns{\nabla f_i(\xk)} + D_{1,i} \ski +C_{1,i}) - \ns{\nabla f_i(\xk)}\right) \notag\\
	&\qquad \qquad
	+ \ns{\nabla f(\xk)} \notag\\
	&\leq \frac{1+\omega}{m^2} \summ \left(2\left((A_{1,i}+B_{1,i}L_i)-L_i/(1+\omega)\right)(f_i(\xk)-f_i^*) + D_{1,i} \ski +C_{1,i} \right) \notag\\
	&\qquad \qquad
	+ \ns{\nabla f(\xk)} \notag\\
	&\leq \frac{2(1+\omega)A}{m^2}\summ (f_i(\xk)-f_i^*)  + \frac{1+\omega}{m^2}\summ  D_{1,i} \ski + \frac{1+\omega}{m^2}\summ C_{1,i}
	+ \ns{\nabla f(\xk)} \label{eq:define-a-dc-diff}\\
	&= \frac{2(1+\omega)A}{m}(f(\xk)-f^*)  
	+ \frac{1+\omega}{m^2}\summ  D_{1,i} \ski 
	+ \frac{(1+\omega)C}{m}
	+ \ns{\nabla f(\xk)}, \label{eq:define-fstar-dc-diff}
	\end{align}
	where \eqref{eq:define-a-dc-diff} holds by defining $A:=\max_i  (A_{1,i}+B_{1,i}L_i-L_i/(1+\omega))$, and \eqref{eq:define-fstar-dc-diff} holds by defining 	$C := \frac{1}{m} \summ C_{1,i} + 2A\Delta_f^*$ and $\Delta_f^*:=f^*-\frac{1}{m}\summ f_i^*$.
	
	Thus, we have proved the first part, i.e., \eqref{eq:gk1-dc-diff} holds with
	\begin{align}
	&A_1 =\frac{(1+\omega)A}{m}, \qquad
	B_1 =1, \qquad 
	C_1  =  \frac{(1+\omega)C}{m}, \\ 
	&D_1 =\frac{1+\omega}{m}, \qquad
	\sk =  \frac{1}{m}\summ  D_{1,i} \ski . \label{eq:sk-dc-diff}
	\end{align}
	Now we prove the second part $\Ek[\skn]$ (i.e., \eqref{eq:gk2-dc-diff}). According to \eqref{eq:gi2-dc-diff}, we have 
	\begin{align}
	&\Ek[\skn] \notag\\
	&\overset{\eqref{eq:sk-dc-diff}}{:=} \EkB{\frac{1}{m}\summ  D_{1,i} \skin} \notag\\
	&\overset{\eqref{eq:gi2-dc-diff}}{\leq} \frac{1}{m} \summ  D_{1,i} \left( (1-\rho_i)\sigma_{k,i}^2 + 2A_{2,i}(f(\xk)-f^*)+B_{2,i}\ns{\nabla f(\xk)} + D_{2,i}\Ek[\ns{\gk}] +C_{2,i}\right) \notag\\
	&~=\frac{1}{m} \summ (1-\rho_i) D_{1,i} \ski 
	+ 2D_A(f(\xk)-f^*)  
	+  D_B\ns{\nabla f(\xk)}
	+ D_D \Ek[\ns{\gk}]
	+D_C \label{eq:dd-dc-diff}\\
	&\overset{\eqref{eq:define-fstar-dc-diff}}{\leq}\frac{1}{m} \summ (1-\rho_i) D_{1,i} \ski 
	+ 2D_A(f(\xk)-f^*)  
	+  D_B\ns{\nabla f(\xk)}+D_C \notag\\
	&\qquad 
	+D_D\left(\frac{2(1+\omega)A}{m}(f(\xk)-f^*)  
	+ \frac{1+\omega}{m^2}\summ  D_{1,i} \ski 
	+ \frac{(1+\omega)C}{m}
	+ \ns{\nabla f(\xk)}\right) \notag\\
	&~=\frac{1}{m} \summ (1-\rho_i+\tau) D_{1,i} \ski 
	+ 2(D_A+\tau A)(f(\xk)-f^*)  \notag\\
	&\qquad  \qquad
	+  (D_B+D_D)\ns{\nabla f(\xk)}+D_C +\tau C \label{eq:tau-dc-diff}\\
	&~~\leq (1-\rho_i+\tau) \sk + 2(D_A+\tau A) (f(\xk)-f^*) + (D_B+D_D)\ns{\nabla f(\xk)} 
	+ D_C + \tau C,
	\label{eq:term2-dc-diff}
	\end{align}	
	where  \eqref{eq:dd-dc-diff} holds by defining 
	$D_A:=\frac{1}{m} \summ  D_{1,i}A_{2,i}$,
	$D_B:=\frac{1}{m} \summ  D_{1,i} B_{2,i}$,
	$D_D:=\frac{1}{m} \summ  D_{1,i} D_{2,i}$,
	and $D_C:=\frac{1}{m} \summ  D_{1,i}C_{2,i}$,
	\eqref{eq:tau-dc-diff} holds by defining $\tau:= \frac{(1+\omega)D_D}{m}$, 
	and the last inequality holds by defining $\rho :=\min_i\rho_i-\tau$.
	
	Now, we have proved the second part, i.e., \eqref{eq:gk2-dc-diff} holds with
	\begin{align*}
	&\rho =\min_i\rho_i-\tau, \qquad
	A_2 =D_A+ \tau A, \qquad
	B_2 =D_B+D_D, \qquad 
	C_2  = D_C+  \tau C.
	\end{align*}
\end{proofof}

In the following sections, we prove that if the parallel workers use some specific methods, i.e., GD, SGD, L-SVRG and SAGA, for computing their local stochastic gradient $\widetilde{g}_i^k$ (see Line \ref{line:localgrad-dc} of Algorithm \ref{alg:dc}), then $g^k$ (see Line \ref{line:gk-dc} of Algorithm \ref{alg:dc}) satisfies the unified Assumption \ref{asp:boge}.
Then, we plug their corresponding parameters (i.e., specific values for $A_1, A_2, B_1, B_2, C_1,C_2,D_1,\rho$) into our unified Theorem \ref{thm:main} to obtain the detailed convergence rates for these methods. 

\subsubsection{DC-GD method}

In this section, we show that if the parallel workers use GD for computing their local  gradient $\widetilde{g}_i^k$, then $g^k$ (see Line \ref{line:gk-dc-gd} of Algorithm \ref{alg:dc-gd}) satisfies the unified Assumption \ref{asp:boge}.

\begin{algorithm}[htb]
	\caption{DC-GD}
	\label{alg:dc-gd}
	\begin{algorithmic}[1]
		\REQUIRE ~
		initial point $x^0$,  stepsize $\eta_k$
		\FOR {$k=0,1,2,\ldots$}
		\STATE {\bf{for all machines $i= 1,2,\ldots,m$ do in parallel}}
		\STATE \quad Compute local gradient $\widetilde{g}_i^k = \nabla f_i(x^k)$ \label{line:localgrad-dc-gd}
		\STATE \quad Compress local gradient $\cC_i^k(\widetilde{g}_i^k)$ and send it to the server
		\STATE {\bf{end for}}
		\STATE Aggregate received compressed gradient information
		$g^k = \frac{1}{m}\sum \limits_{i=1}^m \cC_i^k(\widetilde{g}_i^k)$ \label{line:gk-dc-gd}
		\STATE $x^{k+1} = x^k - \eta_k g^k$  
		\ENDFOR
	\end{algorithmic}
\end{algorithm}

\begin{lemma}[DC-GD]\label{lem:dc-gd}
	Let the local gradient estimator $\widetilde{g}_i^k=\nabla f_i(x^k)$ (see Line \ref{line:localgrad-dc-gd} of Algorithm \ref{alg:dc-gd}), then we know that $\widetilde{g}_i^k$ satisfies \eqref{eq:gi1-dc} and \eqref{eq:gi2-dc} with $B_{1,i}=1, A_{1,i}=C_{1,i}=D_{1}'=0, \tsk \equiv 0, \rho'=1, A_2'=B_2'=C_2'=D_2'=0$. 
	Thus, according to Theorem \ref{thm:dc}, $g^k$ (see Line \ref{line:gk-dc-gd} of Algorithm \ref{alg:dc-gd})  satisfies the unified Assumption \ref{asp:boge} with
	\begin{align*}
	&A_1 =\frac{(1+\omega)A}{m}, \qquad
	B_1 =1, \qquad 
	C_1  =  \frac{(1+\omega)C}{m}, \\ 
	&D_1 =\frac{1+\omega}{m}, \qquad
	\sk \equiv 0, \qquad
	\rho =1, \\
	&A_2 =0, \qquad
	B_2 =0, \qquad 
	C_2  = 0,
	\end{align*}
	where 
	$A :=\max_i (L_i-L_i/(1+\omega))$,
	$C:= 2A\Delta_f^*$,
	and
	$\Delta_f^*:=f^*-\frac{1}{m}\summ f_i^*$.
\end{lemma}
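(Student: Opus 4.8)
The plan is to prove the lemma in two stages: first I would verify that the exact local gradient estimator $\gi = \nabla f_i(\xk)$ satisfies the two local recursions \eqref{eq:gi1-dc} and \eqref{eq:gi2-dc} with the asserted parameters, and then I would invoke the already-established Theorem \ref{thm:dc} to read off the resulting global parameters for $\gk$.

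For the local recursions, I would observe that transmitting the exact local gradient introduces no local randomness, so the estimator is trivially unbiased, $\Ek[\gi] = \nabla f_i(\xk)$, and $\Ek[\ns{\gi}] = \ns{\nabla f_i(\xk)}$. Thus \eqref{eq:gi1-dc} holds with equality once we set $B_{1,i}=1$ and $A_{1,i}=C_{1,i}=D_1'=0$. Since no auxiliary variance-reduction sequence is present, I set $\tsk\equiv 0$; then the right-hand side of \eqref{eq:gi2-dc} is nonnegative while the left-hand side is $0$, so the recursion holds trivially with $\rho'=1$ and $A_2'=B_2'=C_2'=D_2'=0$.

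For the second stage, I would substitute these local values into the formulas of Theorem \ref{thm:dc} and simplify. Because $D_1'=D_2'=0$, the coupling constant $\tau=(1+\omega)D_1'D_2'/m$ vanishes, which immediately yields $A_2=B_2=C_2=0$, $\rho=\rho'-\tau=1$, and $\sk=D_1'\tsk\equiv 0$. The constant $A=\max_i(A_{1,i}+B_{1,i}L_i-L_i/(1+\omega))$ collapses to $\max_i(L_i-L_i/(1+\omega))$ since $A_{1,i}=0$ and $B_{1,i}=1$, and $C=\frac{1}{m}\summ C_{1,i}+2A\Delta_f^*=2A\Delta_f^*$ since every $C_{1,i}=0$. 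The remaining global parameters $A_1=(1+\omega)A/m$, $B_1=1$, $C_1=(1+\omega)C/m$, $D_1=(1+\omega)/m$ are then exactly what Theorem \ref{thm:dc} outputs.

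I do not expect a genuine analytic obstacle, since the real work was already absorbed into the proof of Theorem \ref{thm:dc}, where $L_i$-smoothness of each $f_i$ (Assumption \ref{asp:lsmooth-diana}) was used to bound $\ns{\nabla f_i(\xk)}\le 2L_i(f_i(\xk)-f_i^*)$. The only points worth double-checking are the nonnegativity requirements of Assumption \ref{asp:boge}: one verifies $A=\max_i L_i\cdot\omega/(1+\omega)\ge 0$, and, using $f^*=\min_x\frac{1}{m}\summ f_i(x)\ge\frac{1}{m}\summ\min_x f_i(x)=\frac{1}{m}\summ f_i^*$, that $\Delta_f^*\ge 0$ and hence $C\ge 0$.
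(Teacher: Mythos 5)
Your proposal is correct and follows essentially the same route as the paper: observe that the deterministic estimator $\widetilde{g}_i^k=\nabla f_i(x^k)$ satisfies the local recursions \eqref{eq:gi1-dc}--\eqref{eq:gi2-dc} trivially (with $B_{1,i}=1$ and all other local parameters zero), then plug these values into Theorem \ref{thm:dc}. Your explicit simplification of $\tau$, $A$, $C$ and the extra check that $A\geq 0$ and $\Delta_f^*\geq 0$ (so the resulting constants are indeed nonnegative, as Assumption \ref{asp:boge} requires) are details the paper leaves implicit, but they do not change the argument.
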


\begin{proofof}{Lemma \ref{lem:dc-gd}}
	If the local gradient estimator $\gi=\nabla f_i(\xk)$ (see Line \ref{line:localgrad-dc-gd} of Algorithm \ref{alg:dc-gd}), it is easy to see that 
	$$\Ek[\gi] = \nabla f_i(\xk)$$ 
	and 
	$$\Ek[\ns{\gi}] \leq \ns{ \nabla f_i(\xk)}$$
	since there is no randomness. Thus the local gradient estimator $\gi$ satisfies \eqref{eq:gi1-dc} and \eqref{eq:gi2-dc} with 
	$$A_{1,i}=C_{1,i}=D_{1}'=0, B_{1,i}=1,\tsk \equiv 0, \rho'=1, A_2'=B_2'=C_2'=D_2'=0.$$ 
	Then, according to Theorem \ref{thm:dc}, $g^k$ (see Line \ref{line:gk-dc-gd} of Algorithm \ref{alg:dc-gd}) satisfies the unified Assumption \ref{asp:boge} with
	\begin{align*}
	&A_1 =\frac{(1+\omega)A}{m}, \qquad
	B_1 =1, \qquad 
	C_1  =  \frac{(1+\omega)C}{m}, \\ 
	&D_1 =\frac{1+\omega}{m}, \qquad
	\sk \equiv 0, \qquad
	\rho =1, \\
	&A_2 =0, \qquad
	B_2 =0, \qquad 
	C_2  = 0,
	\end{align*}
	where 
	$A :=\max_i (L_i-L_i/(1+\omega))$,
	$C:= 2A\Delta_f^*$,
	and
	$\Delta_f^*:=f^*-\frac{1}{m}\summ f_i^*$.
\end{proofof}

\begin{corollary}[DC-GD]\label{cor:dc-gd}
	Suppose that Assumption \ref{asp:lsmooth-diana} holds. 
	Let stepsize 
	$$ \eta \leq \min\left\{ \frac{1}{L},~ \sqrt{\frac{m\ln 2}{(1+\omega)LAK}},~ \frac{m\epsilon^2}{2(1+\omega)LC} \right\},$$ 
	then the number of iterations performed by DC-GD (Algorithm \ref{alg:dc-gd}) to find an $\epsilon$-solution of nonconvex federated problem \eqref{eq:prob-fed}, i.e. a point $\hx$ such that $\E[\n{\nabla f(\hx)}] \leq \epsilon$, can be bounded by
	\begin{align}
	K = \frac{8\fgap L}{\epsilon^2} \max \left\{1,~ \frac{12(1+\omega)\fgap A }{\epsilon^2m},~ \frac{2(1+\omega)C}{\epsilon^2m}\right\},
	\end{align}
	where 
	$A:=\max_i (L_i-L_i/(1+\omega))$,
	$C:= 2A\Delta_f^*$,
	and
	$\Delta_f^*:=f^*-\frac{1}{m}\summ f_i^*$.
\end{corollary}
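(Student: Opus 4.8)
The plan is to obtain this corollary as a direct specialization of the main unified Theorem~\ref{thm:main}, using Lemma~\ref{lem:dc-gd} to supply the eight parameters $A_1,A_2,B_1,B_2,C_1,C_2,D_1,\rho$ that the DC-GD estimator satisfies. Lemma~\ref{lem:dc-gd} already establishes that the aggregated compressed gradient $g^k$ in Line~\ref{line:gk-dc-gd} of Algorithm~\ref{alg:dc-gd} verifies Assumption~\ref{asp:boge} with $B_1=1$, $A_2=B_2=C_2=0$, $\rho=1$, $\sigma_k^2\equiv 0$, $A_1=\frac{(1+\omega)A}{m}$, $C_1=\frac{(1+\omega)C}{m}$, and $D_1=\frac{1+\omega}{m}$, where $A=\max_i(L_i-L_i/(1+\omega))$ and $C=2A\Delta_f^*$. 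So the only work left is bookkeeping: feed these values into the stepsize and iteration-count formulas of Theorem~\ref{thm:main} and simplify.

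Before invoking Theorem~\ref{thm:main}, I must check its smoothness hypothesis (Assumption~\ref{asp:lsmooth}), since the corollary only assumes the per-worker smoothness of Assumption~\ref{asp:lsmooth-diana}. This is the one genuinely non-mechanical step. It follows because $f=\frac{1}{m}\sum_{i=1}^m f_i$ is a convex combination of $L_i$-smooth functions, hence $f$ is $\big(\frac{1}{m}\sum_i L_i\big)$-smooth, and by Jensen's (equivalently Cauchy--Schwarz) inequality $\frac{1}{m}\sum_i L_i\le \sqrt{\frac{1}{m}\sum_i L_i^2}=L$ with $L$ as defined right after Assumption~\ref{asp:lsmooth-diana}. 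Thus $f$ is $L$-smooth and Theorem~\ref{thm:main} applies with this $L$.

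Next I substitute. Because $B_2=0$ and $\rho=1$, the first stepsize term $\frac{1}{LB_1+LD_1B_2\rho^{-1}}$ collapses to $\frac{1}{L}$; because $A_2=0$ the second collapses to $\sqrt{\frac{\ln 2}{LA_1K}}=\sqrt{\frac{m\ln 2}{(1+\omega)LAK}}$; and because $C_2=0$ the third collapses to $\frac{\epsilon^2}{2LC_1}=\frac{m\epsilon^2}{2(1+\omega)LC}$, which is exactly the stated stepsize. Similarly, in the iteration bound \eqref{eq:main-ssuhud} the quantity $B_1+D_1B_2\rho^{-1}$ becomes $1$, $A_1+D_1A_2\rho^{-1}$ becomes $A_1=\frac{(1+\omega)A}{m}$, and $C_1+D_1C_2\rho^{-1}$ becomes $C_1=\frac{(1+\omega)C}{m}$. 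Finally, since $\sigma_k^2\equiv 0$ we have $\sigma_0^2=0$, so $\fgapp=f(x^0)-f^*+2^{-1}L\eta^2D_1\rho^{-1}\sigma_0^2=\fgap$, and the three-way maximum reduces to $\max\{1,\ \frac{12(1+\omega)\fgap A}{\epsilon^2 m},\ \frac{2(1+\omega)C}{\epsilon^2 m}\}$ as claimed. The only subtlety worth double-checking is the consistent propagation of the factor $\frac{1+\omega}{m}$ through $A_1$ and $C_1$; once the vanishing parameters are correctly identified, no nontrivial estimate remains, so I expect the verification of $L$-smoothness of $f$ to be the main (and only mild) obstacle.
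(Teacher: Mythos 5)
Your proposal is correct and follows essentially the same route as the paper: plug the parameters from Lemma~\ref{lem:dc-gd} into Theorem~\ref{thm:main}, observe that $B_2=A_2=C_2=0$, $\rho=1$, $\sigma_0^2=0$ collapse the stepsize and iteration formulas, and conclude $\fgapp=\fgap$. Your explicit verification that $f$ is $L$-smooth under Assumption~\ref{asp:lsmooth-diana} (via $\frac{1}{m}\sum_i L_i \le \sqrt{\frac{1}{m}\sum_i L_i^2}=L$) is a welcome bit of rigor that the paper's own proof leaves implicit.
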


\begin{proofof}{Corollary \ref{cor:dc-gd}}
	According to our unified Theorem \ref{thm:main}, if the stepsize is chosen as 
	\begin{align}
	\eta &\leq \min\left\{ \frac{1}{LB_1+LD_1B_2\rho^{-1}}, \sqrt{\frac{\ln 2}{(LA_1 + LD_1A_2\rho^{-1})K}}, \frac{\epsilon^2}{2L(C_1+D_1C_2\rho^{-1})} \right\}  \notag\\
	&= \min\left\{ \frac{1}{L},~ \sqrt{\frac{m\ln 2}{(1+\omega)LAK}},~ \frac{m\epsilon^2}{2(1+\omega)LC} \right\}
	\end{align}
	since $B_2=A_2=C_2=0, B_1=1, A_1 =\frac{(1+\omega)A}{m}$ and $C_1  =  \frac{(1+\omega)C}{m}$
	according to Lemma \ref{lem:dc-gd},
	then the number of iterations performed by DC-GD (Algorithm \ref{alg:dc-gd}) to find an $\epsilon$-solution of problem \eqref{eq:prob-fed} can be bounded by
	\begin{align}
	K &= \frac{8\fgapp L}{\epsilon^2} \max \left\{ B_1+D_1B_2\rho^{-1}, \frac{12\fgapp (A_1+D_1 A_2\rho^{-1})}{\epsilon^2}, \frac{2(C_1+D_1C_2\rho^{-1})}{\epsilon^2}\right\}  \notag\\
	&= \frac{8\fgap L}{\epsilon^2} \max \left\{1,~ \frac{12(1+\omega)\fgap A }{\epsilon^2m},~ \frac{2(1+\omega)C}{\epsilon^2m}\right\}
	\end{align}
	since $B_2=A_2=C_2=0, B_1=1, A_1 =\frac{(1+\omega)A}{m}, C_1  =  \frac{(1+\omega)C}{m}, \sigma_0^2 = 0$, and 
	$$\fgapp:= f(x^0) - f^* + 2^{-1}L\eta^2D_1\rho^{-1} \sigma_0^2 = f(x^0) - f^* =\fgap.$$
\end{proofof}

\subsubsection{DC-SGD method}

In this section, we show that if the parallel workers use SGD for computing their local  stochastic gradient $\widetilde{g}_i^k$, then $g^k$ (see Line \ref{line:gk-dc-sgd} of Algorithm \ref{alg:dc-sgd}) satisfies the unified Assumption \ref{asp:boge}.

\begin{algorithm}[h]
	\caption{DC-SGD}
	\label{alg:dc-sgd}
	\begin{algorithmic}[1]
		\REQUIRE ~
		initial point $x^0$,  stepsize $\eta_k$
		\FOR {$k=0,1,2,\ldots$}
		\STATE {\bf{for all machines $i= 1,2,\ldots,m$ do in parallel}}
		\STATE \quad Compute local stochastic gradient $\widetilde{g}_i^k$ with Algorithm \ref{alg:sgd} by changing $f(x)$ to the local $f_i(x)$ \label{line:localgrad-dc-sgd}
		\STATE \quad Compress local gradient $\cC_i^k(\widetilde{g}_i^k)$ and send it to the server
		\STATE {\bf{end for}}
		\STATE Aggregate received compressed gradient information
		$g^k = \frac{1}{m}\sum \limits_{i=1}^m \cC_i^k(\widetilde{g}_i^k)$ \label{line:gk-dc-sgd}
		\STATE $x^{k+1} = x^k - \eta_k g^k$  
		\ENDFOR
	\end{algorithmic}
\end{algorithm}

\begin{lemma}[DC-SGD]\label{lem:dc-sgd}
	Let the local stochastic gradient estimator $\widetilde{g}_i^k$ (see Line \ref{line:localgrad-dc-sgd} of Algorithm \ref{alg:dc-sgd}) satisfy Assumption \ref{asp:es}, i.e.,
	$$ \E_k[\ns{\widetilde{g}_i^k}] \leq 2A_{1,i}(f_i(x^k)-f_i^*)+B_{1,i}\ns{\nabla f_i(x^k)} + C_{1,i},$$
	then we know that $\widetilde{g}_i^k$ satisfies \eqref{eq:gi1-dc} and \eqref{eq:gi2-dc} with $D_{1}'=0, \tsk \equiv 0, \rho'=1, A_2'=B_2'=C_2'=D_2'=0$. 
	Thus, according to Theorem \ref{thm:dc}, $g^k$ (see Line \ref{line:gk-dc-sgd} of Algorithm \ref{alg:dc-sgd})  satisfies the unified Assumption \ref{asp:boge} with
	\begin{align*}
	&A_1 =\frac{(1+\omega)A}{m}, \qquad
	B_1 =1, \qquad 
	C_1  =  \frac{(1+\omega)C}{m}, \\ 
	&D_1 =\frac{1+\omega}{m}, \qquad
	\sk \equiv 0, \qquad
	\rho =1, \\
	&A_2 =0, \qquad
	B_2 =0, \qquad 
	C_2  = 0,
	\end{align*}
	where 
	$A:=\max_i  (A_{1,i}+B_{1,i}L_i-L_i/(1+\omega))$,
	$C := \frac{1}{m} \summ C_{1,i} + 2A\Delta_f^*$
	and
	$\Delta_f^*:=f^*-\frac{1}{m}\summ f_i^*$.
\end{lemma}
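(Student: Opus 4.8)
The plan is to reduce this to a direct application of Theorem~\ref{thm:dc}, exactly as in the proof of Lemma~\ref{lem:dc-gd}. The only real work is to verify that the local SGD estimator $\gi$ fits recursions \eqref{eq:gi1-dc}--\eqref{eq:gi2-dc} with the degenerate parameters stated in the lemma, and then to read off the global parameters from the conclusion of Theorem~\ref{thm:dc}.

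First I would record that any SGD-type estimator covered by Algorithm~\ref{alg:sgd}, applied to the local objective $f_i$, is unbiased, $\Ek[\gi]=\nabla f_i(\xk)$, and by hypothesis satisfies the Expected Smoothness Assumption~\ref{asp:es} for $f_i$, namely $\Ek[\ns{\gi}] \le 2A_{1,i}(f_i(\xk)-f_i^*)+B_{1,i}\ns{\nabla f_i(\xk)}+C_{1,i}$. This is literally recursion \eqref{eq:gi1-dc} once we set the variance-reduction coefficient $D_1'=0$, so that no $\tsk$ term survives on the right-hand side.

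Second, because plain SGD carries no auxiliary variance-reduction sequence, I would simply take $\tsk\equiv 0$. With $\rho'=1$ and $A_2'=B_2'=C_2'=D_2'=0$, recursion \eqref{eq:gi2-dc} collapses to $\Ek[\tskn]\le 0$, which holds with equality since $\tsk\equiv 0$. Hence both recursions \eqref{eq:gi1-dc}--\eqref{eq:gi2-dc} are satisfied with the claimed degenerate parameters, and the hypotheses of Theorem~\ref{thm:dc} are met.

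Finally I would invoke Theorem~\ref{thm:dc} and substitute $D_1'=0$, $\rho'=1$, $A_2'=B_2'=C_2'=D_2'=0$ into its conclusion. Then $\tau=\frac{(1+\omega)D_1'D_2'}{m}=0$, so $\rho=\rho'-\tau=1$, $\sk=D_1'\tsk\equiv 0$, and $A_2=B_2=C_2=0$, while $A_1,B_1,C_1,D_1$ keep their stated values with $A:=\max_i(A_{1,i}+B_{1,i}L_i-L_i/(1+\omega))$ and $C:=\frac{1}{m}\summ C_{1,i}+2A\Delta_f^*$; this reproduces exactly the global parameters in the statement. I do not expect any genuine obstacle, since the argument is purely a specialization of Theorem~\ref{thm:dc}. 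The only point worth a second glance is the heterogeneity correction: the local bounds are phrased in terms of the local minima $f_i^*$ whereas Assumption~\ref{asp:boge} is phrased in terms of the global $f^*$, and this mismatch is already absorbed into the additive constant $C$ through $\Delta_f^*=f^*-\frac{1}{m}\summ f_i^*$ inside Theorem~\ref{thm:dc}, so nothing further is required here.
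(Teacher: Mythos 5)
Your proposal is correct and follows essentially the same route as the paper's own proof: verify that the local ES estimator satisfies recursions \eqref{eq:gi1-dc}--\eqref{eq:gi2-dc} with the degenerate parameters $D_1'=0$, $\tsk\equiv 0$, $\rho'=1$, $A_2'=B_2'=C_2'=D_2'=0$, and then specialize Theorem~\ref{thm:dc}. Your explicit checks that $\tau=0$ and that \eqref{eq:gi2-dc} collapses to $0\le 0$ are slightly more careful than the paper's write-up, but they are the same argument.
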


\begin{proofof}{Lemma \ref{lem:dc-sgd}}
	Suppose that the local stochastic gradient estimator $\widetilde{g}_i^k$ (see Line \ref{line:localgrad-dc-sgd} of Algorithm \ref{alg:dc-sgd}) satisfies Assumption \ref{asp:es}, i.e., 
	$$\Ek[\gi] = \nabla f_i(\xk)$$
	and 
	$$ \Ek[\ns{\gi}] \leq 2A_{1,i}(f_i(x^k)-f_i^*)+B_{1,i}\ns{\nabla f_i(x^k)} + C_{1,i},$$
	Thus the local stochastic gradient estimator $\gi$ satisfies \eqref{eq:gi1-dc} and \eqref{eq:gi2-dc} with 
	$$A_{1,i}=A_{1,i}, B_{1,i}=B_{1,i},C_{1,i}=C_{1,i}, D_{1}'=0,\tsk \equiv 0, \rho'=1, A_2'=B_2'=C_2'=D_2'=0.$$ 
	Then, according to Theorem \ref{thm:dc}, $g^k$ (see Line \ref{line:gk-dc-sgd} of Algorithm \ref{alg:dc-sgd}) satisfies the unified Assumption \ref{asp:boge} with
	\begin{align*}
	&A_1 =\frac{(1+\omega)A}{m}, \qquad
	B_1 =1, \qquad 
	C_1  =  \frac{(1+\omega)C}{m}, \\ 
	&D_1 =\frac{1+\omega}{m}, \qquad
	\sk \equiv 0, \qquad
	\rho =1, \\
	&A_2 =0, \qquad
	B_2 =0, \qquad 
	C_2  = 0,
	\end{align*}
	where 
	$A:=\max_i  (A_{1,i}+B_{1,i}L_i-L_i/(1+\omega))$,
	$C := \frac{1}{m} \summ C_{1,i} + 2A\Delta_f^*$
	and
	$\Delta_f^*:=f^*-\frac{1}{m}\summ f_i^*$.
\end{proofof}

\begin{corollary}[DC-SGD]\label{cor:dc-sgd}
	Suppose that Assumption \ref{asp:lsmooth-diana} holds. 
	Let stepsize 
	$$ \eta \leq \min\left\{ \frac{1}{L},~ \sqrt{\frac{m\ln 2}{(1+\omega)LAK}},~ \frac{m\epsilon^2}{2(1+\omega)LC} \right\},$$ 
	then the number of iterations performed by DC-SGD (Algorithm \ref{alg:dc-sgd}) to find an $\epsilon$-solution of nonconvex federated problem \eqref{eq:prob-fed} with \eqref{prob-fed:exp} or \eqref{prob-fed:finite}, i.e. a point $\hx$ such that $\E[\n{\nabla f(\hx)}] \leq \epsilon$, can be bounded by
	\begin{align}
	K = \frac{8\fgap L}{\epsilon^2} \max \left\{1,~ \frac{12(1+\omega)\fgap A }{\epsilon^2m},~ \frac{2(1+\omega)C}{\epsilon^2m}\right\},
	\end{align}
	where 
	where 
	$A:=\max_i  (A_{1,i}+B_{1,i}L_i-L_i/(1+\omega))$,
	$C := \frac{1}{m} \summ C_{1,i} + 2A\Delta_f^*$
	and
	$\Delta_f^*:=f^*-\frac{1}{m}\summ f_i^*$.
\end{corollary}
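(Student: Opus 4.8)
The plan is to derive this corollary as a direct specialization of the main unified Theorem~\ref{thm:main}, mirroring the argument already used for Corollary~\ref{cor:dc-gd}. First I would invoke Lemma~\ref{lem:dc-sgd}, which guarantees that whenever each local estimator $\widetilde{g}_i^k$ satisfies the expected smoothness Assumption~\ref{asp:es}, the aggregated estimator $g^k$ in Algorithm~\ref{alg:dc-sgd} satisfies the unified Assumption~\ref{asp:boge} with the parameters $A_1=\frac{(1+\omega)A}{m}$, $B_1=1$, $C_1=\frac{(1+\omega)C}{m}$, $D_1=\frac{1+\omega}{m}$, $\sigma_k^2\equiv 0$, $\rho=1$, and $A_2=B_2=C_2=0$, where $A$, $C$, and $\Delta_f^*$ are as stated. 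With Assumption~\ref{asp:boge} in hand, Theorem~\ref{thm:main} applies verbatim, so it only remains to substitute these specific values into the generic stepsize rule and iteration bound.

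The key simplification is that the degenerate second recursion ($A_2=B_2=C_2=0$, $\rho=1$) annihilates every $D_1(\cdot)\rho^{-1}$ cross term. Concretely, in the stepsize of Theorem~\ref{thm:main} the quantity $LB_1+LD_1B_2\rho^{-1}$ collapses to $L$, the quantity $LA_1+LD_1A_2\rho^{-1}$ collapses to $LA_1=\frac{(1+\omega)LA}{m}$, and $L(C_1+D_1C_2\rho^{-1})$ collapses to $LC_1=\frac{(1+\omega)LC}{m}$. Plugging these into the three arguments of the minimum yields exactly $\eta\le\min\{\frac{1}{L},\ \sqrt{\frac{m\ln 2}{(1+\omega)LAK}},\ \frac{m\epsilon^2}{2(1+\omega)LC}\}$, matching the stated stepsize.

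The same collapse applied to the iteration count \eqref{eq:main-ssuhud} turns $B_1+D_1B_2\rho^{-1}$ into $1$, turns $A_1+D_1A_2\rho^{-1}$ into $\frac{(1+\omega)A}{m}$, and turns $C_1+D_1C_2\rho^{-1}$ into $\frac{(1+\omega)C}{m}$; substituting gives the claimed bound $K=\frac{8\fgap L}{\epsilon^2}\max\{1,\ \frac{12(1+\omega)\fgap A}{\epsilon^2 m},\ \frac{2(1+\omega)C}{\epsilon^2 m}\}$. The last bookkeeping step is to note that since the local estimators carry no variance-reduction sequence, $\sigma_0^2=0$, so that $\fgapp=f(x^0)-f^*+2^{-1}L\eta^2 D_1\rho^{-1}\sigma_0^2=f(x^0)-f^*=\fgap$, which is why $\fgap$ rather than $\fgapp$ appears throughout.

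I do not anticipate a genuine obstacle here: the entire content is the verification performed in Lemma~\ref{lem:dc-sgd} (already available) together with a mechanical substitution, identical in form to Corollary~\ref{cor:dc-gd}. The only point requiring a moment's care is confirming that the values $A=\max_i(A_{1,i}+B_{1,i}L_i-L_i/(1+\omega))$ and $C=\frac{1}{m}\summ C_{1,i}+2A\Delta_f^*$ carried over from Lemma~\ref{lem:dc-sgd} are precisely those reported in the corollary statement, so that the final expressions are literally the specialization of Theorem~\ref{thm:main} with no further algebra.
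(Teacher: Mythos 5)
Your proposal is correct and follows exactly the paper's own route: invoke Lemma~\ref{lem:dc-sgd} to get the unified parameters $A_1=\frac{(1+\omega)A}{m}$, $B_1=1$, $C_1=\frac{(1+\omega)C}{m}$, $D_1=\frac{1+\omega}{m}$, $\rho=1$, $A_2=B_2=C_2=0$, then specialize the stepsize and iteration bound of Theorem~\ref{thm:main}, using $\sigma_0^2=0$ to conclude $\fgapp=\fgap$. The substitutions you carry out (the vanishing of the $D_1(\cdot)\rho^{-1}$ cross terms and the resulting collapse of both the stepsize rule and the bound \eqref{eq:main-ssuhud}) are precisely the paper's computation, so nothing further is needed.
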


\begin{proofof}{Corollary \ref{cor:dc-sgd}}
	According to our unified Theorem \ref{thm:main}, if the stepsize is chosen as 
	\begin{align}
	\eta &\leq \min\left\{ \frac{1}{LB_1+LD_1B_2\rho^{-1}}, \sqrt{\frac{\ln 2}{(LA_1 + LD_1A_2\rho^{-1})K}}, \frac{\epsilon^2}{2L(C_1+D_1C_2\rho^{-1})} \right\}  \notag\\
	&= \min\left\{ \frac{1}{L},~ \sqrt{\frac{m\ln 2}{(1+\omega)LAK}},~ \frac{m\epsilon^2}{2(1+\omega)LC} \right\}
	\end{align}
	since $B_2=A_2=C_2=0, B_1=1, A_1 =\frac{(1+\omega)A}{m}$ and $C_1  =  \frac{(1+\omega)C}{m}$
	according to Lemma \ref{lem:dc-sgd},
	then the number of iterations performed by DC-SGD (Algorithm \ref{alg:dc-sgd}) to find an $\epsilon$-solution of problem \eqref{eq:prob-fed} with \eqref{prob-fed:exp} or \eqref{prob-fed:finite} can be bounded by
	\begin{align}
	K &= \frac{8\fgapp L}{\epsilon^2} \max \left\{ B_1+D_1B_2\rho^{-1}, \frac{12\fgapp (A_1+D_1 A_2\rho^{-1})}{\epsilon^2}, \frac{2(C_1+D_1C_2\rho^{-1})}{\epsilon^2}\right\}  \notag\\
	&= \frac{8\fgap L}{\epsilon^2} \max \left\{1,~ \frac{12(1+\omega)\fgap A }{\epsilon^2m},~ \frac{2(1+\omega)C}{\epsilon^2m}\right\}
	\end{align}
	since $B_2=A_2=C_2=0, B_1=1, A_1 =\frac{(1+\omega)A}{m}, C_1  =  \frac{(1+\omega)C}{m}, \sigma_0^2 = 0$, and 
	$$\fgapp:= f(x^0) - f^* + 2^{-1}L\eta^2D_1\rho^{-1} \sigma_0^2 = f(x^0) - f^* =\fgap.$$
\end{proofof}

\newpage
\subsubsection{DC-LSVRG method}

In this section, we show that if the parallel workers use L-SVRG for computing their local  gradient $\widetilde{g}_i^k$, then $g^k$ (see Line \ref{line:gk-dc-lsvrg} of Algorithm \ref{alg:dc-lsvrg}) satisfies the unified Assumption \ref{asp:boge}.

\begin{algorithm}[htb]
	\caption{DC-LSVRG}
	\label{alg:dc-lsvrg}
	\begin{algorithmic}[1]
		\REQUIRE ~
		initial point $x^0=w^0$, stepsize $\eta_k$, minibatch size $b$, probability $p\in (0,1]$
		\FOR {$k=0,1,2,\ldots$}
		\STATE {\bf{for all machines $i= 1,2,\ldots,m$ do in parallel}}
		\STATE \quad Compute local stochastic gradient $\widetilde{g}_i^k=\frac{1}{b} \sum_{j\in I_b} (\nabla f_{i,j}(x^k)- \nabla f_{i,j}(w^k)) +\nabla f_i(w^k)$  \label{line:localgrad-dc-lsvrg}
		\STATE \quad Compress local gradient $\cC_i^k(\widetilde{g}_i^k)$ and send it to the server
		\STATE \quad $w^{k+1} = \begin{cases}
		x^k &\text {with probability } p\\
		w^k &\text {with probability } 1-p
		\end{cases}$ \label{line:w_prob-dc-lsvrg}
		\STATE {\bf{end for}}
		\STATE Aggregate received compressed gradient information
		$g^k = \frac{1}{m}\sum \limits_{i=1}^m \cC_i^k(\widetilde{g}_i^k)$ \label{line:gk-dc-lsvrg}
		\STATE $x^{k+1} = x^k - \eta_k g^k$  \label{line:update-dc-lsvrg}
		\ENDFOR
	\end{algorithmic}
\end{algorithm}

\begin{lemma}[DC-LSVRG]\label{lem:dc-lsvrg}
	Let the local stochastic gradient estimator $\widetilde{g}_i^k=\frac{1}{b} \sum_{j\in I_b} (\nabla f_{i,j}(x^k)- \nabla f_{i,j}(w^k)) +\nabla f_i(w^k)$ (see Line \ref{line:localgrad-dc-lsvrg} of Algorithm \ref{alg:dc-lsvrg}), then we know that $\widetilde{g}_i^k$ satisfies \eqref{eq:gi1-dc} and \eqref{eq:gi2-dc} with $A_{1,i}=0, B_{1,i}=1, C_{1,i} =0, D_{1}'=\frac{\bar{L}^2}{b}, \tsk = \ns{\xk-\wk}$,
	$\rho'=p+p\gamma-\gamma, A_2'=0, B_2'=(1-p)\eta^2\gamma^{-1},C_2'=0, D_2'=\eta^2$, and $\forall\gamma>0$.
	Thus, according to Theorem \ref{thm:dc}, $g^k$ (see Line \ref{line:gk-dc-lsvrg} of Algorithm \ref{alg:dc-lsvrg})  satisfies the unified Assumption \ref{asp:boge} with
	\begin{align*}
	&A_1 =\frac{(1+\omega)A}{m}, \qquad
	B_1 =1, \qquad 
	C_1  =  \frac{(1+\omega)C}{m}, \\ 
	&D_1 =\frac{1+\omega}{m}, \qquad
	\sk =\frac{\bar{L}^2}{b}\ns{\xk-\wk}, \qquad
	\rho =p+p\gamma-\gamma-\tau, \\
	&A_2 =\tau A, \qquad
	B_2 = \frac{\bar{L}^2 ((1-p)\eta^2\gamma^{-1} + \eta^2)}{b}, \qquad 
	C_2  = \tau C,
	\end{align*}
	where 
	$A:=\max_i (L_i-L_i/(1+\omega))$,
	$C:= 2A\Delta_f^*$,
	$\Delta_f^*:=f^*-\frac{1}{m}\summ f_i^*$,
	$\tau:= \frac{(1+\omega)\bar{L}^2 \eta^2}{mb}$
	and $\forall\gamma>0$.
\end{lemma}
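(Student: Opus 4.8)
The plan is to verify that the local L-SVRG estimator $\gi$ satisfies the two DC-template recursions \eqref{eq:gi1-dc} and \eqref{eq:gi2-dc} with the stated parameters, and then invoke Theorem \ref{thm:dc} to read off the global parameters. The first recursion is a worker-wise transcription of the proof of Lemma \ref{lem:lsvrg}: unbiasedness $\Ek[\gi]=\nabla f_i(\xk)$ follows from cancellation of the control-variate terms, and the decomposition $\Ek[\ns{\gi}]=\Ek[\ns{\gi-\nabla f_i(\xk)}]+\ns{\nabla f_i(\xk)}$ combined with $\E[\ns{x-\E x}]\le\E[\ns{x}]$ and the average $\bL$-smoothness of $f_i$ (Assumption \ref{asp:avgsmooth-fed}) gives $\Ek[\ns{\gi}]\le\frac{\bL^2}{b}\ns{\xk-\wk}+\ns{\nabla f_i(\xk)}$. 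Matching this against \eqref{eq:gi1-dc} with $\tsk:=\ns{\xk-\wk}$ yields $A_{1,i}=0$, $B_{1,i}=1$, $C_{1,i}=0$, and $D_1'=\frac{\bL^2}{b}$.

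The crux is the second recursion. I would expand $\Ek[\tskn]=\Ek[\ns{\xkn-\wkn}]$ by conditioning on the probabilistic snapshot update (Line \ref{line:w_prob-dc-lsvrg}), splitting into the events $\wkn=\xk$ (probability $p$) and $\wkn=\wk$ (probability $1-p$). The essential departure from the single-node analysis is that here the iterate evolves through the \emph{aggregated} gradient via $\xkn=\xk-\eta\gk$, so after expanding the square and using $\Ek[\gk]=\nabla f(\xk)$, the arising $\eta^2\Ek[\ns{\gk}]$ term cannot be folded into a local quantity; it must be retained in the dedicated $D_2'\Ek[\ns{\gk}]$ slot that the generalized recursion \eqref{eq:gi2-dc} was designed to host. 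Bounding the cross term by Young's inequality with parameter $\gamma>0$, namely $-2(1-p)\eta\inner{\xk-\wk}{\nabla f(\xk)}\le(1-p)\gamma\ns{\xk-\wk}+(1-p)\gamma^{-1}\eta^2\ns{\nabla f(\xk)}$, and collecting the coefficient of $\tsk$ as $(1-p)(1+\gamma)=1-(p+p\gamma-\gamma)$, produces $\rho'=p+p\gamma-\gamma$, $B_2'=(1-p)\eta^2\gamma^{-1}$, $D_2'=\eta^2$, and $A_2'=C_2'=0$.

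With both recursions in hand, the conclusion is a direct substitution into Theorem \ref{thm:dc}: one computes $A=\max_i(A_{1,i}+B_{1,i}L_i-L_i/(1+\omega))=\max_i(L_i-L_i/(1+\omega))$, $C=\frac{1}{m}\summ C_{1,i}+2A\Delta_f^*=2A\Delta_f^*$, and $\tau=\frac{(1+\omega)D_1'D_2'}{m}=\frac{(1+\omega)\bL^2\eta^2}{mb}$, whence $\sk=D_1'\tsk=\frac{\bL^2}{b}\ns{\xk-\wk}$, $\rho=\rho'-\tau$, $A_2=\tau A$, $B_2=D_1'(B_2'+D_2')=\frac{\bL^2((1-p)\eta^2\gamma^{-1}+\eta^2)}{b}$, and $C_2=\tau C$, exactly as claimed. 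I expect the sole delicate point to be the bookkeeping in the second recursion, namely correctly isolating the global $\eta^2\Ek[\ns{\gk}]$ contribution as $D_2'\Ek[\ns{\gk}]$ rather than bounding it via local data, which is precisely the structural feature distinguishing this distributed analysis from the single-node Lemma \ref{lem:lsvrg}.
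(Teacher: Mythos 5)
Your proposal is correct and follows essentially the same route as the paper's proof: establish unbiasedness and the local second-moment bound via the variance decomposition and Assumption \ref{asp:avgsmooth-fed}, expand $\Ek[\tskn]$ through the snapshot update with Young's inequality (keeping the aggregated term $\eta^2\Ek[\ns{\gk}]$ in the $D_2'$ slot rather than folding it into local quantities, exactly as the paper does), and then substitute into Theorem \ref{thm:dc}. All parameter identifications, including $\tau=\frac{(1+\omega)\bar{L}^2\eta^2}{mb}$ and $B_2=D_1'(B_2'+D_2')$, match the paper's derivation.
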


\begin{proofof}{Lemma \ref{lem:dc-lsvrg}}
	If the local stochastic gradient estimator $\gi=\frac{1}{b} \sum_{j\in I_b} (\nabla f_{i,j}(x^k)- \nabla f_{i,j}(w^k)) +\nabla f_i(w^k)$ (see Line \ref{line:localgrad-dc-lsvrg} of Algorithm \ref{alg:dc-lsvrg}), we show the following equations:
	\begin{align}
	\Ek[\gi] &= \Ek\left[\frac{1}{b} \sum_{j\in I_b} (\nabla f_{i,j}(x^k)- \nabla f_{i,j}(w^k)) +\nabla f_i(w^k)\right] \notag\\
	&= \nabla f_i(\xk) - \nabla f_i(\wk) + \nabla f_i(\wk) = \nabla f_i(\xk) \label{eq:dc-svrg0}
	\end{align}
	and 
	\begin{align}
	\Ek[\ns{\gi}] & = \Ek\left[\ns{\gi-\nabla f_i(\xk)}\right] + \ns{\nabla f_i(\xk) } \notag\\
	&= \Ek\left[\nsB{\frac{1}{b} \sum_{j\in I_b} (\nabla f_{i,j}(x^k)- \nabla f_{i,j}(w^k)) +\nabla f_i(w^k) - \nabla f_i(\xk)} \right]  + \ns{\nabla f_i(\xk) }  \notag
	\end{align}
	\begin{align}
	&= \frac{1}{b^2} \Ek\left[\nsB{\sum_{j\in I_b} \left((\nabla f_{i,j}(\xk)- \nabla f_{i,j}(\wk)) -(\nabla f_i(\xk) - \nabla f_i(\wk)) \right)} \right]  + \ns{\nabla f_i(\xk) }  \notag\\
	&= \frac{1}{b} \Ek\left[\nsB{(\nabla f_{i,j}(\xk)- \nabla f_{i,j}(\wk)) -(\nabla f_i(\xk) - \nabla f_i(\wk))} \right]  + \ns{\nabla f_i(\xk) }  \notag\\
	&\leq \frac{1}{b} \Ek\left[\nsB{\nabla f_{i,j}(\xk)- \nabla f_{i,j}(\wk)} \right]  + \ns{\nabla f_i(\xk) }  \label{eq:dc-var}\\
	& \leq  \frac{\bL^2}{b} \ns{\xk-\wk} + \ns{\nabla f_i(\xk)}, \label{eq:dc-svrg1}
	\end{align}
	where \eqref{eq:dc-var} uses the fact $\E[\ns{x-\E[x]}]\leq \E[\ns{x}]$, and the last inequality uses Assumption \ref{asp:avgsmooth-fed} (i.e., \eqref{eq:avgsmooth-fed}).
	Now, we define $\tsk:=\ns{\xk-\wk}$ and obtain
	\begin{align}
	&\Ek[\tskn] \notag\\
	& := \Ek[\ns{\xkn-\wkn}]  \notag\\
	&= p\Ek[\ns{\xkn-\xk}] + (1-p)\Ek[\ns{\xkn-\wk}]  \label{eq:dc-useprob}\\
	&=p\eta^2\Ek[\ns{\gk}] + (1-p)\Ek[\ns{\xk-\eta \gk-\wk}]   \label{eq:dc-useupdate} \\
	&=p\eta^2\Ek[\ns{\gk}] + (1-p)\Ek[\ns{\xk-\wk} + \ns{\eta \gk} -2\inner{\xk-\wk}{\eta\gk}]  \notag\\
	&= p\eta^2\Ek[\ns{\gk}] + (1-p)\ns{\xk-\wk} + (1-p)\eta^2\Ek[\ns{\gk}] -2(1-p)\inner{\xk-\wk}{\eta \nabla f(\xk)} \notag \\
	&\leq \eta^2\Ek[\ns{\gk}] + (1-p)\ns{\xk-\wk} +(1-p)\gamma\ns{\xk-\wk} + \frac{1-p}{\gamma}\ns{\eta \nabla f(\xk)} \label{eq:dc-useyoung} \\
	&= \eta^2\Ek[\ns{\gk}] + (1-p)(1+\gamma)\ns{\xk-\wk}  
	+ \frac{(1-p)\eta^2}{\gamma}\ns{\nabla f(\xk)} \notag\\
	&= (1-(p+p\gamma-\gamma)) \tsk + \frac{(1-p)\eta^2}{\gamma}\ns{\nabla f(\xk)}
	+ \eta^2\Ek[\ns{\gk}], 
	\label{eq:dc-svrg2}
	\end{align}
	where \eqref{eq:dc-useprob} uses Line \ref{line:w_prob-dc-lsvrg} of Algorithm \ref{alg:dc-lsvrg}, \eqref{eq:dc-useupdate} uses Line \ref{line:update-dc-lsvrg} of Algorithm \ref{alg:dc-lsvrg}, \eqref{eq:dc-useyoung} uses Young's inequality for $\forall \gamma>0$, and \eqref{eq:dc-svrg2} follows from the definition $\tsk:=\ns{\xk-\wk}$.
	
	Now, according to \eqref{eq:dc-svrg1} and \eqref{eq:dc-svrg2}, we know the local stochastic gradient estimator $\gi$ (see Line \ref{line:localgrad-dc-lsvrg} of Algorithm \ref{alg:dc-lsvrg}) satisfies \eqref{eq:gi1-dc} and \eqref{eq:gi2-dc} with 
	\begin{align*}
	&A_{1,i}=0,\quad  B_{1,i}=1,\quad 
	D_{1}'=\frac{\bar{L}^2}{b},\quad \tsk = \ns{\xk-\wk},\quad
	C_{1,i} =0, \\
	&\rho'=p+p\gamma-\gamma,\quad
	A_2'=0,\quad
	B_2'=(1-p)\eta^2\gamma^{-1},\quad
	D_2'=\eta^2,  \quad
	C_2'=0,\quad \forall\gamma>0.
	\end{align*}
	Thus, according to Theorem \ref{thm:dc}, $g^k$ (see Line \ref{line:gk-dc-lsvrg} of Algorithm \ref{alg:dc-lsvrg})  satisfies the unified Assumption \ref{asp:boge} with
	\begin{align*}
	&A_1 =\frac{(1+\omega)A}{m}, \qquad
	B_1 =1, \qquad 
	C_1  =  \frac{(1+\omega)C}{m}, \\ 
	&D_1 =\frac{1+\omega}{m}, \qquad
	\sk =\frac{\bar{L}^2}{b}\ns{\xk-\wk}, \qquad
	\rho =p+p\gamma-\gamma-\tau, \\
	&A_2 =\tau A, \qquad
	B_2 = \frac{\bar{L}^2 ((1-p)\eta^2\gamma^{-1} + \eta^2)}{b}, \qquad 
	C_2  = \tau C,
	\end{align*}
	where 
	$A:=\max_i (L_i-L_i/(1+\omega))$,
	$C:= 2A\Delta_f^*$,
	$\Delta_f^*:=f^*-\frac{1}{m}\summ f_i^*$,
	$\tau:= \frac{(1+\omega)\bar{L}^2 \eta^2}{mb}$
	and $\forall\gamma>0$.
\end{proofof}

\begin{corollary}[DC-LSVRG]\label{cor:dc-lsvrg}
	Suppose that Assumption \ref{asp:lsmooth-diana} and \ref{asp:avgsmooth-fed} hold. 
	Let stepsize 
	$$\eta \leq \min\left\{ \frac{1}{L+L(1+\omega)Bm^{-1}b^{-1}\rho^{-1}},~ \sqrt{\frac{m\ln 2}{(1+\omega)( 1+\tau\rho^{-1})LAK}},~ \frac{m\epsilon^2}{2(1+\omega)( 1+\tau\rho^{-1})LC} \right\},$$ 
	then the number of iterations performed by DC-LSVRG (Algorithm \ref{alg:dc-lsvrg}) to find an $\epsilon$-solution of nonconvex federated problem \eqref{eq:prob-fed} with \eqref{prob-fed:finite}, i.e. a point $\hx$ such that $\E[\n{\nabla f(\hx)}] \leq \epsilon$, can be bounded by	
	\begin{align}
	K = \frac{8\fgap L}{\epsilon^2} \max \left\{ 1+\frac{(1+\omega)B}{mb\rho},~ \frac{12(1+\omega)( 1+\tau\rho^{-1})\fgap A}{m\epsilon^2},~ \frac{2(1+\omega)( 1+\tau\rho^{-1})LC}{m\epsilon^2}\right\},
	\end{align}
	where 
	$A:=\max_i (L_i-L_i/(1+\omega))$,
	$B: = \bar{L}^2 ((1-p)\eta^2\gamma^{-1} + \eta^2)$,
	$C:= 2A\Delta_f^*$,
	$\Delta_f^*:=f^*-\frac{1}{m}\summ f_i^*$,
	$\rho :=p+p\gamma-\gamma-\tau$,
	$\tau:= \frac{(1+\omega)\bar{L}^2 \eta^2}{mb}$
	and $\forall\gamma>0$.
\end{corollary}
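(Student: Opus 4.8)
The plan is to prove this corollary as a direct consequence of Lemma~\ref{lem:dc-lsvrg} together with our unified main result, Theorem~\ref{thm:main}. Lemma~\ref{lem:dc-lsvrg} already certifies that the aggregated DC-LSVRG estimator $g^k$ (Line~\ref{line:gk-dc-lsvrg} of Algorithm~\ref{alg:dc-lsvrg}) satisfies Assumption~\ref{asp:boge} with the explicit parameters $A_1 = \frac{(1+\omega)A}{m}$, $B_1 = 1$, $C_1 = \frac{(1+\omega)C}{m}$, $D_1 = \frac{1+\omega}{m}$, $A_2 = \tau A$, $B_2 = \frac{\bar{L}^2((1-p)\eta^2\gamma^{-1} + \eta^2)}{b}$, $C_2 = \tau C$, and $\rho = p + p\gamma - \gamma - \tau$, where $\tau := \frac{(1+\omega)\bar{L}^2\eta^2}{mb}$. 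So the entire proof reduces to substituting these values into the stepsize bound and the iteration-count formula of Theorem~\ref{thm:main} and simplifying. No new estimates are needed; the conceptual work was already carried out in Theorem~\ref{thm:dc} and Lemma~\ref{lem:dc-lsvrg}.

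First I would compute the three arguments of the $\min$ defining the admissible stepsize in Theorem~\ref{thm:main}. Writing $B := \bar{L}^2((1-p)\eta^2\gamma^{-1} + \eta^2)$ so that $B_2 = B/b$, the first argument becomes $\frac{1}{LB_1 + LD_1 B_2\rho^{-1}} = \frac{1}{L + L(1+\omega)Bm^{-1}b^{-1}\rho^{-1}}$. For the remaining two arguments the key simplification is that both $A_1 + D_1 A_2\rho^{-1}$ and $C_1 + D_1 C_2\rho^{-1}$ factor cleanly: since $A_2 = \tau A$ and $C_2 = \tau C$, one gets $A_1 + D_1 A_2\rho^{-1} = \frac{(1+\omega)A}{m}(1 + \tau\rho^{-1})$ and $C_1 + D_1 C_2\rho^{-1} = \frac{(1+\omega)C}{m}(1 + \tau\rho^{-1})$. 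Substituting these yields the second argument $\sqrt{\frac{m\ln 2}{(1+\omega)(1+\tau\rho^{-1})LAK}}$ and the third $\frac{m\epsilon^2}{2(1+\omega)(1+\tau\rho^{-1})LC}$, matching the claimed stepsize exactly.

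Next I would carry the same substitutions into the complexity bound~\eqref{eq:main-ssuhud}. The first term in the max is $B_1 + D_1 B_2\rho^{-1} = 1 + \frac{(1+\omega)B}{mb\rho}$; the second and third reuse the factored expressions above to produce $\frac{12(1+\omega)(1+\tau\rho^{-1})\fgapp A}{m\epsilon^2}$ and $\frac{2(1+\omega)(1+\tau\rho^{-1})C}{m\epsilon^2}$, respectively. Finally, since the algorithm initializes $x^0 = w^0$, the initial variance $\sigma_0^2 = \frac{\bar{L}^2}{b}\ns{x^0 - w^0} = 0$, whence $\fgapp = f(x^0) - f^* + 2^{-1}L\eta^2 D_1\rho^{-1}\sigma_0^2 = \fgap$. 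Replacing $\fgapp$ by $\fgap$ throughout then gives the stated $K$.

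The only genuine obstacle here is bookkeeping: one must track the nested definitions ($A$, $B$, $C$, $\tau$, $\rho$, and the auxiliary parameter $\gamma$) consistently and verify that the $(1+\tau\rho^{-1})$ factor emerges identically in the $A$- and $C$-branches of the max. A secondary point I would flag is internal consistency of the constant in the third term of the max: Theorem~\ref{thm:main} produces $\frac{2(C_1 + D_1 C_2\rho^{-1})}{\epsilon^2}$, i.e.\ a bare $C$ rather than an $LC$, so the factor of $L$ printed in the corollary's third term should be reconciled against the theorem (compare the DC-GD corollary, where the analogous term carries no $L$ since there $C_2 = \tau = 0$). Modulo that check, the corollary follows verbatim by substitution into Theorem~\ref{thm:main}.
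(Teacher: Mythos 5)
Your proposal is correct and follows exactly the paper's own proof: plug the parameters from Lemma~\ref{lem:dc-lsvrg} into the stepsize bound and iteration count of Theorem~\ref{thm:main}, using the factorizations $A_1 + D_1A_2\rho^{-1} = \frac{(1+\omega)A}{m}(1+\tau\rho^{-1})$ and $C_1 + D_1C_2\rho^{-1} = \frac{(1+\omega)C}{m}(1+\tau\rho^{-1})$, and observe that $x^0=w^0$ gives $\sigma_0^2=0$, hence $\fgapp=\fgap$. Your flag on the third max-term is also right: Theorem~\ref{thm:main} yields $\frac{2(C_1+D_1C_2\rho^{-1})}{\epsilon^2} = \frac{2(1+\omega)(1+\tau\rho^{-1})C}{m\epsilon^2}$, so the factor $L$ printed in that term of the corollary (and carried through verbatim in the paper's own proof) is a typo, as confirmed by comparison with Corollaries~\ref{cor:dc-gd} and~\ref{cor:dc-sgd}, whose analogous terms carry no $L$, while the $L$ in the stepsize's third argument is correct since the theorem requires $\eta \leq \frac{\epsilon^2}{2L(C_1+D_1C_2\rho^{-1})}$.
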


\begin{proofof}{Corollary \ref{cor:dc-lsvrg}}
	According to our unified Theorem \ref{thm:main}, if the stepsize is chosen as 
	\begin{align*}
	\eta &\leq \min\left\{ \frac{1}{LB_1+LD_1B_2\rho^{-1}}, \sqrt{\frac{\ln 2}{(LA_1 + LD_1A_2\rho^{-1})K}}, \frac{\epsilon^2}{2L(C_1+D_1C_2\rho^{-1})} \right\}  \notag\\
	&= \min\left\{\frac{1}{L+L(1+\omega)Bm^{-1}b^{-1}\rho^{-1}},~ \sqrt{\frac{m\ln 2}{(1+\omega)( 1+\tau\rho^{-1})LAK}},~ \frac{m\epsilon^2}{2(1+\omega)( 1+\tau\rho^{-1})LC} \right\}
	\end{align*}
	since $B_1 =1, D_1 =\frac{1+\omega}{m}, B_2=\frac{B}{b}, B=\bar{L}^2 ((1-p)\eta^2\gamma^{-1} + \eta^2), A_1 =\frac{(1+\omega)A}{m},
	A_2 =\tau A, C_1  =  \frac{(1+\omega)C}{m}$ and $C_2  = \tau C$ 
	according to Lemma \ref{lem:dc-lsvrg},
	then the number of iterations performed by DC-LSVRG (Algorithm \ref{alg:dc-lsvrg}) to find an $\epsilon$-solution of problem \eqref{eq:prob-fed} with \eqref{prob-fed:finite} can be bounded by
	\begin{align}
	K &= \frac{8\fgapp L}{\epsilon^2} \max \left\{ B_1+D_1B_2\rho^{-1}, \frac{12\fgapp (A_1+D_1 A_2\rho^{-1})}{\epsilon^2}, \frac{2(C_1+D_1C_2\rho^{-1})}{\epsilon^2}\right\}  \notag\\
	&= \frac{8\fgap L}{\epsilon^2} \max \left\{1+\frac{(1+\omega)B}{mb\rho},~ \frac{12(1+\omega)( 1+\tau\rho^{-1})\fgap A}{m\epsilon^2},~ \frac{2(1+\omega)( 1+\tau\rho^{-1})LC}{m\epsilon^2}\right\}
	\end{align}
	since $B_1 =1, D_1 =\frac{1+\omega}{m}, B_2=\frac{B}{b}, B=\bar{L}^2 ((1-p)\eta^2\gamma^{-1} + \eta^2), A_1 =\frac{(1+\omega)A}{m},
	A_2 =\tau A, C_1  =  \frac{(1+\omega)C}{m}, C_2  = \tau C$, and 
	$\fgapp:= f(x^0) - f^* + 2^{-1}L\eta^2D_1\rho^{-1} \sigma_0^2 = f(x^0) - f^* =\fgap$ due to $\sigma_0^2 = \frac{\bar{L}^2}{b}\ns{x^0-w^0}=0$.
\end{proofof}

\subsubsection{DC-SAGA method}
In this section, we show that if the parallel workers use SAGA for computing their local  gradient $\widetilde{g}_i^k$, then $g^k$ (see Line \ref{line:gk-dc-saga} of Algorithm \ref{alg:dc-saga}) satisfies the unified Assumption \ref{asp:boge}.

\begin{algorithm}[htb]
	\caption{DC-SAGA}
	\label{alg:dc-saga}
	\begin{algorithmic}[1]
		\REQUIRE ~
		initial point $x^0, \{w_i^0\}_{i=1}^n$, stepsize $\eta_k$, minibatch size $b$
		\FOR {$k=0,1,2,\ldots$}
		\STATE {\bf{for all machines $i= 1,2,\ldots,m$ do in parallel}}
		\STATE \quad Compute local stochastic gradient $\widetilde{g}_i^k=\frac{1}{b} \sum_{j\in I_b} (\nabla f_{i,j}(x^k)- \nabla f_{i,j}(w_{i,j}^k)) + \frac{1}{n}\sum_{j=1}^{n}\nabla f_{i,j}(w_{i,j}^k)$  \label{line:localgrad-dc-saga}
		\STATE \quad Compress local gradient $\cC_i^k(\widetilde{g}_i^k)$ and send it to the server
		\STATE \quad $w_{i,j}^{k+1} = \begin{cases}
		x^k & \text{for~~}  j\in I_b\\
		w_{i,j}^k &\text{for~~} j\notin I_b
		\end{cases}$ \label{line:w_prob-dc-saga}
		\STATE {\bf{end for}}
		\STATE Aggregate received compressed gradient information
		$g^k = \frac{1}{m}\sum \limits_{i=1}^m \cC_i^k(\widetilde{g}_i^k)$ \label{line:gk-dc-saga}
		\STATE $x^{k+1} = x^k - \eta_k g^k$  \label{line:update-dc-saga}
		\ENDFOR
	\end{algorithmic}
\end{algorithm}

\begin{lemma}[DC-SAGA]\label{lem:dc-saga}
	Let the local stochastic gradient estimator $\widetilde{g}_i^k=\frac{1}{b} \sum_{j\in I_b} (\nabla f_{i,j}(x^k)- \nabla f_{i,j}(w_{i,j}^k)) + \frac{1}{n}\sum_{j=1}^{n}\nabla f_{i,j}(w_{i,j}^k)$ (see Line \ref{line:localgrad-dc-saga} of Algorithm \ref{alg:dc-saga}), then we know that $\widetilde{g}_i^k$ satisfies \eqref{eq:gi1-dc-diff} and \eqref{eq:gi2-dc-diff} with $A_{1,i}=0, B_{1,i}=1, C_{1,i} =0, D_{1,i}=\frac{\bar{L}^2}{b}, \ski = \frac{1}{n}\sum_{j=1}^{n}\ns{\xk-w_{i,j}^k}$,
	$\rho_i=\frac{b}{n}+\frac{b}{n}\gamma-\gamma, A_{2,i}=0, B_{2,i}=(1-\frac{b}{n})\eta^2\gamma^{-1},C_{2,i}=0, D_{2,i}=\eta^2$, and $\forall\gamma>0$.
	Thus, according to Theorem \ref{thm:dc-diff}, $g^k$ (see Line \ref{line:gk-dc-saga} of Algorithm \ref{alg:dc-saga})  satisfies the unified Assumption \ref{asp:boge} with
	\begin{align*}
	&A_1 =\frac{(1+\omega)A}{m}, \qquad
	B_1 =1, \qquad 
	C_1  =  \frac{(1+\omega)C}{m}, \\ 
	&D_1 =\frac{1+\omega}{m}, \qquad
	\sk =\frac{1}{mn}\summ \sum_{j=1}^{n}\frac{\bar{L}^2}{b}\ns{\xk-w_{i,j}^k}, \qquad
	\rho =\frac{b}{n}+\frac{b}{n}\gamma-\gamma-\tau, \\
	&A_2 =\tau A, \qquad
	B_2 = \frac{\bar{L}^2 ((1-\frac{b}{n})\eta^2\gamma^{-1} + \eta^2)}{b}, \qquad 
	C_2  = \tau C,
	\end{align*}
	where 
	$A:=\max_i (L_i-L_i/(1+\omega))$,
	$C:= 2A\Delta_f^*$,
	$\Delta_f^*:=f^*-\frac{1}{m}\summ f_i^*$,
	$\tau:= \frac{(1+\omega)\bar{L}^2 \eta^2}{mb}$
	and $\forall\gamma>0$.
\end{lemma}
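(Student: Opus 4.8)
The plan is to proceed in two stages: first verify that the local SAGA estimator $\gi$ satisfies the two local recursions \eqref{eq:gi1-dc-diff} and \eqref{eq:gi2-dc-diff} with the claimed constants, and then invoke Theorem \ref{thm:dc-diff} to transfer these to the aggregated estimator $\gk$, thereby establishing Assumption \ref{asp:boge}. For the first recursion, note that $\gi$ (Line \ref{line:localgrad-dc-saga}) is exactly the SAGA estimator for the local finite sum $f_i=\frac1n\sum_{j=1}^n f_{i,j}$, so the computation mirrors Lemma \ref{lem:saga}. I would first establish unbiasedness $\Ek[\gi]=\nabla f_i(\xk)$ by telescoping the stored control variates, and then use the variance decomposition $\Ek[\ns{\gi}]=\Ek[\ns{\gi-\nabla f_i(\xk)}]+\ns{\nabla f_i(\xk)}$ together with $\E[\ns{X-\E X}]\le\E[\ns{X}]$ and Assumption \ref{asp:avgsmooth-saga-fed} to obtain $\Ek[\ns{\gi}]\le\frac{\bL^2}{b}\,\frac1n\sum_{j=1}^n\ns{\xk-w_{i,j}^k}+\ns{\nabla f_i(\xk)}$. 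This is precisely \eqref{eq:gi1-dc-diff} with $\ski=\frac1n\sum_{j=1}^n\ns{\xk-w_{i,j}^k}$, $B_{1,i}=1$, $D_{1,i}=\bL^2/b$, and $A_{1,i}=C_{1,i}=0$.

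For the variance recursion I would track $\ski$ directly. By the sampling rule on Line \ref{line:w_prob-dc-saga}, each index $j$ lands in $I_b$ with probability $b/n$, so $w_{i,j}^{k+1}=\xk$ with probability $b/n$ and $w_{i,j}^{k+1}=w_{i,j}^k$ otherwise. Conditioning on this choice, substituting $\xkn=\xk-\eta\gk$ (Line \ref{line:update-dc-saga}), expanding $\ns{\xkn-w_{i,j}^{k+1}}$, and taking $\Ek$ with $\Ek[\gk]=\nabla f(\xk)$, the $\ns{\gk}$ contributions from the two branches combine into a single $\eta^2\Ek[\ns{\gk}]$, while Young's inequality on the cross term gives, for any $\gamma>0$,
\[
\Ek[\skin]\le\Big(1-\tfrac{b}{n}+(1-\tfrac{b}{n})\gamma\Big)\ski+(1-\tfrac{b}{n})\eta^2\gamma^{-1}\ns{\nabla f(\xk)}+\eta^2\Ek[\ns{\gk}].
\]
Reading off the coefficients yields \eqref{eq:gi2-dc-diff} with $\rho_i=\frac{b}{n}+\frac{b}{n}\gamma-\gamma$, $B_{2,i}=(1-\frac{b}{n})\eta^2\gamma^{-1}$, $D_{2,i}=\eta^2$, and $A_{2,i}=C_{2,i}=0$.

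With the local constants in hand, I would close the argument by substituting them into the aggregation formulas of Theorem \ref{thm:dc-diff}: here $D_A=D_C=0$, $D_D=\frac{\bL^2\eta^2}{b}$, $\tau=\frac{(1+\omega)\bL^2\eta^2}{mb}$, and $D_B=\frac{\bL^2}{b}(1-\frac{b}{n})\eta^2\gamma^{-1}$, which reproduce exactly the stated global parameters, including $\sk=\frac{1}{mn}\summ\sum_{j=1}^n\frac{\bL^2}{b}\ns{\xk-w_{i,j}^k}$ and $\rho=\frac{b}{n}+\frac{b}{n}\gamma-\gamma-\tau$. The step demanding the most care — and the only genuinely new feature relative to single-node SAGA — is that the local variance recursion must \emph{retain} the aggregated term $\eta^2\Ek[\ns{\gk}]$ rather than absorbing it through the local second-moment bound as in Lemma \ref{lem:saga}; this is unavoidable because $\xkn$ moves along the global direction $\gk$ rather than the local $\gi$. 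It is exactly this $D_{2,i}\Ek[\ns{\gk}]$ coupling, together with the fact that each worker carries its own variance $\ski$ (in contrast to the shared $\tsk$ of DC-LSVRG in Lemma \ref{lem:dc-lsvrg}), that forces the use of the per-worker Theorem \ref{thm:dc-diff} in place of the simpler Theorem \ref{thm:dc}.
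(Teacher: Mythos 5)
Your proposal is correct and follows essentially the same route as the paper's proof: establish unbiasedness and the second-moment bound for the local SAGA estimator via the variance decomposition, $\E[\ns{x-\E[x]}]\leq\E[\ns{x}]$, and Assumption \ref{asp:avgsmooth-saga-fed}; derive the recursion for $\ski$ by conditioning on the per-index replacement probability $b/n$, substituting $\xkn=\xk-\eta\gk$, merging the two $\ns{\gk}$ contributions into $\eta^2\Ek[\ns{\gk}]$, and applying Young's inequality to the cross term; then read off the global parameters from Theorem \ref{thm:dc-diff}, whose aggregation constants ($D_A=D_C=0$, $D_D=\bar{L}^2\eta^2/b$, $D_B=\frac{\bar{L}^2}{b}(1-\frac{b}{n})\eta^2\gamma^{-1}$, $\tau=\frac{(1+\omega)\bar{L}^2\eta^2}{mb}$) you compute exactly as the paper does. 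Your closing remark about why the $D_{2,i}\Ek[\ns{\gk}]$ coupling and the per-worker variances $\ski$ force the use of Theorem \ref{thm:dc-diff} rather than Theorem \ref{thm:dc} also matches the paper's own discussion.
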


\begin{proofof}{Lemma \ref{lem:dc-saga}}
	If the local stochastic gradient estimator $\gi=\frac{1}{b} \sum_{j\in I_b} (\nabla f_{i,j}(x^k)- \nabla f_{i,j}(w_{i,j}^k)) + \frac{1}{n}\sum_{j=1}^{n}\nabla f_{i,j}(w_{i,j}^k)$(see Line \ref{line:localgrad-dc-saga} of Algorithm \ref{alg:dc-saga}), we show the following equations:
	\begin{align}
	\Ek[\gi] &= \Ek\left[\frac{1}{b} \sum_{j\in I_b} (\nabla f_{i,j}(x^k)- \nabla f_{i,j}(w_{i,j}^k)) + \frac{1}{n}\sum_{j=1}^{n}\nabla f_{i,j}(w_{i,j}^k)\right] \notag\\
	&= \nabla f_i(\xk) - \frac{1}{n}\sum_{j=1}^{n}\nabla f_{i,j}(w_{i,j}^k) + \frac{1}{n}\sum_{j=1}^{n}\nabla f_{i,j}(w_{i,j}^k) \notag\\
	&= \nabla f_i(\xk) \label{eq:dc-saga0}
	\end{align}
	and 
	\begin{align}
	&\Ek[\ns{\gi}] \notag\\
	& = \Ek\left[\ns{\gi-\nabla f_i(\xk)}\right] + \ns{\nabla f_i(\xk) } \notag\\
	&= \Ek\left[\nsB{\frac{1}{b} \sum_{j\in I_b} (\nabla f_{i,j}(x^k)- \nabla f_{i,j}(w_{i,j}^k)) + \frac{1}{n}\sum_{j=1}^{n}\nabla f_{i,j}(w_{i,j}^k) - \nabla f_i(\xk)} \right]  + \ns{\nabla f_i(\xk) }  \notag\\
	&= \frac{1}{b^2} \Ek\left[\nsB{\sum_{j\in I_b} \left((\nabla f_{i,j}(\xk)- \nabla f_{i,j}(w_{i,j}^k)) -\left(\frac{1}{n}\sum_{j=1}^{n}\nabla f_{i,j}(\xk)- \frac{1}{n}\sum_{j=1}^{n}\nabla f_{i,j}(w_{i,j}^k)\right) \right)} \right]  \notag\\
	&\qquad \quad + \ns{\nabla f_i(\xk) }  \notag\\
	&= \frac{1}{b} \Ek\left[\nsB{(\nabla f_{i,j}(\xk)- \nabla f_{i,j}(w_{i,j}^k)) -\left(\frac{1}{n}\sum_{j=1}^{n}\nabla f_{i,j}(\xk)- \frac{1}{n}\sum_{j=1}^{n}\nabla f_{i,j}(w_{i,j}^k)\right)} \right]  \notag\\
	&\qquad \quad + \ns{\nabla f_i(\xk) }  \notag\\
	&\leq \frac{1}{b} \Ek\left[\nsB{\nabla f_{i,j}(\xk)- \nabla f_{i,j}(w_{i,j}^k)} \right]  + \ns{\nabla f_i(\xk) }  \label{eq:dc-var1}\\
	& \leq  \frac{\bL^2}{b}\frac{1}{n}\sum_{j=1}^{n}\ns{\xk-w_{i,j}^k} + \ns{\nabla f_i(\xk)}, \label{eq:dc-saga1}
	\end{align}
	where \eqref{eq:dc-var1} uses the fact $\E[\ns{x-\E[x]}]\leq \E[\ns{x}]$, and the last inequality uses Assumption \ref{asp:avgsmooth-saga-fed} (i.e., \eqref{eq:avgsmooth-saga-fed}).
	Now, we define $\ski:=\frac{1}{n}\sum_{j=1}^{n}\ns{\xk-w_{i,j}^k}$ and obtain
	\begin{align}
	&\Ek[\skin] \notag\\
	& := \EkB{\frac{1}{n}\sum_{j=1}^{n}\ns{\xkn-\wijkn}}  \notag\\
	&= \EkB{\frac{1}{n}\sum_{j=1}^{n}\frac{b}{n}\ns{\xkn-\xk}+ \frac{1}{n}\sum_{j=1}^{n}\left(1-\frac{b}{n}\right)\ns{\xkn-\wijk}} \label{eq:dc-useprob1}\\
	&= \frac{b}{n}\Ek\eta^2\ns{\gk} 
	+ \left(1-\frac{b}{n}\right)\EkB{\frac{1}{n}\sum_{j=1}^{n} \ns{\xk-\eta\gk-\wijk} }  \label{eq:dc-useupdate1} \\
	&= \frac{b\eta^2}{n}\Ek\ns{\gk} 
	+ \left(1-\frac{b}{n}\right)\EkB{\frac{1}{n}\sum_{j=1}^{n} \left(\ns{\xk-\wijk} + \ns{\eta \gk} -2\inner{\xk-\wijk}{\eta\gk}\right)}  \notag\\
	&= \eta^2\Ek\ns{\gk} 
	+ \left(1-\frac{b}{n}\right)\frac{1}{n}\sum_{j=1}^{n} \ns{\xk-\wijk} 
	+ 2\left(1-\frac{b}{n}\right)\frac{1}{n}\sum_{j=1}^{n}\inner{\xk-\wijk}{\eta \nabla f(\xk)}  \notag\\
	&\leq \eta^2\Ek\ns{\gk} 
	+ \left(1-\frac{b}{n}\right)\frac{1}{n}\sum_{j=1}^{n} \ns{\xk-\wijk} \notag\\
	&\qquad \qquad 
	+ \left(1-\frac{b}{n}\right)\frac{1}{n}\sum_{j=1}^{n}\left(\gamma\ns{\xk-\wijk} +\frac{\eta^2}{\gamma}\ns{\nabla f(\xk)}\right) \label{eq:dc-useyoung1} \\
	&= \left(1-(\frac{b}{n}+\frac{b}{n}\gamma-\gamma)\right) \ski + \frac{(1-\frac{b}{n})\eta^2}{\gamma}\ns{\nabla f(\xk)}
	+ \eta^2\Ek[\ns{\gk}], 
	\label{eq:dc-saga2}
	\end{align}
	where \eqref{eq:dc-useprob1} uses Line \ref{line:w_prob-dc-saga} of Algorithm \ref{alg:dc-saga}, \eqref{eq:dc-useupdate1} uses Line \ref{line:update-dc-saga} of Algorithm \ref{alg:dc-saga}, \eqref{eq:dc-useyoung1} uses Young's inequality for $\forall \gamma>0$, and \eqref{eq:dc-saga2} follows from the definition $\ski:=\frac{1}{n}\sum_{j=1}^{n}\ns{\xk-w_{i,j}^k}$.
	
	Now, according to \eqref{eq:dc-saga1} and \eqref{eq:dc-saga2}, we know the local stochastic gradient estimator $\gi$ (see Line \ref{line:localgrad-dc-saga} of Algorithm \ref{alg:dc-saga}) satisfies \eqref{eq:gi1-dc-diff} and \eqref{eq:gi2-dc-diff} with 
	\begin{align*}
	&A_{1,i}=0,\quad  B_{1,i}=1,\quad 
	D_{1,i}=\frac{\bar{L}^2}{b},\quad 
	\ski = \frac{1}{n}\sum_{j=1}^{n}\ns{\xk-w_{i,j}^k},\quad
	C_{1,i} =0, \\
	&\rho_i=\frac{b}{n}+\frac{b}{n}\gamma-\gamma,\quad
	A_{2,i}=0,\quad
	B_{2,i}=(1-\frac{b}{n})\eta^2\gamma^{-1},\quad
	D_{2,i}=\eta^2,  \quad
	C_{2,i}=0,\quad \forall\gamma>0.
	\end{align*}
	Thus, according to Theorem \ref{thm:dc-diff}, $g^k$ (see Line \ref{line:gk-dc-saga} of Algorithm \ref{alg:dc-saga}) satisfies the unified Assumption \ref{asp:boge} with
	\begin{align*}
	&A_1 =\frac{(1+\omega)A}{m}, \qquad
	B_1 =1, \qquad 
	C_1  =  \frac{(1+\omega)C}{m}, \\ 
	&D_1 =\frac{1+\omega}{m}, \qquad
	\sk =\frac{1}{mn}\summ \sum_{j=1}^{n}\frac{\bar{L}^2}{b}\ns{\xk-w_{i,j}^k}, \qquad
	\rho =\frac{b}{n}+\frac{b}{n}\gamma-\gamma - \tau, \\
	&A_2 =\tau A, \qquad
	B_2 = \frac{\bar{L}^2 ((1-\frac{b}{n})\eta^2\gamma^{-1} + \eta^2)}{b}, \qquad 
	C_2  = \tau C,
	\end{align*}
	where 
	$A:=\max_i (L_i-L_i/(1+\omega))$,
	$C:= 2A\Delta_f^*$,
	$\Delta_f^*:=f^*-\frac{1}{m}\summ f_i^*$,
	$\tau:= \frac{(1+\omega)\bar{L}^2 \eta^2}{mb}$
	and $\forall\gamma>0$.
\end{proofof}

\begin{corollary}[DC-SAGA]\label{cor:dc-saga}
	Suppose that Assumption \ref{asp:lsmooth-diana} and \ref{asp:avgsmooth-saga-fed} hold. 
	Let stepsize 
	$$\eta \leq \min\left\{ \frac{1}{L+L(1+\omega)Bm^{-1}b^{-1}\rho^{-1}},~ \sqrt{\frac{m\ln 2}{(1+\omega)( 1+\tau\rho^{-1})LAK}},~ \frac{m\epsilon^2}{2(1+\omega)( 1+\tau\rho^{-1})LC} \right\},$$ 
	then the number of iterations performed by DC-SAGA (Algorithm \ref{alg:dc-saga}) to find an $\epsilon$-solution of nonconvex federated problem \eqref{eq:prob-fed} with \eqref{prob-fed:finite}, i.e. a point $\hx$ such that $\E[\n{\nabla f(\hx)}] \leq \epsilon$, can be bounded by	
	\begin{align}
	K = \frac{8\fgap L}{\epsilon^2} \max \left\{ 1+\frac{(1+\omega)B}{mb\rho},~ \frac{12(1+\omega)( 1+\tau\rho^{-1})\fgap A}{m\epsilon^2},~ \frac{2(1+\omega)( 1+\tau\rho^{-1})LC}{m\epsilon^2}\right\},
	\end{align}
	where 
	$A:=\max_i (L_i-L_i/(1+\omega))$,
	$B: = \bar{L}^2 ((1-\frac{b}{n})\eta^2\gamma^{-1} + \eta^2)$,
	$C:= 2A\Delta_f^*$,
	$\Delta_f^*:=f^*-\frac{1}{m}\summ f_i^*$,
	$\rho :=\frac{b}{n}+\frac{b}{n}\gamma-\gamma$,
	$\tau:= \frac{(1+\omega)\bar{L}^2 \eta^2}{mb}$
	and $\forall\gamma>0$.
\end{corollary}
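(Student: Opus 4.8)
The plan is to follow the same two-step recipe used in the proofs of Corollaries \ref{cor:dc-gd}, \ref{cor:dc-sgd}, and especially \ref{cor:dc-lsvrg}: first invoke Lemma \ref{lem:dc-saga} to read off the concrete values of the eight parameters $A_1,A_2,B_1,B_2,C_1,C_2,D_1,\rho$ for the DC-SAGA global estimator $g^k$, and then substitute these values into the unified main Theorem \ref{thm:main}. Lemma \ref{lem:dc-saga} already establishes that $g^k$ satisfies Assumption \ref{asp:boge} with $B_1=1$, $D_1=\frac{1+\omega}{m}$, $A_1=\frac{(1+\omega)A}{m}$, $C_1=\frac{(1+\omega)C}{m}$, $B_2=\frac{B}{b}$ where $B=\bar{L}^2((1-\tfrac bn)\eta^2\gamma^{-1}+\eta^2)$, together with $A_2=\tau A$ and $C_2=\tau C$; hence there is no fresh estimator analysis to carry out, and the remaining work is purely the bookkeeping of plugging into Theorem \ref{thm:main}.

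First I would assemble the three composite quantities that appear in Theorem \ref{thm:main}. Because $B_1=1$ and because $A_2,C_2$ are proportional to $\tau$, the products factor cleanly: $B_1+D_1B_2\rho^{-1}=1+\frac{(1+\omega)B}{mb\rho}$, while $A_1+D_1A_2\rho^{-1}=\frac{(1+\omega)A}{m}(1+\tau\rho^{-1})$ and $C_1+D_1C_2\rho^{-1}=\frac{(1+\omega)C}{m}(1+\tau\rho^{-1})$, using $D_1A_2=\frac{1+\omega}{m}\tau A$ and the analogous identity for $C$. Substituting these three expressions into the stepsize bound of Theorem \ref{thm:main} reproduces verbatim the three-term minimum stated in the corollary, and substituting them into the iteration-count formula \eqref{eq:main-ssuhud} gives the stated maximum of three terms, up to the factor $L$ carried in the third slot exactly as in Corollary \ref{cor:dc-lsvrg}.

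The only remaining point is to reduce $\fgapp$ to $\fgap$. I would observe that in Algorithm \ref{alg:dc-saga} the SAGA memory is initialized so that $w_{i,j}^0=x^0$ for all $i,j$, whence the initial variance $\sigma_0^2=\frac{1}{mn}\summ\sum_{j=1}^{n}\frac{\bar{L}^2}{b}\ns{x^0-w_{i,j}^0}=0$; consequently $\fgapp=f(x^0)-f^*+2^{-1}L\eta^2D_1\rho^{-1}\sigma_0^2=f(x^0)-f^*=\fgap$, and the claim follows. I do not expect a genuine obstacle: the argument is a direct parallel of the DC-LSVRG proof with the probability $p$ replaced by the SAGA sampling ratio $b/n$. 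The one place requiring care is the self-referential role of $\rho$ and $\tau$, since $\tau=\frac{(1+\omega)\bar{L}^2\eta^2}{mb}$ depends on $\eta$; one must keep $\rho$ positive and treat the factors $(1+\tau\rho^{-1})$ as handed over by Lemma \ref{lem:dc-saga} rather than re-deriving them. I would also flag that the $\rho$ quoted in the corollary statement omits the $-\tau$ correction present in the lemma, so throughout the substitution the $\rho$ used should be the lemma's value $\rho=\frac bn+\frac bn\gamma-\gamma-\tau$.
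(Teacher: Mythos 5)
Your proposal is correct and follows essentially the same route as the paper's own proof: read off the parameters $A_1,A_2,B_1,B_2,C_1,C_2,D_1,\rho$ from Lemma~\ref{lem:dc-saga}, substitute into Theorem~\ref{thm:main} (noting $B_1+D_1B_2\rho^{-1}=1+\frac{(1+\omega)B}{mb\rho}$ and the factored forms $\frac{(1+\omega)A}{m}(1+\tau\rho^{-1})$, $\frac{(1+\omega)C}{m}(1+\tau\rho^{-1})$), and reduce $\fgapp$ to $\fgap$ via $\sigma_0^2=0$, exactly as the paper does. Your two flags are also well taken: the corollary's displayed $\rho$ indeed drops the $-\tau$ correction present in Lemma~\ref{lem:dc-saga} (the DC-LSVRG corollary keeps it), and the stray factor $L$ in the third slot of the max is inherited from the corollary statement rather than produced by Theorem~\ref{thm:main}, so using the lemma's $\rho$ throughout, as you do, is the right call.
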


\begin{proofof}{Corollary \ref{cor:dc-saga}}
	According to our unified Theorem \ref{thm:main}, if the stepsize is chosen as 
	\begin{align*}
	\eta &\leq \min\left\{ \frac{1}{LB_1+LD_1B_2\rho^{-1}}, \sqrt{\frac{\ln 2}{(LA_1 + LD_1A_2\rho^{-1})K}}, \frac{\epsilon^2}{2L(C_1+D_1C_2\rho^{-1})} \right\}  \notag\\
	&= \min\left\{\frac{1}{L+L(1+\omega)Bm^{-1}b^{-1}\rho^{-1}},~ \sqrt{\frac{m\ln 2}{(1+\omega)( 1+\tau\rho^{-1})LAK}},~ \frac{m\epsilon^2}{2(1+\omega)( 1+\tau\rho^{-1})LC} \right\}
	\end{align*}
	since $B_1 =1, D_1 =\frac{1+\omega}{m}, B_2=\frac{B}{b}, B=\bar{L}^2 ((1-\frac{b}{n})\eta^2\gamma^{-1} + \eta^2), A_1 =\frac{(1+\omega)A}{m},
	A_2 =\tau A, C_1  =  \frac{(1+\omega)C}{m}$ and $C_2  = \tau C$ 
	according to Lemma \ref{lem:dc-saga},
	then the number of iterations performed by DC-LSVRG (Algorithm \ref{alg:dc-saga}) to find an $\epsilon$-solution of problem \eqref{eq:prob-fed} with \eqref{prob-fed:finite} can be bounded by
	\begin{align}
	K &= \frac{8\fgapp L}{\epsilon^2} \max \left\{ B_1+D_1B_2\rho^{-1}, \frac{12\fgapp (A_1+D_1 A_2\rho^{-1})}{\epsilon^2}, \frac{2(C_1+D_1C_2\rho^{-1})}{\epsilon^2}\right\}  \notag\\
	&= \frac{8\fgap L}{\epsilon^2} \max \left\{1+\frac{(1+\omega)B}{mb\rho},~ \frac{12(1+\omega)( 1+\tau\rho^{-1})\fgap A}{m\epsilon^2},~ \frac{2(1+\omega)( 1+\tau\rho^{-1})LC}{m\epsilon^2}\right\}
	\end{align}
	since $B_1 =1, D_1 =\frac{1+\omega}{m}, B_2=\frac{B}{b}, B=\bar{L}^2 ((1-\frac{b}{n})\eta^2\gamma^{-1} + \eta^2), A_1 =\frac{(1+\omega)A}{m},
	A_2 =\tau A, C_1  =  \frac{(1+\omega)C}{m}, C_2  = \tau C$, and 
	$\fgapp:= f(x^0) - f^* + 2^{-1}L\eta^2D_1\rho^{-1} \sigma_0^2 = f(x^0) - f^* =\fgap$ by letting $\sigma_0^2 = 0$.
\end{proofof}

\subsection{DIANA framework for nonconvex federated optimization}
\label{sec:diana-app}

Similar to Section \ref{sec:dc-app},
we first prove a general Theorem for DIANA framework (Algorithm \ref{alg:diana})  which shows that several (new) methods belonging to the general DIANA framework satisfy Assumption \ref{asp:boge} and thus can be captured by our unified analysis.
Then, we plug their corresponding parameters into our unified Theorem \ref{thm:main} to obtain the detailed convergence rates for these methods. 

Before proving the Theorem \ref{thm:diana-type-diff}, we first provide a simple version as in Theorem \ref{thm:diana-type} where all workers share the same variance term $\tsk$ (see \eqref{eq:gi1}).
If the parallel workers use GD, SGD or L-SVRG for computing their local stochastic gradient $\widetilde{g}_i^k$ (see Line \ref{line:localgrad} of Algorithm \ref{alg:diana}), then they indeed share the same variance term $\tsk$, i.e., Theorem \ref{thm:diana-type} includes these settings.
However, if the parallel workers use SAGA-type methods for computing their local stochastic gradient $\widetilde{g}_i^k$, then the variance term $\ski$ (see \eqref{eq:gi1-diff}) is different for different worker $i$, i.e., the more general Theorem \ref{thm:diana-type-diff} includes this SAGA setting while Theorem \ref{thm:diana-type} does not.

\begin{theorem}[DIANA framework with same variance for all workers]\label{thm:diana-type} 
	Suppose that the local stochastic gradient $\widetilde{g}_i^k$ (see Line \ref{line:localgrad} of Algorithm \ref{alg:diana}) satisfies 
	\begin{align}
	\E_k[\ns{\widetilde{g}_i^k}] & \leq 2A_{1,i}(f_i(x^k)-f_i^*)+B_{1,i}\ns{\nabla f_i(x^k)} + \blue{D_{1}' \widetilde{\sigma}_{k}^2} +C_{1,i},  \label{eq:gi1}\\
	\E_k[\widetilde{\sigma}_{k+1}^2] & \leq \blue{(1-\rho')\widetilde{\sigma}_{k}^2 + 2A_{2}'(f(x^k)-f^*)+B_{2}'\ns{\nabla f(x^k)} + D_2'\E_k[\ns{g^k}] +C_2'}, \label{eq:gi2}
	\end{align}
	then $g^k$ (see Line \ref{line:gk} of Algorithm \ref{alg:diana}) satisfies the unified Assumption \ref{asp:boge}, i.e., 
	\begin{align}
	\E_k[\ns{g^k}] & \leq 2A_1(f(x^k)-f^*)+B_1\ns{\nabla f(x^k)} + D_1 \sigma_k^2 +C_1, \label{eq:gk1}\\
	\E_k[\sigma_{k+1}^2] & \leq (1-\rho)\sigma_k^2 + 2A_2(f(x^k)-f^*)+B_2\ns{\nabla f(x^k)} +C_2, \label{eq:gk2}
	\end{align}
	with parameters
	\begin{align*}
	&A_1 =\frac{(1+\omega)A}{m}, \qquad
	B_1 =1, \qquad 
	C_1  =  \frac{(1+\omega)C}{m}, \\ 
	&D_1 =\frac{1+\omega}{m}, \qquad
	\sk =  D_{1}'\tsk + \frac{\omega}{(1+\omega)m}
	\summ \ns{\nabla f_i(\xk) -\hi}, \\
	&\rho =\min\{\rho'-\tau,~ 2\alpha -(1-\alpha)\beta^{-1} -\alpha^2-\tau \},\\
	&A_2 =D_{1}'A_{2}'+ \tau A, \qquad
	B_2 =D_{1}'B_{2}'+B, \qquad 
	C_2  = D_{1}'C_2' +  \tau C,
	\end{align*}
	where 
	$A:=\max_i  (A_{1,i}+(B_{1,i}-1)L_i)$, 
	$B:=\frac{\omega(1+\beta)L^2\eta^2}{1+\omega}+ D_{1}'D_2'$, 
	$C := \frac{1}{m} \summ C_{1,i} + 2A\Delta_f^*$,
	$\Delta_f^*:=f^*-\frac{1}{m}\summ f_i^*$,
	$\tau:=\alpha^2 \omega + \frac{(1+\omega)B}{m}$,
	and $\forall \beta>0$.
\end{theorem}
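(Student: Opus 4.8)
The plan is to follow the blueprint of the DC-framework proof (Theorem~\ref{thm:dc}), adapting each step to the presence of the local shifts $\hi$ and to the fact that DIANA compresses the \emph{shifted} gradients $\gi-\hi$. Writing $\gk = \frac{1}{m}\summ(\hi + \Ci(\gi-\hi))$ and using $\Ek[\Ci(\gi-\hi)] = \nabla f_i(\xk)-\hi$ together with $h^k=\frac{1}{m}\summ\hi$, unbiasedness $\Ek[\gk]=\nabla f(\xk)$ follows at once. For the second moment I would use $\Ek[\ns{\gk}] = \ns{\nabla f(\xk)} + \Ek[\ns{\gk-\nabla f(\xk)}]$ and independence of the compressions across workers, so that $\Ek[\ns{\gk-\nabla f(\xk)}] = \frac{1}{m^2}\summ \Ek[\ns{\Ci(\gi-\hi)-(\nabla f_i(\xk)-\hi)}]$. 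The new feature is the bias--variance split of each summand: because the compressed argument is $\gi-\hi$ rather than $\gi$, Definition~\ref{def_compression} gives $\Ek[\ns{\Ci(\gi-\hi)-(\nabla f_i(\xk)-\hi)}] \le \omega\,\Ek[\ns{\gi-\hi}] + \Ek[\ns{\gi-\nabla f_i(\xk)}]$, and expanding $\ns{\gi-\hi}$ about $\nabla f_i(\xk)$ produces the extra term $\omega\ns{\nabla f_i(\xk)-\hi}$. Summing over workers, applying \eqref{eq:gi1}, and converting the excess $(B_{1,i}-1)\ns{\nabla f_i(\xk)}$ via $L_i$-smoothness ($\ns{\nabla f_i(\xk)}\le 2L_i(f_i(\xk)-f_i^*)$) exactly as in Theorem~\ref{thm:dc} yields \eqref{eq:gk1} with $A := \max_i(A_{1,i}+(B_{1,i}-1)L_i)$ and the augmented variable $\sk = D_1'\tsk + \frac{\omega}{(1+\omega)m}\summ\ns{\nabla f_i(\xk)-\hi}$.

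For the recursion \eqref{eq:gk2} I would treat the two pieces of $\skn$ separately. The piece $D_1'\tskn$ is handled verbatim from \eqref{eq:gi2}, contracting at rate $\rho'$ and feeding back $D_1'D_2'\,\Ek[\ns{\gk}]$. The shift-discrepancy piece needs the DIANA analysis: from $\hin = \hi + \alpha\Ci(\gi-\hi)$ I would rewrite $\nabla f_i(\xk)-\hin = (1-\alpha)(\nabla f_i(\xk)-\hi) - \alpha\big(\Ci(\gi-\hi)-(\nabla f_i(\xk)-\hi)\big)$ and take $\Ek$; since the compression noise is conditionally zero-mean the cross term vanishes, giving $\Ek[\ns{\nabla f_i(\xk)-\hin}] \le \big((1-\alpha)^2+\alpha^2\omega\big)\ns{\nabla f_i(\xk)-\hi} + \alpha^2(1+\omega)\,\Ek[\ns{\gi-\nabla f_i(\xk)}]$. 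The inner-variance term, bounded by \eqref{eq:gi1} and smoothness and then weighted by $\frac{\omega}{(1+\omega)m}\summ$, contributes precisely the $\alpha^2\omega\,[\,2A(f(\xk)-f^*)+D_1'\tsk+C\,]$ feedback that supplies the $\alpha^2\omega$ part of $\tau$ as well as the corresponding pieces of $A_2$ and $C_2$.

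The genuinely DIANA-specific difficulty is the \emph{moving target}: $\skn$ measures the discrepancy against $\nabla f_i(\xkn)$, whereas the contraction above is toward $\nabla f_i(\xk)$. I would insert the drift $\nabla f_i(\xkn)-\nabla f_i(\xk)$ and control it by Young's inequality with a free parameter $\beta>0$, assigning the factor $(1+\beta)$ to the drift—which via $L_i$-smoothness and $\xkn-\xk=-\eta\gk$ becomes $(1+\beta)L_i^2\eta^2\ns{\gk}$—while the coupling contributes $\beta^{-1}$ to the cross term with $\nabla f_i(\xk)-\hi$, degrading the shift contraction from $2\alpha-\alpha^2$ to $2\alpha-(1-\alpha)\beta^{-1}-\alpha^2$. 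Averaging the drift with weight $\frac{\omega}{(1+\omega)m}\summ$ and using $\frac1m\summ L_i^2 = L^2$ gives the $\frac{\omega(1+\beta)L^2\eta^2}{1+\omega}$ contribution to $B$, the $D_1'D_2'$ contribution coming from the first piece, so the total $\Ek[\ns{\gk}]$ feedback coefficient is exactly $B$. Substituting the already-established bound \eqref{eq:gk1} for this $\Ek[\ns{\gk}]$ term turns it into multiples of $(f(\xk)-f^*)$, $\ns{\nabla f(\xk)}$, $\sk$ and a constant, producing $B_2=D_1'B_2'+B$, the remaining $\frac{(1+\omega)B}{m}$ piece of $\tau$, and the final $A_2,C_2$; taking $\rho$ to be the minimum of the two $\tau$-reduced contraction rates collapses everything into the single form \eqref{eq:gk2}. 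The main obstacle I anticipate is the careful bookkeeping of the correlation between the drift $\nabla f_i(\xkn)-\nabla f_i(\xk)$ and the per-worker compression noise, together with choosing the Young split so that the self-coefficient on $\ns{\nabla f_i(\xk)-\hi}$ comes out as exactly $1-(2\alpha-(1-\alpha)\beta^{-1}-\alpha^2-\tau)$; once $\tau = \alpha^2\omega + \frac{(1+\omega)B}{m}$ is identified the remainder is mechanical.
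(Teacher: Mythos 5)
Your proposal is correct and follows essentially the same route as the paper's proof of Theorem~\ref{thm:diana-type}: the same unbiasedness argument, the same bias--variance split giving the augmented $\sk = D_{1}'\tsk + \frac{\omega}{(1+\omega)m}\summ\ns{\nabla f_i(\xk)-\hi}$ with $A=\max_i(A_{1,i}+(B_{1,i}-1)L_i)$, the same two-piece recursion for $\skn$ (verbatim use of \eqref{eq:gi2} for $D_1'\tskn$, drift insertion with Young parameter $\beta$ for the shift piece), the $\Ek[\ns{\gk}]$ feedback collected into $B$ and re-expanded via \eqref{eq:gk1}, and the same $\tau$, $A_2$, $B_2$, $C_2$ and min-form $\rho$. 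The one obstacle you flag---the correlation between the drift $\nabla f_i(\xkn)-\nabla f_i(\xk)$ and the per-worker compression noise---is handled in the paper exactly by the move you describe: the cross term's second factor $\nabla f_i(\xk)-\hi-\alpha\Ci(\gi-\hi)$ is replaced by its conditional mean $(1-\alpha)(\nabla f_i(\xk)-\hi)$ before Young's inequality is applied, so your sketch requires no idea beyond what the paper itself uses.
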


Before providing the proof for Theorem \ref{thm:diana-type}, we recall the more general Theorem \ref{thm:diana-type-diff} here for better comparison. Then we provide the detailed proofs for Theorems \ref{thm:diana-type} and \ref{thm:diana-type-diff}.

\begingroup
\def\thetheorem{\ref{thm:diana-type-diff}}
\begin{theorem}[DIANA framework with different variance for different worker]
	Suppose that the local stochastic gradient $\widetilde{g}_i^k$ (see Line \ref{line:localgrad-dc} of Algorithm \ref{alg:dc}) satisfies 
	\begin{align}
	\E_k[\ns{\widetilde{g}_i^k}] & \leq 2A_{1,i}(f_i(x^k)-f_i^*)+B_{1,i}\ns{\nabla f_i(x^k)} + \blue{D_{1,i} \sigma_{k,i}^2} +C_{1,i}, \label{eq:gi1-diff}\\
	\E_k[\sigma_{k+1,i}^2] & \leq \blue{(1-\rho_i)\sigma_{k,i}^2 + 2A_{2,i}(f(x^k)-f^*)+B_{2,i}\ns{\nabla f(x^k)} + D_{2,i}\E_k[\ns{g^k}] +C_{2,i}}, \label{eq:gi2-diff}
	\end{align}
	then $g^k$ (see Line \ref{line:gk} of Algorithm \ref{alg:diana})  also satisfies the unified Assumption \ref{asp:boge}, i.e., 
	\begin{align}
	\E_k[\ns{g^k}] & \leq 2A_1(f(x^k)-f^*)+B_1\ns{\nabla f(x^k)} + D_1 \sigma_k^2 +C_1, \label{eq:gk1-diff}\\
	\E_k[\sigma_{k+1}^2] & \leq (1-\rho)\sigma_k^2 + 2A_2(f(x^k)-f^*)+B_2\ns{\nabla f(x^k)} +C_2, \label{eq:gk2-diff}
	\end{align}
	with parameters
	\begin{align*}
	&A_1 =\frac{(1+\omega)A}{m}, \qquad
	B_1 =1, \qquad 
	C_1  =  \frac{(1+\omega)C}{m}, \\ 
	&D_1 =\frac{1+\omega}{m}, \qquad
	\sk =  \frac{1}{m} \summ  D_{1,i} \ski  + \frac{\omega}{(1+\omega)m}
	\summ \ns{\nabla f_i(\xk) -\hi}, \\
	&\rho =\min\{\min_i\rho_i-\tau,~ 2\alpha -(1-\alpha)\beta^{-1} -\alpha^2-\tau \}, \\
	&A_2 =D_A+ \tau A, \qquad
	B_2 =D_B+B, \qquad 
	C_2  = D_C +  \tau C,
	\end{align*}
	where 
	$A:=\max_i  (A_{1,i}+(B_{1,i}-1)L_i)$, 
	$B:=\frac{\omega(1+\beta)L^2\eta^2}{1+\omega} + D_D$, 
	$C := \frac{1}{m} \summ C_{1,i} + 2A\Delta_f^*$,
	$\Delta_f^*:=f^*-\frac{1}{m}\summ f_i^*$,
	$\tau:=\alpha^2 \omega + \frac{(1+\omega)B}{m}$,
	$D_A:=\frac{1}{m} \summ  D_{1,i}A_{2,i}$,
	$D_B:=\frac{1}{m} \summ  D_{1,i} B_{2,i}$,
	$D_D:=\frac{1}{m} \summ  D_{1,i} D_{2,i}$,
	$D_C:=\frac{1}{m} \summ  D_{1,i}C_{2,i}$,
	and $\forall \beta>0$.
\end{theorem}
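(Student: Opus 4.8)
The plan is to follow the two-part template of the DC-framework proof of Theorem~\ref{thm:dc-diff}: first show that $\gk$ is unbiased and satisfies the second-moment bound \eqref{eq:gk1-diff}, and then establish the recursion \eqref{eq:gk2-diff} for the new variance surrogate $\sk$. Unbiasedness is immediate from $\Ek[\Ci(\gi-\hi)]=\nabla f_i(\xk)-\hi$ together with $h^k=\frac1m\summ\hi$, since then $\Ek[\gk]=h^k+\frac1m\summ(\nabla f_i(\xk)-\hi)=\nabla f(\xk)$.

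For \eqref{eq:gk1-diff} I would decompose $\gk-\nabla f(\xk)=\frac1m\summ\big(\Ci(\gi-\hi)-(\gi-\hi)\big)+\frac1m\summ\big(\gi-\nabla f_i(\xk)\big)$, so that the conditionally zero-mean, cross-worker-independent compression error decouples from the sampling error. By \eqref{eq:compress} the compression block contributes $\frac{\omega}{m^2}\summ\Ek[\ns{\gi-\hi}]$, and the new ingredient relative to the DC analysis is the identity $\Ek[\ns{\gi-\hi}]=\Ek[\ns{\gi}]-\ns{\nabla f_i(\xk)}+\ns{\nabla f_i(\xk)-\hi}$: the shift residual $\ns{\nabla f_i(\xk)-\hi}$ is exactly what the second summand $\frac{\omega}{(1+\omega)m}\summ\ns{\nabla f_i(\xk)-\hi}$ of the new $\sk$ is built to absorb (note $D_1\cdot\frac{\omega}{(1+\omega)m}=\frac{\omega}{m^2}$). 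After collecting terms one obtains $\Ek[\ns{\gk}]\le\frac{1+\omega}{m^2}\summ\big(\Ek[\ns{\gi}]-\ns{\nabla f_i(\xk)}\big)+\frac{\omega}{m^2}\summ\ns{\nabla f_i(\xk)-\hi}+\ns{\nabla f(\xk)}$; substituting \eqref{eq:gi1-diff}, using $L_i$-smoothness in the form $\ns{\nabla f_i(\xk)}\le 2L_i(f_i(\xk)-f_i^*)$ to produce $A=\max_i(A_{1,i}+(B_{1,i}-1)L_i)$, and converting $\frac1m\summ(f_i-f_i^*)=f-f^*+\Delta_f^*$ then reads off $A_1,B_1,C_1,D_1$ as stated.

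The harder recursion \eqref{eq:gk2-diff} splits $\skn$ into its two blocks. The block $\frac1m\summ D_{1,i}\Ek[\skin]$ is handled exactly as in Theorem~\ref{thm:dc-diff}: inserting \eqref{eq:gi2-diff} produces the averages $D_A,D_B,D_C,D_D$ and a feedback term $D_D\,\Ek[\ns{\gk}]$. The genuinely new piece is the shift block $\frac{\omega}{(1+\omega)m}\summ\Ek[\ns{\nabla f_i(\xkn)-\hin}]$. Here I would write $\nabla f_i(\xkn)-\hin=(\nabla f_i(\xk)-\hin)+(\nabla f_i(\xkn)-\nabla f_i(\xk))$ and apply Young's inequality with parameter $\beta$, assigning the factor $1+\beta$ to the drift; bounding the drift by $L_i^2\eta^2\ns{\gk}$ and averaging via $\frac1m\summ L_i^2=L^2$ is the source of the $\frac{\omega(1+\beta)L^2\eta^2}{1+\omega}$ term in $B$. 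For the fixed-point part I would expand, using the update $\hin=\hi+\alpha\Ci(\gi-\hi)$, the identity $\nabla f_i(\xk)-\hin=(1-\alpha)(\nabla f_i(\xk)-\hi)-\alpha(\gi-\nabla f_i(\xk))-\alpha e_i^k$ with $e_i^k$ the conditionally zero-mean compression error, so that all cross terms vanish under $\Ek$ and the compression variance bound yields the contraction factor $(1-\alpha)^2+\alpha^2\omega$ on $\ns{\nabla f_i(\xk)-\hi}$ plus an $\alpha^2(1+\omega)$ multiple of $\Ek[\ns{\gi}]-\ns{\nabla f_i(\xk)}$.

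The main obstacle is that both blocks generate $\Ek[\ns{\gk}]$-type feedback, producing a circular dependence of the $\sk$-recursion on the second moment of $\gk$. I would break the circle by substituting the first-part bound \eqref{eq:gk1-diff} into every such occurrence and then collecting all resulting multiples of $\sk$, of $f(\xk)-f^*$, of $\ns{\nabla f(\xk)}$, and of the constant. The multiples of $\sk$ and of the constant, together with the residual $\alpha^2\omega$-contraction from the fixed-point analysis and the drift coefficient, are what combine into the single scalar $\tau=\alpha^2\omega+\frac{(1+\omega)B}{m}$, which is why $\tau$ appears subtracted inside $\rho$ and added as $\tau A$ and $\tau C$ to $A_2$ and $C_2$ — the precise analogue of $\tau=\frac{(1+\omega)D_D}{m}$ in the DC proof. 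Finally, since $\sk$ is a sum of two heterogeneous blocks contracting at the distinct rates $\min_i\rho_i-\tau$ and $2\alpha-(1-\alpha)\beta^{-1}-\alpha^2-\tau$ (the latter being $1$ minus $(1-\alpha)^2+(1-\alpha)\beta^{-1}$, obtained from the Young's factor via $(1-\alpha)^2\le 1-\alpha$), the overall rate is their minimum, explaining the $\min\{\cdot,\cdot\}$ in $\rho$. I expect the delicate part to be threading the single parameter $\beta$ consistently through the drift and fixed-point estimates and verifying that the Young's-induced factors on the $\alpha^2$-terms collapse into the stated $\tau$, $A$, and $B$ rather than leaking additional $\beta^{-1}$-dependence into the coefficients.
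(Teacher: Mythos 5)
Most of your plan coincides with the paper's own proof: the unbiasedness computation; the decomposition (up to organization) yielding $\Ek[\ns{\gk}]\le\frac{1+\omega}{m^2}\summ\big(\Ek[\ns{\gi}]-\ns{\nabla f_i(\xk)}\big)+\frac{\omega}{m^2}\summ\ns{\nabla f_i(\xk)-\hi}+\ns{\nabla f(\xk)}$; the conversion to function gaps via $\ns{\nabla f_i(\xk)}\le 2L_i(f_i(\xk)-f_i^*)$ and $\frac1m\summ(f_i(\xk)-f_i^*)=f(\xk)-f^*+\Delta_f^*$; the treatment of the $\frac1m\summ D_{1,i}\skin$ block through \eqref{eq:gi2-diff}; the drift bound $\ns{\nabla f_i(\xkn)-\nabla f_i(\xk)}\le L_i^2\eta^2\ns{\gk}$ averaged to $L^2$; and the substitution of \eqref{eq:gk1-diff} into all $\Ek[\ns{\gk}]$ feedback to assemble $\tau$, $A_2$, $B_2$, $C_2$, $\rho$.

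The genuine problem is exactly the one you flagged, and it does not resolve in your favor. Applying Young's inequality at the outer level, $\ns{(\nabla f_i(\xkn)-\nabla f_i(\xk))+(\nabla f_i(\xk)-\hin)}\le(1+\beta)\ns{\nabla f_i(\xkn)-\nabla f_i(\xk)}+(1+\beta^{-1})\ns{\nabla f_i(\xk)-\hin}$, and only afterwards expanding $\Ek[\ns{\nabla f_i(\xk)-\hin}]\le\big((1-\alpha)^2+\alpha^2\omega\big)\ns{\nabla f_i(\xk)-\hi}+\alpha^2(1+\omega)\big(\Ek[\ns{\gi}]-\ns{\nabla f_i(\xk)}\big)$, multiplies both $\alpha^2$ coefficients by $(1+\beta^{-1})$. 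Every occurrence of $\tau$ then becomes $\tau'=(1+\beta^{-1})\alpha^2\omega+\frac{(1+\omega)B}{m}>\tau$: the $\ski$-block contracts at $1-\rho_i+\tau'$, and $A_2$, $C_2$ inflate to $D_A+\tau'A$, $D_C+\tau'C$, so the stated parameters are strictly missed (even for $\alpha\le\frac1{1+\omega}$, where the $h$-block still contracts at the stated rate, the $\ski$-block and $A_2,C_2$ do not). The paper gets the stated constants by the opposite order of operations: expand $\Ek[\ns{\nabla f_i(\xk)-\hi-\alpha\Ci(\gi-\hi)}]=(1-2\alpha)\ns{\nabla f_i(\xk)-\hi}+\alpha^2\Ek[\ns{\Ci(\gi-\hi)}]$ first, using $\Ek[\Ci(\gi-\hi)]=\nabla f_i(\xk)-\hi$, and apply Young's only to the cross term against the conditional mean, $2\inner{\nabla f_i(\xkn)-\nabla f_i(\xk)}{(1-\alpha)(\nabla f_i(\xk)-\hi)}$, so that $\beta^{-1}$ multiplies only $(1-\alpha)^2$. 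Be aware, though, that this very step in the paper replaces the random vector $\nabla f_i(\xk)-\hi-\alpha\Ci(\gi-\hi)$ by its conditional mean inside an inner product with $\nabla f_i(\xkn)-\nabla f_i(\xk)$, which is correlated with it through $\gk$; the dropped covariance terms are never shown to vanish. So the discrepancy is not a removable inefficiency in your argument: your (rigorous) route cannot reproduce the paper's exact constants, while the paper's constants rest on an unjustified equality. Your plan does yield a correct theorem of the same form once $\tau$ is replaced by $\tau'$ (equivalently, $\rho$, $A_2$, $C_2$ adjusted accordingly), but it does not prove the statement with the parameters as written.
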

\addtocounter{theorem}{-1}
\endgroup

\begin{proofof}{Theorem \ref{thm:diana-type}}
	First, we show the that gradient estimator $\gk$ (see Line \ref{line:gk} of Algorithm \ref{alg:diana}) is unbiased:
	\begin{align}
	\Ek[\gk] & = \Ek\left[h^k + \frac{1}{m}\summ \Di \right] \notag\\
	&=\Ek\left[\frac{1}{m}\summ \hi + \frac{1}{m}\summ \Ci(\gi-\hi) \right] \notag\\
	&= \Ek\left[\frac{1}{m}\summ \hi + \frac{1}{m}\summ (\gi-\hi) \right]  \notag\\
	&= \Ek\left[\frac{1}{m}\summ \gi \right]  =\nabla f(\xk) 
	\end{align}
	Then, we prove the upper bound for the second moment of gradient estimator $\gk$:
	\begin{align}
	&\Ek[\ns{\gk}] \notag\\
	&=\Ek\left[\nsB{\frac{1}{m}\summ \hi + \frac{1}{m}\summ \Ci(\gi-\hi) } \right] \notag\\
	&=\Ek\left[\nsB{\frac{1}{m}\summ \hi + \frac{1}{m}\summ \Ci(\gi-\hi) -\frac{1}{m}\summ \gi + \frac{1}{m}\summ \gi} \right] \notag\\
	&\overset{\eqref{eq:compress}}{=} \Ek\left[\nsB{\frac{1}{m}\summ \left(\Ci(\gi-\hi) - \gi +\hi \right)}\right]  
	+\Ek\left[\nsB{\frac{1}{m}\summ \gi} \right] \notag\\
	&\overset{\eqref{eq:compress}}{\leq} \frac{\omega}{m^2}  \EkB{\summ \ns{\gi-\hi}}
	+\Ek\left[\nsB{\frac{1}{m}\summ (\gi - \nabla f_i(\xk)) + \frac{1}{m}\summ \nabla f_i(\xk)} \right] \notag\\
	&=\frac{\omega}{m^2}  \EkB{\summ \ns{\gi-\hi}}
	+\frac{1}{m^2} \EkB{\summ\ns{\gi - \nabla f_i(\xk)}}
	+ \ns{\nabla f(\xk)} \notag\\
	&=\frac{\omega}{m^2}  \EkB{\summ \ns{\gi-\nabla f_i(\xk) + \nabla f_i(\xk) -\hi}}  +\frac{1}{m^2} \EkB{\summ\ns{\gi - \nabla f_i(\xk)}}
	+ \ns{\nabla f(\xk)} \notag\\
	&= \frac{1+\omega}{m^2} \EkB{\summ\ns{\gi - \nabla f_i(\xk)}}
	+ \frac{\omega}{m^2} \summ \ns{\nabla f_i(\xk) -\hi} 
	+ \ns{\nabla f(\xk)} \notag\\
	&=\frac{1+\omega}{m^2} \summ \left(\Ek[\ns{\gi}] - \ns{\nabla f_i(\xk)}\right)
	+ \frac{\omega}{m^2} \summ \ns{\nabla f_i(\xk) -\hi} 
	+ \ns{\nabla f(\xk)} \notag\\
	&\overset{\eqref{eq:gi1}}{\leq} \frac{1+\omega}{m^2} \summ \left(2A_{1,i}(f_i(\xk)-f_i^*)+B_{1,i}\ns{\nabla f_i(\xk)} + D_{1}' \tsk +C_{1,i} - \ns{\nabla f_i(\xk)}\right) \notag\\
	&\qquad \qquad
	+ \frac{\omega}{m^2} \summ \ns{\nabla f_i(\xk) -\hi} 
	+ \ns{\nabla f(\xk)} \notag
	\end{align}
	\begin{align}
	&\leq \frac{1+\omega}{m^2} \summ \left(2(A_{1,i}+(B_{1,i}-1)L_i)(f_i(\xk)-f_i^*) + D_{1}' \tsk +C_{1,i} \right) \notag\\
	&\qquad \qquad
	+ \frac{\omega}{m^2} \summ \ns{\nabla f_i(\xk) -\hi} 
	+ \ns{\nabla f(\xk)} \notag\\
	&\leq \frac{2(1+\omega)A}{m^2}\summ (f_i(\xk)-f_i^*)  + \frac{1+\omega}{m} D_{1}' \tsk + \frac{1+\omega}{m^2}\summ C_{1,i}\notag\\
	&\qquad \qquad
	+ \frac{\omega}{m^2} \summ \ns{\nabla f_i(\xk) -\hi} 
	+ \ns{\nabla f(\xk)} \label{eq:define-a}\\
	&= \frac{2(1+\omega)A}{m}(f(\xk)-f^*)  
	+ \frac{1+\omega}{m}  \left(D_{1}'\tsk + \frac{\omega}{(1+\omega)m}
	\summ \ns{\nabla f_i(\xk) -\hi}\right) 
	+ \ns{\nabla f(\xk)} \notag\\
	&\qquad \qquad	+ \frac{(1+\omega)C}{m}, \label{eq:define-fstar}
	\end{align}
	where \eqref{eq:define-a} holds by defining $A:=\max_i  (A_{1,i}+(B_{1,i}-1)L_i)$, and \eqref{eq:define-fstar} holds by defining 	$C := \frac{1}{m} \summ C_{1,i} + 2A\Delta_f^*$ and $\Delta_f^*:=f^*-\frac{1}{m}\summ f_i^*$.
	
	Thus, we have proved the first part, i.e., \eqref{eq:gk1} holds with
	\begin{align}
	&A_1 =\frac{(1+\omega)A}{m}, \qquad
	B_1 =1, \qquad 
	C_1  =  \frac{(1+\omega)C}{m}, \\ 
	&D_1 =\frac{1+\omega}{m}, \qquad
	\sk =  D_{1}'\tsk + \frac{\omega}{(1+\omega)m}
	\summ \ns{\nabla f_i(\xk) -\hi}.
	\end{align}
	Now we prove the second part (i.e., \eqref{eq:gk2}). According to \eqref{eq:gi2}, we have 
	\begin{align}
	\Ek[D_{1}'\tskn] & \leq (1-\rho')D_{1}'\tsk + 2D_{1}'A_{2}'(f(\xk)-f^*)+D_{1}'B_{2}'\ns{\nabla f(\xk)} + D_{1}'D_2'\Ek[\ns{\gk}] +D_{1}'C_2'. \label{eq:term1}
	\end{align}	
	
	For the second term of $\tskn$, we bound it as follows:
	\begin{align}
	&\EkB{\frac{\omega}{(1+\omega)m}
		\summ \ns{\nabla f_i(\xkn) -\hin}}  \notag\\
	& = \EkB{\frac{\omega}{(1+\omega)m}
		\summ \ns{\nabla f_i(\xkn) -\nabla f_i(\xk) + \nabla f_i(\xk) -\hin}} \notag\\
	& = \EkB{\frac{\omega}{(1+\omega)m}
		\summ \ns{\nabla f_i(\xkn) -\nabla f_i(\xk) + \nabla f_i(\xk) -\hi - \alpha \Ci(\gi-\hi)}} \notag\\
	& = \frac{\omega}{(1+\omega)m} \summ \Ek\Big[
	\ns{\nabla f_i(\xkn) -\nabla f_i(\xk)} 
	+\ns{\nabla f_i(\xk) -\hi - \alpha \Ci(\gi-\hi)} \notag\\
	& \qquad \qquad 
	+2\inner{\nabla f_i(\xkn) -\nabla f_i(\xk)}{\nabla f_i(\xk) -\hi - \alpha \Ci(\gi-\hi)}\Big] \notag\\
	& = \frac{\omega}{(1+\omega)m} \summ \Ek\Big[
	\ns{\nabla f_i(\xkn) -\nabla f_i(\xk)} 
	+\ns{\nabla f_i(\xk) -\hi - \alpha \Ci(\gi-\hi)} \notag\\
	& \qquad \qquad 
	+2\inner{\nabla f_i(\xkn) -\nabla f_i(\xk)}{(1-\alpha)(\nabla f_i(\xk) -\hi)}\Big] \notag
	\end{align}
	\begin{align}
	& = \frac{\omega}{(1+\omega)m} \summ \Ek\Big[
	\ns{\nabla f_i(\xkn) -\nabla f_i(\xk)} 
	+(1-2\alpha)\ns{\nabla f_i(\xk) -\hi}
	+ \alpha^2 \ns{\Ci(\gi-\hi)} \notag\\
	& \qquad \qquad 
	+2\inner{\nabla f_i(\xkn) -\nabla f_i(\xk)}{(1-\alpha)(\nabla f_i(\xk) -\hi)}\Big] \notag\\
	& \leq \frac{\omega}{(1+\omega)m} \summ \Ek\Big[
	\ns{\nabla f_i(\xkn) -\nabla f_i(\xk)} 
	+(1-2\alpha)\ns{\nabla f_i(\xk) -\hi}
	+ \alpha^2 \ns{\Ci(\gi-\hi)} \notag\\
	& \qquad \qquad 
	+\beta\ns{\nabla f_i(\xkn) -\nabla f_i(\xk)} 
	+\frac{(1-\alpha)^2}{\beta}\ns{\nabla f_i(\xk) -\hi}\Big] \qquad \forall \beta>0\notag\\
	& = \frac{\omega}{(1+\omega)m} \summ \Ek\Big[
	(1+\beta)\ns{\nabla f_i(\xkn) -\nabla f_i(\xk)} 
	+(1-2\alpha +\frac{(1-\alpha)^2}{\beta})\ns{\nabla f_i(\xk) -\hi}
	\notag\\
	& \qquad \qquad 
	+ \alpha^2 \ns{\Ci(\gi-\hi)}\Big]  \notag\\
	& \overset{\eqref{eq:compress}}{\leq}  \frac{\omega}{(1+\omega)m} \summ \Ek\Big[
	(1+\beta)\ns{\nabla f_i(\xkn) -\nabla f_i(\xk)} 
	+(1-2\alpha +\frac{(1-\alpha)^2}{\beta})\ns{\nabla f_i(\xk) -\hi}
	\notag\\
	& \qquad \qquad 
	+ \alpha^2 (1+\omega)\ns{\gi-\hi}\Big]  \notag\\
	& = \frac{\omega}{(1+\omega)m} \summ \Ek\Big[
	(1+\beta)\ns{\nabla f_i(\xkn) -\nabla f_i(\xk)} 
	+(1-2\alpha +\frac{(1-\alpha)^2}{\beta})\ns{\nabla f_i(\xk) -\hi}
	\notag\\
	& \qquad \qquad 
	+ \alpha^2 (1+\omega)\ns{\gi-\nabla f_i(\xk) + \nabla f_i(\xk) -\hi}\Big]  \notag\\
	& = \left(1-2\alpha +\frac{(1-\alpha)^2}{\beta}+\alpha^2 (1+\omega)\right) \frac{\omega}{(1+\omega)m} \summ \ns{\nabla f_i(\xk) -\hi}   \notag\\
	&\qquad \qquad 
	+ \frac{\omega(1+\beta)}{(1+\omega)m} \summ 
	\Ek[\ns{\nabla f_i(\xkn) 	-\nabla f_i(\xk)} ]
	+ \frac{\alpha^2 \omega}{m} \summ 
	\Ek[\ns{\gi-\nabla f_i(\xk)}]  \notag\\
	& \leq \left(1-2\alpha +\frac{(1-\alpha)^2}{\beta}+\alpha^2 (1+\omega)\right) \frac{\omega}{(1+\omega)m} \summ \ns{\nabla f_i(\xk) -\hi}   \notag\\
	&\qquad \qquad 
	+ \frac{\omega(1+\beta)}{(1+\omega)m} \summ 
	\Ek[\ns{\nabla f_i(\xkn) 	-\nabla f_i(\xk)} ]
	+ \alpha^2 \omega \left(2A (f(\xk)-f^*) +D_{1}' \tsk +C\right)\notag\\
	& \leq \left(1-2\alpha +\frac{(1-\alpha)^2}{\beta}+\alpha^2 (1+\omega)\right) \frac{\omega}{(1+\omega)m} \summ \ns{\nabla f_i(\xk) -\hi}   \notag\\
	&\qquad \qquad 
	+ \frac{\omega(1+\beta)}{(1+\omega)m} \summ 
	L_i^2\Ek[\ns{\xkn -\xk} ]
	+ \alpha^2 \omega \left(2A (f(\xk)-f^*) +D_{1}' \tsk +C\right)  \notag\\
	& = \left(1-2\alpha +\frac{(1-\alpha)^2}{\beta}+\alpha^2 (1+\omega)\right) \frac{\omega}{(1+\omega)m} \summ \ns{\nabla f_i(\xk) -\hi}   \notag\\
	&\qquad \qquad 
	+ \frac{\omega(1+\beta)}{(1+\omega)m} \summ 
	L_i^2\eta^2\Ek[\ns{\gk} ]
	+ \alpha^2 \omega \left(2A (f(\xk)-f^*) +D_{1}' \tsk +C\right)  \label{eq:term2}
	\end{align}	
	By combining \eqref{eq:term1} and \eqref{eq:term2}, we have 
	\begin{align}
	&\Ek[\skn]  \notag\\
	&=\EkB{D_{1}'\tskn + \frac{\omega}{(1+\omega)m}
		\summ \ns{\nabla f_i(\xkn) -\hin}}  \notag
	\end{align}
	\begin{align}
	&\leq (1-\rho')D_{1}'\tsk + 2D_{1}'A_{2}'(f(\xk)-f^*)+D_{1}'B_{2}'\ns{\nabla f(\xk)} + D_{1}'D_2'\Ek[\ns{\gk}] +D_{1}'C_2'  \notag\\
	&\quad +\left(1-2\alpha +\frac{(1-\alpha)^2}{\beta}+\alpha^2 (1+\omega)\right) \frac{\omega}{(1+\omega)m} \summ \ns{\nabla f_i(\xk) -\hi}   \notag\\
	&\qquad \qquad 
	+ \frac{\omega(1+\beta)}{(1+\omega)m} \summ 
	L_i^2\eta^2\Ek[\ns{\gk} ]
	+ \alpha^2 \omega \left(2A (f(\xk)-f^*) +D_{1}' \tsk +C\right)  \notag\\
	&= (1-\rho' +\alpha^2 \omega)D_{1}'\tsk + 2(D_{1}'A_{2}'+\alpha^2 \omega A)(f(\xk)-f^*)+D_{1}'B_{2}'\ns{\nabla f(\xk)} +D_{1}'C_2'  \notag\\
	&\quad + \alpha^2 \omega C +\left(1-2\alpha +\frac{(1-\alpha)^2}{\beta}+\alpha^2 (1+\omega)\right) \frac{\omega}{(1+\omega)m} \summ \ns{\nabla f_i(\xk) -\hi}   \notag\\
	&\qquad \qquad 
	+ B\Ek[\ns{\gk}] \label{eq:define-B}\\
	&\leq (1-\rho' +\alpha^2 \omega)D_{1}'\tsk + 2(D_{1}'A_{2}'+\alpha^2 \omega A)(f(\xk)-f^*)+D_{1}'B_{2}'\ns{\nabla f(\xk)} +D_{1}'C_2'   \notag\\
	&\quad + \alpha^2 \omega C +\left(1-2\alpha +\frac{(1-\alpha)^2}{\beta}+\alpha^2 (1+\omega)\right) \frac{\omega}{(1+\omega)m} \summ \ns{\nabla f_i(\xk) -\hi}   \notag\\
	&\qquad  
	+ \frac{2(1+\omega)BA}{m}(f(\xk)-f^*)  
	+ \frac{(1+\omega)B}{m}  \left(D_{1}'\tsk + \frac{\omega}{(1+\omega)m}
	\summ \ns{\nabla f_i(\xk) -\hi}\right)  \notag\\
	&\qquad 		+ B\ns{\nabla f(\xk)} + \frac{(1+\omega)B}{m}C \label{eq:pluggk}\\
	&= (1-\rho' +\tau)D_{1}'\tsk + 2(D_{1}'A_{2}'+ \tau A)(f(\xk)-f^*)+(D_{1}'B_{2}'+B)\ns{\nabla f(\xk)} +D_{1}'C_2'  \notag\\
	&\quad + \tau C +\left(1-2\alpha +\frac{(1-\alpha)^2}{\beta}+\alpha^2 + \tau \right) \frac{\omega}{(1+\omega)m} \summ \ns{\nabla f_i(\xk) -\hi}   \label{eq:define-tau}\\
	&\leq (1-\rho) \tsk + 2(D_{1}'A_{2}'+ \tau A)(f(\xk)-f^*)+(D_{1}'B_{2}'+B)\ns{\nabla f(\xk)} +D_{1}'C_2' +  \tau C,  \label{eq:define-rho}
	\end{align}
	where \eqref{eq:define-B} holds by defining $B:=\frac{\omega(1+\beta)L^2\eta^2}{1+\omega}+ D_{1}'D_2'$, 
	\eqref{eq:pluggk} follows from \eqref{eq:define-fstar},
	\eqref{eq:define-tau} holds by defining  $\tau:=\alpha^2 \omega + \frac{(1+\omega)B}{m}$,
	and the last inequality holds by defining $\rho :=\min\{\rho'-\tau, 2\alpha -(1-\alpha)\beta^{-1} -\alpha^2-\tau \}$.
	
	Now, we have proved the second part, i.e., \eqref{eq:gk2} holds with
	\begin{align*}
	&\rho =\min\{\rho'-\tau, 2\alpha -(1-\alpha)\beta^{-1} -\alpha^2-\tau \}, \\
	&A_2 =D_{1}'A_{2}'+ \tau A, \qquad
	B_2 =D_{1}'B_{2}'+B, \qquad 
	C_2  = D_{1}'C_2' +  \tau C.
	\end{align*}
\end{proofof}

\begin{proofof}{Theorem \ref{thm:diana-type-diff}}
	Similar to the proof of Theorem \ref{thm:diana-type}, 
	we know that gradient estimator $\gk$ (see Line \ref{line:gk} of Algorithm \ref{alg:diana}) is unbiased, i.e., 
	\begin{align}
	\Ek[\gk] & = \Ek\left[h^k + \frac{1}{m}\summ \Di \right] \notag\\
	&=\Ek\left[\frac{1}{m}\summ \hi + \frac{1}{m}\summ \Ci(\gi-\hi) \right] \notag\\
	&= \Ek\left[\frac{1}{m}\summ \hi + \frac{1}{m}\summ (\gi-\hi) \right]  \notag\\
	&= \Ek\left[\frac{1}{m}\summ \gi \right]  =\nabla f(\xk) 
	\end{align}
	Then, we prove the upper bound for the second moment of gradient estimator $\gk$:
	\begin{align}
	&\Ek[\ns{\gk}] \notag\\
	&=\Ek\left[\nsB{\frac{1}{m}\summ \hi + \frac{1}{m}\summ \Ci(\gi-\hi) } \right] \notag\\
	&=\Ek\left[\nsB{\frac{1}{m}\summ \hi + \frac{1}{m}\summ \Ci(\gi-\hi) -\frac{1}{m}\summ \gi + \frac{1}{m}\summ \gi} \right] \notag\\
	&\overset{\eqref{eq:compress}}{=} \Ek\left[\nsB{\frac{1}{m}\summ \left(\Ci(\gi-\hi) - \gi +\hi \right)}\right]  
	+\Ek\left[\nsB{\frac{1}{m}\summ \gi} \right] \notag\\
	&\overset{\eqref{eq:compress}}{\leq} \frac{\omega}{m^2}  \EkB{\summ \ns{\gi-\hi}}
	+\Ek\left[\nsB{\frac{1}{m}\summ (\gi - \nabla f_i(\xk)) + \frac{1}{m}\summ \nabla f_i(\xk)} \right] \notag\\
	&=\frac{\omega}{m^2}  \EkB{\summ \ns{\gi-\hi}}
	+\frac{1}{m^2} \EkB{\summ\ns{\gi - \nabla f_i(\xk)}}
	+ \ns{\nabla f(\xk)} \notag\\
	&=\frac{\omega}{m^2}  \EkB{\summ \ns{\gi-\nabla f_i(\xk) + \nabla f_i(\xk) -\hi}}  +\frac{1}{m^2} \EkB{\summ\ns{\gi - \nabla f_i(\xk)}}  \notag\\
	&\qquad \qquad
	+ \ns{\nabla f(\xk)} \notag\\
	&= \frac{1+\omega}{m^2} \EkB{\summ\ns{\gi - \nabla f_i(\xk)}}
	+ \frac{\omega}{m^2} \summ \ns{\nabla f_i(\xk) -\hi} 
	+ \ns{\nabla f(\xk)} \notag\\
	&=\frac{1+\omega}{m^2} \summ \left(\Ek[\ns{\gi}] - \ns{\nabla f_i(\xk)}\right)
	+ \frac{\omega}{m^2} \summ \ns{\nabla f_i(\xk) -\hi} 
	+ \ns{\nabla f(\xk)} \notag\\
	&\overset{\eqref{eq:gi1-diff}}{\leq} \frac{1+\omega}{m^2} \summ \left(2A_{1,i}(f_i(\xk)-f_i^*)+B_{1,i}\ns{\nabla f_i(\xk)} + D_{1,i} \ski +C_{1,i} - \ns{\nabla f_i(\xk)}\right) \notag\\
	&\qquad \qquad
	+ \frac{\omega}{m^2} \summ \ns{\nabla f_i(\xk) -\hi} 
	+ \ns{\nabla f(\xk)} \notag\\
	&\leq \frac{1+\omega}{m^2} \summ \left(2(A_{1,i}+(B_{1,i}-1)L_i)(f_i(\xk)-f_i^*) + D_{1,i} \ski   +C_{1,i} \right) \notag\\
	&\qquad \qquad
	+ \frac{\omega}{m^2} \summ \ns{\nabla f_i(\xk) -\hi} 
	+ \ns{\nabla f(\xk)} \notag\\
	&\leq \frac{2(1+\omega)A}{m^2}\summ (f_i(\xk)-f_i^*)  + \frac{1+\omega}{m^2} \summ  D_{1,i} \ski  + \frac{1+\omega}{m^2}\summ C_{1,i} \notag\\
	&\qquad \qquad
	+ \frac{\omega}{m^2} \summ \ns{\nabla f_i(\xk) -\hi} 
	+ \ns{\nabla f(\xk)} \label{eq:define-a-diff}\\
	&= \frac{2(1+\omega)A}{m}(f(\xk)-f^*)  
	+ \frac{1+\omega}{m}  \left(\frac{1}{m} \summ  D_{1,i} \ski  +\frac{\omega}{(1+\omega)m} \summ \ns{\nabla f_i(\xk) -\hi}\right) \notag\\
	&\qquad \qquad		+ \ns{\nabla f(\xk)}  + \frac{(1+\omega)C}{m}, \label{eq:define-fstar-diff}
	\end{align}
	where \eqref{eq:define-a-diff} holds by defining $A:=\max_i  (A_{1,i}+(B_{1,i}-1)L_i)$, and \eqref{eq:define-fstar-diff} holds by defining $C := \frac{1}{m} \summ C_{1,i} + 2A\Delta_f^*$ and $\Delta_f^*:=f^*-\frac{1}{m}\summ f_i^*$.	
	
	Thus, we have proved the first part, i.e., \eqref{eq:gk1-diff} holds with
	\begin{align}
	&A_1 =\frac{(1+\omega)A}{m}, \qquad
	B_1 =1, \qquad 
	C_1  =  \frac{(1+\omega)C}{m}, \\ 
	&D_1 =\frac{1+\omega}{m}, \qquad
	\sk =  \frac{1}{m} \summ  D_{1,i} \ski  + \frac{\omega}{(1+\omega)m}
	\summ \ns{\nabla f_i(\xk) -\hi}.
	\end{align}
	Now we prove the second part (i.e., \eqref{eq:gk2-diff}). According to \eqref{eq:gi2-diff}, we have 
	\begin{align}
	&\EkB{\frac{1}{m} \summ  D_{1,i} \skin}  \notag\\ 
	&\overset{\eqref{eq:gi2-diff}}{\leq}
	\frac{1}{m} \summ  D_{1,i} \left( (1-\rho_i)\sigma_{k,i}^2 + 2A_{2,i}(f(\xk)-f^*)+B_{2,i}\ns{\nabla f(\xk)} + D_{2,i}\Ek[\ns{\gk}] +C_{2,i}\right) \\
	&= \frac{1}{m} \summ (1-\rho_i) D_{1,i} \ski 
	+ 2D_A(f(\xk)-f^*)  
	+  D_B\ns{\nabla f(\xk)}
	+ D_D \Ek[\ns{\gk}]
	+D_C,   \label{eq:term1-diff}
	\end{align}	
	where the last equality holds by defining 
	$D_A:=\frac{1}{m} \summ  D_{1,i}A_{2,i}$,
	$D_B:=\frac{1}{m} \summ  D_{1,i} B_{2,i}$,
	$D_D:=\frac{1}{m} \summ  D_{1,i} D_{2,i}$,
	and $D_C:=\frac{1}{m} \summ  D_{1,i}C_{2,i}$.
	
	For the second term of $\skin$, we bound it as follows:
	\begin{align}
	&\EkB{\frac{\omega}{(1+\omega)m}
		\summ \ns{\nabla f_i(\xkn) -\hin}}  \notag\\
	& = \EkB{\frac{\omega}{(1+\omega)m}
		\summ \ns{\nabla f_i(\xkn) -\nabla f_i(\xk) + \nabla f_i(\xk) -\hin}} \notag\\
	& = \EkB{\frac{\omega}{(1+\omega)m}
		\summ \ns{\nabla f_i(\xkn) -\nabla f_i(\xk) + \nabla f_i(\xk) -\hi - \alpha \Ci(\gi-\hi)}} \notag\\
	& = \frac{\omega}{(1+\omega)m} \summ \Ek\Big[
	\ns{\nabla f_i(\xkn) -\nabla f_i(\xk)} 
	+\ns{\nabla f_i(\xk) -\hi - \alpha \Ci(\gi-\hi)} \notag\\
	& \qquad \qquad 
	+2\inner{\nabla f_i(\xkn) -\nabla f_i(\xk)}{\nabla f_i(\xk) -\hi - \alpha \Ci(\gi-\hi)}\Big] \notag\\
	& = \frac{\omega}{(1+\omega)m} \summ \Ek\Big[
	\ns{\nabla f_i(\xkn) -\nabla f_i(\xk)} 
	+\ns{\nabla f_i(\xk) -\hi - \alpha \Ci(\gi-\hi)} \notag\\
	& \qquad \qquad 
	+2\inner{\nabla f_i(\xkn) -\nabla f_i(\xk)}{(1-\alpha)(\nabla f_i(\xk) -\hi)}\Big] \notag\\
	& = \frac{\omega}{(1+\omega)m} \summ \Ek\Big[
	\ns{\nabla f_i(\xkn) -\nabla f_i(\xk)} 
	+(1-2\alpha)\ns{\nabla f_i(\xk) -\hi}
	+ \alpha^2 \ns{\Ci(\gi-\hi)} \notag\\
	& \qquad \qquad 
	+2\inner{\nabla f_i(\xkn) -\nabla f_i(\xk)}{(1-\alpha)(\nabla f_i(\xk) -\hi)}\Big] \notag\\
	& \leq \frac{\omega}{(1+\omega)m} \summ \Ek\Big[
	\ns{\nabla f_i(\xkn) -\nabla f_i(\xk)} 
	+(1-2\alpha)\ns{\nabla f_i(\xk) -\hi}
	+ \alpha^2 \ns{\Ci(\gi-\hi)} \notag\\
	& \qquad \qquad 
	+\beta\ns{\nabla f_i(\xkn) -\nabla f_i(\xk)} 
	+\frac{(1-\alpha)^2}{\beta}\ns{\nabla f_i(\xk) -\hi}\Big] \qquad \forall \beta>0\notag
	\end{align}
	\begin{align}
	& = \frac{\omega}{(1+\omega)m} \summ \Ek\Big[
	(1+\beta)\ns{\nabla f_i(\xkn) -\nabla f_i(\xk)} 
	+(1-2\alpha +\frac{(1-\alpha)^2}{\beta})\ns{\nabla f_i(\xk) -\hi}
	\notag\\
	& \qquad \qquad 
	+ \alpha^2 \ns{\Ci(\gi-\hi)}\Big]  \notag\\
	& \overset{\eqref{eq:compress}}{\leq}  \frac{\omega}{(1+\omega)m} \summ \Ek\Big[
	(1+\beta)\ns{\nabla f_i(\xkn) -\nabla f_i(\xk)} 
	+(1-2\alpha +\frac{(1-\alpha)^2}{\beta})\ns{\nabla f_i(\xk) -\hi}
	\notag\\
	& \qquad \qquad 
	+ \alpha^2 (1+\omega)\ns{\gi-\hi}\Big]  \notag\\
	& = \frac{\omega}{(1+\omega)m} \summ \Ek\Big[
	(1+\beta)\ns{\nabla f_i(\xkn) -\nabla f_i(\xk)} 
	+(1-2\alpha +\frac{(1-\alpha)^2}{\beta})\ns{\nabla f_i(\xk) -\hi}
	\notag\\
	& \qquad \qquad 
	+ \alpha^2 (1+\omega)\ns{\gi-\nabla f_i(\xk) + \nabla f_i(\xk) -\hi}\Big]  \notag\\
	& = \left(1-2\alpha +\frac{(1-\alpha)^2}{\beta}+\alpha^2 (1+\omega)\right) \frac{\omega}{(1+\omega)m} \summ \ns{\nabla f_i(\xk) -\hi}   \notag\\
	&\qquad \qquad 
	+ \frac{\omega(1+\beta)}{(1+\omega)m} \summ 
	\Ek[\ns{\nabla f_i(\xkn) 	-\nabla f_i(\xk)} ]
	+ \frac{\alpha^2 \omega}{m} \summ 
	\Ek[\ns{\gi-\nabla f_i(\xk)}]  \notag\\
	& \leq \left(1-2\alpha +\frac{(1-\alpha)^2}{\beta}+\alpha^2 (1+\omega)\right) \frac{\omega}{(1+\omega)m} \summ \ns{\nabla f_i(\xk) -\hi}   \notag\\
	&\qquad \qquad 
	+ \frac{\omega(1+\beta)}{(1+\omega)m} \summ 
	\Ek[\ns{\nabla f_i(\xkn) 	-\nabla f_i(\xk)} ]
	\notag\\
	&\qquad \qquad 
	+ \alpha^2 \omega \left(2A (f(\xk)-f^*) +\frac{1}{m} \summ  D_{1,i} \ski  +C\right)\notag\\
	& \leq \left(1-2\alpha +\frac{(1-\alpha)^2}{\beta}+\alpha^2 (1+\omega)\right) \frac{\omega}{(1+\omega)m} \summ \ns{\nabla f_i(\xk) -\hi}   \notag\\
	&\qquad \qquad 
	+ \frac{\omega(1+\beta)}{(1+\omega)m} \summ 
	L_i^2\Ek[\ns{\xkn -\xk} ]\notag\\
	&\qquad \qquad 
	+ \alpha^2 \omega \left(2A (f(\xk)-f^*) +\frac{1}{m} \summ  D_{1,i} \ski +C\right)  \notag\\
	& = \left(1-2\alpha +\frac{(1-\alpha)^2}{\beta}+\alpha^2 (1+\omega)\right) \frac{\omega}{(1+\omega)m} \summ \ns{\nabla f_i(\xk) -\hi}   \notag\\
	&\qquad \qquad 
	+ \frac{\omega(1+\beta)}{(1+\omega)m} \summ 
	L_i^2\eta^2\Ek[\ns{\gk} ]
	+ \alpha^2 \omega \left(2A (f(\xk)-f^*) +\frac{1}{m} \summ  D_{1,i} \ski  +C\right).  \label{eq:term2-diff}
	\end{align}	
	By combining \eqref{eq:term1-diff} and \eqref{eq:term2-diff}, we have 
	\begin{align}
	&\Ek[\skn]  \notag\\
	&=\EkB{\frac{1}{m} \summ  D_{1,i} \ski  + \frac{\omega}{(1+\omega)m}
		\summ \ns{\nabla f_i(\xkn) -\hin}} \notag
	\end{align}
	\begin{align}
	&\leq \frac{1}{m} \summ (1-\rho_i) D_{1,i} \ski + 2D_A(f(\xk)-f^*)  
	+ D_B\ns{\nabla f(\xk)} +D_D \Ek[\ns{\gk}] +D_C \notag\\
	&\qquad 
	+ \left(1-2\alpha +\frac{(1-\alpha)^2}{\beta}+\alpha^2 (1+\omega)\right) \frac{\omega}{(1+\omega)m} \summ \ns{\nabla f_i(\xk) -\hi}   \notag\\
	&\qquad 
	+ \frac{\omega(1+\beta)}{(1+\omega)m} \summ 
	L_i^2\eta^2\Ek[\ns{\gk} ]
	+ \alpha^2 \omega \left(2A (f(\xk)-f^*) +\frac{1}{m} \summ  D_{1,i} \ski  +C\right) \notag\\
	&= \frac{1}{m} \summ (1-\rho_i +\alpha^2 \omega)D_{1,i}\ski + 2(D_A+\alpha^2 \omega A)(f(\xk)-f^*)+D_B\ns{\nabla f(\xk)} +D_C  \notag\\
	&\qquad + \alpha^2 \omega C  +\left(1-2\alpha +\frac{(1-\alpha)^2}{\beta}+\alpha^2 (1+\omega)\right) \frac{\omega}{(1+\omega)m} \summ \ns{\nabla f_i(\xk) -\hi}   \notag\\
	&\qquad 
	+ B\Ek[\ns{\gk}] \label{eq:define-B-diff} \\
	&= \frac{1}{m} \summ (1-\rho_i +\alpha^2 \omega)D_{1,i}\ski + 2(D_A+\alpha^2 \omega A)(f(\xk)-f^*)+D_B\ns{\nabla f(\xk)} +D_C   \notag\\
	&\qquad + \alpha^2 \omega C +\left(1-2\alpha +\frac{(1-\alpha)^2}{\beta}+\alpha^2 (1+\omega)\right) \frac{\omega}{(1+\omega)m} \summ \ns{\nabla f_i(\xk) -\hi}   \notag\\
	&\qquad  
	+ \frac{2(1+\omega)BA}{m}(f(\xk)-f^*)  \notag\\
	&\qquad 
	+ \frac{(1+\omega)B}{m}  \left(\frac{1}{m} \summ  D_{1,i} \ski + \frac{\omega}{(1+\omega)m}
	\summ \ns{\nabla f_i(\xk) -\hi}\right)  \notag\\
	&\qquad 		+ B\ns{\nabla f(\xk)} + \frac{(1+\omega)B}{m}C \label{eq:pluggk-diff}\\
	&= \frac{1}{m} \summ (1-\rho_i +\tau)D_{1,i}\ski  + 2(D_A+ \tau A)(f(\xk)-f^*)+(D_B+B)\ns{\nabla f(\xk)} +D_C  \notag\\
	&\qquad + \tau C +\left(1-2\alpha +\frac{(1-\alpha)^2}{\beta}+\alpha^2 + \tau \right) \frac{\omega}{(1+\omega)m} \summ \ns{\nabla f_i(\xk) -\hi}   \label{eq:define-tau-diff}\\
	&\leq (1-\rho) \tsk + 2(D_A+ \tau A)(f(\xk)-f^*)+(D_B+B)\ns{\nabla f(\xk)} +D_C +  \tau C,  \label{eq:define-rho-diff}
	\end{align}
	where \eqref{eq:define-B-diff} holds by defining $B:=\frac{\omega(1+\beta)L^2\eta^2}{1+\omega} + D_D$, 
	\eqref{eq:pluggk-diff} follows from \eqref{eq:define-fstar-diff},
	\eqref{eq:define-tau-diff} holds by defining  $\tau:=\alpha^2 \omega + \frac{(1+\omega)B}{m}$,
	and the last inequality holds by defining $\rho :=\min\{\min_i\rho_i-\tau, 2\alpha -(1-\alpha)\beta^{-1} -\alpha^2-\tau \}$.
	
	Now, we have proved the second part, i.e., \eqref{eq:gk2-diff} holds with
	\begin{align*}
	&\rho =\min\{\min_i\rho_i-\tau, 2\alpha -(1-\alpha)\beta^{-1} -\alpha^2-\tau \}, \\
	&A_2 =D_A+ \tau A, \qquad
	B_2 =D_B+B, \qquad 
	C_2  = D_C +  \tau C.
	\end{align*}
\end{proofof}

In the following sections, we prove that if the parallel workers use some specific methods, i.e., GD, SGD, L-SVRG and SAGA, for computing their local stochastic gradient $\widetilde{g}_i^k$ (see Line \ref{line:localgrad} of Algorithm \ref{alg:diana}), then $g^k$ (see Line \ref{line:gk} of Algorithm \ref{alg:diana}) satisfies the unified Assumption \ref{asp:boge}.
Then, we plug their corresponding parameters (i.e., specific values for $A_1, A_2, B_1, B_2, C_1,C_2,D_1,\rho$) into our unified Theorem \ref{thm:main} to obtain the detailed convergence rates for these methods. 

\subsubsection{DIANA-GD method}

In this section, we show that if the parallel workers use GD for computing their local  gradient $\widetilde{g}_i^k$, then $g^k$ (see Line \ref{line:gk-diana-gd} of Algorithm \ref{alg:diana-gd}) satisfies the unified Assumption \ref{asp:boge}.

\begin{algorithm}[htb]
	\caption{DIANA-GD}
	\label{alg:diana-gd}
	\begin{algorithmic}[1]
		\REQUIRE ~
		initial point $x^0$, $\{h_i^0\}_{i=1}^m$, $h^0=\frac{1}{m}\sum_{i=1}^{m}h_i^0$, parameters $\eta_k, \alpha_k$
		\FOR {$k=0,1,2,\ldots$}
		\STATE {\bf{for all machines $i= 1,2,\ldots,m$ do in parallel}}
		\STATE \quad Compute local  gradient $\widetilde{g}_i^k=\nabla f_i(x^k)$ \label{line:localgrad-diana-gd}
		\STATE \quad Compress shifted local gradient $\widehat{\Delta}_i^k= \cC_i^k(\widetilde{g}_i^k- h_i^k)$ and send $\widehat{\Delta}_i^k$ to the server
		\STATE \quad Update local shift $h_i^{k+1}=h_i^k+\alpha_k \cC_i^k(\widetilde{g}_i^k - h_i^k)$
		\STATE {\bf{end for}}
		\STATE Aggregate received compressed gradient information
		$g^k = h^k + \frac{1}{m}\sum_{i=1}^m \widehat{\Delta}_i^k$ \label{line:gk-diana-gd}
		\STATE $x^{k+1} = x^k - \eta_k g^k$  
		\STATE $h^{k+1} = h^k + \alpha_k  \frac{1}{m}\sum_{i=1}^m  \widehat{\Delta}_i^k$
		\ENDFOR
	\end{algorithmic}
\end{algorithm}

\begin{lemma}[DIANA-GD]\label{lem:diana-gd}
	Let the local gradient estimator $\widetilde{g}_i^k=\nabla f_i(x^k)$ (see Line \ref{line:localgrad-diana-gd} of Algorithm \ref{alg:diana-gd}), then we know that $\widetilde{g}_i^k$ satisfies \eqref{eq:gi1} and \eqref{eq:gi2} with $B_{1,i}=1, A_{1,i}=C_{1,i}=D_{1}'=0, \tsk \equiv 0, \rho'=1, A_2'=B_2'=C_2'=D_2'=0$. 
	Thus, according to Theorem \ref{thm:diana-type}, $g^k$ (see Line \ref{line:gk-diana-gd} of Algorithm \ref{alg:diana-gd})  satisfies the unified Assumption \ref{asp:boge} with
	\begin{align*}
	&A_1 =0, \qquad
	B_1 =1, \qquad 
	C_1  =  0, \\ 
	&D_1 =\frac{1+\omega}{m}, \qquad
	\sk =  \frac{\omega}{(1+\omega)m}
	\summ \ns{\nabla f_i(\xk) -\hi}, \\
	&\rho =\min\{1-\tau,~ 2\alpha -(1-\alpha)\beta^{-1} -\alpha^2-\tau \},\\
	&A_2 =0, \qquad
	B_2 =B, \qquad 
	C_2  = 0,
	\end{align*}
	where 
	$B:=\frac{\omega(1+\beta)L^2\eta^2}{1+\omega}$,
	$\tau:=\alpha^2 \omega + \frac{(1+\omega)B}{m}$,
	and $\forall \beta>0$.
\end{lemma}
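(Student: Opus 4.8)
The plan is to follow the same two-step template used for DC-GD in Lemma~\ref{lem:dc-gd}, now routed through the DIANA framework theorem. First I would verify that the deterministic local estimator $\gi=\nabla f_i(\xk)$ satisfies the local recursions \eqref{eq:gi1}--\eqref{eq:gi2}. Since there is no randomness in the computation of $\gi$, we have $\Ek[\gi]=\nabla f_i(\xk)$ and $\Ek[\ns{\gi}]=\ns{\nabla f_i(\xk)}$. The latter is exactly \eqref{eq:gi1} with $B_{1,i}=1$ and $A_{1,i}=C_{1,i}=D_1'=0$, while \eqref{eq:gi2} is satisfied vacuously by taking $\tsk\equiv 0$, $\rho'=1$, and $A_2'=B_2'=C_2'=D_2'=0$.

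Next I would invoke Theorem~\ref{thm:diana-type} with these particular parameter values and simplify its conclusion. The key simplifications are driven by $A_{1,i}=0$, $B_{1,i}=1$, and $C_{1,i}=0$, which give $A:=\max_i(A_{1,i}+(B_{1,i}-1)L_i)=0$ and $C:=\frac{1}{m}\summ C_{1,i}+2A\Delta_f^*=0$. Consequently $A_1=\frac{(1+\omega)A}{m}=0$ and $C_1=\frac{(1+\omega)C}{m}=0$, and because $D_1'=0$ the quantity $B$ collapses to $B=\frac{\omega(1+\beta)L^2\eta^2}{1+\omega}$ and $\sk$ loses its $D_1'\tsk$ contribution, leaving only $\sk=\frac{\omega}{(1+\omega)m}\summ\ns{\nabla f_i(\xk)-\hi}$. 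The remaining outputs $D_1=\frac{1+\omega}{m}$, $B_1=1$, $A_2=D_1'A_2'+\tau A=0$, $B_2=D_1'B_2'+B=B$, $C_2=D_1'C_2'+\tau C=0$, $\rho=\min\{1-\tau,\,2\alpha-(1-\alpha)\beta^{-1}-\alpha^2-\tau\}$, and $\tau=\alpha^2\omega+\frac{(1+\omega)B}{m}$ then follow directly by substitution, matching the claimed values.

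The interesting point is that essentially all of the genuine analytic work---the variance-shift control of the DIANA iterates, the Young's-inequality splitting of $\ns{\nabla f_i(\xkn)-\hin}$, and the telescoping into the $\rho$ contraction---has already been discharged inside Theorem~\ref{thm:diana-type}; the lemma itself requires no new estimates. Accordingly I do not anticipate a substantive obstacle. The only thing demanding care is bookkeeping: one must confirm that each ``inactive'' local parameter ($D_1'$, $A_2'$, $B_2'$, $C_2'$, $D_2'$) indeed annihilates the corresponding term in the framework formulas (e.g.\ $\tau$ still carries the compression term $\alpha^2\omega$ even though $D_1'=0$, and $B_2$ retains the full $B$), and that $\rho$ is reported with $\rho'=1$ rather than being dropped. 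These are routine substitutions and present no difficulty.
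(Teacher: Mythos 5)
Your proposal is correct and follows essentially the same route as the paper's own proof: verify that the deterministic estimator $\widetilde{g}_i^k=\nabla f_i(x^k)$ satisfies \eqref{eq:gi1}--\eqref{eq:gi2} with the trivial parameter choices, then substitute into Theorem~\ref{thm:diana-type}. Your explicit bookkeeping (showing $A=0$, $C=0$, the collapse of $B$ and $\sigma_k^2$ when $D_1'=0$, and that $\tau$ retains the $\alpha^2\omega$ term) is in fact slightly more detailed than the paper's, which simply states the substitutions.
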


\begin{proofof}{Lemma \ref{lem:diana-gd}}
	If the local gradient estimator $\gi=\nabla f_i(\xk)$ (see Line \ref{line:localgrad-diana-gd} of Algorithm \ref{alg:diana-gd}), it is easy to see that 
	$$\Ek[\gi] = \nabla f_i(\xk)$$ 
	and 
	$$\Ek[\ns{\gi}] \leq \ns{ \nabla f_i(\xk)}$$
	since there is no randomness. Thus the local gradient estimator $\gi$ satisfies \eqref{eq:gi1} and \eqref{eq:gi2} with 
	$$A_{1,i}=C_{1,i}=D_{1}'=0, B_{1,i}=1,\tsk \equiv 0, \rho'=1, A_2'=B_2'=C_2'=D_2'=0.$$ 
	Then, according to Theorem \ref{thm:diana-type}, $g^k$ (see Line \ref{line:gk-diana-gd} of Algorithm \ref{alg:diana-gd}) satisfies the unified Assumption \ref{asp:boge} with
	\begin{align*}
	&A_1 =0, \qquad
	B_1 =1, \qquad 
	C_1  =  0, \\ 
	&D_1 =\frac{1+\omega}{m}, \qquad
	\sk =  \frac{\omega}{(1+\omega)m}
	\summ \ns{\nabla f_i(\xk) -\hi}, \\
	&\rho =\min\{1-\tau,~ 2\alpha -(1-\alpha)\beta^{-1} -\alpha^2-\tau \},\\
	&A_2 =0, \qquad
	B_2 =B, \qquad 
	C_2  = 0,
	\end{align*}
	where 
	$B:=\frac{\omega(1+\beta)L^2\eta^2}{1+\omega}$,
	$\tau:=\alpha^2 \omega + \frac{(1+\omega)B}{m}$,
	and $\forall \beta>0$.
\end{proofof}

\begin{corollary}[DIANA-GD]\label{cor:diana-gd}
	Suppose that Assumption \ref{asp:lsmooth-diana} holds. 
	Let stepsize 
	$\eta \leq \frac{1}{L+L(1+\omega)Bm^{-1}\rho^{-1}}$, then the number of iterations performed by DIANA-GD (Algorithm \ref{alg:diana-gd}) to find an $\epsilon$-solution of nonconvex federated problem \eqref{eq:prob-fed}, i.e. a point $\hx$ such that $\E[\n{\nabla f(\hx)}] \leq \epsilon$, can be bounded by
	\begin{align}
	K = \frac{8\fgap L}{\epsilon^2} \left(1+\frac{(1+\omega)B}{m\rho}\right),
	\end{align}
	where $B:=\frac{\omega(1+\beta)L^2\eta^2}{1+\omega}$, 
	$\rho:=\min\{1-\tau,~ 2\alpha -(1-\alpha)\beta^{-1} -\alpha^2-\tau \}$,
	$\tau:=\alpha^2 \omega + \frac{(1+\omega)B}{m}$,
	and $\forall \beta>0$.
\end{corollary}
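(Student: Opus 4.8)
The plan is to obtain this corollary as an immediate specialization of the unified main Theorem~\ref{thm:main}, feeding in the parameter values that Lemma~\ref{lem:diana-gd} has already established for the DIANA-GD estimator. First I would recall from that lemma that the aggregated estimator $g^k$ of Algorithm~\ref{alg:diana-gd} satisfies Assumption~\ref{asp:boge} with $A_1 = A_2 = C_1 = C_2 = 0$, $B_1 = 1$, $D_1 = \frac{1+\omega}{m}$, $B_2 = B$, and with $\rho$, $B$, $\tau$ exactly as in the present statement. The one simplification that drives everything is that the four parameters $A_1, A_2, C_1, C_2$ all vanish, so the ``$A$-type'' and ``$C$-type'' contributions in Theorem~\ref{thm:main} drop out entirely, leaving only the $B$-type term.

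Next I would substitute these values into the stepsize formula of Theorem~\ref{thm:main}. Since $A_1 = A_2 = 0$ the second entry $\sqrt{\ln 2 / ((LA_1 + LD_1A_2\rho^{-1})K)}$ is vacuous (infinite), and since $C_1 = C_2 = 0$ the third entry $\epsilon^2 / (2L(C_1 + D_1 C_2 \rho^{-1}))$ is likewise vacuous. Hence the min reduces to its first entry $\frac{1}{LB_1 + LD_1 B_2 \rho^{-1}}$, and inserting $B_1 = 1$, $D_1 = \frac{1+\omega}{m}$, $B_2 = B$ gives exactly $\eta \le \frac{1}{L + L(1+\omega)B m^{-1}\rho^{-1}}$, the stated stepsize bound.

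For the iteration count I would repeat the same substitution in the complexity bound of Theorem~\ref{thm:main}. Because $A_1 = A_2 = 0$ the middle argument $\frac{12\fgapp(A_1 + D_1 A_2\rho^{-1})}{\epsilon^2}$ of the maximum equals $0$, and because $C_1 = C_2 = 0$ the last argument $\frac{2(C_1 + D_1 C_2\rho^{-1})}{\epsilon^2}$ also equals $0$, so the maximum collapses to its first argument $B_1 + D_1 B_2 \rho^{-1} = 1 + \frac{(1+\omega)B}{m\rho}$. This yields $K = \frac{8\fgapp L}{\epsilon^2}\bigl(1 + \frac{(1+\omega)B}{m\rho}\bigr)$, matching the claimed form up to the distinction between $\fgapp$ and $\fgap$.

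The only point requiring a genuine choice---and the only place I would flag as ``the hard part,'' though it is mild---is reconciling $\fgapp$ with the $\fgap$ that appears in the statement. Here $\fgapp = f(x^0) - f^* + 2^{-1}L\eta^2 D_1\rho^{-1}\sigma_0^2$, and from Lemma~\ref{lem:diana-gd} the initial variance term is $\sigma_0^2 = \frac{\omega}{(1+\omega)m}\summ \ns{\nabla f_i(x^0) - h_i^0}$, so the two agree precisely when $\sigma_0^2 = 0$. I would therefore initialize the shifts as $h_i^0 = \nabla f_i(x^0)$ (equivalently $h_i^0 = \widetilde{g}_i^0$), which forces $\sigma_0^2 = 0$ and hence $\fgapp = \fgap$, completing the identification with the claimed bound. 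No further estimates are needed, since every quantity has been reduced to an algebraic substitution into the already-proved Theorem~\ref{thm:main}.
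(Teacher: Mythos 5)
Your proposal is correct and follows exactly the paper's own route: invoke Lemma~\ref{lem:diana-gd} for the parameter values, observe that $A_1=A_2=C_1=C_2=0$ collapses both the stepsize minimum and the complexity maximum in Theorem~\ref{thm:main} to their $B$-type entries, and identify $\fgapp$ with $\fgap$ by forcing $\sigma_0^2=0$. Your explicit initialization $h_i^0=\nabla f_i(x^0)$ is precisely how the paper's condition ``$\sigma_0^2=\frac{\omega}{(1+\omega)m}\summ\ns{\nabla f_i(x^0)-h_i^0}=0$'' is realized, so the two arguments coincide.
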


\begin{proofof}{Corollary \ref{cor:diana-gd}}
	According to our unified Theorem \ref{thm:main}, if the stepsize is chosen as 
	\begin{align}
	\eta &\leq \min\left\{ \frac{1}{LB_1+LD_1B_2\rho^{-1}}, \sqrt{\frac{\ln 2}{(LA_1 + LD_1A_2\rho^{-1})K}}, \frac{\epsilon^2}{2L(C_1+D_1C_2\rho^{-1})} \right\}  \notag\\
	&= \frac{1}{L+L(1+\omega)Bm^{-1}\rho^{-1}}
	\end{align}
	since $B_1=1, D_1 =\frac{1+\omega}{m}, B_2=B, B=\frac{\omega(1+\beta)L^2\eta^2}{1+\omega}$ and $A_1=A_2=C_1=C_2=0$
	according to Lemma \ref{lem:diana-gd},
	then the number of iterations performed by DIANA-GD (Algorithm \ref{alg:diana-gd}) to find an $\epsilon$-solution of problem \eqref{eq:prob-fed} can be bounded by
	\begin{align}
	K &= \frac{8\fgapp L}{\epsilon^2} \max \left\{ B_1+D_1B_2\rho^{-1}, \frac{12\fgapp (A_1+D_1 A_2\rho^{-1})}{\epsilon^2}, \frac{2(C_1+D_1C_2\rho^{-1})}{\epsilon^2}\right\}  \notag\\
	&= \frac{8\fgap L}{\epsilon^2}  \left(1+\frac{(1+\omega)B}{m\rho}\right)
	\end{align}
	since $B_1=1, D_1 =\frac{1+\omega}{m}, B_2=B, B=\frac{\omega(1+\beta)L^2\eta^2}{1+\omega}, A_1=A_2=C_1=C_2=0$ and 
	$\fgapp:= f(x^0) - f^* + 2^{-1}L\eta^2D_1\rho^{-1} \sigma_0^2 = f(x^0) - f^* =\fgap$ by letting $\sigma_0^2 =\frac{\omega}{(1+\omega)m}
	\summ \ns{\nabla f_i(x^0) -h^0}=0$.
\end{proofof}

\subsubsection{DIANA-SGD method}
In this section, we show that if the parallel workers use SGD for computing their local  stochastic gradient $\widetilde{g}_i^k$, then $g^k$ (see Line \ref{line:gk-diana-sgd} of Algorithm \ref{alg:diana-sgd}) satisfies the unified Assumption \ref{asp:boge}.

\begin{algorithm}[htb]
	\caption{DIANA-SGD}
	\label{alg:diana-sgd}
	\begin{algorithmic}[1]
		\REQUIRE ~
		initial point $x^0$, $\{h_i^0\}_{i=1}^m$, $h^0=\frac{1}{m}\sum_{i=1}^{m}h_i^0$, parameters $\eta_k, \alpha_k$
		\FOR {$k=0,1,2,\ldots$}
		\STATE {\bf{for all machines $i= 1,2,\ldots,m$ do in parallel}}
		\STATE \quad Compute local stochastic gradient $\widetilde{g}_i^k$ with Algorithm \ref{alg:sgd} by changing $f(x)$ to the local $f_i(x)$ \label{line:localgrad-diana-sgd}
		\STATE \quad Compress shifted local gradient $\widehat{\Delta}_i^k= \cC_i^k(\widetilde{g}_i^k- h_i^k)$ and send $\widehat{\Delta}_i^k$ to the server
		\STATE \quad Update local shift $h_i^{k+1}=h_i^k+\alpha_k \cC_i^k(\widetilde{g}_i^k - h_i^k)$
		\STATE {\bf{end for}}
		\STATE Aggregate received compressed gradient information
		$g^k = h^k + \frac{1}{m}\sum_{i=1}^m \widehat{\Delta}_i^k$
		\label{line:gk-diana-sgd}
		\STATE $x^{k+1} = x^k - \eta_k g^k$  
		\STATE $h^{k+1} = h^k + \alpha_k  \frac{1}{m}\sum_{i=1}^m  \widehat{\Delta}_i^k$
		\ENDFOR
	\end{algorithmic}
\end{algorithm}

\begin{lemma}[DIANA-SGD]\label{lem:diana-sgd}
	Let the local stochastic gradient estimator $\widetilde{g}_i^k$ (see Line \ref{line:localgrad-diana-sgd} of Algorithm \ref{alg:diana-sgd}) satisfy Assumption \ref{asp:es}, i.e.,
	$$ \E_k[\ns{\widetilde{g}_i^k}] \leq 2A_{1,i}(f_i(x^k)-f_i^*)+B_{1,i}\ns{\nabla f_i(x^k)} + C_{1,i},$$
	then we know that $\widetilde{g}_i^k$ satisfies \eqref{eq:gi1} and \eqref{eq:gi2} with $D_{1}'=0, \tsk \equiv 0, \rho'=1, A_2'=B_2'=C_2'=D_2'=0$. 
	Thus, according to Theorem \ref{thm:diana-type}, $g^k$ (see Line \ref{line:gk-diana-sgd} of Algorithm \ref{alg:diana-sgd})  satisfies the unified Assumption \ref{asp:boge} with
	\begin{align*}
	&A_1 =\frac{(1+\omega)A}{m}, \qquad
	B_1 =1, \qquad 
	C_1  =  \frac{(1+\omega)C}{m}, \\ 
	&D_1 =\frac{1+\omega}{m}, \qquad
	\sk = \frac{\omega}{(1+\omega)m}
	\summ \ns{\nabla f_i(\xk) -\hi}, \\
	&\rho =\min\{1-\tau,~ 2\alpha -(1-\alpha)\beta^{-1} -\alpha^2-\tau \},\\
	&A_2 =\tau A, \qquad
	B_2 =B, \qquad 
	C_2  = \tau C,
	\end{align*}
	where 
	$A:=\max_i  (A_{1,i}+(B_{1,i}-1)L_i)$, 
	$B:=\frac{\omega(1+\beta)L^2\eta^2}{1+\omega}$,
	$C := \frac{1}{m} \summ C_{1,i} + 2A\Delta_f^*$,
	$\Delta_f^*:=f^*-\frac{1}{m}\summ f_i^*$,
	$\tau:=\alpha^2 \omega + \frac{(1+\omega)B}{m}$,
	and $\forall\beta>0$.
\end{lemma}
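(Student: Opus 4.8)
The plan is to follow the same two-stage template used for DIANA-GD (Lemma~\ref{lem:diana-gd}): first verify that the local SGD estimator $\widetilde{g}_i^k$ fits the abstract local recursions \eqref{eq:gi1}--\eqref{eq:gi2} with a trivial (``inactive'') variance sequence, and then read off the global parameters by direct substitution into Theorem~\ref{thm:diana-type}. Since every worker runs the same SGD estimator and hence shares the (degenerate) variance term, the ``same variance'' version Theorem~\ref{thm:diana-type} is the correct vehicle rather than Theorem~\ref{thm:diana-type-diff}.

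First I would record the two consequences of Assumption~\ref{asp:es} applied to the local objective $f_i$: unbiasedness $\E_k[\widetilde{g}_i^k] = \nabla f_i(x^k)$, and the second-moment bound $\E_k[\ns{\widetilde{g}_i^k}] \leq 2A_{1,i}(f_i(x^k)-f_i^*)+B_{1,i}\ns{\nabla f_i(x^k)} + C_{1,i}$. This is exactly inequality \eqref{eq:gi1} with $D_1' = 0$, i.e.\ with no contribution from an auxiliary variance term. Since plain SGD maintains no variance-reduction sequence, I would set $\widetilde{\sigma}_k^2 \equiv 0$ together with $\rho' = 1$ and $A_2' = B_2' = C_2' = D_2' = 0$; the second recursion \eqref{eq:gi2} then reduces to $0 \leq 0$ and holds vacuously. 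This completes the first stage.

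Second, with these choices I would invoke Theorem~\ref{thm:diana-type}, where all the substitutions are collapses driven by $D_1' = 0$ and $D_2'=0$. The variance term $\sigma_k^2 = D_1'\widetilde{\sigma}_k^2 + \frac{\omega}{(1+\omega)m}\summ \ns{\nabla f_i(x^k)-h_i^k}$ loses its first summand and keeps only the DIANA shift term; the constant $B = \frac{\omega(1+\beta)L^2\eta^2}{1+\omega} + D_1'D_2'$ drops the $D_1'D_2'$ piece; and $A_2 = D_1'A_2' + \tau A$, $C_2 = D_1'C_2' + \tau C$ both shed their first terms, leaving $A_2 = \tau A$ and $C_2 = \tau C$. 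Because $\rho' = 1$, the rate becomes $\rho = \min\{1-\tau,\,2\alpha-(1-\alpha)\beta^{-1}-\alpha^2-\tau\}$. Matching $A = \max_i(A_{1,i}+(B_{1,i}-1)L_i)$, $C = \frac{1}{m}\summ C_{1,i}+2A\Delta_f^*$, and $\tau = \alpha^2\omega + \frac{(1+\omega)B}{m}$ then reproduces precisely the eight claimed parameters, with $A_1=\frac{(1+\omega)A}{m}$, $B_1=1$, $C_1=\frac{(1+\omega)C}{m}$, and $D_1=\frac{1+\omega}{m}$.

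There is no genuine obstacle here: the analytical content is already carried by Theorem~\ref{thm:diana-type}, and the only step of substance is checking that the degenerate recursion \eqref{eq:gi2} is legitimately satisfiable, which it is since both sides vanish identically. The sole point requiring care is purely bookkeeping, namely confirming that each of the first-term cancellations above is licensed by $D_1'=0$ (or $D_1'D_2'=0$) and that the resulting $\sigma_k^2$ retains exactly the shift contribution $\frac{\omega}{(1+\omega)m}\summ \ns{\nabla f_i(x^k)-h_i^k}$ inherited from the DIANA correction.
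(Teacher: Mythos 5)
Your proposal is correct and follows exactly the paper's own route: the paper likewise observes that Assumption~\ref{asp:es} applied to each $f_i$ gives \eqref{eq:gi1} with $D_1'=0$, sets $\tsk\equiv 0$, $\rho'=1$, $A_2'=B_2'=C_2'=D_2'=0$ so that \eqref{eq:gi2} holds trivially, and then reads off the global parameters from Theorem~\ref{thm:diana-type} (the shared-variance version, as you chose). Your accounting of the collapses ($D_1'D_2'=0$ in $B$, the surviving shift term in $\sigma_k^2$, $\rho'-\tau=1-\tau$, and $A_2=\tau A$, $B_2=B$, $C_2=\tau C$) matches the paper's substitutions exactly.
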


\begin{proofof}{Lemma \ref{lem:diana-sgd}}
	Suppose that the local stochastic gradient estimator $\widetilde{g}_i^k$ (see Line \ref{line:localgrad-diana-sgd} of Algorithm \ref{alg:diana-sgd}) satisfies Assumption \ref{asp:es}, i.e., 
	$$\Ek[\gi] = \nabla f_i(\xk)$$
	and 
	$$ \Ek[\ns{\gi}] \leq 2A_{1,i}(f_i(x^k)-f_i^*)+B_{1,i}\ns{\nabla f_i(x^k)} + C_{1,i},$$
	Thus the local stochastic gradient estimator $\gi$ satisfies \eqref{eq:gi1} and \eqref{eq:gi2} with 
	$$A_{1,i}=A_{1,i}, B_{1,i}=B_{1,i},C_{1,i}=C_{1,i}, D_{1}'=0,\tsk \equiv 0, \rho'=1, A_2'=B_2'=C_2'=D_2'=0.$$ 
	Then, according to Theorem \ref{thm:diana-type}, $g^k$ (see Line \ref{line:gk-diana-sgd} of Algorithm \ref{alg:diana-sgd}) satisfies the unified Assumption \ref{asp:boge} with
	\begin{align*}
	&A_1 =\frac{(1+\omega)A}{m}, \qquad
	B_1 =1, \qquad 
	C_1  =  \frac{(1+\omega)C}{m}, \\ 
	&D_1 =\frac{1+\omega}{m}, \qquad
	\sk = \frac{\omega}{(1+\omega)m}
	\summ \ns{\nabla f_i(\xk) -\hi}, \\
	&\rho =\min\{1-\tau,~ 2\alpha -(1-\alpha)\beta^{-1} -\alpha^2-\tau \},\\
	&A_2 =\tau A, \qquad
	B_2 =B, \qquad 
	C_2  = \tau C,
	\end{align*}
	where 
	$A:=\max_i  (A_{1,i}+(B_{1,i}-1)L_i)$, 
	$B:=\frac{\omega(1+\beta)L^2\eta^2}{1+\omega}$,
	$C := \frac{1}{m} \summ C_{1,i} + 2A\Delta_f^*$,
	$\Delta_f^*:=f^*-\frac{1}{m}\summ f_i^*$,
	$\tau:=\alpha^2 \omega + \frac{(1+\omega)B}{m}$,
	and $\forall\beta>0$.
\end{proofof}

\begin{corollary}[DIANA-SGD]\label{cor:diana-sgd}
	Suppose that Assumption \ref{asp:lsmooth-diana} holds. 
	Let stepsize 
	$$\eta \leq \min\left\{ \frac{1}{L+L(1+\omega)Bm^{-1}\rho^{-1}},~ \sqrt{\frac{m\ln 2}{(1+\omega)( 1+\tau\rho^{-1})LAK}},~ \frac{m\epsilon^2}{2(1+\omega)( 1+\tau\rho^{-1})LC} \right\},$$ 
	then the number of iterations performed by DIANA-SGD (Algorithm \ref{alg:diana-sgd}) to find an $\epsilon$-solution of nonconvex federated problem \eqref{eq:prob-fed} with \eqref{prob-fed:exp} or \eqref{prob-fed:finite}, i.e. a point $\hx$ such that $\E[\n{\nabla f(\hx)}] \leq \epsilon$, can be bounded by	
	\begin{align}
	K = \frac{8\fgap L}{\epsilon^2} \max \left\{ 1+\frac{(1+\omega)B}{m\rho},~ \frac{12(1+\omega)( 1+\tau\rho^{-1})\fgap A}{m\epsilon^2},~ \frac{2(1+\omega)( 1+\tau\rho^{-1})LC}{m\epsilon^2}\right\},
	\end{align}
	where 
	$A:=\max_i  (A_{1,i}+(B_{1,i}-1)L_i)$, 
	$B:=\frac{\omega(1+\beta)L^2\eta^2}{1+\omega}$, 
	$C := \frac{1}{m} \summ C_{1,i} + 2A\Delta_f^*$,
	$\Delta_f^*:=f^*-\frac{1}{m}\summ f_i^*$,
	$\rho :=\min\{1-\tau, 2\alpha -(1-\alpha)\beta^{-1} -\alpha^2-\tau \}$,
	$\tau:=\alpha^2 \omega + \frac{(1+\omega)B}{m}$,
	and $\forall \beta>0$.
\end{corollary}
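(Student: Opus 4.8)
The plan is to treat this corollary as a direct specialization of the generic Theorem~\ref{thm:main}, exactly as was done for DC-GD, DC-SGD and DIANA-GD. First I would invoke Lemma~\ref{lem:diana-sgd}, which already establishes that the aggregated estimator $g^k$ produced by DIANA-SGD satisfies the unified Assumption~\ref{asp:boge} with the explicit parameters $B_1=1$, $D_1=\frac{1+\omega}{m}$, $A_1=\frac{(1+\omega)A}{m}$, $C_1=\frac{(1+\omega)C}{m}$, $A_2=\tau A$, $B_2=B$, $C_2=\tau C$, and $\rho=\min\{1-\tau,\,2\alpha-(1-\alpha)\beta^{-1}-\alpha^2-\tau\}$. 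With these values in hand, the whole corollary reduces to evaluating the three quantities $B_1+D_1B_2\rho^{-1}$, $A_1+D_1A_2\rho^{-1}$ and $C_1+D_1C_2\rho^{-1}$ that appear both in the stepsize bound and in the iteration-count bound of Theorem~\ref{thm:main}.

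Second, I would carry out the substitution. The crucial simplification is that both $A_2$ and $C_2$ are proportional to $\tau$: writing $A_2=\tau A$, $C_2=\tau C$ and using $D_1=\frac{1+\omega}{m}$ gives $A_1+D_1A_2\rho^{-1}=\frac{(1+\omega)A}{m}(1+\tau\rho^{-1})$ and likewise $C_1+D_1C_2\rho^{-1}=\frac{(1+\omega)C}{m}(1+\tau\rho^{-1})$, while $B_1+D_1B_2\rho^{-1}=1+\frac{(1+\omega)B}{m\rho}$. Plugging these into the minimum defining $\eta$ in Theorem~\ref{thm:main} yields precisely the stated three-way minimum, and plugging them into the maximum defining $K$ yields the three terms $1+\frac{(1+\omega)B}{m\rho}$, $\frac{12(1+\omega)(1+\tau\rho^{-1})\fgap A}{m\epsilon^2}$ and $\frac{2(1+\omega)(1+\tau\rho^{-1})LC}{m\epsilon^2}$.

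Finally, I would deal with the initial potential $\fgapp=f(x^0)-f^*+2^{-1}L\eta^2D_1\rho^{-1}\sigma_0^2$. By initializing the shifts so that $h_i^0=\nabla f_i(x^0)$, the DIANA variance surrogate $\sigma_0^2=\frac{\omega}{(1+\omega)m}\summ\ns{\nabla f_i(x^0)-h_i^0}$ vanishes at $k=0$, so $\fgapp=\fgap$ and every occurrence of $\fgapp$ in the Theorem~\ref{thm:main} bound may be replaced by $\fgap$, matching the statement.

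I do not anticipate a genuine analytical obstacle, since all the hard work is already absorbed into Lemma~\ref{lem:diana-sgd} and Theorem~\ref{thm:main}; the only point requiring care is the bookkeeping of the common $(1+\tau\rho^{-1})$ factor, together with the fact that $\tau$, $\rho$ and $B$ themselves depend on the free parameters $\alpha$, $\beta$ and on $\eta$. Consequently the stepsize constraint is implicit and should be read as a self-consistent condition on $\eta$ (with $\alpha,\beta$ chosen to keep $\rho>0$) rather than as a closed-form bound, and I would state it as such to keep the presentation parallel to the earlier DC-LSVRG and DC-SAGA corollaries.
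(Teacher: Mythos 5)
Your proposal follows exactly the paper's own route: invoke Lemma~\ref{lem:diana-sgd} to obtain the parameters $B_1=1$, $D_1=\tfrac{1+\omega}{m}$, $A_1=\tfrac{(1+\omega)A}{m}$, $C_1=\tfrac{(1+\omega)C}{m}$, $A_2=\tau A$, $B_2=B$, $C_2=\tau C$, substitute them into the stepsize and iteration bounds of Theorem~\ref{thm:main}, and observe that choosing $h_i^0=\nabla f_i(x^0)$ makes $\sigma_0^2=0$ so that $\fgapp=\fgap$. The algebraic simplifications (the common factor $1+\tau\rho^{-1}$ in the $A$- and $C$-terms, and $B_1+D_1B_2\rho^{-1}=1+\tfrac{(1+\omega)B}{m\rho}$) match the paper's computation, and your closing remark about the stepsize condition being self-consistent in $\eta,\alpha,\beta$ is a fair additional observation that does not alter the argument.
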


\begin{proofof}{Corollary \ref{cor:diana-sgd}}
	According to our unified Theorem \ref{thm:main}, if the stepsize is chosen as 
	\begin{align*}
	\eta &\leq \min\left\{ \frac{1}{LB_1+LD_1B_2\rho^{-1}}, \sqrt{\frac{\ln 2}{(LA_1 + LD_1A_2\rho^{-1})K}}, \frac{\epsilon^2}{2L(C_1+D_1C_2\rho^{-1})} \right\}  \notag\\
	&= \min\left\{\frac{1}{L+L(1+\omega)Bm^{-1}\rho^{-1}},~ \sqrt{\frac{m\ln 2}{(1+\omega)( 1+\tau\rho^{-1})LAK}},~ \frac{m\epsilon^2}{2(1+\omega)( 1+\tau\rho^{-1})LC} \right\}
	\end{align*}
	since $B_1 =1, D_1 =\frac{1+\omega}{m}, B_2=B, B=\frac{\omega(1+\beta)L^2\eta^2}{1+\omega}, A_1 =\frac{(1+\omega)A}{m},
	A_2 =\tau A, C_1  =  \frac{(1+\omega)C}{m}$ and $C_2  = \tau C$ 
	according to Lemma \ref{lem:diana-sgd},
	then the number of iterations performed by DIANA-SGD (Algorithm \ref{alg:diana-sgd}) to find an $\epsilon$-solution of problem \eqref{eq:prob-fed} with \eqref{prob-fed:exp} or \eqref{prob-fed:finite} can be bounded by
	\begin{align}
	K &= \frac{8\fgapp L}{\epsilon^2} \max \left\{ B_1+D_1B_2\rho^{-1}, \frac{12\fgapp (A_1+D_1 A_2\rho^{-1})}{\epsilon^2}, \frac{2(C_1+D_1C_2\rho^{-1})}{\epsilon^2}\right\}  \notag\\
	&= \frac{8\fgap L}{\epsilon^2} \max \left\{1+\frac{(1+\omega)B}{m\rho},~ \frac{12(1+\omega)( 1+\tau\rho^{-1})\fgap A}{m\epsilon^2},~ \frac{2(1+\omega)( 1+\tau\rho^{-1})LC}{m\epsilon^2}\right\}
	\end{align}
	since $B_1 =1, D_1 =\frac{1+\omega}{m}, B_2=B, B=\frac{\omega(1+\beta)L^2\eta^2}{1+\omega}, A_1 =\frac{(1+\omega)A}{m},
	A_2 =\tau A, C_1  =  \frac{(1+\omega)C}{m}, C_2  = \tau C$ and 
	$\fgapp:= f(x^0) - f^* + 2^{-1}L\eta^2D_1\rho^{-1} \sigma_0^2 = f(x^0) - f^* =\fgap$ by letting $\sigma_0^2 =\frac{\omega}{(1+\omega)m}
	\summ \ns{\nabla f_i(x^0) -h^0}=0$.
\end{proofof}

\subsubsection{DIANA-LSVRG method}
In this section, we show that if the parallel workers use L-SVRG for computing their local  gradient $\widetilde{g}_i^k$, then $g^k$ (see Line \ref{line:gk-diana-lsvrg} of Algorithm \ref{alg:diana-lsvrg}) satisfies the unified Assumption \ref{asp:boge}.

\begin{algorithm}[htb]
	\caption{DIANA-LSVRG}
	\label{alg:diana-lsvrg}
	\begin{algorithmic}[1]
		\REQUIRE ~
		initial point $x^0=w^0$, $\{h_i^0\}_{i=1}^m$, $h^0=\frac{1}{m}\sum_{i=1}^{m}h_i^0$, parameters $b, \eta_k, \alpha_k$, probability $p\in (0,1]$
		\FOR {$k=0,1,2,\ldots$}
		\STATE {\bf{for all machines $i= 1,2,\ldots,m$ do in parallel}}
		\STATE \quad Compute local stochastic gradient $\widetilde{g}_i^k=\frac{1}{b} \sum_{j\in I_b} (\nabla f_{i,j}(x^k)- \nabla f_{i,j}(w^k)) +\nabla f_i(w^k)$ \label{line:localgrad-diana-lsvrg}
		\STATE \quad Compress shifted local gradient $\widehat{\Delta}_i^k= \cC_i^k(\widetilde{g}_i^k- h_i^k)$ and send $\widehat{\Delta}_i^k$ to the server
		\STATE \quad Update local shift $h_i^{k+1}=h_i^k+\alpha_k \cC_i^k(\widetilde{g}_i^k - h_i^k)$
		\STATE \quad $w^{k+1} = \begin{cases}
		x^k, &\text {with probability } p\\
		w^k, &\text {with probability } 1-p
		\end{cases}$ \label{line:w_prob-diana-lsvrg}
		\STATE {\bf{end for}}
		\STATE Aggregate received compressed gradient information
		$g^k = h^k + \frac{1}{m}\sum_{i=1}^m \widehat{\Delta}_i^k$
		\label{line:gk-diana-lsvrg}
		\STATE $x^{k+1} = x^k - \eta_k g^k$   \label{line:update-diana-lsvrg}
		\STATE $h^{k+1} = h^k + \alpha_k  \frac{1}{m}\sum_{i=1}^m  \widehat{\Delta}_i^k$
		\ENDFOR
	\end{algorithmic}
\end{algorithm}

\begin{lemma}[DIANA-LSVRG]\label{lem:diana-lsvrg}
	Let the local stochastic gradient estimator $\widetilde{g}_i^k=\frac{1}{b} \sum_{j\in I_b} (\nabla f_{i,j}(x^k)- \nabla f_{i,j}(w^k)) +\nabla f_i(w^k)$ (see Line \ref{line:localgrad-diana-lsvrg} of Algorithm \ref{alg:diana-lsvrg}), then we know that $\widetilde{g}_i^k$ satisfies \eqref{eq:gi1} and \eqref{eq:gi2} with $A_{1,i}=0, B_{1,i}=1, C_{1,i} =0, D_{1}'=\frac{\bar{L}^2}{b}, \tsk = \ns{\xk-\wk}$,
	$\rho'=p+p\gamma-\gamma, A_2'=0, B_2'=(1-p)\eta^2\gamma^{-1},C_2'=0, D_2'=\eta^2$, and $\forall\gamma>0$.
	Thus, according to Theorem \ref{thm:diana-type}, $g^k$ (see Line \ref{line:gk-diana-lsvrg} of Algorithm \ref{alg:diana-lsvrg})  satisfies the unified Assumption \ref{asp:boge} with
	\begin{align*}
	&A_1 =0, \qquad
	B_1 =1, \qquad 
	C_1  =  0, \\ 
	&D_1 =\frac{1+\omega}{m}, \qquad
	\sk = \frac{\bar{L}^2}{b}\ns{\xk-\wk} + \frac{\omega}{(1+\omega)m}
	\summ \ns{\nabla f_i(\xk) -\hi}, \\
	&\rho =\min\{p+p\gamma-\gamma-\tau,~ 2\alpha -(1-\alpha)\beta^{-1} -\alpha^2-\tau \},\\
	&A_2 =0 \qquad
	B_2 =(1-p)\bar{L}^2\eta^2\gamma^{-1} b^{-1} +B, \qquad 
	C_2  = 0,
	\end{align*}
	where 
	$B:=\frac{\omega(1+\beta)L^2\eta^2}{1+\omega} + \bar{L}^2\eta^2 b^{-1}$,
	$\tau:=\alpha^2 \omega + \frac{(1+\omega)B}{m}$,
	and $\forall \gamma, \beta>0$.
\end{lemma}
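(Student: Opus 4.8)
The plan is to verify the two local recursions \eqref{eq:gi1} and \eqref{eq:gi2} for the L-SVRG local estimator $\gi$ and then invoke Theorem \ref{thm:diana-type}, in complete analogy with the DC-LSVRG argument of Lemma \ref{lem:dc-lsvrg} (which used Theorem \ref{thm:dc}). In fact the entire local analysis here is word-for-word identical to that of Lemma \ref{lem:dc-lsvrg}, since every worker applies the same loopless control-variate estimator and shares a single snapshot $\wk$; only the final invocation changes from the DC theorem to the DIANA theorem. First I would establish unbiasedness: taking $\Ek$ over the minibatch $I_b$ and using $\Ek[\nabla f_{i,j}(\xk)] = \nabla f_i(\xk)$, the control-variate structure gives $\Ek[\gi] = \nabla f_i(\xk) - \nabla f_i(\wk) + \nabla f_i(\wk) = \nabla f_i(\xk)$.

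Next I would bound the second moment. Writing $\Ek[\ns{\gi}] = \Ek[\ns{\gi - \nabla f_i(\xk)}] + \ns{\nabla f_i(\xk)}$, expanding the minibatch variance, discarding the centering term via $\E[\ns{X - \E X}] \le \E[\ns{X}]$, and applying the average $\bL$-smoothness Assumption \ref{asp:avgsmooth-fed}, I obtain $\Ek[\ns{\gi}] \le \frac{\bL^2}{b}\ns{\xk - \wk} + \ns{\nabla f_i(\xk)}$. Setting $\tsk := \ns{\xk - \wk}$, this is exactly \eqref{eq:gi1} with $A_{1,i}=C_{1,i}=0$, $B_{1,i}=1$, and $D_1' = \bL^2/b$.

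Then I would derive the recursion for $\tsk$. Using the probabilistic snapshot update (Line \ref{line:w_prob-diana-lsvrg}) to split $\Ek[\tskn]$ into the $\wkn = \xk$ and $\wkn = \wk$ cases, substituting $\xkn = \xk - \eta\gk$ (Line \ref{line:update-diana-lsvrg}), expanding the square, applying $\Ek[\gk] = \nabla f(\xk)$ to the cross term, and using Young's inequality with a free parameter $\gamma > 0$, I reach $\Ek[\tskn] \le \bigl(1-(p+p\gamma-\gamma)\bigr)\tsk + \frac{(1-p)\eta^2}{\gamma}\ns{\nabla f(\xk)} + \eta^2\Ek[\ns{\gk}]$, which is \eqref{eq:gi2} with $\rho' = p+p\gamma-\gamma$, $B_2' = (1-p)\eta^2\gamma^{-1}$, $D_2' = \eta^2$, and $A_2'=C_2'=0$. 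Feeding these identifications into Theorem \ref{thm:diana-type} finishes the proof: one computes $A = \max_i(A_{1,i}+(B_{1,i}-1)L_i) = 0$ and $C = \frac{1}{m}\summ C_{1,i} + 2A\Delta_f^* = 0$, so $A_1=C_1=A_2=C_2=0$; the constant becomes $B = \frac{\omega(1+\beta)L^2\eta^2}{1+\omega} + D_1'D_2' = \frac{\omega(1+\beta)L^2\eta^2}{1+\omega} + \bL^2\eta^2 b^{-1}$; the gradient coefficient becomes $B_2 = D_1'B_2' + B = (1-p)\bL^2\eta^2\gamma^{-1}b^{-1} + B$; and $\sk = D_1'\tsk + \frac{\omega}{(1+\omega)m}\summ\ns{\nabla f_i(\xk) - \hi}$, which recovers precisely the stated parameters.

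The main obstacle — or rather the one genuinely delicate feature — is that the $\tsk$-recursion retains the term $\eta^2\Ek[\ns{\gk}]$ instead of a clean multiple of $\ns{\nabla f(\xk)}$, because the displacement $\xkn - \xk$ is driven by the compressed aggregate $\gk$ rather than the local gradient. This is exactly the reason the framework recursion \eqref{eq:gi2} is designed to carry the $D_2'\Ek[\ns{\gk}]$ term; its absorption, together with the extra DIANA shift contribution $\frac{\omega}{(1+\omega)m}\summ\ns{\nabla f_i(\xk) - \hi}$ and the contraction factor $2\alpha - (1-\alpha)\beta^{-1} - \alpha^2$ coming from the $\hi$-update, is already handled inside Theorem \ref{thm:diana-type} via the $\tau$-bookkeeping. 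Consequently the only new work relative to DC-LSVRG is checking these substitutions, since the heavy lifting has been front-loaded into the general DIANA theorem.
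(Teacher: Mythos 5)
Your proposal is correct and follows essentially the same route as the paper: establish unbiasedness and the second-moment bound $\Ek[\ns{\gi}] \leq \frac{\bL^2}{b}\ns{\xk-\wk} + \ns{\nabla f_i(\xk)}$ via Assumption \ref{asp:avgsmooth-fed}, derive the $\tsk$-recursion through the probabilistic snapshot update, the step $\xkn = \xk - \eta\gk$, and Young's inequality with parameter $\gamma$, and then feed the resulting local parameters into Theorem \ref{thm:diana-type}. Your parameter substitutions ($A=0$, $C=0$, $B = \frac{\omega(1+\beta)L^2\eta^2}{1+\omega} + D_1'D_2'$, $B_2 = D_1'B_2' + B$, etc.) all match the paper's, and your observation that the local analysis is identical to that of Lemma \ref{lem:dc-lsvrg} with only the final theorem invocation changed is exactly how the paper structures it.
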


\begin{proofof}{Lemma \ref{lem:diana-lsvrg}}
	If the local stochastic gradient estimator $\gi=\frac{1}{b} \sum_{j\in I_b} (\nabla f_{i,j}(x^k)- \nabla f_{i,j}(w^k)) +\nabla f_i(w^k)$ (see Line \ref{line:localgrad-diana-lsvrg} of Algorithm \ref{alg:diana-lsvrg}), we show the following equations:
	\begin{align}
	\Ek[\gi] &= \Ek\left[\frac{1}{b} \sum_{j\in I_b} (\nabla f_{i,j}(x^k)- \nabla f_{i,j}(w^k)) +\nabla f_i(w^k)\right] \notag\\
	&= \nabla f_i(\xk) - \nabla f_i(\wk) + \nabla f_i(\wk) = \nabla f_i(\xk) \label{eq:diana-svrg0}
	\end{align}
	and 
	\begin{align}
	\Ek[\ns{\gi}] & = \Ek\left[\ns{\gi-\nabla f_i(\xk)}\right] + \ns{\nabla f_i(\xk) } \notag\\
	&= \Ek\left[\nsB{\frac{1}{b} \sum_{j\in I_b} (\nabla f_{i,j}(x^k)- \nabla f_{i,j}(w^k)) +\nabla f_i(w^k) - \nabla f_i(\xk)} \right]  + \ns{\nabla f_i(\xk) }  \notag\\
	&= \frac{1}{b^2} \Ek\left[\nsB{\sum_{j\in I_b} \left((\nabla f_{i,j}(\xk)- \nabla f_{i,j}(\wk)) -(\nabla f_i(\xk) - \nabla f_i(\wk)) \right)} \right]  + \ns{\nabla f_i(\xk) }  \notag\\
	&= \frac{1}{b} \Ek\left[\nsB{(\nabla f_{i,j}(\xk)- \nabla f_{i,j}(\wk)) -(\nabla f_i(\xk) - \nabla f_i(\wk))} \right]  + \ns{\nabla f_i(\xk) }  \notag\\
	&\leq \frac{1}{b} \Ek\left[\nsB{\nabla f_{i,j}(\xk)- \nabla f_{i,j}(\wk)} \right]  + \ns{\nabla f_i(\xk) }  \label{eq:diana-var}\\
	& \leq  \frac{\bL^2}{b} \ns{\xk-\wk} + \ns{\nabla f_i(\xk)}, \label{eq:diana-svrg1}
	\end{align}
	where \eqref{eq:diana-var} uses the fact $\E[\ns{x-\E[x]}]\leq \E[\ns{x}]$, and the last inequality uses Assumption \ref{asp:avgsmooth-fed} (i.e., \eqref{eq:avgsmooth-fed}).
	Now, we define $\tsk:=\ns{\xk-\wk}$ and obtain
	\begin{align}
	&\Ek[\tskn] \notag\\
	& := \Ek[\ns{\xkn-\wkn}]  \notag\\
	&= p\Ek[\ns{\xkn-\xk}] + (1-p)\Ek[\ns{\xkn-\wk}]  \label{eq:diana-useprob}\\
	&=p\eta^2\Ek[\ns{\gk}] + (1-p)\Ek[\ns{\xk-\eta \gk-\wk}]   \label{eq:diana-useupdate} \\
	&=p\eta^2\Ek[\ns{\gk}] + (1-p)\Ek[\ns{\xk-\wk} + \ns{\eta \gk} -2\inner{\xk-\wk}{\eta\gk}]  \notag\\
	&= p\eta^2\Ek[\ns{\gk}] + (1-p)\ns{\xk-\wk} + (1-p)\eta^2\Ek[\ns{\gk}] -2(1-p)\inner{\xk-\wk}{\eta \nabla f(\xk)} \notag \\
	&\leq \eta^2\Ek[\ns{\gk}] + (1-p)\ns{\xk-\wk} +(1-p)\gamma\ns{\xk-\wk} + \frac{1-p}{\gamma}\ns{\eta \nabla f(\xk)} \label{eq:diana-useyoung} \\
	&= \eta^2\Ek[\ns{\gk}] + (1-p)(1+\gamma)\ns{\xk-\wk}  
	+ \frac{(1-p)\eta^2}{\gamma}\ns{\nabla f(\xk)} \notag\\
	&= (1-(p+p\gamma-\gamma)) \tsk + \frac{(1-p)\eta^2}{\gamma}\ns{\nabla f(\xk)}
	+ \eta^2\Ek[\ns{\gk}], 
	\label{eq:diana-svrg2}
	\end{align}
	where \eqref{eq:diana-useprob} uses Line \ref{line:w_prob-diana-lsvrg} of Algorithm \ref{alg:diana-lsvrg}, \eqref{eq:diana-useupdate} uses Line \ref{line:update-diana-lsvrg} of Algorithm \ref{alg:diana-lsvrg}, \eqref{eq:diana-useyoung} uses Young's inequality for $\forall \gamma>0$, and \eqref{eq:diana-svrg2} follows from the definition $\tsk:=\ns{\xk-\wk}$.
	
	Now, according to \eqref{eq:diana-svrg1} and \eqref{eq:diana-svrg2}, we know the local stochastic gradient estimator $\gi$ (see Line \ref{line:localgrad-diana-lsvrg} of Algorithm \ref{alg:diana-lsvrg}) satisfies \eqref{eq:gi1} and \eqref{eq:gi2} with 
	\begin{align*}
	&A_{1,i}=0,\quad  B_{1,i}=1,\quad 
	D_{1}'=\frac{\bar{L}^2}{b},\quad \tsk = \ns{\xk-\wk},\quad
	C_{1,i} =0, \\
	&\rho'=p+p\gamma-\gamma,\quad
	A_2'=0,\quad
	B_2'=(1-p)\eta^2\gamma^{-1},\quad
	D_2'=\eta^2,  \quad
	C_2'=0,\quad \forall\gamma>0.
	\end{align*}
	Thus, according to Theorem \ref{thm:diana-type}, $g^k$ (see Line \ref{line:gk-diana-lsvrg} of Algorithm \ref{alg:diana-lsvrg})  satisfies the unified Assumption \ref{asp:boge} with
	\begin{align*}
	&A_1 =0, \qquad
	B_1 =1, \qquad 
	C_1  =  0, \\ 
	&D_1 =\frac{1+\omega}{m}, \qquad
	\sk = \frac{\bar{L}^2}{b}\ns{\xk-\wk} + \frac{\omega}{(1+\omega)m}
	\summ \ns{\nabla f_i(\xk) -\hi}, \\
	&\rho =\min\{p+p\gamma-\gamma-\tau,~ 2\alpha -(1-\alpha)\beta^{-1} -\alpha^2-\tau \},\\
	&A_2 =0 \qquad
	B_2 =(1-p)\bar{L}^2\eta^2\gamma^{-1} b^{-1} +B, \qquad 
	C_2  = 0,
	\end{align*}
	where 
	$B:=\frac{\omega(1+\beta)L^2\eta^2}{1+\omega} + \bar{L}^2\eta^2 b^{-1}$,
	$\tau:=\alpha^2 \omega + \frac{(1+\omega)B}{m}$,
	and $\forall \gamma, \beta>0$.
\end{proofof}

\begin{corollary}[DIANA-LSVRG]\label{cor:diana-lsvrg}
	Suppose that Assumption \ref{asp:lsmooth-diana} and \ref{asp:avgsmooth-fed} hold. 
	Let stepsize 
	$$\eta \leq  \frac{1}{L+L(1+\omega)B'm^{-1}b^{-1}\rho^{-1}},$$ 
	then the number of iterations performed by DIANA-LSVRG (Algorithm \ref{alg:diana-lsvrg}) to find an $\epsilon$-solution of nonconvex federated problem \eqref{eq:prob-fed} with \eqref{prob-fed:finite}, i.e. a point $\hx$ such that $\E[\n{\nabla f(\hx)}] \leq \epsilon$, can be bounded by	
	\begin{align}
	K = \frac{8\fgap L}{\epsilon^2}  \left(1+\frac{(1+\omega)B'}{mb\rho}\right),
	\end{align}
	where 
	$B' :=  (1-p)\bar{L}^2\eta^2\gamma^{-1} + Bb$,
	$\rho :=\min\{p+p\gamma-\gamma-\tau,~ 2\alpha -(1-\alpha)\beta^{-1} -\alpha^2-\tau \}$,
	$\tau:=\alpha^2 \omega + \frac{(1+\omega)B}{m}$,
	$B:=\frac{\omega(1+\beta)L^2\eta^2}{1+\omega} + \bar{L}^2\eta^2 b^{-1}$,
	and $\forall \gamma, \beta>0$.
\end{corollary}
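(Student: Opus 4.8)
The plan is to feed the parameters furnished by Lemma \ref{lem:diana-lsvrg} directly into the unified Theorem \ref{thm:main}. First I would record that Lemma \ref{lem:diana-lsvrg} gives $A_1 = A_2 = C_1 = C_2 = 0$, together with $B_1 = 1$, $D_1 = \frac{1+\omega}{m}$, and $B_2 = (1-p)\bar{L}^2\eta^2\gamma^{-1}b^{-1} + B$. Because $A_1 = A_2 = 0$ and $C_1 = C_2 = 0$, the second and third entries of the stepsize minimum in Theorem \ref{thm:main} (the ones carrying $\sqrt{\ln 2 / ((LA_1 + LD_1 A_2\rho^{-1})K)}$ and $\epsilon^2 / (2L(C_1 + D_1 C_2\rho^{-1}))$) become vacuous, so the stepsize requirement collapses to $\eta \leq \frac{1}{LB_1 + LD_1 B_2\rho^{-1}}$. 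For the same reason the two $O(1/\epsilon^2)$ arguments inside the $\max$ of the iteration bound \eqref{eq:k-main} vanish, leaving $K = \frac{8\fgapp L}{\epsilon^2}(B_1 + D_1 B_2\rho^{-1})$.

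The only genuine computation is simplifying $B_1 + D_1 B_2\rho^{-1}$ into the closed form stated in the corollary. The key observation is that the combined variance coefficient factors as $B_2 = B'/b$, where $B' := (1-p)\bar{L}^2\eta^2\gamma^{-1} + Bb$; this is just pulling $1/b$ out of $(1-p)\bar{L}^2\eta^2\gamma^{-1}b^{-1} + B$. Substituting $B_1 = 1$ and $D_1 = \frac{1+\omega}{m}$ then yields $B_1 + D_1 B_2\rho^{-1} = 1 + \frac{(1+\omega)B'}{mb\rho}$, which matches both the stepsize denominator $L + L(1+\omega)B'm^{-1}b^{-1}\rho^{-1}$ and the factor appearing in the final $K$.

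It remains to confirm $\fgapp = \fgap$. Since the algorithm initializes $x^0 = w^0$, the first summand of $\sigma_0^2 = \frac{\bar{L}^2}{b}\ns{x^0 - w^0} + \frac{\omega}{(1+\omega)m}\summ \ns{\nabla f_i(x^0) - h_i^0}$ vanishes; initializing the local shifts by $h_i^0 = \nabla f_i(x^0)$ kills the second summand as well, so $\sigma_0^2 = 0$ and hence $\fgapp = f(x^0) - f^* + 2^{-1}L\eta^2 D_1\rho^{-1}\sigma_0^2 = \fgap$. I expect no real obstacle here: the whole argument is a substitution into Theorem \ref{thm:main}, and the only place demanding care is the bookkeeping that recognizes the factoring $B_2 = B'/b$ and that the initialization forces $\sigma_0^2 = 0$. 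This follows the structure already used for Corollary \ref{cor:dc-lsvrg} almost verbatim, the distinction being that the DIANA variance term $\sk$ now also tracks the shift residuals $\ns{\nabla f_i(\xk) - \hi}$, which is already folded into $\rho$ and $B$ by Lemma \ref{lem:diana-lsvrg}.
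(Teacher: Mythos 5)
Your proposal is correct and follows essentially the same route as the paper's own proof: substitute the parameters from Lemma \ref{lem:diana-lsvrg} into Theorem \ref{thm:main}, observe that the terms involving $A_1, A_2, C_1, C_2$ drop out, factor $B_2 = B'/b$ to obtain the stated stepsize bound and iteration count, and set $\sigma_0^2 = 0$ via the initialization. If anything, your handling of $\sigma_0^2$ is slightly more explicit than the paper's, since you spell out that both $x^0 = w^0$ and $h_i^0 = \nabla f_i(x^0)$ are needed to make both summands of $\sigma_0^2$ vanish, whereas the paper only records the shift-residual term.
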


\begin{proofof}{Corollary \ref{cor:diana-lsvrg}}
	According to our unified Theorem \ref{thm:main}, if the stepsize is chosen as 
	\begin{align}
	\eta &\leq \min\left\{ \frac{1}{LB_1+LD_1B_2\rho^{-1}}, \sqrt{\frac{\ln 2}{(LA_1 + LD_1A_2\rho^{-1})K}}, \frac{\epsilon^2}{2L(C_1+D_1C_2\rho^{-1})} \right\}  \notag\\
	&= \frac{1}{L+L(1+\omega)B'm^{-1}b^{-1}\rho^{-1}}
	\end{align}
	since $B_1=1, D_1 =\frac{1+\omega}{m}, B_2=\frac{B'}{b}, B'=(1-p)\bar{L}^2\eta^2\gamma^{-1} + Bb$ and $A_1=A_2=C_1=C_2=0$
	according to Lemma \ref{lem:diana-lsvrg},
	then the number of iterations performed by DIANA-LSVRG (Algorithm \ref{alg:diana-lsvrg}) to find an $\epsilon$-solution of problem \eqref{eq:prob-fed} with \eqref{prob-fed:finite} can be bounded by
	\begin{align}
	K &= \frac{8\fgapp L}{\epsilon^2} \max \left\{ B_1+D_1B_2\rho^{-1}, \frac{12\fgapp (A_1+D_1 A_2\rho^{-1})}{\epsilon^2}, \frac{2(C_1+D_1C_2\rho^{-1})}{\epsilon^2}\right\}  \notag\\
	&= \frac{8\fgap L}{\epsilon^2}  \left(1+\frac{(1+\omega)B'}{mb\rho}\right)
	\end{align}
	since $B_1=1, D_1 =\frac{1+\omega}{m}, B_2=\frac{B'}{b}, B'=(1-p)\bar{L}^2\eta^2\gamma^{-1} + Bb, A_1=A_2=C_1=C_2=0$ and 
	$\fgapp:= f(x^0) - f^* + 2^{-1}L\eta^2D_1\rho^{-1} \sigma_0^2 = f(x^0) - f^* =\fgap$ by letting $\sigma_0^2 =\frac{\omega}{(1+\omega)m}
	\summ \ns{\nabla f_i(x^0) -h^0}=0$.
\end{proofof}

\subsubsection{DIANA-SAGA method}
In this section, we show that if the parallel workers use SAGA for computing their local  gradient $\widetilde{g}_i^k$, then $g^k$ (see Line \ref{line:gk-diana-saga} of Algorithm \ref{alg:diana-saga}) satisfies the unified Assumption \ref{asp:boge}.

\begin{algorithm}[htb]
	\caption{DIANA-SAGA}
	\label{alg:diana-saga}
	\begin{algorithmic}[1]
		\REQUIRE ~
		initial point  $x^0$, $\{w_{i,j}^0\}_{i,j=1}^{m,n}$, $\{h_i^0\}_{i=1}^m$, $h^0=\frac{1}{m}\sum_{i=1}^{m}h_i^0$, parameters $b, \eta_k, \alpha_k$
		\FOR {$k=0,1,2,\ldots$}
		\STATE {\bf{for all machines $i= 1,2,\ldots,m$ do in parallel}}
		\STATE \quad Compute local stochastic gradient $\widetilde{g}_i^k=\frac{1}{b} \sum_{j\in I_b} (\nabla f_{i,j}(x^k)-  \nabla f_{i,j}(w_{i,j}^k)) +\frac{1}{n}\sum_{j=1}^{n}\nabla f_{i,j}(w_{i,j}^k)$  \label{line:localgrad-diana-saga}
		\STATE \quad Compress shifted local gradient $\widehat{\Delta}_i^k= \cC_i^k(\widetilde{g}_i^k- h_i^k)$ and send $\widehat{\Delta}_i^k$ to the server
		\STATE \quad Update local shift $h_i^{k+1}=h_i^k+\alpha_k \cC_i^k(\widetilde{g}_i^k - h_i^k)$
		\STATE \quad $w_{i,j}^{k+1} = \begin{cases}
		x^k & \text{for~~}  j\in I_b\\
		w_{i,j}^k &\text{for~~} j\notin I_b
		\end{cases}$ \label{line:w_prob-diana-saga}
		\STATE {\bf{end for}}
		\STATE Aggregate received compressed gradient information
		$g^k = h^k + \frac{1}{m}\sum_{i=1}^m \widehat{\Delta}_i^k$
		\label{line:gk-diana-saga}
		\STATE $x^{k+1} = x^k - \eta_k g^k$ \label{line:update-diana-saga}
		\STATE $h^{k+1} = h^k + \alpha_k  \frac{1}{m}\sum_{i=1}^m  \widehat{\Delta}_i^k$
		\ENDFOR
	\end{algorithmic}
\end{algorithm}

\begin{lemma}[DIANA-SAGA]\label{lem:diana-saga}
	Let the local stochastic gradient estimator $\widetilde{g}_i^k=\frac{1}{b} \sum_{j\in I_b} (\nabla f_{i,j}(x^k)-  \nabla f_{i,j}(w_{i,j}^k)) + \frac{1}{n}\sum_{j=1}^{n}\nabla f_{i,j}(w_{i,j}^k)$  (see Line \ref{line:localgrad-diana-saga} of Algorithm \ref{alg:diana-saga}), then we know that $\widetilde{g}_i^k$ satisfies \eqref{eq:gi1-diff} and \eqref{eq:gi2-diff} with $A_{1,i}=0, B_{1,i}=1, C_{1,i} =0, D_{1,i}=\frac{\bar{L}^2}{b}, \ski = \frac{1}{n}\sum_{j=1}^{n}\ns{\xk-w_{i,j}^k}$,
	$\rho_i=\frac{b}{n}+\frac{b}{n}\gamma-\gamma, A_{2,i}=0, B_{2,i}=(1-\frac{b}{n})\eta^2\gamma^{-1},C_{2,i}=0, D_{2,i}=\eta^2$, and $\forall\gamma>0$.
	Thus, according to Theorem \ref{thm:diana-type-diff}, $g^k$ (see Line \ref{line:gk-diana-saga} of Algorithm \ref{alg:diana-saga})  satisfies the unified Assumption \ref{asp:boge} with
	\begin{align*}
	&A_1 =0, \qquad
	B_1 =1, \qquad 
	C_1  =  0, \\ 
	&D_1 =\frac{1+\omega}{m}, \qquad
	\sk = \frac{1}{mn}\summ \sum_{j=1}^{m}\frac{\bar{L}^2}{b}\ns{\xk-w_{i,j}^k} + \frac{\omega}{(1+\omega)m}
	\summ \ns{\nabla f_i(\xk) -\hi}, \\
	&\rho =\min\left\{\frac{b}{n}+\frac{b}{n}\gamma-\gamma-\tau,~ 2\alpha -(1-\alpha)\beta^{-1} -\alpha^2-\tau \right\},\\
	&A_2 =0 \qquad
	B_2 =(1-\frac{b}{n})\bar{L}^2\eta^2\gamma^{-1} b^{-1} +B, \qquad 
	C_2  = 0,
	\end{align*}
	where 
	$B:=\frac{\omega(1+\beta)L^2\eta^2}{1+\omega} + \bar{L}^2\eta^2 b^{-1}$,
	$\tau:=\alpha^2 \omega + \frac{(1+\omega)B}{m}$,
	and $\forall \gamma, \beta>0$.
\end{lemma}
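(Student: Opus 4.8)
The plan is to mirror the two-step structure used throughout the DIANA section: first verify that the per-worker SAGA estimator $\gi$ obeys the pair of local recursions \eqref{eq:gi1-diff}--\eqref{eq:gi2-diff} with the claimed constants, and then feed those constants into Theorem \ref{thm:diana-type-diff} to read off the global parameters of Assumption \ref{asp:boge}. The local calculation is essentially identical to the one already carried out for DC-SAGA in Lemma \ref{lem:dc-saga}, since the local estimator $\gi$ and the $w$-update (Line \ref{line:w_prob-diana-saga}) are the same; the only difference lies in which aggregation theorem is invoked at the end.

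First I would establish unbiasedness. Because $I_b$ is a uniform minibatch of size $b$, we have $\Ek[\frac{1}{b}\sum_{j\in I_b}\nabla f_{i,j}(\xk)]=\nabla f_i(\xk)$ and the two control-variate terms involving $\wijk$ cancel in expectation, so $\Ek[\gi]=\nabla f_i(\xk)$. Then I would bound the second moment via the variance decomposition $\Ek[\ns{\gi}]=\Ek[\ns{\gi-\nabla f_i(\xk)}]+\ns{\nabla f_i(\xk)}$, drop the centering using $\E[\ns{x-\E[x]}]\le\E[\ns{x}]$, and apply Assumption \ref{asp:avgsmooth-saga-fed} to obtain $\Ek[\ns{\gi}]\le \frac{\bL^2}{b}\frac{1}{n}\sum_{j=1}^n\ns{\xk-\wijk}+\ns{\nabla f_i(\xk)}$. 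Reading off coefficients against \eqref{eq:gi1-diff} with $\ski:=\frac1n\sum_{j=1}^n\ns{\xk-\wijk}$ gives $A_{1,i}=C_{1,i}=0$, $B_{1,i}=1$, $D_{1,i}=\bL^2/b$.

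Next I would derive the $\ski$-recursion. Using Line \ref{line:w_prob-diana-saga}, each summand $\ns{\xkn-\wijkn}$ equals $\ns{\xkn-\xk}$ with probability $b/n$ and $\ns{\xkn-\wijk}$ with probability $1-b/n$; after substituting $\xkn=\xk-\eta\gk$, expanding the cross term, and applying Young's inequality with parameter $\gamma>0$, the mixed inner product produces the gradient term $(1-\frac bn)\eta^2\gamma^{-1}\ns{\nabla f(\xk)}$ while the coefficient of $\ski$ becomes $1-(\frac bn+\frac bn\gamma-\gamma)$. Crucially, I would retain the term $\eta^2\Ek[\ns{\gk}]$ \emph{unexpanded}, since Theorem \ref{thm:diana-type-diff} expects a coefficient $D_{2,i}$ multiplying $\Ek[\ns{\gk}]$; this yields \eqref{eq:gi2-diff} with $\rho_i=\frac bn+\frac bn\gamma-\gamma$, $A_{2,i}=C_{2,i}=0$, $B_{2,i}=(1-\frac bn)\eta^2\gamma^{-1}$, and $D_{2,i}=\eta^2$.

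Finally I would invoke Theorem \ref{thm:diana-type-diff}. Substituting the local constants, $A=\max_i(A_{1,i}+(B_{1,i}-1)L_i)=0$ and $C=\frac1m\summ C_{1,i}+2A\Delta_f^*=0$, so $A_1=C_1=A_2=C_2=0$; the composite quantities evaluate to $D_D=\frac1m\summ D_{1,i}D_{2,i}=\bL^2\eta^2/b$, $D_B=\frac1m\summ D_{1,i}B_{2,i}=(1-\frac bn)\bL^2\eta^2\gamma^{-1}/b$, and $D_A=D_C=0$, giving $B=\frac{\omega(1+\beta)L^2\eta^2}{1+\omega}+\bL^2\eta^2 b^{-1}$, $B_2=D_B+B$, and $\tau=\alpha^2\omega+\frac{(1+\omega)B}{m}$. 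Since all $\rho_i$ coincide, $\min_i\rho_i=\frac bn+\frac bn\gamma-\gamma$, and the two components of $\sk$ assemble exactly as claimed. The one point requiring care is that SAGA's memory $\{\wijk\}$ differs across workers, so the variance state $\ski$ is genuinely worker-dependent; this is precisely why the ``different variance'' Theorem \ref{thm:diana-type-diff}, rather than its common-variance counterpart Theorem \ref{thm:diana-type}, must be used here. Beyond this bookkeeping there is no substantive obstacle: the heavy lifting (the $\hi$-shift recursion and the $\tau$/$\rho$ combination) is already encapsulated in Theorem \ref{thm:diana-type-diff}.
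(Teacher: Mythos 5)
Your proposal is correct and follows essentially the same route as the paper's own proof: unbiasedness plus the variance decomposition with Assumption \ref{asp:avgsmooth-saga-fed} to get \eqref{eq:gi1-diff}, the $w_{i,j}$-update with Young's inequality (keeping $\eta^2\Ek[\ns{\gk}]$ unexpanded as the $D_{2,i}$ term) to get \eqref{eq:gi2-diff}, and then substitution into Theorem \ref{thm:diana-type-diff}. Your closing observation—that the worker-dependent SAGA memory $\{\wijk\}$ is exactly what forces the use of the different-variance Theorem \ref{thm:diana-type-diff} rather than Theorem \ref{thm:diana-type}—matches the paper's stated motivation for that theorem.
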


\begin{proofof}{Lemma \ref{lem:diana-saga}}
	If the local stochastic gradient estimator $\gi=\frac{1}{b} \sum_{j\in I_b} (\nabla f_{i,j}(x^k)- \nabla f_{i,j}(w_{i,j}^k)) + \frac{1}{n}\sum_{j=1}^{n}\nabla f_{i,j}(w_{i,j}^k)$(see Line \ref{line:localgrad-diana-saga} of Algorithm \ref{alg:diana-saga}), we show the following equations:
	\begin{align}
	\Ek[\gi] &= \Ek\left[\frac{1}{b} \sum_{j\in I_b} (\nabla f_{i,j}(x^k)- \nabla f_{i,j}(w_{i,j}^k)) + \frac{1}{n}\sum_{j=1}^{n}\nabla f_{i,j}(w_{i,j}^k)\right] \notag\\
	&= \nabla f_i(\xk) - \frac{1}{n}\sum_{j=1}^{n}\nabla f_{i,j}(w_{i,j}^k) + \frac{1}{n}\sum_{j=1}^{n}\nabla f_{i,j}(w_{i,j}^k) \notag\\
	&= \nabla f_i(\xk) \label{eq:diana-saga0}
	\end{align}
	and 
	\begin{align}
	&\Ek[\ns{\gi}] \notag\\
	& = \Ek\left[\ns{\gi-\nabla f_i(\xk)}\right] + \ns{\nabla f_i(\xk) } \notag\\
	&= \Ek\left[\nsB{\frac{1}{b} \sum_{j\in I_b} (\nabla f_{i,j}(x^k)- \nabla f_{i,j}(w_{i,j}^k)) + \frac{1}{n}\sum_{j=1}^{n}\nabla f_{i,j}(w_{i,j}^k) - \nabla f_i(\xk)} \right]  + \ns{\nabla f_i(\xk) }  \notag\\
	&= \frac{1}{b^2} \Ek\left[\nsB{\sum_{j\in I_b} \left((\nabla f_{i,j}(\xk)- \nabla f_{i,j}(w_{i,j}^k)) -\left(\frac{1}{n}\sum_{j=1}^{n}\nabla f_{i,j}(\xk)- \frac{1}{n}\sum_{j=1}^{n}\nabla f_{i,j}(w_{i,j}^k)\right) \right)} \right]  \notag\\
	&\qquad \quad + \ns{\nabla f_i(\xk) }  \notag\\
	&= \frac{1}{b} \Ek\left[\nsB{(\nabla f_{i,j}(\xk)- \nabla f_{i,j}(w_{i,j}^k)) -\left(\frac{1}{n}\sum_{j=1}^{n}\nabla f_{i,j}(\xk)- \frac{1}{n}\sum_{j=1}^{n}\nabla f_{i,j}(w_{i,j}^k)\right)} \right]  \notag\\
	&\qquad \quad + \ns{\nabla f_i(\xk) }  \notag\\
	&\leq \frac{1}{b} \Ek\left[\nsB{\nabla f_{i,j}(\xk)- \nabla f_{i,j}(w_{i,j}^k)} \right]  + \ns{\nabla f_i(\xk) }  \label{eq:diana-var1}\\
	& \leq  \frac{\bL^2}{b}\frac{1}{n}\sum_{j=1}^{n}\ns{\xk-w_{i,j}^k} + \ns{\nabla f_i(\xk)}, \label{eq:diana-saga1}
	\end{align}
	where \eqref{eq:diana-var1} uses the fact $\E[\ns{x-\E[x]}]\leq \E[\ns{x}]$, and the last inequality uses Assumption \ref{asp:avgsmooth-saga-fed} (i.e., \eqref{eq:avgsmooth-saga-fed}).
	Now, we define $\ski:=\frac{1}{n}\sum_{j=1}^{n}\ns{\xk-w_{i,j}^k}$ and obtain
	
	\begin{align}
	&\Ek[\skin] \notag\\
	& := \EkB{\frac{1}{n}\sum_{j=1}^{n}\ns{\xkn-\wijkn}}  \notag\\
	&= \EkB{\frac{1}{n}\sum_{j=1}^{n}\frac{b}{n}\ns{\xkn-\xk}+ \frac{1}{n}\sum_{j=1}^{n}\left(1-\frac{b}{n}\right)\ns{\xkn-\wijk}} \label{eq:diana-useprob1}\\
	&= \frac{b}{n}\Ek\eta^2\ns{\gk} 
	+ \left(1-\frac{b}{n}\right)\EkB{\frac{1}{n}\sum_{j=1}^{n} \ns{\xk-\eta\gk-\wijk} }  \label{eq:diana-useupdate1} \\
	&= \frac{b\eta^2}{n}\Ek\ns{\gk} 
	+ \left(1-\frac{b}{n}\right)\EkB{\frac{1}{n}\sum_{j=1}^{n} \left(\ns{\xk-\wijk} + \ns{\eta \gk} -2\inner{\xk-\wijk}{\eta\gk}\right)}  \notag\\
	&= \eta^2\Ek\ns{\gk} 
	+ \left(1-\frac{b}{n}\right)\frac{1}{n}\sum_{j=1}^{n} \ns{\xk-\wijk} 
	+ 2\left(1-\frac{b}{n}\right)\frac{1}{n}\sum_{j=1}^{n}\inner{\xk-\wijk}{\eta \nabla f(\xk)}  \notag\\
	&\leq \eta^2\Ek\ns{\gk} 
	+ \left(1-\frac{b}{n}\right)\frac{1}{n}\sum_{j=1}^{n} \ns{\xk-\wijk} \notag\\
	&\qquad \qquad 
	+ \left(1-\frac{b}{n}\right)\frac{1}{n}\sum_{j=1}^{n}\left(\gamma\ns{\xk-\wijk} +\frac{\eta^2}{\gamma}\ns{\nabla f(\xk)}\right) \label{eq:diana-useyoung1} 
	\end{align}
	\begin{align}
	&= \left(1-(\frac{b}{n}+\frac{b}{n}\gamma-\gamma)\right) \ski + \frac{(1-\frac{b}{n})\eta^2}{\gamma}\ns{\nabla f(\xk)}
	+ \eta^2\Ek[\ns{\gk}], 
	\label{eq:diana-saga2}
	\end{align}
	where \eqref{eq:diana-useprob1} uses Line \ref{line:w_prob-diana-saga} of Algorithm \ref{alg:diana-saga}, \eqref{eq:diana-useupdate1} uses Line \ref{line:update-diana-saga} of Algorithm \ref{alg:diana-saga}, \eqref{eq:diana-useyoung1} uses Young's inequality for $\forall \gamma>0$, and \eqref{eq:diana-saga2} follows from the definition $\ski:=\frac{1}{n}\sum_{j=1}^{n}\ns{\xk-w_{i,j}^k}$.
	
	Now, according to \eqref{eq:diana-saga1} and \eqref{eq:diana-saga2}, we know the local stochastic gradient estimator $\gi$ (see Line \ref{line:localgrad-diana-saga} of Algorithm \ref{alg:diana-saga}) satisfies \eqref{eq:gi1-diff} and \eqref{eq:gi2-diff} with 
	\begin{align*}
	&A_{1,i}=0,\quad  B_{1,i}=1,\quad 
	D_{1,i}=\frac{\bar{L}^2}{b},\quad 
	\ski = \frac{1}{n}\sum_{j=1}^{n}\ns{\xk-w_{i,j}^k},\quad
	C_{1,i} =0, \\
	&\rho_i=\frac{b}{n}+\frac{b}{n}\gamma-\gamma,\quad
	A_{2,i}=0,\quad
	B_{2,i}=(1-\frac{b}{n})\eta^2\gamma^{-1},\quad
	D_{2,i}=\eta^2,  \quad
	C_{2,i}=0,\quad \forall\gamma>0.
	\end{align*}
	Thus, according to Theorem \ref{thm:diana-type-diff}, $g^k$ (see Line \ref{line:gk-diana-saga} of Algorithm \ref{alg:diana-saga}) satisfies the unified Assumption \ref{asp:boge} with
	\begin{align*}
	&A_1 =0, \qquad
	B_1 =1, \qquad 
	C_1  =  0, \\ 
	&D_1 =\frac{1+\omega}{m}, \qquad
	\sk = \frac{1}{mn}\summ \sum_{j=1}^{m}\frac{\bar{L}^2}{b}\ns{\xk-w_{i,j}^k} + \frac{\omega}{(1+\omega)m}
	\summ \ns{\nabla f_i(\xk) -\hi}, \\
	&\rho =\min\left\{\frac{b}{n}+\frac{b}{n}\gamma-\gamma-\tau,~ 2\alpha -(1-\alpha)\beta^{-1} -\alpha^2-\tau \right\},\\
	&A_2 =0 \qquad
	B_2 =(1-\frac{b}{n})\bar{L}^2\eta^2\gamma^{-1} b^{-1} +B, \qquad 
	C_2  = 0,
	\end{align*}
	where 
	$B:=\frac{\omega(1+\beta)L^2\eta^2}{1+\omega} + \bar{L}^2\eta^2 b^{-1}$,
	$\tau:=\alpha^2 \omega + \frac{(1+\omega)B}{m}$,
	and $\forall \gamma, \beta>0$.
\end{proofof}

\begin{corollary}[DIANA-SAGA]\label{cor:diana-saga}
	Suppose that Assumption \ref{asp:lsmooth-diana} and \ref{asp:avgsmooth-saga-fed} hold. 
	Let stepsize 
	$$\eta \leq  \frac{1}{L+L(1+\omega)B'm^{-1}b^{-1}\rho^{-1}},$$ 
	then the number of iterations performed by DIANA-SAGA (Algorithm \ref{alg:diana-saga}) to find an $\epsilon$-solution of nonconvex federated problem \eqref{eq:prob-fed} with \eqref{prob-fed:finite}, i.e. a point $\hx$ such that $\E[\n{\nabla f(\hx)}] \leq \epsilon$, can be bounded by	
	\begin{align}
	K = \frac{8\fgap L}{\epsilon^2}  \left(1+\frac{(1+\omega)B'}{mb\rho}\right),
	\end{align}
	where 
	$B' :=  (1-\frac{b}{n})\bar{L}^2\eta^2\gamma^{-1} + Bb$,
	$\rho :=\min\{\frac{b}{n}+\frac{b}{n}\gamma-\gamma-\tau,~ 2\alpha -(1-\alpha)\beta^{-1} -\alpha^2-\tau \}$,
	$\tau:=\alpha^2 \omega + \frac{(1+\omega)B}{m}$,
	$B:=\frac{\omega(1+\beta)L^2\eta^2}{1+\omega} + \bar{L}^2\eta^2 b^{-1}$,
	and $\forall \gamma, \beta>0$.
\end{corollary}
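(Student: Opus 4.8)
The plan is to follow exactly the template used for the other DIANA-framework corollaries (Corollaries~\ref{cor:diana-gd}--\ref{cor:diana-lsvrg}): first invoke the relevant lemma to read off the parameters of Assumption~\ref{asp:boge}, then substitute them directly into the unified main Theorem~\ref{thm:main}. Concretely, I would begin by applying Lemma~\ref{lem:diana-saga}, which states that the global estimator $g^k$ produced by Algorithm~\ref{alg:diana-saga} satisfies Assumption~\ref{asp:boge} with $A_1=A_2=C_1=C_2=0$, $B_1=1$, $D_1=\frac{1+\omega}{m}$, $B_2=(1-\frac{b}{n})\bar{L}^2\eta^2\gamma^{-1}b^{-1}+B$, and $\rho=\min\{\frac{b}{n}+\frac{b}{n}\gamma-\gamma-\tau,\,2\alpha-(1-\alpha)\beta^{-1}-\alpha^2-\tau\}$. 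The vanishing of $A_1,A_2,C_1,C_2$ is the crucial structural fact: it collapses the three-way maximum in Theorem~\ref{thm:main} and reduces the stepsize constraint to its first branch $\eta\leq (LB_1+LD_1B_2\rho^{-1})^{-1}$.

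Next I would carry out the single algebraic simplification that drives the whole result: recognizing that $B_2=B'/b$, where $B':=(1-\frac{b}{n})\bar{L}^2\eta^2\gamma^{-1}+Bb$. With this identity, $D_1B_2\rho^{-1}=\frac{1+\omega}{m}\cdot\frac{B'}{b}\cdot\frac{1}{\rho}=\frac{(1+\omega)B'}{mb\rho}$, so that $B_1+D_1B_2\rho^{-1}=1+\frac{(1+\omega)B'}{mb\rho}$. Substituting into the stepsize bound gives $\eta\leq (L+L(1+\omega)B'm^{-1}b^{-1}\rho^{-1})^{-1}$, exactly as stated, and substituting into the iteration count of Theorem~\ref{thm:main}---whose second and third arguments of the max are zero because $A_1=C_1=0$---yields $K=\frac{8\fgapp L}{\epsilon^2}\bigl(1+\frac{(1+\omega)B'}{mb\rho}\bigr)$.

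Finally I would dispose of the $\fgapp$ versus $\fgap$ discrepancy by observing that the initialization of Algorithm~\ref{alg:diana-saga} forces $\sigma_0^2=0$: choosing $w_{i,j}^0=x^0$ for all $i,j$ kills the SAGA table term $\frac{1}{mn}\summ\sum_{j}\frac{\bar L^2}{b}\ns{x^0-w_{i,j}^0}$, and choosing $h_i^0=\nabla f_i(x^0)$ kills the DIANA shift term $\frac{\omega}{(1+\omega)m}\summ\ns{\nabla f_i(x^0)-h_i^0}$. Hence $\fgapp=f(x^0)-f^*+2^{-1}L\eta^2D_1\rho^{-1}\sigma_0^2=f(x^0)-f^*=\fgap$, which gives the final bound $K=\frac{8\fgap L}{\epsilon^2}\bigl(1+\frac{(1+\omega)B'}{mb\rho}\bigr)$ and completes the derivation.

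There is essentially no hard analytic step in the corollary itself: all of the real work has already been performed in Lemma~\ref{lem:diana-saga} (establishing the per-worker SAGA recursions and pushing them through the DIANA aggregation via Theorem~\ref{thm:diana-type-diff}). The only point requiring genuine care---and thus the main obstacle---is purely bookkeeping: correctly tracking the factor of $b$ so that the lemma's $B_2$ matches the corollary's $B'=bB_2$, and carrying the definitions of $\rho$, $\tau$, and $B$ through consistently, including the implicit quantifier $\forall\gamma,\beta>0$ and the free parameter $\alpha$, which must be preserved so that $\gamma,\alpha,\beta$ can subsequently be tuned to render $\rho$ positive.
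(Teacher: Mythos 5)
Your proposal is correct and follows essentially the same route as the paper's own proof: read the parameters $A_1=A_2=C_1=C_2=0$, $B_1=1$, $D_1=\frac{1+\omega}{m}$, $B_2$, $\rho$ from Lemma~\ref{lem:diana-saga}, observe that the vanishing constants collapse Theorem~\ref{thm:main} to its first branch, rewrite $B_2=B'/b$ so that $B_1+D_1B_2\rho^{-1}=1+\frac{(1+\omega)B'}{mb\rho}$, and conclude with $\fgapp=\fgap$ via $\sigma_0^2=0$. Your treatment of the initialization is in fact slightly more careful than the paper's, which justifies $\sigma_0^2=0$ by citing only the shift term $\frac{\omega}{(1+\omega)m}\summ\ns{\nabla f_i(x^0)-h^0}$, whereas you correctly note that both this term (via $h_i^0=\nabla f_i(x^0)$) and the SAGA table term (via $w_{i,j}^0=x^0$) must be made to vanish.
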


\begin{proofof}{Corollary \ref{cor:diana-saga}}
	According to our unified Theorem \ref{thm:main}, if the stepsize is chosen as 
	\begin{align}
	\eta &\leq \min\left\{ \frac{1}{LB_1+LD_1B_2\rho^{-1}}, \sqrt{\frac{\ln 2}{(LA_1 + LD_1A_2\rho^{-1})K}}, \frac{\epsilon^2}{2L(C_1+D_1C_2\rho^{-1})} \right\}  \notag\\
	&= \frac{1}{L+L(1+\omega)B'm^{-1}b^{-1}\rho^{-1}}
	\end{align}
	since $B_1=1, D_1 =\frac{1+\omega}{m}, B_2=\frac{B'}{b}, B'=(1-\frac{b}{n})\bar{L}^2\eta^2\gamma^{-1} + Bb$ and $A_1=A_2=C_1=C_2=0$
	according to Lemma \ref{lem:diana-saga},
	then the number of iterations performed by DIANA-SAGA (Algorithm \ref{alg:diana-saga}) to find an $\epsilon$-solution of problem \eqref{eq:prob-fed} with \eqref{prob-fed:finite} can be bounded by
	\begin{align}
	K &= \frac{8\fgapp L}{\epsilon^2} \max \left\{ B_1+D_1B_2\rho^{-1}, \frac{12\fgapp (A_1+D_1 A_2\rho^{-1})}{\epsilon^2}, \frac{2(C_1+D_1C_2\rho^{-1})}{\epsilon^2}\right\}  \notag\\
	&= \frac{8\fgap L}{\epsilon^2}  \left(1+\frac{(1+\omega)B'}{mb\rho}\right)
	\end{align}
	since $B_1=1, D_1 =\frac{1+\omega}{m}, B_2=\frac{B'}{b}, B'=(1-\frac{b}{n})\bar{L}^2\eta^2\gamma^{-1} + Bb, A_1=A_2=C_1=C_2=0$ and 
	$\fgapp:= f(x^0) - f^* + 2^{-1}L\eta^2D_1\rho^{-1} \sigma_0^2 = f(x^0) - f^* =\fgap$ by letting $\sigma_0^2 =\frac{\omega}{(1+\omega)m}
	\summ \ns{\nabla f_i(x^0) -h^0}=0$.
\end{proofof}

\newpage
\section{Better Convergence for Nonconvex Optimization under PL Condition}
\label{sec:pl}

In this section, we provide the detailed better convergence rates and proofs under PL condition (i.e., Assumption \ref{asp:pl}) for some specific methods in the single machine case (i.e., $m=1$) of nonconvex federated problem \eqref{eq:prob-fed} which reduces to the standard nonconvex problem \eqref{eq:prob} with online form \ref{prob:exp} or finite-sum form \eqref{prob:finite}, i.e., 
\begin{equation*}
\min_{x\in \R^d}   f(x), \text{~~where~}  f(x) := \E_{\zeta\sim \cD}[f(x,\zeta)],  
\text{~~~or~~~}  f(x) := \frac{1}{n}\sum_{i=1}^n{f_i(x)}.
\end{equation*}

In the following, we prove that some specific methods, i.e., GD, SGD, L-SVRG and SAGA satisfy our unified Assumption \ref{asp:boge} and thus can be captured by our unified analysis.
Then, we plug their corresponding parameters (i.e., specific values for $A_1, A_2, B_1, B_2, C_1,C_2,D_1,\rho$) into our unified Theorem \ref{thm:main-pl-dec} or \ref{thm:main-pl} under PL condition to obtain the detailed better convergence rates for these methods. 

\subsection{GD method under PL condition}

\begin{corollary}[GD under PL condition]\label{cor:gd-pl}
	Suppose that Assumption \ref{asp:lsmooth} and \ref{asp:pl} hold. Let stepsize $\eta\leq \frac{1}{L}$, then
	the number of iterations performed by GD (Algorithm \ref{alg:gd}) to find an $\epsilon$-solution of nonconvex problem \eqref{eq:prob}, i.e. a point $x^K$ such that $\E[f(x^K)-f^*] \leq \epsilon$, can be bounded by
	$$
	K = \frac{L}{\mu}\log \frac{2\fgap}{\epsilon}.
	$$
	Note that it recovers the previous result for GD under PL condition given by \citep{polyak1963gradient,karimi2016linear}.
\end{corollary}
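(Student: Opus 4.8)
The plan is to derive Corollary \ref{cor:gd-pl} as an immediate special case of the unified Theorem \ref{thm:main-pl-dec} under the PL condition, exactly mirroring the structure of the non-PL proof of Corollary \ref{cor:gd}. The key observation is that GD is the cleanest possible instance of Assumption \ref{asp:boge}: by Lemma \ref{lem:gd}, the full-gradient estimator $g^k = \nabla f(x^k)$ satisfies our unified assumption with $A_1 = C_1 = D_1 = 0$, $B_1 = 1$, $\sigma_k^2 \equiv 0$, $\rho = 1$, and $A_2 = B_2 = C_2 = 0$. So the entire proof reduces to substituting these values into the stepsize bound and iteration complexity formula of Theorem \ref{thm:main-pl-dec}.

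First I would invoke Theorem \ref{thm:main-pl-dec}, which applies because Assumptions \ref{asp:boge}, \ref{asp:lsmooth}, and \ref{asp:pl} all hold (the first via Lemma \ref{lem:gd}, the latter two by hypothesis). The stepsize constraint in that theorem,
$$\eta \leq \frac{1}{LB_1 + 2LD_1 B_2 \rho^{-1} + (LA_1 + 2LD_1 A_2 \rho^{-1})\mu^{-1}},$$
collapses, upon plugging in $B_1 = 1$ and $A_1 = D_1 = 0$, to precisely $\eta \leq \frac{1}{L}$, matching the corollary's hypothesis. Next I would evaluate the auxiliary quantity $\fgapp := f(x^0) - f^* + L\eta^2 D_1 \rho^{-1}\sigma_0^2$: since $D_1 = 0$ (and $\sigma_0^2 = 0$ anyway), the variance-reduction correction term vanishes and $\fgapp = f(x^0) - f^* = \fgap$.

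Finally I would simplify the iteration bound \eqref{eq:main-pl-k}. With $A_1 = B_2 = C_1 = C_2 = D_1 = 0$, $B_1 = 1$, and $\kappa = L/\mu$, the second argument of the max, namely $\frac{10(C_1 + 2D_1 C_2 \rho^{-1})\kappa}{\mu\epsilon}$, equals zero, so the max is attained by the first argument; that argument reduces to $2 \cdot 1 \cdot \kappa \log\frac{2\fgapp}{\epsilon} = 2\kappa\log\frac{2\fgap}{\epsilon}$. To obtain the exact constant $\frac{L}{\mu}\log\frac{2\fgap}{\epsilon}$ claimed in the corollary, I would note that GD is deterministic: the decreasing-stepsize machinery of Theorem \ref{thm:main-pl-dec} is unnecessary, and one can instead appeal directly to the constant-stepsize Theorem \ref{thm:main-pl} whose bound \eqref{eq:k-main-pl} gives $K = \max\{B_1 + \cdots, \cdots\}\kappa\log\frac{2\fgapp}{\epsilon} = 1 \cdot \frac{L}{\mu}\log\frac{2\fgap}{\epsilon}$ after the same substitution.

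The only genuine subtlety — and the one point I would flag as requiring care rather than being purely routine — is the mild discrepancy in constants between the decreasing-stepsize Theorem \ref{thm:main-pl-dec} (factor $2$) and the constant-stepsize Theorem \ref{thm:main-pl} (factor $1$), together with the fact that the corollary cites \citep{polyak1963gradient, karimi2016linear} as the recovered result. Since these references state the sharp linear rate $(1-\mu/L)^K$ with the clean $\frac{L}{\mu}\log\frac{1}{\epsilon}$ iteration count, I would present the proof via Theorem \ref{thm:main-pl} (constant stepsize) rather than the decreasing-stepsize version, so that the recovered constant matches exactly; the decreasing-stepsize refinement is only needed when $C_1$ or $C_2$ is nonzero (i.e. for genuinely stochastic, non-variance-reduced methods), which is not the case here. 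Everything else is direct substitution of the Lemma \ref{lem:gd} parameters, so no inequality-chasing or telescoping is required beyond what the unified theorems already establish.
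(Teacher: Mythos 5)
Your proposal is correct and matches the paper's own proof: the paper likewise derives Corollary \ref{cor:gd-pl} by substituting the Lemma \ref{lem:gd} parameters ($B_1=1$, $A_1=C_1=D_1=0$, $\sigma_0^2=0$, hence $\fgapp=\fgap$) into the constant-stepsize Theorem \ref{thm:main-pl}, obtaining $\eta \leq \frac{1}{L}$ and $K = \frac{L}{\mu}\log\frac{2\fgap}{\epsilon}$. Your observation that the decreasing-stepsize Theorem \ref{thm:main-pl-dec} would only give the weaker constant $2\kappa\log\frac{2\fgap}{\epsilon}$, so the constant-stepsize theorem is the right tool here, is precisely the choice the paper makes (even though Table \ref{table:pl} nominally cites Theorem \ref{thm:main-pl-dec}).
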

\begin{proofof}{Corollary \ref{cor:gd-pl}}
	According to Theorem \ref{thm:main-pl}, if the stepsize is chosen as 
	\begin{align*}
	\eta_k \equiv \eta &\leq \min\left\{ \frac{1}{LB_1+2LD_1B_2\rho^{-1} + (L A_1 + 2LD_1A_2 \rho^{-1})\mu^{-1}},~~ \frac{\mu\epsilon}{LC_1+2LD_1C_2\rho^{-1}}  \right\} \\
	&= \frac{1}{L}
	\end{align*}
	since $B_1=1, A_1=C_1=D_1=0$ according to Lemma \ref{lem:gd},
	then the number of iterations performed by Algorithm \ref{alg:1} to find an $\epsilon$-solution of problem \eqref{eq:prob} can be bounded by
	\begin{align*}
	K &= \max \left\{ B_1+2D_1B_2\rho^{-1} + (L A_1 + 2LD_1A_2 \rho^{-1})\mu^{-1},~~ \frac{C_1+2D_1C_2\rho^{-1}}{\mu \epsilon}\right\} \frac{L}{\mu}\log \frac{2\fgapp}{\epsilon}  \\
	&= \frac{L}{\mu}\log \frac{2\fgap}{\epsilon}
	\end{align*}
	since $B_1=1, A_1=C_1=D_1=0, \sigma_0^2 = 0$, and $\fgapp:= f(x^0) - f^* + L\eta^2D_1\rho^{-1} \sigma_0^2 = f(x^0) - f^* =\fgap$.
\end{proofof}

\subsection{SGD method under PL condition}

\begin{corollary}[SGD under PL condition]\label{cor:sgd-pl}
	Suppose that Assumption \ref{asp:lsmooth} and \ref{asp:pl} hold, and the gradient estimator $g^k$ in Algorithm \ref{alg:sgd} satisfies Assumption \ref{asp:es}. 
	Let stepsize 
	$$\eta_{k} = \begin{cases}
	\eta   & \text{if~~}  k\leq \frac{K}{2}\\
	\frac{2\eta}{2+(k-\frac{K}{2})\mu\eta} &\text{if~~}  k>\frac{K}{2}
	\end{cases}, 	\text{~~~~where~~}
	\eta \leq \frac{1}{LB + L A \mu^{-1}},
	$$
	then the number of iterations performed by SGD (Algorithm \ref{alg:sgd})  for finding an $\epsilon$-solution of nonconvex problem \eqref{eq:prob} with \eqref{prob:exp} or \eqref{prob:finite}, i.e. a point $x^K$ such that $\E[f(x^K)-f^*] \leq \epsilon$,  can be bounded by 
	$$
	K = \max \left\{\frac{2L( B + A \mu^{-1})}{\mu}\log \frac{2\fgap}{\epsilon},~~ \frac{10LC}{\mu^2 \epsilon}\right\}.
	$$
	Note that it recovers the recent result for SGD under PL condition given by \citep{khaled2020better}.
\end{corollary}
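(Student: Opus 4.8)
The plan is to obtain this corollary as a direct specialization of the general decreasing-stepsize result, Theorem \ref{thm:main-pl-dec}, to the SGD gradient estimator; no new analysis is needed beyond substitution. First I would invoke Lemma \ref{lem:sgd}, which states that any estimator $g^k$ obeying the Expected Smoothness Assumption \ref{asp:es} automatically satisfies the unified Assumption \ref{asp:boge} with the reduced parameter set $A_1 = A$, $B_1 = B$, $C_1 = C$, $D_1 = 0$, $\sigma_k^2 \equiv 0$, $\rho = 1$, and $A_2 = B_2 = C_2 = 0$. Since Assumptions \ref{asp:lsmooth} and \ref{asp:pl} are in force by hypothesis, all three premises of Theorem \ref{thm:main-pl-dec} (namely Assumptions \ref{asp:boge}, \ref{asp:lsmooth}, \ref{asp:pl}) hold, so the theorem applies verbatim. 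One point deserving a line of care is confirming that the degenerate variance recurrence \eqref{eq:boge2}, with $A_2 = B_2 = C_2 = 0$, $\rho = 1$, and $\sigma_k^2 \equiv 0$, is trivially consistent with Assumption \ref{asp:boge}; this is immediate because both sides are identically zero.

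Next I would substitute these values into the stepsize bound and the complexity expression of Theorem \ref{thm:main-pl-dec}. The crucial simplification is that $D_1 = 0$ annihilates every term carrying a $D_1$ factor, so the variance-reduction mechanism plays no role and the parameter $\rho$ drops out entirely. Concretely, the stepsize condition $\eta \leq 1/(LB_1 + 2LD_1B_2\rho^{-1} + (LA_1 + 2LD_1A_2\rho^{-1})\mu^{-1})$ collapses to $\eta \leq 1/(LB + LA\mu^{-1})$, which is exactly the admissible stepsize stated in the corollary, and the decreasing schedule is inherited unchanged from the theorem.

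Finally I would read off the two-branch complexity. Because $D_1 = 0$ and $\sigma_0^2 \equiv 0$, the shifted initial gap satisfies $\fgapp = f(x^0) - f^* + L\eta^2 D_1\rho^{-1}\sigma_0^2 = f(x^0) - f^* = \fgap$, so $\fgapp$ may be replaced by $\fgap$ throughout. Evaluating the complexity at the maximal admissible stepsize with $\kappa = L/\mu$ then produces the linear branch $\tfrac{2L(B + A\mu^{-1})}{\mu}\log\tfrac{2\fgap}{\epsilon}$ and, from the surviving $C_1 = C$ term, the sublinear branch $\tfrac{10LC}{\mu^2\epsilon}$, which is precisely the claimed bound and matches the rate of \citet{khaled2020better}. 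The argument parallels the GD-under-PL derivation (Corollary \ref{cor:gd-pl}); the one essential difference is that here $C > 0$, since the stochastic gradient carries irreducible noise and is \emph{not} variance-reduced, and it is exactly this nonzero $C$ that necessitates the decreasing-stepsize Theorem \ref{thm:main-pl-dec} rather than its constant-stepsize counterpart Theorem \ref{thm:main-pl}: the noise term generates the $O(C\kappa/(\mu\epsilon))$ branch that dominates for small $\epsilon$. There is no genuine obstacle — the proof is pure bookkeeping — so the main thing to get right is the constant in the linear branch, which is cleanest to track by passing through the $\tfrac{2}{\mu\eta}\log\tfrac{2\fgap}{\epsilon}$ form and then inserting the maximal stepsize.
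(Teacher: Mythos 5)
Your proposal is correct and follows essentially the same route as the paper's own proof: invoke Lemma \ref{lem:sgd} to obtain the parameters $A_1=A$, $B_1=B$, $C_1=C$, $D_1=0$, $\sigma_k^2\equiv 0$, $\rho=1$, $A_2=B_2=C_2=0$, then specialize Theorem \ref{thm:main-pl-dec}, using $D_1=0$ and $\sigma_0^2=0$ to collapse the stepsize bound to $\eta \leq 1/(LB + LA\mu^{-1})$ and to identify $\fgapp=\fgap$. Your remark about tracking the linear-branch constant via $\frac{2}{\mu\eta}\log\frac{2\fgap}{\epsilon}$ at the maximal stepsize is exactly the right bookkeeping (it yields $\frac{2L(B+A\mu^{-1})}{\mu}$, matching the corollary), so nothing is missing.
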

\begin{proofof}{Corollary \ref{cor:sgd-pl}}
	According to Theorem \ref{thm:main-pl-dec}, if the stepsize is chosen as 
	\begin{align*}
	\eta_{k} = \begin{cases}
	\eta   & \text {if~~}  k\leq \frac{K}{2}\\
	\frac{2\eta}{2+(k-\frac{K}{2})\mu\eta} &\text {if~~}  k>\frac{K}{2}
	\end{cases},
	\end{align*}
	where 
	$$\eta \leq \frac{1}{LB_1+2LD_1B_2\rho^{-1} + (L A_1 + 2LD_1A_2 \rho^{-1})\mu^{-1}} 
	= \frac{1}{LB+LA\mu^{-1}}$$ 
	since $A_1=A, B_1=B, C_1= C, D_1=0 $ according to Lemma \ref{lem:sgd},
	then the number of iterations performed by Algorithm \ref{alg:1} to find an $\epsilon$-solution of problem \eqref{eq:prob} with \eqref{prob:exp} can be bounded by
	\begin{align*}
	K &= \max \left\{\frac{2L( B_1+2D_1B_2\rho^{-1} +(L A_1 + 2LD_1A_2 \rho^{-1})\mu^{-1})}{\mu}\log \frac{2\fgapp}{\epsilon},~~ \frac{10L(C_1+2D_1C_2\rho^{-1})}{\mu^2 \epsilon}\right\} \\
	&=\max \left\{\frac{2L( B + A \mu^{-1})}{\mu}\log \frac{2\fgap}{\epsilon},~~ \frac{10LC}{\mu^2 \epsilon}\right\}
	\end{align*}
	since $A_1=A, B_1=B, C_1= C, D_1=0, \sigma_0^2 = 0$, and $\fgapp:= f(x^0) - f^* + L\eta^2D_1\rho^{-1} \sigma_0^2 = f(x^0) - f^* =\fgap$.
\end{proofof}

\subsection{L-SVRG method under PL condition}

\begin{corollary}[L-SVRG under PL condition]\label{cor:lsvrg-pl}
	Suppose that Assumption \ref{asp:avgsmooth} and \ref{asp:pl} hold.
	Let stepsize $\eta \leq \frac{1}{L(1+3b^{-1/3}p^{-2/3})}$, 
	then the number of iterations performed by L-SVRG (Algorithm \ref{alg:lsvrg}) for finding an $\epsilon$-solution of nonconvex problem \eqref{eq:prob} with \eqref{prob:finite}, i.e. a point $x^K$ such that $\E[f(x^K)-f^*] \leq \epsilon$, can be bounded by 
	$$
	K = \left(1+ \frac{3}{b^{1/3}p^{2/3}}\right)\frac{L}{\mu}\log\frac{2\fgap}{\epsilon}.
	$$
	In particular, we have
	\begin{enumerate}
		\item let minibatch size $b=1$ and probability $p=\frac{1}{n}$, then the number of iterations $K = \frac{4n^{2/3}L}{\mu}\log\frac{2\fgap}{\epsilon}$.
		\item let minibatch size $b=n^{2/3}$ and probability $p=\frac{1}{n^{1/3}}$, then the number of iterations $K =\frac{4L}{\mu}\log\frac{2\fgap}{\epsilon}$, but each iteration costs $n^{2/3}$ due to minibatch size $b=n^{2/3}$.
	\end{enumerate}
	Note that it recovers the previous result for SVRG under PL condition given by \citep{reddi2016proximal,li2018simple}, but they studied the standard SVRG form \citep{johnson2013accelerating} not this simpler loopless version.
\end{corollary}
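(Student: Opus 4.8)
The plan is to combine Lemma~\ref{lem:lsvrg}, which records the L-SVRG estimator's parameters in the unified Assumption~\ref{asp:boge}, with the constant-stepsize PL theorem, Theorem~\ref{thm:main-pl}. The decisive structural observation is that Lemma~\ref{lem:lsvrg} gives $A_1=A_2=C_1=C_2=0$. Feeding this into Theorem~\ref{thm:main-pl}, the $C$-dependent $\frac{1}{\mu\epsilon}$ term inside the $\max$ of the iteration count vanishes, and the stepsize restriction collapses to $\eta\leq \frac{1}{L(B_1+2D_1B_2\rho^{-1})}$. This is exactly why L-SVRG enjoys the linear rate $K=(B_1+2D_1B_2\rho^{-1})\kappa\log\frac{2\fgapp}{\epsilon}$ rather than a sublinear one. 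The computation parallels the non-PL Corollary~\ref{cor:lsvrg}, the only difference being that Theorem~\ref{thm:main-pl} carries a factor $2$ on $D_1B_2\rho^{-1}$ where Theorem~\ref{thm:main} carries a factor $1$; this is precisely the source of the coefficient $3$ in the claimed bound, versus the coefficient $2$ in Corollary~\ref{cor:lsvrg}.

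The core estimate is to upper bound $B_1+2D_1B_2\rho^{-1}$. First I would substitute $B_1=1$, $D_1=L^2/b$, $B_2=\frac{2\eta^2}{p}-\eta^2$, and $\rho=\frac{p}{2}+\frac{p^2}{2}-\frac{\eta^2L^2}{b}$ from Lemma~\ref{lem:lsvrg}. Using $B_2\leq \frac{2\eta^2}{p}$ together with the elementary bound $\rho\geq \frac{p}{4}$, valid whenever $\frac{\eta^2L^2}{b}\leq \frac{p}{4}$, gives $2D_1B_2\rho^{-1}\leq \frac{16L^2\eta^2}{bp^2}$. Choosing $\eta\leq \frac{1}{L(1+3b^{-1/3}p^{-2/3})}$ then forces $\frac{16L^2\eta^2}{bp^2}\leq \frac{3}{b^{1/3}p^{2/3}}$, since $(3b^{-1/3}p^{-2/3})^2=9b^{-2/3}p^{-4/3}\geq \frac{16}{3}b^{-2/3}p^{-4/3}$, yielding $B_1+2D_1B_2\rho^{-1}\leq 1+\frac{3}{b^{1/3}p^{2/3}}$. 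The same choice of $\eta$ simultaneously guarantees $\frac{\eta^2L^2}{b}\leq \frac{p}{4}$, because $p\leq 1\leq b$ makes $\frac{4p^{1/3}}{9b^{1/3}}\leq 1$, which closes the circle.

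The main obstacle is precisely this circular dependence: the admissible stepsize in Theorem~\ref{thm:main-pl} is specified implicitly through $B_2$ and $\rho$, which in turn depend on $\eta$. I would resolve it exactly as in the proof of Corollary~\ref{cor:lsvrg}, by checking that the explicit choice $\eta\leq \frac{1}{L(1+3b^{-1/3}p^{-2/3})}$ is self-consistent, namely that it both implies $\frac{\eta^2L^2}{b}\leq \frac{p}{4}$ (needed for $\rho\geq p/4$) and satisfies $\eta\leq \frac{1}{L(B_1+2D_1B_2\rho^{-1})}$ once the bound $B_1+2D_1B_2\rho^{-1}\leq 1+3b^{-1/3}p^{-2/3}$ is established. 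No genuinely hard analysis is involved beyond this bookkeeping.

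Finally, since Algorithm~\ref{alg:lsvrg} is initialized with $x^0=w^0$, we have $\sigma_0^2=\ns{x^0-w^0}=0$, so $\fgapp=f(x^0)-f^*+L\eta^2D_1\rho^{-1}\sigma_0^2=\fgap$. Plugging $B_1+2D_1B_2\rho^{-1}\leq 1+3b^{-1/3}p^{-2/3}$ and $\kappa=L/\mu$ into Theorem~\ref{thm:main-pl} delivers $K=\left(1+3b^{-1/3}p^{-2/3}\right)\frac{L}{\mu}\log\frac{2\fgap}{\epsilon}$. The two special cases then follow by direct substitution: $b=1,p=\frac{1}{n}$ gives $K=\frac{4n^{2/3}L}{\mu}\log\frac{2\fgap}{\epsilon}$ (bounding $1\leq n^{2/3}$), while $b=n^{2/3},p=\frac{1}{n^{1/3}}$ gives $b^{-1/3}p^{-2/3}=1$ and hence $K=\frac{4L}{\mu}\log\frac{2\fgap}{\epsilon}$, at the cost of $n^{2/3}$ stochastic-gradient evaluations per iteration.
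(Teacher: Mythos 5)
Your proposal is correct and follows essentially the same route as the paper's own proof: it combines Lemma~\ref{lem:lsvrg} with the constant-stepsize PL theorem (Theorem~\ref{thm:main-pl}), exploits $A_1=A_2=C_1=C_2=0$ to kill the $O(\frac{1}{\mu\epsilon})$ term, and bounds $B_1+2D_1B_2\rho^{-1}\leq 1+3b^{-1/3}p^{-2/3}$ via $B_2\leq\frac{2\eta^2}{p}$ and $\rho\geq\frac{p}{4}$, exactly as in the paper. Your explicit verification that the stepsize choice self-consistently implies $\frac{\eta^2L^2}{b}\leq\frac{p}{4}$ is in fact slightly more careful than the paper's, which asserts this fact without proof.
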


\begin{proofof}{Corollary \ref{cor:lsvrg-pl}}
	According to Theorem \ref{thm:main-pl}, the stepsize should be chosen as 
	\begin{align}
	\eta_k \equiv \eta &\leq \min\left\{ \frac{1}{LB_1+2LD_1B_2\rho^{-1} + (L A_1 + 2LD_1A_2 \rho^{-1})\mu^{-1}},~~ \frac{\mu\epsilon}{LC_1+2LD_1C_2\rho^{-1}}  \right\} \notag\\
	&= \frac{1}{LB_1+2LD_1B_2\rho^{-1}}
	\end{align}
	since $A_1=A_2=C_1=C_2=0$ according to Lemma \ref{lem:lsvrg}.
	Then the number of iterations performed by L-SVRG (Algorithm \ref{alg:lsvrg}) to find an $\epsilon$-solution of problem \eqref{eq:prob} with \eqref{prob:finite} can be bounded by
	\begin{align}
	K &= \max \left\{ B_1+2D_1B_2\rho^{-1} + (L A_1 + 2LD_1A_2 \rho^{-1})\mu^{-1},~~ \frac{C_1+2D_1C_2\rho^{-1}}{\mu \epsilon}\right\} \frac{L}{\mu}\log \frac{2\fgapp}{\epsilon}   \notag\\
	&=(B_1+2D_1B_2\rho^{-1} )\frac{L}{\mu}\log \frac{2\fgap}{\epsilon} 
	\end{align}
	since $A_1=A_2=C_1=C_2=0, \sigma_0^2 = \ns{x^0-w^0}=0$, and $\fgapp:= f(x^0) - f^* + 2^{-1}L\eta^2D_1\rho^{-1} \sigma_0^2 = f(x^0) - f^* =\fgap$.
	
	Now, the remaining thing is to upper bound the term $B_1+2D_1B_2\rho^{-1}$,
	\begin{align}
	B_1+2D_1B_2\rho^{-1} 
	&= 1+ \frac{2L^2}{b}\left(\frac{2\eta^2}{p}-\eta^2\right) \left(\frac{p}{2}+\frac{p^2}{2}-\frac{\eta^2L^2}{b}\right)^{-1} \label{eq:svrgpara-pl}\\
	& \leq 1+ \frac{2L^2}{b}\left(\frac{2\eta^2}{p}\right) \left(\frac{p}{4}\right)^{-1}      \label{eq:pb-pl} \\
	& = 1+ \frac{16L^2\eta^2}{bp^2}  \notag\\
	&\leq 1+ \frac{3}{b^{1/3}p^{2/3}}, \label{eq:plugeta-pl}  
	\end{align}
	where \eqref{eq:svrgpara-pl} follows from $B_1=1,  D_1=\frac{L^2}{b}, B_2=\frac{2\eta^2}{p}-\eta^2, \rho=\frac{p}{2}+\frac{p^2}{2}-\frac{\eta^2L^2}{b}$ in Lemma \ref{lem:lsvrg}, 
	the last inequality \eqref{eq:plugeta-pl} holds by setting 
	$\eta \leq \frac{1}{L(1+\frac{3}{b^{1/3}p^{2/3}})} 
	\leq \frac{1}{LB_1+2LD_1B_2\rho^{-1}}$, 
	and \eqref{eq:pb-pl} is due to the fact $\frac{\eta^2L^2}{b}\leq \frac{p}{4}$.
	
	In sum, let stepsize 
	$$	\eta \leq \frac{1}{L(1+\frac{3}{b^{1/3}p^{2/3}})},$$ 
	then the number of iterations performed by L-SVRG (Algorithm \ref{alg:lsvrg}) to find an $\epsilon$-solution of problem \eqref{eq:prob} with \eqref{prob:finite} can be bounded by
	$$
	K = \left(1+ \frac{3}{b^{1/3}p^{2/3}}\right)\frac{L}{\mu}\log\frac{2\fgap}{\epsilon}.
	$$
	In particular, we have
	\begin{enumerate}
		\item let minibatch size $b=1$ and probability $p=\frac{1}{n}$, then the number of iterations $K = \frac{4n^{2/3}L}{\mu}\log\frac{2\fgap}{\epsilon}$.
		\item let minibatch size $b=n^{2/3}$ and probability $p=\frac{1}{n^{1/3}}$, then the number of iterations $K =\frac{4L}{\mu}\log\frac{2\fgap}{\epsilon}$, but each iteration costs $n^{2/3}$ due to minibatch size $b=n^{2/3}$.
	\end{enumerate}
\end{proofof}

\subsection{SAGA method under PL condition}

\begin{corollary}[SAGA under PL condition]\label{cor:saga-pl}
	Suppose that Assumption \ref{asp:avgsmooth-saga} and \ref{asp:pl} hold.
	Let stepsize $\eta \leq \frac{1}{L(1+3n^{2/3}b^{-1})}$, 
	then the number of iterations performed by SAGA (Algorithm \ref{alg:saga}) for finding an $\epsilon$-solution of nonconvex problem \eqref{eq:prob} with \eqref{prob:finite}, i.e. a point $x^K$ such that $\E[f(x^K)-f^*] \leq \epsilon$, can be bounded by 
	$$
	K = \left(1+ \frac{3n^{2/3}}{b}\right)\frac{L}{\mu}\log\frac{2\fgap}{\epsilon}.
	$$
	In particular, we have
	\begin{enumerate}
		\item let minibatch size $b=1$, then the number of iterations $K = \frac{4n^{2/3}L}{\mu}\log\frac{2\fgap}{\epsilon}$.
		\item let minibatch size $b=n^{2/3}$, then iteration  $K =\frac{4L}{\mu}\log\frac{2\fgap}{\epsilon}$, but each iteration costs $n^{2/3}$ due to minibatch size $b=n^{2/3}$.
	\end{enumerate}
	Note that it recovers the previous result for SAGA under PL condition given by \citep{reddi2016proximal}, but \citet{reddi2016proximal} needed to restart SAGA for $O(\log \frac{1}{\epsilon})$ times to achieve this linear convergence rate.
\end{corollary}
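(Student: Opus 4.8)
The plan is to combine Lemma \ref{lem:saga}, which supplies the parameters of Assumption \ref{asp:boge} for the SAGA estimator, with the constant-stepsize PL theorem, Theorem \ref{thm:main-pl}. First I would invoke Lemma \ref{lem:saga} to record that SAGA satisfies Assumption \ref{asp:boge} with $A_1=A_2=C_1=C_2=0$, $B_1=1$, $D_1=L^2/b$, $B_2=\frac{2\eta^2 n}{b}-\eta^2$, and $\rho=\frac{b}{2n}+\frac{b^2}{2n^2}-\frac{\eta^2L^2}{b}$. Because both $A_1,A_2$ and $C_1,C_2$ vanish, the stepsize restriction of Theorem \ref{thm:main-pl} collapses to $\eta\le \frac{1}{LB_1+2LD_1B_2\rho^{-1}}$ (the $C$-dependent cap becomes vacuous), and the iteration count collapses to $K=(B_1+2D_1B_2\rho^{-1})\frac{L}{\mu}\log\frac{2\fgap}{\epsilon}$. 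Here I would also note that $\sigma_0^2=\frac1n\sum_i\ns{x^0-w_i^0}=0$ under the standard initialization $w_i^0=x^0$, so $\fgapp=f(x^0)-f^*=\fgap$.

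The only real work is then to bound $B_1+2D_1B_2\rho^{-1}$ by $1+3n^{2/3}/b$, exactly paralleling the non-PL computation in the proof of Corollary \ref{cor:saga} but carrying the extra factor of $2$ supplied by the PL theorem. I would first establish the consistency fact $\eta^2L^2/b\le b/(4n)$: with the prescribed $\eta\le \frac{1}{L(1+3n^{2/3}b^{-1})}$ this reduces to $(1+3n^{2/3}/b)^2\ge 4n/b^2$, which holds because $(3n^{2/3}/b)^2=9n^{4/3}/b^2\ge 4n/b^2$ for all $n\ge1$. This inequality yields the denominator bound $\rho\ge \frac{b}{2n}-\frac{\eta^2L^2}{b}\ge \frac{b}{4n}$, hence $\rho^{-1}\le 4n/b$.

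Plugging these in, I would write
\begin{align*}
B_1+2D_1B_2\rho^{-1}
= 1+\frac{2L^2}{b}\Big(\frac{2\eta^2n}{b}-\eta^2\Big)\rho^{-1}
\le 1+\frac{2L^2}{b}\cdot\frac{2\eta^2n}{b}\cdot\frac{4n}{b}
= 1+\frac{16n^2L^2\eta^2}{b^3}
\le 1+\frac{3n^{2/3}}{b},
\end{align*}
where the last step again uses $\eta^2 L^2\le b^2/(9n^{4/3})$ from the stepsize choice (and the crude constant $16/9\le 3$). Substituting back gives $K=(1+\frac{3n^{2/3}}{b})\frac{L}{\mu}\log\frac{2\fgap}{\epsilon}$, from which the two displayed special cases ($b=1$ and $b=n^{2/3}$) follow by direct substitution.

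The main obstacle, such as it is, is bookkeeping rather than conceptual: one must check that the single prescribed stepsize simultaneously (i) satisfies the $\rho$-positivity requirement $\frac{\eta^2L^2}{b}\le \frac{b}{4n}$ used to lower-bound $\rho$, (ii) is tight enough to collapse $\frac{16n^2L^2\eta^2}{b^3}$ into $\frac{3n^{2/3}}{b}$, and (iii) is at least as restrictive as the $\frac{1}{LB_1+2LD_1B_2\rho^{-1}}$ cap demanded by Theorem \ref{thm:main-pl}. All three reduce to elementary inequalities in $n$ and $b$, so no genuinely hard estimate arises; the entire argument is a specialization of the unified PL theorem, directly mirroring the L-SVRG case (Corollary \ref{cor:lsvrg-pl}).
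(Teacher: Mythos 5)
Your proposal is correct and follows essentially the same route as the paper's own proof: invoke Lemma \ref{lem:saga}, specialize Theorem \ref{thm:main-pl} (the vanishing $A$'s and $C$'s collapse the stepsize cap and the iteration bound), note $\sigma_0^2=0$ so $\fgapp=\fgap$, and bound $B_1+2D_1B_2\rho^{-1}\le 1+3n^{2/3}/b$ via $\eta^2L^2/b\le b/(4n)$ and $\rho^{-1}\le 4n/b$. Your explicit verification of the elementary inequalities $(1+3n^{2/3}/b)^2\ge 4n/b^2$ and $16/9\le 3$ is in fact slightly more careful than the paper, which leaves these implicit (and contains a copy-paste slip from the L-SVRG case in its stepsize justification).
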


\begin{proofof}{Corollary \ref{cor:saga-pl}}
	According to Theorem \ref{thm:main-pl}, the stepsize should be chosen as 
	\begin{align}
	\eta_k \equiv \eta &\leq \min\left\{ \frac{1}{LB_1+2LD_1B_2\rho^{-1} + (L A_1 + 2LD_1A_2 \rho^{-1})\mu^{-1}},~~ \frac{\mu\epsilon}{LC_1+2LD_1C_2\rho^{-1}}  \right\} \notag\\
	&= \frac{1}{LB_1+2LD_1B_2\rho^{-1}}
	\end{align}
	since $A_1=A_2=C_1=C_2=0$ according to Lemma \ref{lem:saga}.
	Then the number of iterations performed by SAGA (Algorithm \ref{alg:saga}) to find an $\epsilon$-solution of problem \eqref{eq:prob} with \eqref{prob:finite} can be bounded by
	\begin{align}
	K &= \max \left\{ B_1+2D_1B_2\rho^{-1} + (L A_1 + 2LD_1A_2 \rho^{-1})\mu^{-1},~~ \frac{C_1+2D_1C_2\rho^{-1}}{\mu \epsilon}\right\} \frac{L}{\mu}\log \frac{2\fgapp}{\epsilon}   \notag\\
	&=(B_1+2D_1B_2\rho^{-1} )\frac{L}{\mu}\log \frac{2\fgap}{\epsilon} 
	\end{align}
	since $A_1=A_2=C_1=C_2=0, \sigma_0^2 = \ns{x^0-w^0}=0$, and $\fgapp:= f(x^0) - f^* + 2^{-1}L\eta^2D_1\rho^{-1} \sigma_0^2 = f(x^0) - f^* =\fgap$.
	
	Now, the remaining thing is to upper bound the term $B_1+2D_1B_2\rho^{-1}$,
	\begin{align}
	B_1+2D_1B_2\rho^{-1} 
	&= 1+ \frac{2L^2}{b}\left(\frac{2\eta^2n}{b}-\eta^2\right) \left(\frac{b}{2n}+\frac{b^2}{2n^2}-\frac{\eta^2L^2}{b}\right)^{-1} \label{eq:sagapara-pl}\\
	& \leq 1+ \frac{2L^2}{b}\left(\frac{2\eta^2n}{b}\right) \left(\frac{b}{4n}\right)^{-1}      \label{eq:pb1-pl} \\
	& = 1+ \frac{16n^2L^2\eta^2}{b^3}  \notag\\
	&\leq 1+ \frac{3n^{2/3}}{b}, \label{eq:plugeta1-pl}  
	\end{align}
	where \eqref{eq:sagapara-pl} follows from $B_1=1,  D_1=\frac{L^2}{b},
	B_2=\frac{2\eta^2n}{b}-\eta^2,
	\rho=\frac{b}{2n}+\frac{b^2}{2n^2}-\frac{\eta^2L^2}{b}$ in Lemma \ref{lem:saga}, 
	the last inequality \eqref{eq:plugeta1-pl} holds by setting 
	$\eta \leq \frac{1}{L(1+\frac{3}{b^{1/3}p^{2/3}})} 
	\leq \frac{1}{LB_1+2LD_1B_2\rho^{-1}}$, 
	and \eqref{eq:pb1-pl} is due to the fact $\frac{\eta^2L^2}{b}\leq \frac{b}{4n}$.
	
	In sum, let stepsize 
	$$	\eta \leq \frac{1}{L(1+\frac{3n^{2/3}}{b})},$$ 
	then the number of iterations performed by SAGA (Algorithm \ref{alg:saga}) to find an $\epsilon$-solution of problem \eqref{eq:prob} with \eqref{prob:finite} can be bounded by
	$$
	K = \left(1+ \frac{3n^{2/3}}{b}\right)\frac{L}{\mu}\log\frac{2\fgap}{\epsilon}.
	$$
	In particular, we have
	\begin{enumerate}
		\item let minibatch size $b=1$, then the number of iterations $K = \frac{4n^{2/3}L}{\mu}\log\frac{2\fgap}{\epsilon}$.
		\item let minibatch size $b=n^{2/3}$, then iteration  $K =\frac{4L}{\mu}\log\frac{2\fgap}{\epsilon}$, but each iteration costs $n^{2/3}$ due to minibatch size $b=n^{2/3}$.
	\end{enumerate}
\end{proofof}

\newpage
\section{Better Convergence for Nonconvex Federated Optimization under PL Condition}

Similar to Section \ref{sec:pl}, we prove better convergence rates for the more general nonconvex distributed/federated optimization problem \eqref{eq:prob-fed} with online form \eqref{prob-fed:exp} or \eqref{prob-fed:finite} under the PL condition (i.e., Assumption \ref{asp:pl}).

\begin{equation*}
\min_{x\in \R^d} \bigg\{  f(x) := \frac{1}{m}\sum_{i=1}^m{f_i(x)}  \bigg\}, 
\text{~~where~}  f_i(x) := \E_{\zeta \sim \cD_i}[f_i(x,\zeta)],  
\text{~~or~~}  f_i(x) := \frac{1}{n}\sum_{j=1}^n{f_{i,j}(x)}.
\end{equation*}
Recall that we allow that different machine/worker $i\in [m]$ can have different data distribution $\cD_i$, i.e., non-IID data (heterogeneous data) setting.

In section \ref{sec:federated-app}, we have already proved that several (new) methods belonging to the proposed general DC framework (Algorithm \ref{alg:dc}) and DIANA framework (Algorithm \ref{alg:diana}) for solving this general distributed/federated problem also satisfy the unified Assumption \ref{asp:boge}
and thus can also be captured by our unified analysis.
In the following, we plug their corresponding parameters (i.e., specific values for $A_1, A_2, B_1, B_2, C_1,C_2,D_1,\rho$) into our unified Theorem \ref{thm:main-pl-dec} or \ref{thm:main-pl} under PL condition to obtain the detailed better convergence rates for these methods. 

Note that the proposed general DC framework (Algorithm \ref{alg:dc}) and DIANA framework (Algorithm \ref{alg:diana}) differing in direct gradient compression or compression of gradient differences.
We would like to point out that the direct gradient compression, i.e., DC framework (Algorithm \ref{alg:dc}) has a main issue.
Consider any stationary point $\hx$ such that $\nabla f(\hx) =\sum_{i=1}^{m} \nabla f_i(\hx) =0$, the issue is that the aggregated compressed gradient (even if the local stochastic gradient uses full gradient, i.e., $\widetilde{g}_i^k=\nabla f_i(x^k)$) does not converge to $0$, i.e., $g(\hx) = \frac{1}{m}\sum_{i=1}^m \cC_i(\nabla f_i(\hx)) \nRightarrow 0$.
However, the proposed DIANA Framework to compress the gradient differences (see Line \ref{line:comgrad-diana} of Algorithm \ref{alg:diana}) indeed can address this issue.
They are also reflected by our theoretical results, i.e., comparing the convergence results in the following Section \ref{sec:result-dc-pl} and Section \ref{sec:result-diana-pl}.

\subsection{Better convergence results of DC-type methods under PL condition}
\label{sec:result-dc-pl}
In this section, we provide the detailed convergence results under PL condition for some specific methods (i.e., DC-GD/SGD/LSVRG/SAGA) belonging to our DC framework (Algorithm \ref{alg:dc}) by using our unified Theorem \ref{thm:main-pl-dec} under PL condition.

\subsubsection{DC-GD method under PL condition}
\begin{corollary}[DC-GD under PL condition]\label{cor:dc-gd-pl}
	Suppose that Assumption \ref{asp:lsmooth-diana} and \ref{asp:pl} hold. 
	Let stepsize 
	$$\eta_{k} = \begin{cases}
	\eta   & \text{if~~}  k\leq \frac{K}{2}\\
	\frac{2\eta}{2+(k-\frac{K}{2})\mu\eta} &\text{if~~}  k>\frac{K}{2}
	\end{cases}, 	\text{~~~~where~~}
	\eta \leq \frac{1}{L+ L(1+\omega)Am^{-1}\mu^{-1}},
	$$
	then the number of iterations performed by DC-GD (Algorithm \ref{alg:dc-gd}) to find an $\epsilon$-solution of nonconvex federated problem \eqref{eq:prob-fed}, i.e. a point $x^K$ such that $\E[f(x^K)-f^*] \leq \epsilon$, can be bounded by
	\begin{align}
	K = \max \left\{\frac{2L( 1+ (1+\omega)Am^{-1}\mu^{-1})}{\mu}\log \frac{2\fgap}{\epsilon},~~ \frac{10(1+\omega)LC}{m\mu^2 \epsilon}\right\},
	\end{align}
	where 
	$A:=\max_i (L_i-L_i/(1+\omega))$,
	$C:= 2A\Delta_f^*$,
	and
	$\Delta_f^*:=f^*-\frac{1}{m}\summ f_i^*$.
\end{corollary}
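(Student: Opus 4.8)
The plan is to treat this as a pure specialization corollary, exactly parallel to the single-node PL corollaries (e.g. Corollary \ref{cor:sgd-pl}): the only real content is already isolated in Lemma \ref{lem:dc-gd}, which establishes that the aggregated DC-GD estimator $g^k$ fits the unified Assumption \ref{asp:boge}. That lemma (through Theorem \ref{thm:dc}) hands us the explicit parameters $B_1=1$, $A_1=\frac{(1+\omega)A}{m}$, $C_1=\frac{(1+\omega)C}{m}$, $D_1=\frac{1+\omega}{m}$, together with $\sigma_k^2\equiv 0$, $\rho=1$, and $A_2=B_2=C_2=0$, where $A:=\max_i(L_i-L_i/(1+\omega))$, $C:=2A\Delta_f^*$, and $\Delta_f^*:=f^*-\frac{1}{m}\summ f_i^*$. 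So no fresh estimator analysis is required; the entire task is to feed these eight numbers into Theorem \ref{thm:main-pl-dec}.

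The key observation that makes the substitution short is that the whole variance-reduction apparatus collapses for DC-GD: since $A_2=B_2=C_2=0$ and $\rho=1$, every product of the form $D_1A_2$, $D_1B_2$, $D_1C_2$ and every $\rho^{-1}$ factor drops out. First I would specialize the stepsize condition: the denominator $LB_1+2LD_1B_2\rho^{-1}+(LA_1+2LD_1A_2\rho^{-1})\mu^{-1}$ reduces to $L+LA_1\mu^{-1}=L+L(1+\omega)Am^{-1}\mu^{-1}$, recovering the stated bound on $\eta$. Next I would specialize the complexity bound \eqref{eq:main-pl-k}: the first term reduces to $\frac{2L}{\mu}\bigl(1+(1+\omega)Am^{-1}\mu^{-1}\bigr)\log\frac{2\fgapp}{\epsilon}$ and, because only $C_1$ survives among the additive constants, the second term reduces to $\frac{10(C_1)\kappa}{\mu\epsilon}=\frac{10(1+\omega)LC}{m\mu^2\epsilon}$, giving precisely the claimed maximum. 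Finally, since Lemma \ref{lem:dc-gd} gives $\sigma_k^2\equiv 0$ (DC-GD carries no internal variance sequence), the initial shift vanishes and $\fgapp=f(x^0)-f^*+L\eta^2D_1\rho^{-1}\sigma_0^2=f(x^0)-f^*=\fgap$, so the logarithmic factor is in terms of $\fgap$ as stated.

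I do not expect a genuine analytic obstacle here — the argument is a direct algebraic reduction, and the only place demanding care is not silently discarding the $C_1$ term. The point worth emphasizing, rather than any technical difficulty, is exactly that $C_1=\frac{(1+\omega)C}{m}$ is nonzero whenever the workers are heterogeneous, i.e. whenever $\Delta_f^*=f^*-\frac{1}{m}\summ f_i^*>0$. It is this surviving constant that activates the sublinear $O\!\left(\frac{1}{\epsilon}\right)$ branch of the maximum and blocks the pure linear rate enjoyed by vanilla GD in Corollary \ref{cor:gd-pl}. This is the quantitative trace of the non-vanishing aggregated compressed gradient at stationary points noted for the DC framework, and it is precisely the term that the DIANA counterpart is designed to eliminate; I would flag this comparison so the reader sees why the DC-GD and DIANA-GD PL rates differ.
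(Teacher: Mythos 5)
Your proposal is correct and follows exactly the paper's own route: invoke Lemma \ref{lem:dc-gd} (obtained through Theorem \ref{thm:dc}) for the parameters $A_1=\frac{(1+\omega)A}{m}$, $B_1=1$, $C_1=\frac{(1+\omega)C}{m}$, $D_1=\frac{1+\omega}{m}$, $\sigma_k^2\equiv 0$, $\rho=1$, $A_2=B_2=C_2=0$, and plug them into Theorem \ref{thm:main-pl-dec}; your explicit algebra for the stepsize bound, the two branches of the maximum, and the identity $\fgapp=\fgap$ (from $\sigma_0^2=0$) spells out precisely what the paper's one-line proof leaves implicit. Your closing observation that the surviving heterogeneity constant $C_1=\frac{(1+\omega)C}{m}$ is what triggers the sublinear $O(1/\epsilon)$ branch, in contrast to DIANA-GD, is a correct bonus remark but not needed for the proof itself.
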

\begin{proofof}{Corollary \ref{cor:dc-gd-pl}}
	According to Lemma \ref{lem:dc-gd}, we have proved that 
	$g^k$ (see Line \ref{line:gk-dc-gd} of DC-GD Algorithm \ref{alg:dc-gd})  satisfies the unified Assumption \ref{asp:boge} with
	\begin{align*}
	&A_1 =\frac{(1+\omega)A}{m}, \qquad
	B_1 =1, \qquad 
	C_1  =  \frac{(1+\omega)C}{m}, \\ 
	&D_1 =\frac{1+\omega}{m}, \qquad
	\sk \equiv 0, \qquad
	\rho =1, \\
	&A_2 =0, \qquad
	B_2 =0, \qquad 
	C_2  = 0,
	\end{align*}
	where 
	$A :=\max_i (L_i-L_i/(1+\omega))$,
	$C:= 2A\Delta_f^*$,
	and
	$\Delta_f^*:=f^*-\frac{1}{m}\summ f_i^*$.
	Then this corollary is proved by plugging these specific values for $A_1, A_2, B_1, B_2, C_1,C_2,D_1,\rho$ into our unified Theorem \ref{thm:main-pl-dec}. 
\end{proofof}

\subsubsection{DC-SGD method under PL condition}
\begin{corollary}[DC-SGD under PL condition]\label{cor:dc-sgd-pl}
	Suppose that Assumption \ref{asp:lsmooth-diana} and \ref{asp:pl} hold, and the local gradient estimator $\widetilde{g}_i^k$ in Algorithm \ref{alg:dc-sgd} satisfies Assumption \ref{asp:es}. 
	Let stepsize 
	
	$$\eta_{k} = \begin{cases}
	\eta   & \text{if~~}  k\leq \frac{K}{2}\\
	\frac{2\eta}{2+(k-\frac{K}{2})\mu\eta} &\text{if~~}  k>\frac{K}{2}
	\end{cases}, 	\text{~~~~where~~}
	\eta \leq \frac{1}{L+ L(1+\omega)Am^{-1}\mu^{-1}},
	$$
	then the number of iterations performed by DC-SGD (Algorithm \ref{alg:dc-sgd}) to find an $\epsilon$-solution of nonconvex federated problem \eqref{eq:prob-fed} with \eqref{prob-fed:exp} or \eqref{prob-fed:finite}, i.e. a point $x^K$ such that $\E[f(x^K)-f^*] \leq \epsilon$, can be bounded by
	\begin{align}
	K = \max \left\{\left(1+ \frac{(1+\omega)A}{m\mu}\right)\frac{2L}{\mu}\log \frac{2\fgap}{\epsilon},~~ \frac{10(1+\omega)LC}{m\mu^2 \epsilon}\right\},
	\end{align}
	where 
	$A:=\max_i  (A_{1,i}+B_{1,i}L_i-L_i/(1+\omega))$,
	$C := \frac{1}{m} \summ C_{1,i} + 2A\Delta_f^*$
	and
	$\Delta_f^*:=f^*-\frac{1}{m}\summ f_i^*$.
\end{corollary}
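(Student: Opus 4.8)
The plan is to derive this corollary mechanically from the unified Theorem~\ref{thm:main-pl-dec}, exactly as Corollary~\ref{cor:dc-gd-pl} was obtained for DC-GD. First I would invoke Lemma~\ref{lem:dc-sgd}: since each local estimator $\gi$ obeys the expected-smoothness Assumption~\ref{asp:es}, the lemma certifies that the aggregated DC-SGD estimator $g^k$ (Line~\ref{line:gk-dc-sgd} of Algorithm~\ref{alg:dc-sgd}) satisfies the unified Assumption~\ref{asp:boge} with $A_1 = \tfrac{(1+\omega)A}{m}$, $B_1 = 1$, $C_1 = \tfrac{(1+\omega)C}{m}$, $D_1 = \tfrac{1+\omega}{m}$, $\sk \equiv 0$, $\rho = 1$, and $A_2 = B_2 = C_2 = 0$, where $A := \max_i (A_{1,i} + B_{1,i}L_i - L_i/(1+\omega))$, $C := \tfrac{1}{m}\summ C_{1,i} + 2A\Delta_f^*$, and $\Delta_f^* := f^* - \tfrac{1}{m}\summ f_i^*$. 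Before applying the theorem I would also check its remaining hypothesis: Assumption~\ref{asp:lsmooth-diana} forces $f = \tfrac{1}{m}\summ f_i$ to be $L$-smooth, because $\n{\nabla f(x)-\nabla f(y)} \le \tfrac{1}{m}\summ L_i\n{x-y}$ and $\tfrac{1}{m}\summ L_i \le \sqrt{\tfrac{1}{m}\summ L_i^2} = L$, so Assumption~\ref{asp:lsmooth} holds with the stated $L$; together with Assumption~\ref{asp:pl} this puts us squarely in the setting of Theorem~\ref{thm:main-pl-dec}.

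Second, I would substitute these parameters into the stepsize and iteration-count formulas of Theorem~\ref{thm:main-pl-dec}. The crucial simplification is that $A_2 = B_2 = C_2 = 0$ and $\rho = 1$ render the whole second recurrence \eqref{eq:boge2} inactive, so every $B_2$-, $A_2$-, and $C_2$-weighted contribution drops out. The admissible stepsize $\eta \le (LB_1 + 2LD_1B_2\rho^{-1} + (LA_1 + 2LD_1A_2\rho^{-1})\mu^{-1})^{-1}$ collapses to $\eta \le (L + L(1+\omega)Am^{-1}\mu^{-1})^{-1}$, matching the claimed stepsize. Likewise the first entry of the maximum in \eqref{eq:k-main-pl-dec} reduces to a multiple of $(1 + (1+\omega)Am^{-1}\mu^{-1})\tfrac{L}{\mu}\log\tfrac{2\fgapp}{\epsilon}$, while the second entry, using $C_1 = \tfrac{(1+\omega)C}{m}$ and $C_2 = 0$, becomes $\tfrac{10(1+\omega)LC}{m\mu^2\epsilon}$, giving the stated $K$.

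Finally I would record that the extra term in $\fgapp$ vanishes: since $\sk \equiv 0$ implies $\sigma_0^2 = 0$, the definition $\fgapp := f(x^0)-f^* + L\eta^2 D_1\rho^{-1}\sigma_0^2$ reduces to $\fgapp = f(x^0)-f^* = \fgap$, so $\fgap$ replaces $\fgapp$ throughout. I do not expect a genuine obstacle here, as the argument is a direct specialization of an already-proven theorem; the only steps demanding any care are the verification that Assumption~\ref{asp:lsmooth-diana} supplies the scalar $L$-smoothness consumed by Theorem~\ref{thm:main-pl-dec}, and the bookkeeping that confirms the variance-reduction ($D_1$-coupled) terms disappear because DC-SGD, like plain SGD, is not variance reduced.
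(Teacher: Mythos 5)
Your proposal is correct and follows essentially the same route as the paper's own proof: invoke Lemma~\ref{lem:dc-sgd} to certify that the aggregated DC-SGD estimator satisfies the unified Assumption~\ref{asp:boge} with the stated parameters ($B_1=1$, $A_1=\frac{(1+\omega)A}{m}$, $C_1=\frac{(1+\omega)C}{m}$, $D_1=\frac{1+\omega}{m}$, $\rho=1$, $A_2=B_2=C_2=0$, $\sigma_k^2\equiv 0$), then substitute these into Theorem~\ref{thm:main-pl-dec}, using $\sigma_0^2=0$ to get $\fgapp=\fgap$. Your extra verification that Assumption~\ref{asp:lsmooth-diana} supplies the scalar $L$-smoothness of $f$ required by the theorem is a detail the paper leaves implicit, but it does not change the argument.
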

\begin{proofof}{Corollary \ref{cor:dc-sgd-pl}}
	According to Lemma \ref{lem:dc-sgd}, we have proved that 
	$g^k$ (see Line \ref{line:gk-dc-sgd} of DC-SGD Algorithm \ref{alg:dc-sgd})  satisfies the unified Assumption \ref{asp:boge} with
	\begin{align*}
	&A_1 =\frac{(1+\omega)A}{m}, \qquad
	B_1 =1, \qquad 
	C_1  =  \frac{(1+\omega)C}{m}, \\ 
	&D_1 =\frac{1+\omega}{m}, \qquad
	\sk \equiv 0, \qquad
	\rho =1, \\
	&A_2 =0, \qquad
	B_2 =0, \qquad 
	C_2  = 0,
	\end{align*}
	where 
	$A:=\max_i  (A_{1,i}+B_{1,i}L_i-L_i/(1+\omega))$,
	$C := \frac{1}{m} \summ C_{1,i} + 2A\Delta_f^*$
	and
	$\Delta_f^*:=f^*-\frac{1}{m}\summ f_i^*$.
	Then this corollary is proved by plugging these specific values for $A_1, A_2, B_1, B_2, C_1,C_2,D_1,\rho$ into our unified Theorem \ref{thm:main-pl-dec}. 
\end{proofof}	

\subsubsection{DC-LSVRG method under PL condition}
\begin{corollary}[DC-LSVRG under PL condition]\label{cor:dc-lsvrg-pl}
	Suppose that Assumption \ref{asp:lsmooth-diana}, \ref{asp:avgsmooth-fed} and \ref{asp:pl} hold.
	Let stepsize 
	$$\eta_{k} = \begin{cases}
	\eta   & \text{if~~}  k\leq \frac{K}{2}\\
	\frac{2\eta}{2+(k-\frac{K}{2})\mu\eta} &\text{if~~}  k>\frac{K}{2}
	\end{cases}, $$
	where
	$$\eta \leq \frac{1}{L+ L(1+\omega)(1+2\tau\rho^{-1})Am^{-1}\mu^{-1} + 2L(1+\omega)Bm^{-1}b^{-1}\rho^{-1}},
	$$
	then the number of iterations performed by DC-LSVRG (Algorithm \ref{alg:dc-lsvrg}) to find an $\epsilon$-solution of nonconvex federated problem \eqref{eq:prob-fed} with \eqref{prob-fed:finite}, i.e. a point $x^K$ such that $\E[f(x^K)-f^*] \leq \epsilon$, can be bounded by
	\begin{align}
	K = \max \left\{\left( 1+ \frac{1+\omega}{m}\left(\frac{(1+2\tau\rho^{-1})A}{\mu} +\frac{2B}{b\rho}\right)\right)\frac{2L}{\mu}\log \frac{2\fgap}{\epsilon},~ \frac{10(1+\omega)(1+2\tau\rho^{-1})LC}{m\mu^2 \epsilon}\right\},
	\end{align}
	where 
	$A:=\max_i (L_i-L_i/(1+\omega))$,
	$B: = \bar{L}^2 ((1-p)\eta^2\gamma^{-1} + \eta^2)$,
	$C:= 2A\Delta_f^*$,
	$\Delta_f^*:=f^*-\frac{1}{m}\summ f_i^*$,
	$\rho :=p+p\gamma-\gamma-\tau$,
	$\tau:= \frac{(1+\omega)\bar{L}^2 \eta^2}{mb}$
	and $\forall\gamma>0$.
\end{corollary}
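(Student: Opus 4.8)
The plan is to follow the same two-step recipe used for every corollary under the PL condition in this section (compare Corollaries \ref{cor:sgd-pl} and \ref{cor:lsvrg-pl}): first invoke Lemma \ref{lem:dc-lsvrg} to read off the parameters with which the aggregated estimator $g^k$ (Line \ref{line:gk-dc-lsvrg} of Algorithm \ref{alg:dc-lsvrg}) satisfies the unified Assumption \ref{asp:boge}, and then substitute those parameters verbatim into the decreasing-stepsize Theorem \ref{thm:main-pl-dec}. No new inequality is proved; the content is entirely in the substitution and in collecting terms.

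First I would recall from Lemma \ref{lem:dc-lsvrg} that $g^k$ satisfies Assumption \ref{asp:boge} with $B_1 = 1$, $A_1 = \frac{(1+\omega)A}{m}$, $C_1 = \frac{(1+\omega)C}{m}$, $D_1 = \frac{1+\omega}{m}$, $A_2 = \tau A$, $B_2 = \frac{B}{b}$, $C_2 = \tau C$ and $\rho = p + p\gamma - \gamma - \tau$, where $B = \bar{L}^2((1-p)\eta^2\gamma^{-1} + \eta^2)$ and $\tau = \frac{(1+\omega)\bar{L}^2\eta^2}{mb}$. The decisive structural feature is that the variance sequence is $\sigma_k^2 = \frac{\bar{L}^2}{b}\ns{x^k - w^k}$, so the initialization $x^0 = w^0$ gives $\sigma_0^2 = 0$ and hence $\fgapp = f(x^0) - f^* + L\eta^2 D_1\rho^{-1}\sigma_0^2 = \fgap$; this is exactly what turns $\fgapp$ in \eqref{eq:k-main-pl-dec} into $\fgap$. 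Unlike the single-node L-SVRG of Corollary \ref{cor:lsvrg-pl}, here $A_1, A_2, C_1, C_2$ are all nonzero (they encode the compression heterogeneity through $A = \max_i(L_i - L_i/(1+\omega))$ and $C = 2A\Delta_f^*$), so both branches of the maximum in \eqref{eq:k-main-pl-dec} survive.

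Then I would carry out the bookkeeping. Factoring $L$ out of the stepsize denominator of Theorem \ref{thm:main-pl-dec} and grouping the two $A$-contributions and the two $C$-contributions,
\begin{align*}
A_1\mu^{-1} + 2D_1 A_2\rho^{-1}\mu^{-1} &= \frac{(1+\omega)(1 + 2\tau\rho^{-1})A}{m\mu}, & C_1 + 2D_1 C_2\rho^{-1} &= \frac{(1+\omega)(1 + 2\tau\rho^{-1})C}{m},
\end{align*}
together with $2D_1 B_2\rho^{-1} = \frac{2(1+\omega)B}{mb\rho}$, reproduces the advertised stepsize bound. Multiplying the collected factor $B_1 + 2D_1B_2\rho^{-1} + (A_1 + 2D_1A_2\rho^{-1})\mu^{-1}$ by $2\kappa = 2L/\mu$, and feeding the collected quantity $C_1 + 2D_1C_2\rho^{-1}$ into the $\tfrac{10(\cdot)\kappa}{\mu\epsilon}$ term, gives the two entries of the stated maximum for $K$.

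The only point demanding attention, rather than a genuine obstacle, is that $\rho$, $\tau$ and $B$ all depend implicitly on $\eta$ (and on $\alpha, \beta, \gamma$), so the displayed stepsize restriction is implicit in $\eta$. This causes no circularity: Theorem \ref{thm:main-pl-dec} is valid for any $\eta$ meeting its condition, and we merely inherit the same $(\rho, \tau, B)$ convention already fixed in Lemma \ref{lem:dc-lsvrg} and Corollary \ref{cor:dc-lsvrg}. Everything else is routine simplification.
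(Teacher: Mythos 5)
Your proposal is correct and follows essentially the same route as the paper's own proof: invoke Lemma \ref{lem:dc-lsvrg} to obtain the parameters $(A_1,B_1,C_1,D_1,A_2,B_2,C_2,\rho)$ for the aggregated estimator $g^k$, then substitute them into Theorem \ref{thm:main-pl-dec} and collect terms. In fact you make explicit two details the paper's proof leaves implicit --- the grouping identities such as $A_1\mu^{-1}+2D_1A_2\rho^{-1}\mu^{-1}=\frac{(1+\omega)(1+2\tau\rho^{-1})A}{m\mu}$ and the observation that $x^0=w^0$ forces $\sigma_0^2=0$ and hence $\fgapp=\fgap$ --- so no gap remains.
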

\begin{proofof}{Corollary \ref{cor:dc-lsvrg-pl}}
	According to Lemma \ref{lem:dc-lsvrg}, we have proved that 
	$g^k$ (see Line \ref{line:gk-dc-lsvrg} of DC-LSVRG Algorithm \ref{alg:dc-lsvrg})  satisfies the unified Assumption \ref{asp:boge} with
	\begin{align*}
	&A_1 =\frac{(1+\omega)A}{m}, \qquad
	B_1 =1, \qquad 
	C_1  =  \frac{(1+\omega)C}{m}, \\ 
	&D_1 =\frac{1+\omega}{m}, \qquad
	\sk =\frac{\bar{L}^2}{b}\ns{\xk-\wk}, \qquad
	\rho =p+p\gamma-\gamma-\tau, \\
	&A_2 =\tau A, \qquad
	B_2 = \frac{\bar{L}^2 ((1-p)\eta^2\gamma^{-1} + \eta^2)}{b}, \qquad 
	C_2  = \tau C,
	\end{align*}
	where 
	$A:=\max_i (L_i-L_i/(1+\omega))$,
	$C:= 2A\Delta_f^*$,
	$\Delta_f^*:=f^*-\frac{1}{m}\summ f_i^*$,
	$\tau:= \frac{(1+\omega)\bar{L}^2 \eta^2}{mb}$
	and $\forall\gamma>0$.
	Then this corollary is proved by plugging these specific values for $A_1, A_2, B_1, B_2, C_1,C_2,D_1,\rho$ into our unified Theorem \ref{thm:main-pl-dec}. 
\end{proofof}	

\subsubsection{DC-SAGA method under PL condition}
\begin{corollary}[DC-SAGA under PL condition]\label{cor:dc-saga-pl}
	Suppose that Assumption \ref{asp:lsmooth-diana}, \ref{asp:avgsmooth-saga-fed} and \ref{asp:pl} hold.
	Let stepsize 
	$$\eta_{k} = \begin{cases}
	\eta   & \text{if~~}  k\leq \frac{K}{2}\\
	\frac{2\eta}{2+(k-\frac{K}{2})\mu\eta} &\text{if~~}  k>\frac{K}{2}
	\end{cases},$$ 	
	where
	$$\eta \leq \frac{1}{L+ L(1+\omega)(1+2\tau\rho^{-1})Am^{-1}\mu^{-1} + 2L(1+\omega)Bm^{-1}b^{-1}\rho^{-1}},
	$$
	then the number of iterations performed by DC-SAGA (Algorithm \ref{alg:dc-saga}) to find an $\epsilon$-solution of nonconvex federated problem \eqref{eq:prob-fed} with \eqref{prob-fed:finite}, i.e. a point $x^K$ such that $\E[f(x^K)-f^*] \leq \epsilon$, can be bounded by
	\begin{align}
	K = \max \left\{\left( 1+ \frac{1+\omega}{m}\left(\frac{(1+2\tau\rho^{-1})A}{\mu} +\frac{2B}{b\rho}\right)\right)\frac{2L}{\mu}\log \frac{2\fgap}{\epsilon},~ \frac{10(1+\omega)(1+2\tau\rho^{-1})LC}{m\mu^2 \epsilon}\right\},
	\end{align}
	where 
	$A:=\max_i (L_i-L_i/(1+\omega))$,
	$B: = \bar{L}^2 ((1-\frac{b}{n})\eta^2\gamma^{-1} + \eta^2)$,
	$C:= 2A\Delta_f^*$,
	$\Delta_f^*:=f^*-\frac{1}{m}\summ f_i^*$,
	$\rho :=\frac{b}{n}+\frac{b}{n}\gamma-\gamma-\tau$,
	$\tau:= \frac{(1+\omega)\bar{L}^2 \eta^2}{mb}$
	and $\forall\gamma>0$.
\end{corollary}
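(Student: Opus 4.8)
The plan is to proceed exactly as in the preceding DC-type corollaries under the PL condition (Corollaries \ref{cor:dc-gd-pl}--\ref{cor:dc-lsvrg-pl}): invoke the parameter identification already carried out for DC-SAGA, and then substitute those parameters into our unified decreasing-stepsize theorem. Concretely, Lemma \ref{lem:dc-saga} has already shown that the aggregated estimator $g^k$ of Algorithm \ref{alg:dc-saga} satisfies Assumption \ref{asp:boge} with $B_1 = 1$, $A_1 = \frac{(1+\omega)A}{m}$, $C_1 = \frac{(1+\omega)C}{m}$, $D_1 = \frac{1+\omega}{m}$, $A_2 = \tau A$, $B_2 = \frac{B}{b}$, $C_2 = \tau C$, and $\rho = \frac{b}{n} + \frac{b}{n}\gamma - \gamma - \tau$, where $A, B, C, \tau$ are as defined in the statement. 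Since all of these are in hand, nothing new about the estimator itself needs to be proved; the work is a plug-in.

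The key steps are then purely substitutional. First I would insert the above values into the stepsize constraint $\eta \le \big(LB_1 + 2LD_1B_2\rho^{-1} + (LA_1 + 2LD_1A_2\rho^{-1})\mu^{-1}\big)^{-1}$ of Theorem \ref{thm:main-pl-dec}; collecting the $A$-terms into the common factor $(1 + 2\tau\rho^{-1})$ turns the denominator into $L + L(1+\omega)(1+2\tau\rho^{-1})Am^{-1}\mu^{-1} + 2L(1+\omega)Bm^{-1}b^{-1}\rho^{-1}$, which is exactly the stepsize bound claimed. Second, I would substitute the same values into the iteration count \eqref{eq:k-main-pl-dec}. The $C$-dependent term $\frac{10(C_1 + 2D_1C_2\rho^{-1})\kappa}{\mu\epsilon}$ collapses to $\frac{10(1+\omega)(1+2\tau\rho^{-1})LC}{m\mu^2\epsilon}$ after pulling out $(1 + 2\tau\rho^{-1})$, and the logarithmic term reduces to the stated expression via the same factorization applied to $B_1 + 2D_1B_2\rho^{-1} + (LA_1 + 2LD_1A_2\rho^{-1})\mu^{-1}$. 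Finally, because the SAGA reference points $w_{i,j}^0$ are initialized at $x^0$, we have $\sigma_0^2 = \frac{1}{mn}\summ \sum_{j=1}^{n}\frac{\bar{L}^2}{b}\ns{x^0 - w_{i,j}^0} = 0$, so that $\fgapp = f(x^0) - f^* = \fgap$, which is what appears inside the logarithm.

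The only mild obstacle, compared with the DC-GD case where $A_2 = C_2 = 0$, is that here both $A_2 = \tau A$ and $C_2 = \tau C$ are nonzero; consequently the $\rho^{-1}$-weighted contributions of $A_2$ and $C_2$ must be tracked and gathered into the single factor $(1 + 2\tau\rho^{-1})$ in both the stepsize and the iteration bound. This is routine algebra identical to that used in Corollary \ref{cor:dc-lsvrg-pl}, the only difference being that $B$ and $\rho$ now carry the SAGA-specific quantities $(1 - \tfrac{b}{n})$ and $\tfrac{b}{n}$ in place of the L-SVRG quantities $(1-p)$ and $p$; no new analytic estimate is required, so the result follows directly from Lemma \ref{lem:dc-saga} together with Theorem \ref{thm:main-pl-dec}.
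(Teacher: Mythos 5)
Your proposal is correct and follows exactly the paper's route: invoke the parameter identification of Lemma \ref{lem:dc-saga} (namely $B_1=1$, $A_1=\frac{(1+\omega)A}{m}$, $C_1=\frac{(1+\omega)C}{m}$, $D_1=\frac{1+\omega}{m}$, $A_2=\tau A$, $B_2=\frac{B}{b}$, $C_2=\tau C$, $\rho=\frac{b}{n}+\frac{b}{n}\gamma-\gamma-\tau$) and plug these into Theorem \ref{thm:main-pl-dec}, collecting the $A_2$- and $C_2$-contributions into the factor $(1+2\tau\rho^{-1})$ and using $\sigma_0^2=0$ to get $\fgapp=\fgap$. Your write-up is in fact slightly more explicit than the paper's one-line plug-in proof, since you carry out the factorization and the $\fgapp=\fgap$ reduction that the paper leaves implicit.
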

\begin{proofof}{Corollary \ref{cor:dc-saga-pl}}
	According to Lemma \ref{lem:dc-saga}, we have proved that 
	$g^k$ (see Line \ref{line:gk-dc-saga} of DC-SAGA Algorithm \ref{alg:dc-saga})  satisfies the unified Assumption \ref{asp:boge} with
	\begin{align*}
	&A_1 =\frac{(1+\omega)A}{m}, \qquad
	B_1 =1, \qquad 
	C_1  =  \frac{(1+\omega)C}{m}, \\ 
	&D_1 =\frac{1+\omega}{m}, \qquad
	\sk =\frac{1}{mn}\summ \sum_{j=1}^{n}\frac{\bar{L}^2}{b}\ns{\xk-w_{i,j}^k}, \qquad
	\rho =\frac{b}{n}+\frac{b}{n}\gamma-\gamma-\tau, \\
	&A_2 =\tau A, \qquad
	B_2 = \frac{\bar{L}^2 ((1-\frac{b}{n})\eta^2\gamma^{-1} + \eta^2)}{b}, \qquad 
	C_2  = \tau C,
	\end{align*}
	where 
	$A:=\max_i (L_i-L_i/(1+\omega))$,
	$C:= 2A\Delta_f^*$,
	$\Delta_f^*:=f^*-\frac{1}{m}\summ f_i^*$,
	$\tau:= \frac{(1+\omega)\bar{L}^2 \eta^2}{mb}$
	and $\forall\gamma>0$.
	Then this corollary is proved by plugging these specific values for $A_1, A_2, B_1, B_2, C_1,C_2,D_1,\rho$ into our unified Theorem \ref{thm:main-pl-dec}. 
\end{proofof}

\subsection{Better convergence results of DIANA-type methods under PL Condition}
\label{sec:result-diana-pl}
In this section, we provide the detailed convergence results under PL condition for some specific methods (i.e., DIANA-GD/SGD/LSVRG/SAGA) belonging to our DIANA framework (Algorithm \ref{alg:diana}) by using our unified Theorem \ref{thm:main-pl-dec} or \ref{thm:main-pl} under PL condition.

\subsubsection{DIANA-GD method under PL condition}
\begin{corollary}[DIANA-GD under PL condition]\label{cor:diana-gd-pl}
	Suppose that Assumption \ref{asp:lsmooth-diana} and \ref{asp:pl} hold. 
	Let stepsize
	$$
	\eta \leq  \frac{1}{L+2L(1+\omega)Bm^{-1}\rho^{-1}},
	$$
	then the number of iterations performed by DIANA-GD (Algorithm \ref{alg:diana-gd}) to find an $\epsilon$-solution of nonconvex federated problem \eqref{eq:prob-fed}, i.e. a point $x^K$ such that $\E[f(x^K)-f^*] \leq \epsilon$, can be bounded by
	\begin{align}
	K = \left( 1+ \frac{2(1+\omega)B}{m\rho}\right)\frac{L}{\mu}\log \frac{2\fgap}{\epsilon},
	\end{align}
	where $B:=\frac{\omega(1+\beta)L^2\eta^2}{1+\omega}$, 
	$\rho:=\min\{1-\tau,~ 2\alpha -(1-\alpha)\beta^{-1} -\alpha^2-\tau \}$,
	$\tau:=\alpha^2 \omega + \frac{(1+\omega)B}{m}$,
	and $\forall \beta>0$.
\end{corollary}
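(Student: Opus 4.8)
The plan is to follow the same two-step recipe used for every specialized corollary in this paper: first invoke the structural lemma that identifies the DIANA-GD estimator's parameters, then substitute them into the generic constant-stepsize PL theorem and simplify. First I would recall Lemma \ref{lem:diana-gd}, which already establishes that the aggregated estimator $g^k$ produced by Algorithm \ref{alg:diana-gd} satisfies the unified Assumption \ref{asp:boge} with $A_1=A_2=C_1=C_2=0$, $B_1=1$, $D_1=\frac{1+\omega}{m}$, $B_2=B$, and $\rho=\min\{1-\tau,\ 2\alpha-(1-\alpha)\beta^{-1}-\alpha^2-\tau\}$, where $B:=\frac{\omega(1+\beta)L^2\eta^2}{1+\omega}$ and $\tau:=\alpha^2\omega+\frac{(1+\omega)B}{m}$. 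The qualitatively crucial feature is that the noise-floor constants vanish, $C_1=C_2=0$, reflecting that DIANA compresses gradient \emph{differences} and is therefore fully variance reduced; this is exactly what will convert the generic sublinear guarantee into a linear one.

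Next I would plug these values into Theorem \ref{thm:main-pl} (the constant-stepsize PL theorem), which is the correct one to use here since the corollary specifies a fixed $\eta$. Because $A_1=A_2=0$, the term $(LA_1+2LD_1A_2\rho^{-1})\mu^{-1}$ drops out of both the stepsize bound and the complexity count, so the admissible stepsize collapses to $\eta\le(LB_1+2LD_1B_2\rho^{-1})^{-1}=(L+2L(1+\omega)Bm^{-1}\rho^{-1})^{-1}$, precisely as claimed. Likewise, since $C_1=C_2=0$, the second argument of the $\max$ in the theorem's bound (the $O(\kappa/(\mu\epsilon))$ term) is identically zero and the competing constraint $\eta\le\frac{\mu\epsilon}{LC_1+2LD_1C_2\rho^{-1}}$ becomes inactive, leaving only the linear term $K=(B_1+2D_1B_2\rho^{-1})\kappa\log\frac{2\fgapp}{\epsilon}=\left(1+\frac{2(1+\omega)B}{m\rho}\right)\frac{L}{\mu}\log\frac{2\fgapp}{\epsilon}$. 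Finally I would note that $\fgapp=f(x^0)-f^*+L\eta^2D_1\rho^{-1}\sigma_0^2$ reduces to $\fgap$ once the shifts are initialized so that $\sigma_0^2=\frac{\omega}{(1+\omega)m}\summ\ns{\nabla f_i(x^0)-\hi}=0$ (e.g. $h_i^0=\nabla f_i(x^0)$), yielding the stated bound verbatim.

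The one genuinely non-routine point is that the stepsize constraint is \emph{implicit}: the quantities $B$, $\tau$, and hence $\rho$ all depend on $\eta$ (and on the free auxiliary parameters $\alpha,\beta>0$), so $\eta$ appears on both sides of $\eta\le(L+2L(1+\omega)Bm^{-1}\rho^{-1})^{-1}$. The hard part is therefore not the substitution but arguing that a strictly positive, self-consistent $\eta$ exists: one must choose $\alpha,\beta$ so that $\rho>0$, and then observe that since $B=O(\eta^2)$ the right-hand side converges to a positive constant as $\eta\to 0$, so the implicit inequality is satisfiable. This feasibility argument is inherited unchanged from the non-PL counterpart Corollary \ref{cor:diana-gd} and from Lemma \ref{lem:diana-gd}, so no new estimates are required; the remaining work is purely the bookkeeping of the substitution above.
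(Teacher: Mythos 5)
Your proposal is correct and follows essentially the same route as the paper: invoke Lemma \ref{lem:diana-gd} for the parameters $A_1=A_2=C_1=C_2=0$, $B_1=1$, $D_1=\frac{1+\omega}{m}$, $B_2=B$, $\rho$, then substitute into the constant-stepsize PL theorem (Theorem \ref{thm:main-pl}) and note $\fgapp=\fgap$ via $\sigma_0^2=0$. Your additional remarks on the implicit $\eta$-dependence of $B,\tau,\rho$ and on initializing $h_i^0=\nabla f_i(x^0)$ are sound elaborations of points the paper leaves implicit, not a different argument.
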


\begin{proofof}{Corollary \ref{cor:diana-gd-pl}}
	According to Lemma \ref{lem:diana-gd}, we have proved that 
	$g^k$ (see Line \ref{line:gk-diana-gd} of DIANA-GD Algorithm \ref{alg:diana-gd})  satisfies the unified Assumption \ref{asp:boge} with
	\begin{align*}
	&A_1 =0, \qquad
	B_1 =1, \qquad 
	C_1  =  0, \\ 
	&D_1 =\frac{1+\omega}{m}, \qquad
	\sk =  \frac{\omega}{(1+\omega)m}
	\summ \ns{\nabla f_i(\xk) -\hi}, \\
	&\rho =\min\{1-\tau,~ 2\alpha -(1-\alpha)\beta^{-1} -\alpha^2-\tau \},\\
	&A_2 =0, \qquad
	B_2 =B, \qquad 
	C_2  = 0,
	\end{align*}
	where 
	$B:=\frac{\omega(1+\beta)L^2\eta^2}{1+\omega}$,
	$\tau:=\alpha^2 \omega + \frac{(1+\omega)B}{m}$,
	and $\forall \beta>0$.
	Then this corollary is proved by plugging these specific values for $A_1, A_2, B_1, B_2, C_1,C_2,D_1,\rho$ into our unified Theorem \ref{thm:main-pl}. 
\end{proofof}	

\subsubsection{DIANA-SGD method under PL condition}
\begin{corollary}[DIANA-SGD under PL condition]\label{cor:diana-sgd-pl}
	Suppose that Assumption \ref{asp:lsmooth-diana} and \ref{asp:pl} hold, and the local gradient estimator $\widetilde{g}_i^k$ in Algorithm \ref{alg:diana-sgd} satisfies Assumption \ref{asp:es}. 
	Let stepsize 
	$$\eta_{k} = \begin{cases}
	\eta   & \text{if~~}  k\leq \frac{K}{2}\\
	\frac{2\eta}{2+(k-\frac{K}{2})\mu\eta} &\text{if~~}  k>\frac{K}{2}
	\end{cases},$$ 	
	where
	$$\eta \leq \frac{1}{L+ L(1+\omega)(1+2\tau\rho^{-1})Am^{-1}\mu^{-1} + 2L(1+\omega)Bm^{-1}\rho^{-1}},
	$$
	then the number of iterations performed by DIANA-SGD (Algorithm \ref{alg:diana-sgd}) to find an $\epsilon$-solution of nonconvex federated problem \eqref{eq:prob-fed} with \eqref{prob-fed:exp} or \eqref{prob-fed:finite}, i.e. a point $x^K$ such that $\E[f(x^K)-f^*] \leq \epsilon$, can be bounded by
	\begin{align}
	K = \max \left\{\left( 1+ \frac{1+\omega}{m}\left(\frac{(1+2\tau\rho^{-1})A}{\mu} +\frac{2B}{\rho}\right)\right)\frac{2L}{\mu}\log \frac{2\fgap}{\epsilon},~ \frac{10(1+\omega)(1+2\tau\rho^{-1})LC}{m\mu^2 \epsilon}\right\},
	\end{align}
	where 
	$A:=\max_i  (A_{1,i}+(B_{1,i}-1)L_i)$, 
	$B:=\frac{\omega(1+\beta)L^2\eta^2}{1+\omega}$, 
	$C := \frac{1}{m} \summ C_{1,i} + 2A\Delta_f^*$,
	$\Delta_f^*:=f^*-\frac{1}{m}\summ f_i^*$,
	$\rho :=\min\{1-\tau, 2\alpha -(1-\alpha)\beta^{-1} -\alpha^2-\tau \}$,
	$\tau:=\alpha^2 \omega + \frac{(1+\omega)B}{m}$,
	and $\forall \beta>0$.
\end{corollary}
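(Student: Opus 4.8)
The plan is to proceed exactly as in the preceding PL corollaries (compare Corollaries \ref{cor:dc-sgd-pl} and \ref{cor:diana-gd-pl}): all the analytic work has already been carried out in Lemma \ref{lem:diana-sgd} and Theorem \ref{thm:main-pl-dec}, so this corollary reduces to substituting the parameters produced by the former into the rate guaranteed by the latter, and then simplifying. First I would invoke Lemma \ref{lem:diana-sgd}, which (given that the local estimators $\gi$ satisfy the Expected Smoothness Assumption \ref{asp:es}) shows that the aggregated DIANA-SGD estimator $g^k$ satisfies the unified Assumption \ref{asp:boge} with $B_1=1$, $A_1=\frac{(1+\omega)A}{m}$, $C_1=\frac{(1+\omega)C}{m}$, $D_1=\frac{1+\omega}{m}$, together with the now-nonzero $A_2=\tau A$, $B_2=B$, $C_2=\tau C$, and $\rho=\min\{1-\tau,\,2\alpha-(1-\alpha)\beta^{-1}-\alpha^2-\tau\}$, where $A$, $B$, $C$, $\tau$ are the compound constants named in the statement and $\sk=\frac{\omega}{(1+\omega)m}\summ\ns{\nabla f_i(\xk)-\hi}$.

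Next I would apply Theorem \ref{thm:main-pl-dec}, which is available because Assumptions \ref{asp:boge}, \ref{asp:lsmooth} (the $L$-smoothness of $f$ following from Assumption \ref{asp:lsmooth-diana} with $L^2=\frac1m\summ L_i^2$), and \ref{asp:pl} all hold. The stepsize bound is immediate by substituting the parameters into the theorem's denominator: using $B_1=1$, $2D_1B_2\rho^{-1}=\frac{2(1+\omega)B}{m\rho}$, and $(LA_1+2LD_1A_2\rho^{-1})\mu^{-1}=\frac{L(1+\omega)(1+2\tau\rho^{-1})A}{m\mu}$, the denominator equals $L+L(1+\omega)(1+2\tau\rho^{-1})Am^{-1}\mu^{-1}+2L(1+\omega)Bm^{-1}\rho^{-1}$, matching the stated $\eta$. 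For the iteration count I would use that the first branch of $K$ is $\frac{2}{\mu\eta}\log\frac{2\fgapp}{\epsilon}$; factoring $L$ out of $1/\eta$ collapses the bracket to $1+\frac{1+\omega}{m}(\frac{(1+2\tau\rho^{-1})A}{\mu}+\frac{2B}{\rho})$, and multiplying by $\frac{2L}{\mu}$ reproduces the first term. For the second branch, $C_1+2D_1C_2\rho^{-1}=\frac{(1+\omega)(1+2\tau\rho^{-1})C}{m}$, so $\frac{10L(C_1+2D_1C_2\rho^{-1})}{\mu^2\epsilon}$ reproduces the second term.

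Finally I would record that $\fgapp=\fgap$: initializing $h_i^0=\nabla f_i(x^0)$ makes $\sigma_0^2=\frac{\omega}{(1+\omega)m}\summ\ns{\nabla f_i(x^0)-\hi}=0$, so the $L\eta^2D_1\rho^{-1}\sigma_0^2$ correction in $\fgapp$ vanishes. The only thing requiring care here is bookkeeping rather than analysis: unlike the DC-GD/SGD cases, $A_2$, $B_2$, $C_2$ are genuinely nonzero, so I must faithfully carry the $\tau\rho^{-1}$ cross-terms that enter through the variance-reduction recurrence \eqref{eq:boge2} and check that $A$, $B$, $C$, $\tau$, $\rho$ are precisely the constants emitted by Lemma \ref{lem:diana-sgd}. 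There is no genuine obstacle, since the per-step descent estimate and the telescoping step (Proposition \ref{lem:keyrel}) are already proved inside Theorem \ref{thm:main-pl-dec}.
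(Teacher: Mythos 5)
Your proposal is correct and follows exactly the paper's own route: invoke Lemma \ref{lem:diana-sgd} to obtain the Assumption \ref{asp:boge} parameters for the aggregated DIANA-SGD estimator, then substitute them into Theorem \ref{thm:main-pl-dec}, using $\sigma_0^2=0$ from the shift initialization so that $\fgapp=\fgap$. Your explicit verification of the stepsize denominator and of both branches of $K$ is precisely the bookkeeping the paper leaves implicit in its ``plug these values into the unified theorem'' step.
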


\begin{proofof}{Corollary \ref{cor:diana-sgd-pl}}
	According to Lemma \ref{lem:diana-sgd}, we have proved that 
	$g^k$ (see Line \ref{line:gk-diana-sgd} of DIANA-SGD Algorithm \ref{alg:diana-sgd})  satisfies the unified Assumption \ref{asp:boge} with
	\begin{align*}
	&A_1 =\frac{(1+\omega)A}{m}, \qquad
	B_1 =1, \qquad 
	C_1  =  \frac{(1+\omega)C}{m}, \\ 
	&D_1 =\frac{1+\omega}{m}, \qquad
	\sk = \frac{\omega}{(1+\omega)m}
	\summ \ns{\nabla f_i(\xk) -\hi}, \\
	&\rho =\min\{1-\tau,~ 2\alpha -(1-\alpha)\beta^{-1} -\alpha^2-\tau \},\\
	&A_2 =\tau A, \qquad
	B_2 =B, \qquad 
	C_2  = \tau C,
	\end{align*}
	where 
	$A:=\max_i  (A_{1,i}+(B_{1,i}-1)L_i)$, 
	$B:=\frac{\omega(1+\beta)L^2\eta^2}{1+\omega}$,
	$C := \frac{1}{m} \summ C_{1,i} + 2A\Delta_f^*$,
	$\Delta_f^*:=f^*-\frac{1}{m}\summ f_i^*$,
	$\tau:=\alpha^2 \omega + \frac{(1+\omega)B}{m}$,
	and $\forall\beta>0$.
	Then this corollary is proved by plugging these specific values for $A_1, A_2, B_1, B_2, C_1,C_2,D_1,\rho$ into our unified Theorem \ref{thm:main-pl-dec}. 
\end{proofof}	

\subsubsection{DIANA-LSVRG method under PL condition}
\begin{corollary}[DIANA-LSVRG under PL condition]\label{cor:diana-lsvrg-pl}
	Suppose that Assumption \ref{asp:lsmooth-diana}, \ref{asp:avgsmooth-fed} and \ref{asp:pl} hold.
	Let stepsize 
	$$
	\eta \leq \frac{1}{L+2L(1+\omega)B'm^{-1}b^{-1}\rho^{-1}},
	$$
	then the number of iterations performed by DIANA-LSVRG (Algorithm \ref{alg:diana-lsvrg}) to find an $\epsilon$-solution of nonconvex federated problem \eqref{eq:prob-fed} with \eqref{prob-fed:finite}, i.e. a point $x^K$ such that $\E[f(x^K)-f^*] \leq \epsilon$, can be bounded by
	\begin{align}
	K = \left( 1+ \frac{2(1+\omega)B'}{mb\rho}\right)\frac{L}{\mu}\log \frac{2\fgap}{\epsilon},
	\end{align}
	where 
	$B' :=  (1-p)\bar{L}^2\eta^2\gamma^{-1} + Bb^{-1}$,
	$\rho :=\min\{p+p\gamma-\gamma-\tau,~ 2\alpha -(1-\alpha)\beta^{-1} -\alpha^2-\tau \}$,
	$\tau:=\alpha^2 \omega + \frac{(1+\omega)B}{m}$,
	$B:=\frac{\omega(1+\beta)L^2\eta^2}{1+\omega} + \bar{L}^2\eta^2 b^{-1}$,
	and $\forall \gamma, \beta>0$.
\end{corollary}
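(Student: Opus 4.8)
The plan is to derive this corollary exactly as the preceding DIANA-type PL corollaries (e.g. Corollary~\ref{cor:diana-gd-pl}) are derived: the gradient estimator of DIANA-LSVRG has already been shown to fit the unified model, so the whole argument reduces to substituting its parameters into the constant-stepsize PL theorem. First I would recall from Lemma~\ref{lem:diana-lsvrg} that $g^k$ (Algorithm~\ref{alg:diana-lsvrg}) satisfies Assumption~\ref{asp:boge} with $A_1=A_2=C_1=C_2=0$, $B_1=1$, $D_1=\frac{1+\omega}{m}$, $B_2=(1-p)\bar{L}^2\eta^2\gamma^{-1}b^{-1}+B$, and the stated $\rho$ and $\sigma_k^2$, where $B:=\frac{\omega(1+\beta)L^2\eta^2}{1+\omega}+\bar{L}^2\eta^2 b^{-1}$ and $\tau:=\alpha^2\omega+\frac{(1+\omega)B}{m}$. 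Because this estimator is fully variance reduced ($C_1=C_2=0$), the second branch of the maximum in Theorem~\ref{thm:main-pl} vanishes and one obtains a clean linear rate; this is precisely why I would invoke the constant-stepsize Theorem~\ref{thm:main-pl} here rather than the decreasing-stepsize Theorem~\ref{thm:main-pl-dec}.

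Next, with $A_1=A_2=0$ and $C_1=C_2=0$, the stepsize bound of Theorem~\ref{thm:main-pl} collapses to $\eta\le \frac{1}{LB_1+2LD_1B_2\rho^{-1}}$. Substituting $B_1=1$, $D_1=\frac{1+\omega}{m}$ and the value of $B_2$, and collecting the bracketed quantity into $B':=(1-p)\bar{L}^2\eta^2\gamma^{-1}+Bb$ so that $B_2=B'/b$, gives exactly $\eta\le \frac{1}{L+2L(1+\omega)B'm^{-1}b^{-1}\rho^{-1}}$, the stepsize stated in the corollary. The same vanishing parameters reduce the iteration complexity of Theorem~\ref{thm:main-pl} to $K=\big(B_1+2D_1B_2\rho^{-1}\big)\kappa\log\frac{2\fgapp}{\epsilon}$, and the identical substitution yields $B_1+2D_1B_2\rho^{-1}=1+\frac{2(1+\omega)B'}{mb\rho}$; with $\kappa=L/\mu$ this is the claimed bound.

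Finally I would note that DIANA-LSVRG is initialized with $x^0=w^0$ and with $\{h_i^0\}$ chosen so that $\sigma_0^2=\frac{\bar{L}^2}{b}\ns{x^0-w^0}+\frac{\omega}{(1+\omega)m}\summ\ns{\nabla f_i(x^0)-h_i^0}=0$, whence the shifted initial gap $\fgapp:=f(x^0)-f^*+L\eta^2D_1\rho^{-1}\sigma_0^2=f(x^0)-f^*=\fgap$, so $\fgapp$ may be replaced by $\fgap$ throughout, matching the statement. The argument is entirely mechanical; the only step demanding care is the bookkeeping that regroups $B_2=(1-p)\bar{L}^2\eta^2\gamma^{-1}b^{-1}+B$ as $B'/b$, since $B$ already carries a $b^{-1}$ term of its own and one must track the powers of $b$ consistently so that the definition of $B'$ lines up with its appearance in both the stepsize and the complexity bound.
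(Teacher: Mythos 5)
Your proposal is correct and follows essentially the same route as the paper's own proof: Lemma \ref{lem:diana-lsvrg} supplies the parameters ($A_1=A_2=C_1=C_2=0$, $B_1=1$, $D_1=\tfrac{1+\omega}{m}$, $B_2=(1-p)\bar{L}^2\eta^2\gamma^{-1}b^{-1}+B$), which are then plugged into the constant-stepsize Theorem \ref{thm:main-pl}, with $\sigma_0^2=0$ (from $x^0=w^0$ and $h_i^0=\nabla f_i(x^0)$) giving $\fgapp=\fgap$. One remark: your regrouping $B'=(1-p)\bar{L}^2\eta^2\gamma^{-1}+Bb$, so that $B_2=B'/b$, is the internally consistent reading, and the corollary's printed definition $B'=(1-p)\bar{L}^2\eta^2\gamma^{-1}+Bb^{-1}$ is a typo --- compare the non-PL Corollary \ref{cor:diana-lsvrg}, whose proof explicitly sets $B_2=\frac{B'}{b}$ with $B'=(1-p)\bar{L}^2\eta^2\gamma^{-1}+Bb$ --- so your bookkeeping is the right one.
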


\begin{proofof}{Corollary \ref{cor:diana-lsvrg-pl}}
	According to Lemma \ref{lem:diana-lsvrg}, we have proved that 
	$g^k$ (see Line \ref{line:gk-diana-lsvrg} of DIANA-LSVRG Algorithm \ref{alg:diana-lsvrg})  satisfies the unified Assumption \ref{asp:boge} with
	\begin{align*}
	&A_1 =0, \qquad
	B_1 =1, \qquad 
	C_1  =  0, \\ 
	&D_1 =\frac{1+\omega}{m}, \qquad
	\sk = \frac{\bar{L}^2}{b}\ns{\xk-\wk} + \frac{\omega}{(1+\omega)m}
	\summ \ns{\nabla f_i(\xk) -\hi}, \\
	&\rho =\min\{p+p\gamma-\gamma-\tau,~ 2\alpha -(1-\alpha)\beta^{-1} -\alpha^2-\tau \},\\
	&A_2 =0 \qquad
	B_2 =(1-p)\bar{L}^2\eta^2\gamma^{-1} b^{-1} +B, \qquad 
	C_2  = 0,
	\end{align*}
	where 
	$B:=\frac{\omega(1+\beta)L^2\eta^2}{1+\omega} + \bar{L}^2\eta^2 b^{-1}$,
	$\tau:=\alpha^2 \omega + \frac{(1+\omega)B}{m}$,
	and $\forall \gamma, \beta>0$.
	Then this corollary is proved by plugging these specific values for $A_1, A_2, B_1, B_2, C_1,C_2,D_1,\rho$ into our unified Theorem \ref{thm:main-pl}. 
\end{proofof}

\subsubsection{DIANA-SAGA method under PL condition}
\begin{corollary}[DIANA-SAGA under PL condition]\label{cor:diana-saga-pl}
	Suppose that Assumption \ref{asp:lsmooth-diana}, \ref{asp:avgsmooth-saga-fed} and \ref{asp:pl} hold.
	Let stepsize 
	$$
	\eta \leq \frac{1}{L+2L(1+\omega)B'm^{-1}b^{-1}\rho^{-1}},
	$$
	then the number of iterations performed by DIANA-SAGA (Algorithm \ref{alg:diana-saga}) to find an $\epsilon$-solution of nonconvex federated problem \eqref{eq:prob-fed} with \eqref{prob-fed:finite}, i.e. a point $x^K$ such that $\E[f(x^K)-f^*] \leq \epsilon$, can be bounded by
	\begin{align}
	K = \left( 1+ \frac{2(1+\omega)B'}{mb\rho}\right)\frac{L}{\mu}\log \frac{2\fgap}{\epsilon},
	\end{align}
	where 
	$B' :=  (1-\frac{b}{n})\bar{L}^2\eta^2\gamma^{-1} + Bb^{-1}$,
	$\rho :=\min\{\frac{b}{n}+\frac{b}{n}\gamma-\gamma-\tau,~ 2\alpha -(1-\alpha)\beta^{-1} -\alpha^2-\tau \}$,
	$\tau:=\alpha^2 \omega + \frac{(1+\omega)B}{m}$,
	$B:=\frac{\omega(1+\beta)L^2\eta^2}{1+\omega} + \bar{L}^2\eta^2 b^{-1}$,
	and $\forall \gamma, \beta>0$.
\end{corollary}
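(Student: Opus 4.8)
The plan is to invoke Lemma~\ref{lem:diana-saga} and then substitute the resulting parameters into the unified Theorem~\ref{thm:main-pl} (constant stepsize), exactly as was done for DIANA-LSVRG in Corollary~\ref{cor:diana-lsvrg-pl}. Lemma~\ref{lem:diana-saga} already certifies that the DIANA-SAGA estimator $g^k$ (Line~\ref{line:gk-diana-saga} of Algorithm~\ref{alg:diana-saga}) satisfies Assumption~\ref{asp:boge} with $A_1=A_2=C_1=C_2=0$, $B_1=1$, $D_1=\frac{1+\omega}{m}$, $B_2=(1-\frac{b}{n})\bar{L}^2\eta^2\gamma^{-1}b^{-1}+B$, and the stated $\sigma_k^2$ and $\rho$. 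Hence no new convergence analysis is needed; the whole argument reduces to a parameter substitution.

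First I would record the single structural fact that drives the result: because $A_1=A_2=0$ and $C_1=C_2=0$, the term $(LA_1+2LD_1A_2\rho^{-1})\mu^{-1}$ drops out of the stepsize bound of Theorem~\ref{thm:main-pl}, and the second branch $\frac{C_1+2D_1C_2\rho^{-1}}{\mu\epsilon}$ of the maximum in the iteration count vanishes entirely. This is precisely the mechanism by which full variance reduction (together with the DIANA shift) upgrades the generic sublinear $O(\frac{\kappa}{\mu\epsilon})$ rate to the linear $O(\kappa\log\frac{1}{\epsilon})$ rate, as discussed after Lemma~\ref{lem:lsvrg}.

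Next I would carry out the algebra. Writing $B':=B_2 b=(1-\frac{b}{n})\bar{L}^2\eta^2\gamma^{-1}+Bb$, the surviving stepsize constraint $\eta\le\frac{1}{LB_1+2LD_1B_2\rho^{-1}}$ becomes $\eta\le\frac{1}{L+2L(1+\omega)B'm^{-1}b^{-1}\rho^{-1}}$, matching the stated bound, while the surviving first branch of the iteration count gives $K=(B_1+2D_1B_2\rho^{-1})\kappa\log\frac{2\fgapp}{\epsilon}=\left(1+\frac{2(1+\omega)B'}{mb\rho}\right)\frac{L}{\mu}\log\frac{2\fgapp}{\epsilon}$. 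Finally, initializing $w_{i,j}^0=x^0$ for all $i,j$ and $h_i^0=\nabla f_i(x^0)$ forces $\sigma_0^2=0$, so that $\fgapp:=f(x^0)-f^*+L\eta^2D_1\rho^{-1}\sigma_0^2=f(x^0)-f^*=\fgap$, yielding the claimed complexity.

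The step I expect to require the most care is not any inequality — the proof is a pure substitution — but the bookkeeping: matching $B_2$ (the normalized constant delivered by Lemma~\ref{lem:diana-saga}) against $B'$ (the constant appearing in the corollary) through the factor of $b$, and confirming that the prescribed initialization indeed annihilates both pieces of $\sigma_0^2$, namely the SAGA reference term $\frac{\bar{L}^2}{b}\ns{x^0-w_{i,j}^0}$ and the DIANA shift term $\frac{\omega}{(1+\omega)m}\summ \ns{\nabla f_i(x^0)-h_i^0}$. One should also note in passing that the free parameters $\alpha,\beta,\gamma>0$ must be chosen so that $\rho>0$ for the stepsize and the bound to be meaningful, but this feasibility is inherited from Lemma~\ref{lem:diana-saga} and imposes no additional work here.
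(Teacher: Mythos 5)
Your proposal is correct and follows exactly the paper's route: Lemma~\ref{lem:diana-saga} certifies the parameters of Assumption~\ref{asp:boge} ($A_1=A_2=C_1=C_2=0$, $B_1=1$, $D_1=\tfrac{1+\omega}{m}$, and the stated $B_2$, $\rho$, $\sigma_k^2$), and the corollary then follows by direct substitution into Theorem~\ref{thm:main-pl}, with the vanishing of the $A$- and $C$-branches and the zero initialization of both pieces of $\sigma_0^2$ handled just as you describe (indeed more explicitly than the paper, which only mentions the DIANA shift term). One remark on your bookkeeping: your definition $B'=B_2 b=(1-\tfrac{b}{n})\bar{L}^2\eta^2\gamma^{-1}+Bb$ is the algebraically consistent one (it matches the non-PL Corollary~\ref{cor:diana-saga}), whereas the factor $Bb^{-1}$ printed in the statement of Corollary~\ref{cor:diana-saga-pl} appears to be a typo, so your substitution in fact corrects the stated constant rather than conflicting with it.
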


\begin{proofof}{Corollary \ref{cor:diana-saga-pl}}
	According to Lemma \ref{lem:diana-saga}, we have proved that 
	$g^k$ (see Line \ref{line:gk-diana-saga} of DIANA-SAGA Algorithm \ref{alg:diana-saga})  satisfies the unified Assumption \ref{asp:boge} with
	\begin{align*}
	&A_1 =0, \qquad
	B_1 =1, \qquad 
	C_1  =  0, \\ 
	&D_1 =\frac{1+\omega}{m}, \qquad
	\sk = \frac{1}{mn}\summ \sum_{j=1}^{m}\frac{\bar{L}^2}{b}\ns{\xk-w_{i,j}^k} + \frac{\omega}{(1+\omega)m}
	\summ \ns{\nabla f_i(\xk) -\hi}, \\
	&\rho =\min\left\{\frac{b}{n}+\frac{b}{n}\gamma-\gamma-\tau,~ 2\alpha -(1-\alpha)\beta^{-1} -\alpha^2-\tau \right\},\\
	&A_2 =0 \qquad
	B_2 =(1-\frac{b}{n})\bar{L}^2\eta^2\gamma^{-1} b^{-1} +B, \qquad 
	C_2  = 0,
	\end{align*}
	where 
	$B:=\frac{\omega(1+\beta)L^2\eta^2}{1+\omega} + \bar{L}^2\eta^2 b^{-1}$,
	$\tau:=\alpha^2 \omega + \frac{(1+\omega)B}{m}$,
	and $\forall \gamma, \beta>0$.
	Then this corollary is proved by plugging these specific values for $A_1, A_2, B_1, B_2, C_1,C_2,D_1,\rho$ into our unified Theorem \ref{thm:main-pl}. 
\end{proofof}

\end{document}